\newtheorem{theorem}{Theorem}[section]
\newtheorem*{theorem*}{Theorem}
\newtheorem{corollary}[theorem]{Corollary}
\newtheorem{definition}[theorem]{Definition}
\newtheorem{example}[theorem]{Example}
\newtheorem{lemma}[theorem]{Lemma}
\newtheorem{proposition}[theorem]{Proposition}
\newtheorem{remark}[theorem]{Remark}
\numberwithin{equation}{section}
\def\RR{{\mathbb{R}}}
\keywords{The Laplace operator, nonlocal Robin boundary conditions
on non-smooth domains, global attractor, exponential attractor,
fractal-like domains, domains with Holder cusps, semilinear
reaction-diffusion equation}
\subjclass[2010]{35J92, 35A15, 35B41,35K65}
\chardef\@x10\chardef\@xv60
\def\tcitime{
\def\@time{%
  \@minute\time\@hour\@minute\divide\@hour\@xv
  \ifnum\@hour<\@x 0\fi\the\@hour:%
  \multiply\@hour\@xv\advance\@minute-\@hour
  \ifnum\@minute<\@x 0\fi\the\@minute
  }}%
\def\QCTOpt[#1]#2{%
  \def\QCTOptB{#1}
  \def\QCTOptA{#2}
}
\def\QCTNOpt#1{%
  \def\QCTOptA{#1}
  \let\QCTOptB\empty
}
\def\Qct{%
  \@ifnextchar[{%
    \QCTOpt}{\QCTNOpt}
}
\def\QCBOpt[#1]#2{%
  \def\QCBOptB{#1}
  \def\QCBOptA{#2}
}
\def\QCBNOpt#1{%
  \def\QCBOptA{#1}
  \let\QCBOptB\empty
}
\def\Qcb{%
  \@ifnextchar[{%
    \QCBOpt}{\QCBNOpt}
}
\def\PrepCapArgs{%
  \ifx\QCBOptA\empty
    \ifx\QCTOptA\empty
      {}%
    \else
      \ifx\QCTOptB\empty
        {\QCTOptA}%
      \else
        [\QCTOptB]{\QCTOptA}%
      \fi
    \fi
  \else
    \ifx\QCBOptA\empty
      {}%
    \else
      \ifx\QCBOptB\empty
        {\QCBOptA}%
      \else
        [\QCBOptB]{\QCBOptA}%
      \fi
    \fi
  \fi
}
\def\GRAPHICSPS#1{%
 \ifcase\GRAPHICSTYPE
   \special{ps: #1}%
 \or
   \special{language "PS", include "#1"}%
 \fi
}%
\def\graffile#1#2#3#4{%
    \leavevmode
    \raise -#4 \BOXTHEFRAME{%
        \hbox to #2{\raise #3\hbox to #2{\null #1\hfil}}}%
}%
\def\draftbox#1#2#3#4{%
 \leavevmode\raise -#4 \hbox{%
  \frame{\rlap{\protect\tiny #1}\hbox to #2%
   {\vrule height#3 width\z@ depth\z@\hfil}%
  }%
 }%
}%
\newif\ifwasdraft
\def\GRAPHIC#1#2#3#4#5{%
 \ifnum\draft=\@ne\draftbox{#2}{#3}{#4}{#5}%
  \else\graffile{#1}{#3}{#4}{#5}%
  \fi
 }%
\def\addtoLaTeXparams#1{%
    \edef\LaTeXparams{\LaTeXparams #1}}%
\newif\ifBoxFrame \BoxFramefalse
\newif\ifOverFrame \OverFramefalse
\newif\ifUnderFrame \UnderFramefalse
\def\BOXTHEFRAME#1{%
   \hbox{%
      \ifBoxFrame
         \frame{#1}%
      \else
         {#1}%
      \fi
   }%
}
\def\doFRAMEparams#1{\BoxFramefalse\OverFramefalse\UnderFramefalse\readFRAMEparams#1\end}%
\def\readFRAMEparams#1{%
 \ifx#1\end%
  \let\next=\relax
  \else
  \ifx#1i\dispkind=\z@\fi
  \ifx#1d\dispkind=\@ne\fi
  \ifx#1f\dispkind=\tw@\fi
  \ifx#1t\addtoLaTeXparams{t}\fi
  \ifx#1b\addtoLaTeXparams{b}\fi
  \ifx#1p\addtoLaTeXparams{p}\fi
  \ifx#1h\addtoLaTeXparams{h}\fi
  \ifx#1X\BoxFrametrue\fi
  \ifx#1O\OverFrametrue\fi
  \ifx#1U\UnderFrametrue\fi
  \ifx#1w
    \ifnum\draft=1\wasdrafttrue\else\wasdraftfalse\fi
    \draft=\@ne
  \fi
  \let\next=\readFRAMEparams
  \fi
 \next
 }%
\def\IFRAME#1#2#3#4#5#6{%
      \bgroup
      \let\QCTOptA\empty
      \let\QCTOptB\empty
      \let\QCBOptA\empty
      \let\QCBOptB\empty
      #6%
      \parindent=0pt%
      \leftskip=0pt
      \rightskip=0pt
      \setbox0 = \hbox{\QCBOptA}%
      \@tempdima = #1\relax
      \ifOverFrame
          \typeout{This is not implemented yet}%
          \show\HELP
      \else
         \ifdim\wd0>\@tempdima
            \advance\@tempdima by \@tempdima
            \ifdim\wd0 >\@tempdima
               \textwidth=\@tempdima
               \setbox1 =\vbox{%
                  \noindent\hbox to \@tempdima{\hfill\GRAPHIC{#5}{#4}{#1}{#2}{#3}\hfill}\\%
                  \noindent\hbox to \@tempdima{\parbox[b]{\@tempdima}{\QCBOptA}}%
               }%
               \wd1=\@tempdima
            \else
               \textwidth=\wd0
               \setbox1 =\vbox{%
                 \noindent\hbox to \wd0{\hfill\GRAPHIC{#5}{#4}{#1}{#2}{#3}\hfill}\\%
                 \noindent\hbox{\QCBOptA}%
               }%
               \wd1=\wd0
            \fi
         \else
            \ifdim\wd0>0pt
              \hsize=\@tempdima
              \setbox1 =\vbox{%
                \unskip\GRAPHIC{#5}{#4}{#1}{#2}{0pt}%
                \break
                \unskip\hbox to \@tempdima{\hfill \QCBOptA\hfill}%
              }%
              \wd1=\@tempdima
           \else
              \hsize=\@tempdima
              \setbox1 =\vbox{%
                \unskip\GRAPHIC{#5}{#4}{#1}{#2}{0pt}%
              }%
              \wd1=\@tempdima
           \fi
         \fi
         \@tempdimb=\ht1
         \advance\@tempdimb by \dp1
         \advance\@tempdimb by -#2%
         \advance\@tempdimb by #3%
         \leavevmode
         \raise -\@tempdimb \hbox{\box1}%
      \fi
      \egroup%
}%
\def\DFRAME#1#2#3#4#5{%
 \begin{center}
     \let\QCTOptA\empty
     \let\QCTOptB\empty
     \let\QCBOptA\empty
     \let\QCBOptB\empty
     \ifOverFrame 
        #5\QCTOptA\par
     \fi
     \GRAPHIC{#4}{#3}{#1}{#2}{\z@}
     \ifUnderFrame 
        \nobreak\par #5\QCBOptA
     \fi
 \end{center}%
 }%
\def\FFRAME#1#2#3#4#5#6#7{%
 \begin{figure}[#1]%
  \let\QCTOptA\empty
  \let\QCTOptB\empty
  \let\QCBOptA\empty
  \let\QCBOptB\empty
  \ifOverFrame
    #4
    \ifx\QCTOptA\empty
    \else
      \ifx\QCTOptB\empty
        \caption{\QCTOptA}%
      \else
        \caption[\QCTOptB]{\QCTOptA}%
      \fi
    \fi
    \ifUnderFrame\else
      \label{#5}%
    \fi
  \else
    \UnderFrametrue%
  \fi
  \begin{center}\GRAPHIC{#7}{#6}{#2}{#3}{\z@}\end{center}%
  \ifUnderFrame
    #4
    \ifx\QCBOptA\empty
      \caption{}%
    \else
      \ifx\QCBOptB\empty
        \caption{\QCBOptA}%
      \else
        \caption[\QCBOptB]{\QCBOptA}%
      \fi
    \fi
    \label{#5}%
  \fi
  \end{figure}%
 }%
\def\makeactives{
  \catcode`\"=\active
  \catcode`\;=\active
  \catcode`\:=\active
  \catcode`\'=\active
  \catcode`\~=\active
}
   \gdef\activesoff{%
      \def"{\string"}
      \def;{\string;}
      \def:{\string:}
      \def'{\string'}
      \def~{\string~}
    }
\def\FRAME#1#2#3#4#5#6#7#8{%
 \bgroup
 \@ifundefined{bbl@deactivate}{}{\activesoff}
 \ifnum\draft=\@ne
   \wasdrafttrue
 \else
   \wasdraftfalse%
 \fi
 \def\LaTeXparams{}%
 \dispkind=\z@
 \def\LaTeXparams{}%
 \doFRAMEparams{#1}%
 \ifnum\dispkind=\z@\IFRAME{#2}{#3}{#4}{#7}{#8}{#5}\else
  \ifnum\dispkind=\@ne\DFRAME{#2}{#3}{#7}{#8}{#5}\else
   \ifnum\dispkind=\tw@
    \edef\@tempa{\noexpand\FFRAME{\LaTeXparams}}%
    \@tempa{#2}{#3}{#5}{#6}{#7}{#8}%
    \fi
   \fi
  \fi
  \ifwasdraft\draft=1\else\draft=0\fi{}%
  \egroup
 }%
\def\TEXUX#1{"texux"}
\def\func#1{\mathop{\rm #1}}%
\long\def\QQQ#1#2{%
     \long\expandafter\def\csname#1\endcsname{#2}}%
\long\def\QQA#1#2{}%
\def\QTR#1#2{{\csname#1\endcsname #2}}
\def\EXPAND#1[#2]#3{}%
\def\NOEXPAND#1[#2]#3{}%
\def\LaTeXparent#1{}%
\def\ChildStyles#1{}%
\def\ChildDefaults#1{}%
\def\QTagDef#1#2#3{}%
\def\QQfnmark#1{\footnotemark}
\def\makeatletter\input gnuindex.sty\makeatother\makeindex{\makeatletter\input gnuindex.sty\makeatother\makeindex}%
\def\initial#1{\bigbreak{\raggedright\large\bf #1}\kern 2\p@\penalty3000}}%
 \def\abstract{%
  \if@twocolumn
   \section*{Abstract (Not appropriate in this style!)}%
   \else \small 
   \begin{center}{\bf Abstract\vspace{-.5em}\vspace{\z@}}\end{center}%
   \quotation 
   \fi
  }%
   \def\registered{\relax\ifmmode{}\r@gistered
                    \else$\m@th\r@gistered$\fi}%
 \def\r@gistered{^{\ooalign
  {\hfil\raise.07ex\hbox{$\scriptstyle\rm\text{R}$}\hfil\crcr
  \mathhexbox20D}}}}{}%
\newdimen\theight
\def\Column{%
 \vadjust{\setbox\z@=\hbox{\scriptsize\quad\quad tcol}%
  \theight=\ht\z@\advance\theight by \dp\z@\advance\theight by \lineskip
  \kern -\theight \vbox to \theight{%
   \rightline{\rlap{\box\z@}}%
   \vss
   }%
  }%
 }%
\def\qed{%
 \ifhmode\unskip\nobreak\fi\ifmmode\ifinner\else\hskip5\p@\fi\fi
 \hbox{\hskip5\p@\vrule width4\p@ height6\p@ depth1.5\p@\hskip\p@}%
 }%
\def\miss{\hbox{\vrule height2\p@ width 2\p@ depth\z@}}%
\def\tcol#1{{\baselineskip=6\p@ \vcenter{#1}} \Column}  %
\def\newfmtname{LaTeX2e}
\def\chkcompat{%
   \if@compatibility
   \else
     \usepackage{latexsym}
   \fi
}
  \DeclareOldFontCommand{\rm}{\normalfont\rmfamily}{\mathrm}
  \DeclareOldFontCommand{\sf}{\normalfont\sffamily}{\mathsf}
  \DeclareOldFontCommand{\tt}{\normalfont\ttfamily}{\mathtt}
  \DeclareOldFontCommand{\bf}{\normalfont\bfseries}{\mathbf}
  \DeclareOldFontCommand{\it}{\normalfont\itshape}{\mathit}
  \DeclareOldFontCommand{\sl}{\normalfont\slshape}{\@nomath\sl}
  \DeclareOldFontCommand{\sc}{\normalfont\scshape}{\@nomath\sc}
\def\alpha{{\Greekmath 010B}}%
\def\beta{{\Greekmath 010C}}%
\def\gamma{{\Greekmath 010D}}%
\def\delta{{\Greekmath 010E}}%
\def\epsilon{{\Greekmath 010F}}%
\def\zeta{{\Greekmath 0110}}%
\def\eta{{\Greekmath 0111}}%
\def\theta{{\Greekmath 0112}}%
\def\iota{{\Greekmath 0113}}%
\def\kappa{{\Greekmath 0114}}%
\def\lambda{{\Greekmath 0115}}%
\def\mu{{\Greekmath 0116}}%
\def\nu{{\Greekmath 0117}}%
\def\xi{{\Greekmath 0118}}%
\def\pi{{\Greekmath 0119}}%
\def\rho{{\Greekmath 011A}}%
\def\sigma{{\Greekmath 011B}}%
\def\tau{{\Greekmath 011C}}%
\def\upsilon{{\Greekmath 011D}}%
\def\phi{{\Greekmath 011E}}%
\def\chi{{\Greekmath 011F}}%
\def\psi{{\Greekmath 0120}}%
\def\omega{{\Greekmath 0121}}%
\def\varepsilon{{\Greekmath 0122}}%
\def\vartheta{{\Greekmath 0123}}%
\def\varpi{{\Greekmath 0124}}%
\def\varrho{{\Greekmath 0125}}%
\def\varsigma{{\Greekmath 0126}}%
\def\varphi{{\Greekmath 0127}}%
\def\nabla{{\Greekmath 0272}}
\def\FindBoldGroup{%
   {\setbox0=\hbox{$\mathbf{x\global\edef\theboldgroup{\the\mathgroup}}$}}%
}
\def\Greekmath#1#2#3#4{%
    \if@compatibility
        \ifnum\mathgroup=\symbold
           \mathchoice{\mbox{\boldmath$\displaystyle\mathchar"#1#2#3#4$}}%
                      {\mbox{\boldmath$\textstyle\mathchar"#1#2#3#4$}}%
                      {\mbox{\boldmath$\scriptstyle\mathchar"#1#2#3#4$}}%
                      {\mbox{\boldmath$\scriptscriptstyle\mathchar"#1#2#3#4$}}%
        \else
           \mathchar"#1#2#3#4%
        \fi 
    \else 
        \FindBoldGroup
        \ifnum\mathgroup=\theboldgroup 
           \mathchoice{\mbox{\boldmath$\displaystyle\mathchar"#1#2#3#4$}}%
                      {\mbox{\boldmath$\textstyle\mathchar"#1#2#3#4$}}%
                      {\mbox{\boldmath$\scriptstyle\mathchar"#1#2#3#4$}}%
                      {\mbox{\boldmath$\scriptscriptstyle\mathchar"#1#2#3#4$}}%
        \else
           \mathchar"#1#2#3#4%
        \fi     	    
	  \fi}
\newif\ifGreekBold  \GreekBoldfalse
\let\SAVEPBF=\pbf
\def\pbf{\GreekBoldtrue\SAVEPBF}%
  \newcounter{equationnumber}  
  \def\mathletters{%
     \addtocounter{equation}{1}
     \edef\@currentlabel{\theequation}%
     \setcounter{equationnumber}{\c@equation}
     \setcounter{equation}{0}%
     \edef\theequation{\@currentlabel\noexpand\alph{equation}}%
  }
    \def\BibTeX{{\rm B\kern-.05em{\sc i\kern-.025em b}\kern-.08em
                 T\kern-.1667em\lower.7ex\hbox{E}\kern-.125emX}}}{}%
\def\AmS{{\protect\usefont{OMS}{cmsy}{m}{n}%
                A\kern-.1667em\lower.5ex\hbox{M}\kern-.125emS}}}{}%
\let\DOTSI\relax
\def\RIfM@{\relax\ifmmode}%
\def\FN@{\futurelet\next}%
\def\iint{\DOTSI\intno@\tw@\FN@\ints@}%
\def\iiint{\DOTSI\intno@\thr@@\FN@\ints@}%
\def\iiiint{\DOTSI\intno@4 \FN@\ints@}%
\def\idotsint{\DOTSI\intno@\z@\FN@\ints@}%
\def\ints@{\findlimits@\ints@@}%
\newif\iflimtoken@
\newif\iflimits@
\def\findlimits@{\limtoken@true\ifx\next\limits\limits@true
 \else\ifx\next\nolimits\limits@false\else
 \limtoken@false\ifx\ilimits@\nolimits\limits@false\else
 \ifinner\limits@false\else\limits@true\fi\fi\fi\fi}%
\def\multint@{\int\ifnum\intno@=\z@\intdots@                          
 \else\intkern@\fi                                                    
 \ifnum\intno@>\tw@\int\intkern@\fi                                   
 \ifnum\intno@>\thr@@\int\intkern@\fi                                 
 \int}
\def\multintlimits@{\intop\ifnum\intno@=\z@\intdots@\else\intkern@\fi
 \ifnum\intno@>\tw@\intop\intkern@\fi
 \ifnum\intno@>\thr@@\intop\intkern@\fi\intop}%
\def\intic@{%
    \mathchoice{\hskip.5em}{\hskip.4em}{\hskip.4em}{\hskip.4em}}%
\def\negintic@{\mathchoice
 {\hskip-.5em}{\hskip-.4em}{\hskip-.4em}{\hskip-.4em}}%
\def\ints@@{\iflimtoken@                                              
 \def\ints@@@{\iflimits@\negintic@
   \mathop{\intic@\multintlimits@}\limits                             
  \else\multint@\nolimits\fi                                          
  \eat@}
 \else                                                                
 \def\ints@@@{\iflimits@\negintic@
  \mathop{\intic@\multintlimits@}\limits\else
  \multint@\nolimits\fi}\fi\ints@@@}%
\def\intkern@{\mathchoice{\!\!\!}{\!\!}{\!\!}{\!\!}}%
\def\plaincdots@{\mathinner{\cdotp\cdotp\cdotp}}%
\def\intdots@{\mathchoice{\plaincdots@}%
 {{\cdotp}\mkern1.5mu{\cdotp}\mkern1.5mu{\cdotp}}%
 {{\cdotp}\mkern1mu{\cdotp}\mkern1mu{\cdotp}}%
 {{\cdotp}\mkern1mu{\cdotp}\mkern1mu{\cdotp}}}%
\def\RIfM@{\relax\protect\ifmmode}
\def\text{\RIfM@\expandafter\text@\else\expandafter\mbox\fi}
\let\nfss@text\text
\def\text@#1{\mathchoice
   {\textdef@\displaystyle\f@size{#1}}%
   {\textdef@\textstyle\tf@size{\firstchoice@false #1}}%
   {\textdef@\textstyle\sf@size{\firstchoice@false #1}}%
   {\textdef@\textstyle \ssf@size{\firstchoice@false #1}}%
   \glb@settings}
\def\textdef@#1#2#3{\hbox{{%
                    \everymath{#1}%
                    \let\f@size#2\selectfont
                    #3}}}
\newif\iffirstchoice@
\def\Let@{\relax\iffalse{\fi\let\\=\cr\iffalse}\fi}%
\def\vspace@{\def\vspace##1{\crcr\noalign{\vskip##1\relax}}}%
\def\multilimits@{\bgroup\vspace@\Let@
 \baselineskip\fontdimen10 \scriptfont\tw@
 \advance\baselineskip\fontdimen12 \scriptfont\tw@
 \lineskip\thr@@\fontdimen8 \scriptfont\thr@@
 \lineskiplimit\lineskip
 \vbox\bgroup\ialign\bgroup\hfil$\m@th\scriptstyle{##}$\hfil\crcr}%
\def\Sb{_\multilimits@}%
\def\endSb{\crcr\egroup\egroup\egroup}%
\def\Sp{^\multilimits@}%
\newdimen\ex@
\def\rightarrowfill@#1{$#1\m@th\mathord-\mkern-6mu\cleaders
 \hbox{$#1\mkern-2mu\mathord-\mkern-2mu$}\hfill
 \mkern-6mu\mathord\rightarrow$}%
\def\leftarrowfill@#1{$#1\m@th\mathord\leftarrow\mkern-6mu\cleaders
 \hbox{$#1\mkern-2mu\mathord-\mkern-2mu$}\hfill\mkern-6mu\mathord-$}%
\def\leftrightarrowfill@#1{$#1\m@th\mathord\leftarrow
\mkern-6mu\cleaders
 \hbox{$#1\mkern-2mu\mathord-\mkern-2mu$}\hfill
 \mkern-6mu\mathord\rightarrow$}%
\def\overrightarrow{\mathpalette\overrightarrow@}%
\def\overrightarrow@#1#2{\vbox{\ialign{##\crcr\rightarrowfill@#1\crcr
 \noalign{\kern-\ex@\nointerlineskip}$\m@th\hfil#1#2\hfil$\crcr}}}%
\def\overleftarrow{\mathpalette\overleftarrow@}%
\def\overleftarrow@#1#2{\vbox{\ialign{##\crcr\leftarrowfill@#1\crcr
 \noalign{\kern-\ex@\nointerlineskip}$\m@th\hfil#1#2\hfil$\crcr}}}%
\def\overleftrightarrow{\mathpalette\overleftrightarrow@}%
\def\overleftrightarrow@#1#2{\vbox{\ialign{##\crcr
   \leftrightarrowfill@#1\crcr
 \noalign{\kern-\ex@\nointerlineskip}$\m@th\hfil#1#2\hfil$\crcr}}}%
\def\underrightarrow{\mathpalette\underrightarrow@}%
\def\underrightarrow@#1#2{\vtop{\ialign{##\crcr$\m@th\hfil#1#2\hfil
  $\crcr\noalign{\nointerlineskip}\rightarrowfill@#1\crcr}}}%
\def\underleftarrow{\mathpalette\underleftarrow@}%
\def\underleftarrow@#1#2{\vtop{\ialign{##\crcr$\m@th\hfil#1#2\hfil
  $\crcr\noalign{\nointerlineskip}\leftarrowfill@#1\crcr}}}%
\def\underleftrightarrow{\mathpalette\underleftrightarrow@}%
\def\underleftrightarrow@#1#2{\vtop{\ialign{##\crcr$\m@th
  \hfil#1#2\hfil$\crcr
 \noalign{\nointerlineskip}\leftrightarrowfill@#1\crcr}}}%
\def\qopnamewl@#1{\mathop{\operator@font#1}\nlimits@}
\let\nlimits@\displaylimits
\def\setboxz@h{\setbox\z@\hbox}
\def\varlim@#1#2{\mathop{\vtop{\ialign{##\crcr
 \hfil$#1\m@th\operator@font lim$\hfil\crcr
 \noalign{\nointerlineskip}#2#1\crcr
 \noalign{\nointerlineskip\kern-\ex@}\crcr}}}}
 \def\rightarrowfill@#1{\m@th\setboxz@h{$#1-$}\ht\z@\z@
  $#1\copy\z@\mkern-6mu\cleaders
  \hbox{$#1\mkern-2mu\box\z@\mkern-2mu$}\hfill
  \mkern-6mu\mathord\rightarrow$}
\def\leftarrowfill@#1{\m@th\setboxz@h{$#1-$}\ht\z@\z@
  $#1\mathord\leftarrow\mkern-6mu\cleaders
  \hbox{$#1\mkern-2mu\copy\z@\mkern-2mu$}\hfill
  \mkern-6mu\box\z@$}
\def\projlim{\qopnamewl@{proj\,lim}}
\def\injlim{\qopnamewl@{inj\,lim}}
\def\varinjlim{\mathpalette\varlim@\rightarrowfill@}
\def\varprojlim{\mathpalette\varlim@\leftarrowfill@}
\def\varliminf{\mathpalette\varliminf@{}}
\def\varliminf@#1{\mathop{\underline{\vrule\@depth.2\ex@\@width\z@
   \hbox{$#1\m@th\operator@font lim$}}}}
\def\varlimsup{\mathpalette\varlimsup@{}}
\def\varlimsup@#1{\mathop{\overline
  {\hbox{$#1\m@th\operator@font lim$}}}}
\def\align{\@verbatim \frenchspacing\@vobeyspaces \@alignverbatim
You are using the "align" environment in a style in which it is not defined.}
\let\csname endalign*\endcsname =\endtrivlist
\def\alignat{\@verbatim \frenchspacing\@vobeyspaces \@alignatverbatim
You are using the "alignat" environment in a style in which it is not defined.}
\let\csname endalignat*\endcsname =\endtrivlist
\def\xalignat{\@verbatim \frenchspacing\@vobeyspaces \@xalignatverbatim
You are using the "xalignat" environment in a style in which it is not defined.}
\let\csname endxalignat*\endcsname =\endtrivlist
\def\gather{\@verbatim \frenchspacing\@vobeyspaces \@gatherverbatim
You are using the "gather" environment in a style in which it is not defined.}
\let\csname endgather*\endcsname =\endtrivlist
\def\multiline{\@verbatim \frenchspacing\@vobeyspaces \@multilineverbatim
You are using the "multiline" environment in a style in which it is not defined.}
\let\csname endmultiline*\endcsname =\endtrivlist
\def\arrax{\@verbatim \frenchspacing\@vobeyspaces \@arraxverbatim
You are using a type of "array" construct that is only allowed in AmS-LaTeX.}
\def\tabulax{\@verbatim \frenchspacing\@vobeyspaces \@tabulaxverbatim
You are using a type of "tabular" construct that is only allowed in AmS-LaTeX.}
\let\csname endarrax*\endcsname =\endtrivlist
\let\csname endtabulax*\endcsname =\endtrivlist
\def\@@eqncr{\let\@tempa\relax
    \ifcase\@eqcnt \def\@tempa{& & &}\or \def\@tempa{& &}%
      \else \def\@tempa{&}\fi
     \@tempa
     \if@eqnsw
        \iftag@
           \@taggnum
        \else
           \@eqnnum\stepcounter{equation}%
        \fi
     \fi
     \global\tag@false
     \global\@eqnswtrue
     \global\@eqcnt\z@\cr}
 \def\endequation{%
     \ifmmode\ifinner 
      \iftag@
        \addtocounter{equation}{-1} 
        $\hfil
           \displaywidth\linewidth\@taggnum\egroup \endtrivlist
        \global\tag@false
        \global\@ignoretrue   
      \else
        $\hfil
           \displaywidth\linewidth\@eqnnum\egroup \endtrivlist
        \global\tag@false
        \global\@ignoretrue 
      \fi
     \else   
      \iftag@
        \addtocounter{equation}{-1} 
        \eqno \hbox{\@taggnum}
        \global\tag@false%
        $$\global\@ignoretrue
      \else
        \eqno \hbox{\@eqnnum}
        $$\global\@ignoretrue
      \fi
     \fi\fi
 } 
 \newif\iftag@ \tag@false
 \def\tag{\@ifnextchar*{\@tagstar}{\@tag}}
 \def\@tag#1{%
     \global\tag@true
     \global\def\@taggnum{(#1)}}
 \def\@tagstar*#1{%
     \global\tag@true
     \global\def\@taggnum{#1}%
}
\begin{document}
\title[Reaction-diffusion equations]{Long-term behavior of
reaction-diffusion equations with nonlocal boundary conditions on rough
domains}
\author{Ciprian G. Gal}
\address{C. G. Gal, Department of Mathematics, Florida International
University, Miami, 33199 (USA)}
\email{cgal@fiu.edu}
\author{Mahamadi Warma}
\address{M.~Warma, University of Puerto Rico, Faculty of Natural Sciences,
Department of Mathematics (Rio Piedras Campus), PO Box 70377 San Juan PR
00936-8377 (USA)}
\email{mjwarma@gmail.com, mahamadi.warma1@upr.edu}

\begin{abstract}
We investigate the long term behavior in terms of finite dimensional global
and exponential attractors, as time goes to infinity, of solutions to a
semilinear reaction-diffusion equation on non-smooth domains subject to
nonlocal Robin boundary conditions, characterized by the presence of
fractional diffusion on the boundary. Our results are of general character
and apply to a large class of irregular domains, including domains whose
boundary is H\"{o}lder continuous and domains which have fractal-like
geometry. In addition to recovering most of the existing results on
existence, regularity, uniqueness, stability, attractor existence, and
dimension, for the well-known reaction-diffusion equation in smooth domains,
the framework we develop also makes possible a number of new results for all
diffusion models in other non-smooth settings.
\end{abstract}

\maketitle
\tableofcontents

\section{Introduction}

\label{intro}

The mathematical theory for global existence and regularity of solutions to
the (scalar) reaction-diffusion equation is considered a central problem in
understanding models of (non-)degenerate reaction-diffusion systems for a
variety of applied problems, especially in chemistry and biology. It is also
essential, for practical applications, to be able to understand, and even
predict, the long time behavior of the solutions of such systems. It is
well-known that the asymptotic behavior of solutions to (scalar)
reaction-diffusion equations can be well described by invariant attracting
sets, and, in particular, by a finite-dimensional global attractor, such
that, the dynamics of these equations, when restricted to these sets, is
effectively described by a finite number of parameters (see, e.g., the
monographs \cite{BV,CV,R,T}).

Analytical results for most PDEs in the literature nowadays revolve around
the most commonly found assumption that the underlying physical space $%
\Omega \subset {\mathbb{R}}^{N}$ ($N\geq 2$) is smooth enough, and that at
best, the boundary of $\Omega $, $\partial \Omega $ is of \emph{Lipschitz}
class. But this is barely non-smooth, since a Lipschitz boundary has a
tangent plane almost everywhere. On the other hand, not much seems to be
known about partial differential equations (except for some scarce results
which we will describe below) and their long-time behavior in general, when
the physical domain $\Omega $ is actually "rough". This is the case, for
instance, of domains whose boundary has either a fractal-like geometry or
domains with cusps which are also frequently used in the applications.
Indeed, it cannot be expected that objects in the real-world, be they are
clouds, trees, snowflakes, blood vessels, etc., will possess the structure
of smooth manifolds \cite{Ma}. One of the main technical difficulties
nowadays of dealing with "bad" domains is the scarcity of Sobolev embedding
theorems and interpolation results in this general context. In fact, for a
general non-smooth domain the usual Sobolev embedding and density theorems
do not hold \cite{MP} (cf. also Section 2).

Our main goal in this paper is to develop well-posedness and long-time
dynamics results for reaction-diffusion equations on domains $\Omega $ with
"rough" boundaries, and then subsequently recover the existing results of
this type for the same models that have been previously obtained in the case
of domains with smooth boundary $\partial \Omega $. Along these lines, we
first establish a number of results for scalar reaction-diffusion equations,
including results on existence, regularity, uniqueness of weak and strong
solutions, existence and finite dimensionality of global and exponential
attractors, and existence of Lyapunov functions. To be more precise, we
shall be concerned with diffusion processes in "rough" domains $\Omega $,
described by the equation%
\begin{equation}
\partial _{t}u-\Delta u+f\left( u\right) =0\text{ in }\Omega \times \left(
0,\infty \right) ,  \label{1.1}
\end{equation}%
subject to the following nonlocal Robin boundary condition%
\begin{equation}
\partial _{\nu }ud\sigma +(u+\Theta _{\mu }\left( u\right) )d\mu =0\text{ on
}\partial \Omega \times \left( 0,\infty \right) ,  \label{1.2}
\end{equation}%
and the initial condition
\begin{equation}
u\left( 0\right) =u_{0}\text{ in }\Omega .  \label{1.3}
\end{equation}%
In Eqn. (\ref{1.1}), $f=f\left( u\right) $ plays the role of nonlinear
source, not necessarily monotone, and $\Theta _{\mu }\left( u\right) $ is a
certain nonlocal operator characterizing the presence of "fractional"
diffusion along $\partial \Omega $ (see Eq. \eqref{nonlocal-op} below). The
normal derivative $\partial _{\nu }u$ is understood in the sense of (\ref{NE}%
) specified below, $\sigma $ denotes the restriction to $\partial \Omega $
of the $(N-1)$-dimensional Hausdorff measure $\mathcal{H}^{N-1}$, $\mu $ is
an appropriate positive regular Borel measure on $\partial \Omega $. In
fact, \emph{the regularity assumptions we will impose on} $\partial \Omega $
\emph{enter through the measure} $\mu $ in (\ref{1.2}). We will make this
more precise in Section \ref{prelim}. Since for a "rough" domain $\Omega $
the boundary $\partial \Omega $ may be so irregular that no normal vector
can be defined, we will use the following generalized version of a normal
derivative in the weak sense introduced in \cite{BW2}. Let $\mu $ be again a
Borel measure on $\partial \Omega $ and let $F:\Omega \rightarrow {\mathbb{R}%
}^{N}$ be a measurable function. If there exists a function $f\in
L_{loc}^{1}({\mathbb{R}}^{N})$ such that
\begin{equation}
\int_{\Omega }F\cdot \nabla \varphi dx=\int_{\Omega }f\varphi
dx+\int_{\partial \Omega }\varphi d\mu  \label{NE}
\end{equation}%
for all $\varphi \in C^{1}(\overline{\Omega })$, then we say that $\mu $ is
the \emph{normal measure} of $F$ which we denoted by $N^{\star }(F):=\mu $.
If $N^{\star }(F)$ exists, then it is unique and $dN^{\star }(\psi F)=\psi
dN^{\star }(F)$ for all $\psi \in C^{1}(\overline{\Omega })$. If $u\in
W_{loc}^{1,1}(\Omega )$ and $N^{\star }(\nabla u)$ exists, then we will
denote by $N(u):=N^{\star }(\nabla u)$ the generalized normal measure of $%
\nabla u$. The derivative $dN(u)/d\sigma $ that we denote by $\partial _{\nu
}u$ will be called the generalized normal derivative of $u$. To justify this
definition, consider the special case of a bounded domain $\Omega \subset {%
\mathbb{R}}^{N}$ whose boundary is Lipschitz continuous, $\nu $ the outer
normal to $\partial \Omega $ and let $\sigma $ be the (natural) surface
measure on $\partial \Omega $ (in this case, $\sigma $ also coincides with $%
\mathcal{H}_{\mid \partial \Omega }^{N-1}$). If $u\in C^{1}(\overline{\Omega
})$ is such that there are $f\in L_{loc}^{1}({\mathbb{R}}^{N})$ and $g\in
L^{1}(\partial \Omega ,\sigma )$ with
\begin{equation*}
\int_{\Omega }\nabla u\cdot \nabla \varphi dx=\int_{\Omega }f\varphi
dx+\int_{\partial \Omega }g\varphi d\sigma
\end{equation*}%
for all $\varphi \in C^{1}(\overline{\Omega })$, then $dN(u)=gd\sigma $ with
$g=\partial _{\nu }u=\partial u/\partial \nu $. Throughout the following,
without any mention, for a bounded arbitrary domain $\Omega ,$ we will
always mean the identity (\ref{NE}) for the generalized outer normal
derivative of $u$.

The interest in analysis and modelling of diffusion processes in bounded
domains whose (part of the)\ boundary possess a fractal geometry arises from
mathematical physics, and dates back to the early 1980's. The first
analytical results aimed at understanding transmission problems, which, in
electrostatics and magnetostatics, describe heat transfer through a
fractal-like interface (such as, the snowflake), can be found in \cite{La1,
La2, La3, La4}. The type of parabolic problems we consider also occur in the
field of the so-called \textquotedblleft hydraulic
fracturing\textquotedblright , a frequently used engineering method to
increase the flow of oil from a reservoir into a producing oil well (see
\cite{Ca}; cf. also \cite{HP} for a related application). Further examples
are also provided in the book of Dautray and Lions \cite{DL}. In all these
applications, the mathematical model is usually a linear parabolic boundary
value problem involving a transmission condition on a fractal-like interface
(layer) which is often a Robin boundary condition. The reaction-diffusion
equation (\ref{1.1}) on unbounded fractal domains has also been considered
in \cite{Fa}, and in \cite{Hu}, for bounded fractal domains for which the
usual Sobolev-type inequalities hold and for which (\ref{1.1}) is equipped
with homogeneous Dirichlet boundary conditions on $\partial \Omega $. The
latter contributions devote their attention mainly to some existence results
for some special cases of nonlinearities. The motivation to consider (\ref%
{1.1})-(\ref{1.3}) is also inspired by a wider and challenging problems
aimed at simulating the diffusion of e.g. medical sprays in the bronchial
tree \cite{MS1, MS2}. In this case, the geometry of the underlying physical
domain can be simulated by some classes of self-similar ramified domains
with a fractal boundary. Oxygen diffusion between the lungs and the
circulatory system takes place only in the last generations of the lung
tree, so that a reasonable diffusion model may need to involve nonlocal
Robin boundary conditions (\ref{1.2}) on the top boundary (the smallest
structures), see Section \ref{prelim} (cf. also \cite{AT, ATbis, ATtris}).

It would be extremely useful if one could give a unified analysis of the
reaction-diffusion problem (\ref{1.1})-(\ref{1.3}) for a large class of
rough domains, including the specified families of "fractal" domains and/or
domains with cusps, using only a minimal number of regularity properties for
$\Omega $, and then use these assumptions about the specific form of the
domain, leading to specific models, to derive sharp results about existence,
regularity and stability of solutions. Then, it is also essential, for
further practical applications, to show the existence of the global
attractor for our general model, and then to determine whether the dynamics
restricted to this global attractor is finite-dimensional or not. Among the
first important contributions made to understand the linear problem
associated with Eqns. (\ref{1.1})-(\ref{1.2}), in general bounded open sets $%
\Omega $ with no \emph{essential} regularity assumptions on $\partial \Omega
,$ can be found in \cite{W3} (cf. also \cite{VW} for related results). In
particular, in \cite{W3} it is shown that the unique solution of the linear
problem is given in terms of a strongly continuous (linear) semigroup of
contraction operators on $L^{2}\left( \Omega \right) ,$ that is order
preserving, nonexpansive on $L^{\infty }\left( \Omega \right) $, and
ultracontractive (see Section 2; cf. also \cite[Sections 3-5]{W3}). The
first tool used to derive this result is the validity of the following
inequality, for arbitrary open sets $\Omega $ with finite measure,%
\begin{equation}
\Vert u\Vert _{L^{\frac{2N}{N-1}}\left( \Omega \right) }\leq C_{\Omega
}\left( \Vert \nabla u\Vert _{L^{2}\left( \Omega \right) }+\Vert u\Vert
_{L^{2}(\partial \Omega ,\sigma )}\right) ,  \label{MaI}
\end{equation}%
which holds for any $u\in W^{1,2}(\Omega )\cap C_{c}(\overline{\Omega })$,
for some $C_{\Omega }=C(N,\Omega )>0$. This crucial inequality is due to
Maz'ya \cite{MP}. We recall that for Lipschitz domains, the optimal exponent
on the left-hand side of (\ref{MaI}) is $2N/\left( N-2\right) ,$ while for
\emph{arbitrary} open domains $\Omega ,$ the best optimal exponent is $%
2N/\left( N-1\right) $, see Section 2.2. The second tool is the notion of
relative capacity\ with respect to $\Omega $ \cite{AW1, AW2}, which is a
fundamental tool both in classical analysis and potential theory. Its most
common property is that it measures small sets more precisely than the usual
Lebesgue measure. With both these tools at disposal, the local Robin
boundary condition (that is, $\Theta _{\mu }\equiv 0$ in Eqn. (\ref{1.2}))
for the linear heat equation has been investigated in \cite{BW1, BW2} (and
the references therein) under the \emph{stronger} restriction that $\Omega $
possesses the extension property of Sobolev functions. The (linear) elliptic
system associated with equations (\ref{1.1})-(\ref{1.2}) has also been
considered in \cite{AW1, AW2, D1, D2, DD}, also without any essential
regularity assumptions on $\Omega $. These latter references are mainly
concerned with the existence of weak solutions for these elliptic systems
and several apriori estimates.

In addition to deriving well-posedness and regularity results for our
nonlinear model, perhaps the study of the asymptotic behavior is equally as
important as it is essential to be able to understand, and even predict, the
long time behavior of the solutions of our system. One object well-suited to
describe the long-time behavior is the global attractor. Assuming $f\in C_{%
\text{loc}}^{1}\left( \mathbb{R}\right) $ is a function which satisfies, for
appropriate positive constants,%
\begin{equation}
C_{f}\left\vert s\right\vert ^{p}-c_{f}\leq f\left( s\right) s\leq
\widetilde{C}_{f}\left\vert s\right\vert ^{p}+\widetilde{c}_{f},\text{ }p>1
\label{star1}
\end{equation}%
and%
\begin{equation}
f^{^{\prime }}\left( s\right) \geq -C_{f},\text{ for all }s\in \mathbb{R}%
\text{,}  \label{star2}
\end{equation}%
resembling the same classical assumptions (see, e.g., \cite{CV, R}), we
prove in Section 3.2 that (\ref{1.1})-(\ref{1.3}) generates a nonlinear
continuous semigroup $\left\{ \mathcal{S}\left( t\right) \right\} _{t\geq 0}$
acting on the phase space $L^{2}\left( \Omega \right) $ (see Corollary \ref%
{dyn_system}). This semigroup possesses the global attractor $\mathcal{G}%
_{\Theta ,\mu }$, which is compact in $L^{2}\left( \Omega \right) $, bounded
in $L^{\infty }\left( \Omega \right) $ and has finite fractal dimension (see
Corollary \ref{gl_fin}). In addition, we show that every unique orbit $%
u\left( t\right) =\mathcal{S}\left( t\right) u_{0},$ $u_{0}\in L^{2}\left(
\Omega \right) $, of the parabolic problem (\ref{1.1})-(\ref{1.3})
"instantaneously" exhibits an improved regularity both in space and time.
More precisely, every such weak solution becomes a strong solution (see
Section 3.3); hence, $\mathcal{G}_{\Theta ,\mu }$ contains only strong
solutions. In fact, the aim of the whole Section 3.1 is to verify the
existence of (unique) strong solutions which are smooth enough (even with
more general assumptions than (\ref{star1})-(\ref{star2})). One of the
advantages of this result is that every weak solution can be approximated by
the strong ones and the rigorous justification of the regularity results for
such solutions becomes immediate. In particular, exploiting Maz'ya
inequality (\ref{MaI}) we extend the scheme developed initially by Alikakos
\cite{Ali} for reaction-diffusion equations, to derive that a particular
smoothing property holds for all weak solutions of (\ref{1.1})-(\ref{1.3})
with a function $f$ satisfying (\ref{star1})-(\ref{star2}). Here we
emphasize again that the original proofs due to Alikakos \cite{Ali} rely
heavily on the application of the Gagliardo-Nirenberg-Sobolev inequality
which unfortunately is no longer valid in our context. Besides, in our
iteration scheme we need to get good control of certain constants in such a
way that they do not depend on (the bad behavior of) $\partial \Omega $.
Furthermore, a key point of this analysis is to employ an appropriate fixed
point argument, coupled together with a hidden regularity theorem (Appendix,
Theorem \ref{ap_reg_thm}) for a non-autonomous equation governed by an
accretive operator, to deduce smooth strong solutions which are
differentiable almost everywhere on $\left( 0,\infty \right) $. This
regularity is crucial to show, for instance, that problem (\ref{1.1})-(\ref%
{1.3}) has a Lyapunov function on the global attractor $\mathcal{G}_{\Theta
,\mu }$ (see Lemma \ref{L2} and Theorem \ref{Lf}). Clearly, $\mathcal{G}%
_{\Theta ,\mu }$ depends on the choice of boundary conditions (\ref{1.2})
and the measure $\mu $ on $\partial \Omega $ (see, for instance, Theorems
(A) and (B) below). At this point one could argue that the long-time
behavior of system (\ref{1.1})-(\ref{1.3}) is properly described by the
global attractor. However, it is well-known that the global attractor can
present several drawbacks, among which we can mention that it may only
attract the trajectories at a slow rate, and that the rate of attraction is
very difficult, if not impossible, to express in terms of the physical
observable quantities (see, e.g., \cite{MZrev}). Furthermore, in many
situations, the global attractor may not be even observable in experiments
or in numerical simulations. This can be seen, for instance, for the
one-dimensional Chaffee-Infante equation%
\begin{equation*}
\partial _{t}u-d\partial _{x}^{2}u+f\left( u\right) =0
\end{equation*}%
on the interval $(0,1)$, with cubic nonlinearity $f\left( s\right) =s^{3}-s,$
and non-homogeneous Dirichlet boundary conditions (i.e., $u\left( 0,t\right)
=u\left( 1,t\right) =-1$, $t>0$), in which case every trajectory is
exponentially attracted to the "single point" attractor $\left\{ -1\right\} $%
. On the other hand, this problem possesses many interesting metastable
\textquotedblleft almost stationary\textquotedblright\ equilibria which live
up to a time $t_{\ast }\sim e^{d^{-1/2}}$ and, thus, for $d>0$ small, one
will not see the global attractor in numerical simulations. This is known to
happen, for instance, for some models of one-dimensional Burgers equations
and models of pattern formation in chemotaxis (see \cite{MZrev}, for further
references). Henceforth, in some situations, the global attractor may fail
to capture important transient behaviors. Besides, in the general setting of
arbitrary open domains, this feature can be further amplified for our system
due to the boundary condition (\ref{1.2}) and the "rough" nature of the
domain $\Omega $ near $\partial \Omega $. It is thus also important to
construct larger objects which contain the global attractor, attract the
trajectories at a fast (typically, exponential) rate which can be computed
explicitly in terms of the physical parameters, and are still finite
dimensional. A natural object is the exponential attractor (see, e.g., \cite[%
Section 3]{MZrev}; cf. also below). In Section \ref{gl}, we prove the
existence of such an exponential attractor not only for the dynamical
systems generated by the weak solutions (see Theorem \ref{expo}) but also by
the strong solutions, which require essentially no growth assumptions on the
nonlinearity as $\left\vert s\right\vert \rightarrow \infty $ (see Theorem %
\ref{expo2}). Roughly speaking, the assumption on $f$ reads:%
\begin{equation}
\liminf_{\left\vert s\right\vert \rightarrow +\infty }\frac{f\left( s\right)
}{s}>-\lambda _{\ast },  \label{star3}
\end{equation}%
for some $\lambda _{\ast }\in \lbrack 0,C_{\Omega })$, where $C_{\Omega
}=C\left( N,\Omega \right) >0$ is the best Sobolev/Poincar\'{e} constant
into (\ref{MaI}). The latter result seems to be new for (\ref{1.1})-(\ref%
{1.3}) even when $\Omega $ is a smooth domain. We refer the reader to
Section \ref{wel} for the precise assumptions, related results and
generalizations.

We emphasize that the measure $\mu $\ and/or boundary regularity assumptions
we employ are of general character, and as a result do not require any
specific form of the domain $\Omega $; this abstraction allows (\ref{1.1})-(%
\ref{1.3}) to recover all of the existing diffusion models that have been
previously studied in smooth bounded domains (including Lipschitz domains),
as well as to represent a much larger family of models for (\ref{1.1})-(\ref%
{1.3}) that have not been explicitly studied in detail. As a result, the
system in (\ref{1.1})-(\ref{1.3}) includes reaction-diffusion models, on
domains $\Omega $ which possess the $W^{1,2}$-extension property of Sobolev
functions, and non-Lipschitz domains whose boundary is only H\"{o}lder
continuous, as special cases, and on many arbitrary open bounded domains $%
\Omega $ (satisfying, for instance, (\ref{MaI})) not previously identified.
In Section \ref{sum}, we discuss how the unified analysis presented here can
be used to establish the same results for other important classes of partial
differential equations, such as, reaction-diffusion systems for a
vector-valued function $\overrightarrow{u}=\left( u_{1},...,u_{k}\right) .$

The following theorems can be treated as \emph{special} cases of our
results. They apply for instance to domains $\Omega $\ with a "fractal"
boundary $\partial \Omega .$

\begin{theorem*}[A]
\label{ThA}Assume that $\Omega $ has the $W^{1,2}$-extension property of
Sobolev functions and $f$ satisfies (\ref{star1})-(\ref{star2}). Let $\mu $
be the restriction to $\partial \Omega $ of the $d$-dimensional Hausdorff
measure $\mathcal{H}^{d},$ for any $d\in \left( N-2,N\right) $. Then, the
dynamical system $\mathcal{S}\left( t\right) :L^{2}\left( \Omega \right)
\rightarrow L^{2}\left( \Omega \right) $, $u_{0}\mapsto u\left( t\right) =%
\mathcal{S}\left( t\right) u_{0},$ associated with weak solutions for the
parabolic problem (\ref{1.1})-(\ref{1.3}) is gradient-like and possesses the
global attractor $\mathcal{G}_{\Theta ,\mu }$ of finite fractal dimension.
Moreover, the semigroup $\mathcal{S}\left( t\right) $ also has an
exponential attractor $\mathcal{E}_{\Theta ,\mu }.$
\end{theorem*}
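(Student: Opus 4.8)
The plan is to obtain Theorem~(A) as a direct specialization of the general construction of Sections~\ref{wel}--\ref{gl}: once we know that the present $\Omega$ and $\mu$ meet the structural hypotheses on which that machinery rests, Corollary~\ref{dyn_system} produces the continuous semigroup $\{\mathcal{S}(t)\}_{t\geq0}$ on $L^{2}(\Omega)$, Corollary~\ref{gl_fin} gives the global attractor $\mathcal{G}_{\Theta,\mu}$ with finite fractal dimension, Lemma~\ref{L2} together with Theorem~\ref{Lf} provides the strict Lyapunov function (hence the gradient-like structure), and Theorem~\ref{expo} yields the exponential attractor $\mathcal{E}_{\Theta,\mu}$. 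Thus the whole content of the proof is the verification of two facts: (i) $\Omega$ satisfies the Maz'ya-type inequality (\ref{MaI}) (in fact an improved version of it), $C_{c}(\overline{\Omega})\cap W^{1,2}(\Omega)$ is dense in $W^{1,2}(\Omega)$, and $W^{1,2}(\Omega)\hookrightarrow L^{2}(\Omega)$ is compact; and (ii) $\mu$ (the restriction of $\mathcal{H}^{d}$ to $\pOm$) belongs to the admissible class of boundary measures of Section~\ref{prelim}, i.e.\ it is a finite positive regular Borel measure on $\pOm$, it does not charge sets of zero relative capacity $\Capr$, and the trace map $W^{1,2}(\Omega)\to L^{2}(\pOm,\mu)$ is bounded and compact.

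For (i): since $\Omega$ is bounded and has the $W^{1,2}$-extension property, fix a bounded linear extension operator $E\colon W^{1,2}(\Omega)\to W^{1,2}(\RR^{N})$. Composing $E$ with the classical Sobolev embedding $W^{1,2}(\RR^{N})\hookrightarrow L^{2N/(N-2)}(\RR^{N})$ and restricting to $\Omega$ gives $W^{1,2}(\Omega)\hookrightarrow L^{2N/(N-2)}(\Omega)$, which is sharper than (\ref{MaI}); likewise $W^{1,2}(\Omega)\hookrightarrow L^{2}(\Omega)$ is compact by Rellich--Kondrachov applied through $E$, and $C(\overline{\Omega})\cap W^{1,2}(\Omega)$ is dense in $W^{1,2}(\Omega)$, so the form domain coincides with $W^{1,2}(\Omega)$ and the best constant $C_{\Omega}=C(N,\Omega)$ in (\ref{MaI}) is fixed. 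Since (\ref{star1})--(\ref{star2}) force $\liminf_{|s|\to\infty}f(s)/s\geq 0$, all the quantitative dissipativity/coercivity hypotheses of Sections~\ref{wel}--\ref{gl} (in particular (\ref{star3}) with $\lambda_{\ast}$ any positive number below $C_{\Omega}$) hold automatically.

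For (ii): using the same extension $E$, the trace problem reduces to restricting functions of $W^{1,2}(\RR^{N})$ to the set $\pOm$ endowed with $\mathcal{H}^{d}$. By Jonsson--Wallin trace theory for $d$-sets, in the range $N-2<d<N$ this restriction factors through the Besov space $B^{\beta}_{2,2}(\pOm)$ with fractional order $\beta=1-\tfrac{N-d}{2}\in(0,1)$, and $B^{\beta}_{2,2}(\pOm)\hookrightarrow L^{2}(\pOm,\mathcal{H}^{d})$; hence $W^{1,2}(\Omega)\to L^{2}(\pOm,\mu)$ is bounded, and the strictly positive smoothness gain $\beta>0$ (combined with the compactness of $W^{1,2}(\Omega)\hookrightarrow W^{1-\varepsilon,2}(\Omega)$ for small $\varepsilon>0$) makes it compact. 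The finiteness $\mathcal{H}^{d}(\pOm)<\infty$ is part of the standing hypotheses (for a ``fractal'' boundary one takes $d=\dim_{H}\pOm$ and $\pOm$ a compact $d$-set, for which it is automatic), and since $d>N-2$ the measure $\mathcal{H}^{d}$ vanishes on sets of zero $W^{1,2}$-capacity, hence on sets of zero $\Capr$; therefore $\mu\ll\Capr$ and the bilinear form $a(u,v)=\int_{\Omega}\nabla u\cdot\nabla v\,dx+\int_{\pOm}uv\,d\mu$ is closed on $W^{1,2}(\Omega)$, so $\mu$ is admissible.

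The step I expect to be the main obstacle is precisely this harmonic-analysis input in (ii): establishing boundedness and, above all, \emph{compactness} of the trace operator $W^{1,2}(\Omega)\to L^{2}(\pOm,\mu)$ for $d$-sets in the range $d\in(N-2,N)$, with the embedding constant controlled solely through $N$ and $\Omega$, together with the verification $\mu\ll\Capr$; by contrast, once (i)--(ii) are in hand, the dynamical conclusions --- existence of the semigroup, the gradient-like structure via the Lyapunov function, finite fractal dimensionality of $\mathcal{G}_{\Theta,\mu}$, and the exponential attractor $\mathcal{E}_{\Theta,\mu}$ --- follow verbatim from Corollary~\ref{dyn_system}, Theorem~\ref{Lf}, Corollary~\ref{gl_fin} and Theorem~\ref{expo}, because under (\ref{star1})--(\ref{star2}) the smoothing estimates, dissipative estimates and energy identities developed in the general theory apply with no change.
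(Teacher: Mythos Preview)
Your overall plan is correct and matches the paper's: Theorem~(A) is obtained by checking that the standing hypothesis (H$_\mu$) and the embedding \eqref{CI-mu} hold in this situation, after which Corollary~\ref{dyn_system}, Theorem~\ref{Lf}, Corollary~\ref{gl_fin} and Theorem~\ref{expo} apply verbatim. However, you have substantially over-engineered step~(ii) and misidentified the ``main obstacle''.

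The paper's framework never requires a bounded or compact trace operator $W^{1,2}(\Omega)\to L^{2}(\partial\Omega,\mu)$, nor any Jonsson--Wallin/Besov machinery. The space $\mathcal{V}_{\Theta}^{1,2}(\Omega,\partial\Omega,\mu)$ is defined as an abstract completion (see \eqref{sp10}--\eqref{sp11}), and what is needed is only: (a) $\Gamma_{\mu}$ is $\func{Cap}_{2,\Omega}$-admissible with respect to $\mu$, so that the form $\mathcal{A}_{\Theta,\mu}$ is closed (Theorem~\ref{lower-semi}); and (b) the embedding \eqref{CI-mu}. In the present setting the paper dispatches both in one line each: (a) is quoted from \cite[Lemma~4.7]{W3} (the unnumbered theorem following Theorem~\ref{lower-semi}: for $W^{1,2}$-extension domains, $\partial\Omega$ is $\func{Cap}_{2,\Omega}$-admissible with respect to $\mathcal{H}^{d}_{|\partial\Omega}$ for every $d\in(N-2,N)$), and (b) follows from $\mathcal{V}_{\Theta}^{1,2}(\Omega,\partial\Omega,\mu)\hookrightarrow W^{1,2}(\Omega)\hookrightarrow L^{2N/(N-2)}(\Omega)$ via the extension operator, exactly as in your part~(i). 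Compactness of the resolvent (and hence of the semigroup) then comes from Theorem~\ref{3prop}(b) via the compact embedding $\mathcal{V}_{\Theta}^{1,2}\hookrightarrow L^{2}(\Omega)$, not from any trace compactness.

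Two further remarks. First, your invocation of Jonsson--Wallin presupposes that $\partial\Omega$ is a $d$-set, which is \emph{not} assumed in Theorem~(A); the admissibility result from \cite{W3} needs only the extension property and the range $d\in(N-2,N)$. Second, you correctly note that $\mathcal{H}^{d}$ vanishes on sets of zero $W^{1,2}$-capacity when $d>N-2$; this is precisely the content of admissibility, and is all the paper uses. The nontriviality clause in (H$_\mu$) (either $\func{Cap}_{2,\Omega}(\Gamma_{\mu}^{\infty})>0$ or $\mu(\Gamma_{\mu})>0$) is an implicit nondegeneracy assumption on the choice of $d$ relative to $\partial\Omega$, consistent with Remark~\ref{3rem}(1).
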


\begin{theorem*}[B]
\label{ThB}Assume that $\Omega $ has the $W^{1,2}$-extension property of
Sobolev functions and $f$ obeys (\ref{star3}). Let $\mu $ be the restriction
to $\partial \Omega $ of the $d$-dimensional Hausdorff measure $\mathcal{H}%
^{d},$ for any $d\in \left( N-2,N\right) $. Then, the dynamical system $%
\mathcal{T}\left( t\right) :L^{\infty }\left( \Omega \right) \rightarrow
L^{\infty }\left( \Omega \right) $, $u_{0}\mapsto u\left( t\right) =\mathcal{%
T}\left( t\right) u_{0},$ associated with "strong" solutions of the
parabolic problem (\ref{1.1})-(\ref{1.3}) has an exponential attractor $%
\mathcal{Y}_{\Theta ,\mu }$ (hence, also a global attractor of finite
fractal dimension).
\end{theorem*}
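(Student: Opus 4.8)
The plan is to obtain Theorem~B as the specialization, to the case $\mu=\mathcal{H}^{d}_{\mid\partial\Omega}$ with $d\in(N-2,N)$ on a $W^{1,2}$-extension domain, of the general exponential-attractor theorem for strong solutions, Theorem~\ref{expo2}, combined with the strong-solution existence/uniqueness/regularity theory developed in Sections~3.1 and~3.3. Thus essentially all the work lies in checking that this particular pair $(\Omega,\mu)$ satisfies the structural hypotheses under which those results are established, and that assumption (\ref{star3}) here is exactly the dissipativity condition required there; the abstract machinery then does the rest.

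First I would unpack the $W^{1,2}$-extension property: it yields the Sobolev embedding $W^{1,2}(\Omega)\hookrightarrow L^{2N/(N-2)}(\Omega)$ (in particular Maz'ya's inequality (\ref{MaI}) with a constant $C_\Omega=C(N,\Omega)>0$, cf.\ \cite{MP}) and the compact embedding $W^{1,2}(\Omega)\hookrightarrow\hookrightarrow L^2(\Omega)$. Next, appealing to the trace theory for $d$-dimensional Hausdorff measures on boundaries of extension domains (see \cite{AW1,AW2,BW1,BW2} and Section~2), I would verify that $\mathcal{H}^{d}_{\mid\partial\Omega}$ with $N-2<d<N$ is an \emph{admissible} boundary measure: a regular Borel measure on $\partial\Omega$ with $\partial\Omega$ of finite $\mathcal{H}^d$-measure, and for which the trace operator $W^{1,2}(\Omega)\to L^{2}(\partial\Omega,\mathcal{H}^{d})$ is well defined, bounded, and \emph{compact}. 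Granting this, the nonlocal Robin form attached to $-\Delta$, $\mu$ and $\Theta_\mu$ is densely defined, closed and bounded below; its generator $A_{\Theta,\mu}$ on $L^2(\Omega)$ has compact resolvent; and the associated semigroup is sub-Markovian and ultracontractive, exactly as in \cite{W3} and Section~2. I would then match constants: the bottom of the spectrum of the linear part is controlled from below by $C_\Omega$ through (\ref{MaI}), so that (\ref{star3}) with $\lambda_\ast\in[0,C_\Omega)$ is precisely the requirement that $f(s)+\lambda s$ be eventually nonnegative for some admissible $\lambda$ below that threshold, which is the only hypothesis on $f$ in Theorem~\ref{expo2}.

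With these checks in place, Sections~3.1 and~3.3 provide, for every $u_0\in L^\infty(\Omega)$, a unique global strong solution of (\ref{1.1})--(\ref{1.3}) which is bounded in $L^{\infty}(\Omega)$ (via the Alikakos-type iteration of \cite{Ali} adapted through (\ref{MaI})) and exhibits instantaneous smoothing; this defines the continuous semigroup $\mathcal{T}(t):L^{\infty}(\Omega)\to L^{\infty}(\Omega)$. Applying Theorem~\ref{expo2} to $\mathcal{T}(t)$ yields the exponential attractor $\mathcal{Y}_{\Theta,\mu}$. The parenthetical assertion is then the standard consequence of the definition of an exponential attractor: $\mathcal{Y}_{\Theta,\mu}$ is compact, positively invariant, exponentially attracting and of finite fractal dimension, hence it contains the $\omega$-limit set of any bounded absorbing set; therefore $\mathcal{T}(t)$ possesses the global attractor $\mathcal{G}_{\Theta,\mu}\subset\mathcal{Y}_{\Theta,\mu}$ with $\dim_f\mathcal{G}_{\Theta,\mu}\le\dim_f\mathcal{Y}_{\Theta,\mu}<\infty$.

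I expect the main obstacle to be the second step: proving that $\mathcal{H}^{d}_{\mid\partial\Omega}$ is admissible for the whole range $d\in(N-2,N)$ — boundedness of the trace uses $d>N-2$, the critical exponent for the Sobolev trace inequality on a $d$-set (obtained by extending to $\mathbb{R}^{N}$ and restricting), while $d<N$ is what renders the trace map and the resolvent of $A_{\Theta,\mu}$ compact — and, throughout, keeping every relevant constant ($C_\Omega$, the absorbing radii, the smoothing and squeezing constants feeding Theorem~\ref{expo2}) dependent only on $N$, $\Omega$ and the data, never on quantitative roughness of $\partial\Omega$. Compactness of the trace is precisely the engine that drives both the asymptotic compactness behind the global attractor and the smoothing/squeezing estimate behind the exponential attractor; once it is secured, ultracontractivity, the $L^{\infty}$-bounds, and the squeezing property are inherited verbatim from the general theory of Sections~2--3.
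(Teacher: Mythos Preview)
Your proposal is correct and follows the paper's approach: Theorem~B is stated in the introduction explicitly as a \emph{special case} of the general machinery, so its ``proof'' in the paper consists precisely of checking that the pair $(\Omega,\mu)=(\text{$W^{1,2}$-extension domain},\,\mathcal{H}^d_{\mid\partial\Omega})$ with $d\in(N-2,N)$ satisfies hypothesis (H$_\mu$), that (\ref{star3}) is (H1), and then invoking Theorem~\ref{expo2}. The only mild discrepancy is one of vocabulary: you frame the admissibility check in terms of boundedness and compactness of a trace operator $W^{1,2}(\Omega)\to L^2(\partial\Omega,\mathcal{H}^d)$, whereas the paper's structural hypothesis is the $\func{Cap}_{2,\Omega}$-admissibility of $\Gamma_\mu$ (Definition~\ref{adm}, Theorem~\ref{lower-semi}), and the compactness actually used is $\mathcal{V}^{1,2}_\Theta(\Omega,\partial\Omega,\mu)\hookrightarrow L^2(\Omega)$ coming from (\ref{CI-mu}) with $q=N/(N-2)$, not trace compactness per se; the paper obtains the admissibility directly from \cite[Lemma~4.7]{W3} (stated in Section~2.2) rather than via trace theory, which is slightly cleaner since no compact trace is ever needed.
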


The assumption on the measure $\mu $ deserves some additional comments.
First, we note that when $d=N-1$ but $\mu =\mathcal{H}_{\mid \partial \Omega
}^{N-1}\left( =\sigma \right) $ is locally infinite, that is, for all $x\in
\partial \Omega $ and $\delta >0$ we have that
\begin{equation}
\sigma \left( B_{\delta }\left( x\right) \cap \partial \Omega \right)
=\infty ,  \label{loc_inf}
\end{equation}%
in this case the boundary value problem (\ref{1.1})-(\ref{1.2}) coincides
with the (homogeneous) Dirichlet problem for (\ref{1.1}). If (\ref{loc_inf})
only holds on part of the boundary $\Gamma _{\infty }\subset \partial \Omega
$, Dirichlet boundary conditions are satisfied on that part $\Gamma _{\infty
}$ and the usual (nonlocal) Robin boundary condition (\ref{1.2}) is
satisfied on the remaining part $\partial \Omega \backslash \Gamma _{\infty
} $. On the other hand, for domains with fractal boundaries $\partial \Omega
$, one often finds some $d_{0}\in \left( N-2,N\right) $ such that%
\begin{equation*}
\mu _{d_{0}}=\mathcal{H}_{\mid \partial \Omega }^{d_{0}}\text{ and }\mu
_{d_{0}}\left( \partial \Omega \right) <\infty .
\end{equation*}%
In this case, one may think of (\ref{1.2}) as a generalized (nonlocal) Robin
boundary condition ($\mu =\mu _{d_{0}}$) for the boundary value problem (\ref%
{1.1})-(\ref{1.3}). We once again refer the reader to Section \ref{gl} for
more precise statements of the results on the existence of finite
dimensional global and exponential attractors.

The remainder of the paper is structured as follows. In Section \ref{prelim}%
, we establish our notations and give some basic preliminary results for the
operators and spaces appearing in the model (\ref{1.1})-(\ref{1.3}). In
Section \ref{wel}, we prove some well-posedness results for this model; in
particular, we establish existence results for strong solutions (Section
3.1), weak solutions (Section 3.2) and then derive regularity (Section 3.3)
and stability results (Section 3.2). In Section \ref{gl}, we prove results
which establish the existence of global and exponential attractors for (\ref%
{1.1})-(\ref{1.3}), and the existence of Lyapunov functions. Section \ref%
{sum} contains some concluding remarks. To make the paper reasonably
self-contained, in Appendix, we develop some supporting material on
regularity results for abstract non-homogeneous evolution equations which
are necessary to derive the results in Section 3.1.

\section{Preliminaries}

\label{prelim}

\subsection{Some facts from measure theory}

As mentioned in the introduction, we aim to analyze the long-time behavior
of solutions to the reaction-diffusion equation (\ref{1.1}) (supplemented
with the boundary and initial conditions (\ref{1.2})-(\ref{1.3})) in terms
of global and exponential attractors \cite{MZrev}. In order to make the
paper as self-contained as possible, in this section we recall the main
definitions and results in measure and Sobolev function theory which will be
extensively used in what follows. \newline

Let $\Omega \subset {\mathbb{R}}^{N}$ be an open set with boundary $\partial
\Omega $. We say that a Borel measure $\mu $ on $\partial \Omega $ is
regular if for every $A\subset \partial \Omega $ there exists a Borel set $B$
such that $A\subset B$ and $\mu (A)=\mu (B)$. A measure $\mu $ on $\partial
\Omega $ is a Radon measure if $\mu $ is Borel regular and $\mu (K)<\infty $
for each compact set $K\subset \partial \Omega $. Let $\sigma $ be the
restriction to $\partial \Omega $ of the $(N-1)$-dimensional Hausdorff
measure $\mathcal{H}^{N-1}$. Then it is well-known that $\sigma $ is a
regular Borel measure on $\partial \Omega $ but it is not always a Radon
measure, that is, if $\Omega $ has a \textquotedblleft
bad\textquotedblright\ boundary $\partial \Omega $, it may happen that
compact subsets of $\partial \Omega $ have infinite $\sigma $-measure (see
\cite{EG,Fal}).

\begin{definition}
\label{twosets}For a regular Borel measure $\mu $ on $\partial \Omega $, we
denote by
\begin{equation}
\Gamma _{\mu }^{\infty }:=\{z\in \partial \Omega :\;\mu (B(z,r)\cap \partial
\Omega )=\infty \;\forall \;r>0\}  \label{GI}
\end{equation}%
the relatively closed subset of $\partial \Omega $ on which the measure $\mu
$ is \emph{locally infinite}. The complement of $\Gamma _{\mu }^{\infty }$
denoted by
\begin{equation}
\Gamma _{\mu }:=\partial \Omega \setminus \Gamma _{\mu }^{\infty }=\{z\in
\partial \Omega :\;\exists \;r>0:\;\mu (B(z,r)\cap \partial \Omega )<\infty
\}  \label{LF}
\end{equation}%
is the relatively open subset of $\partial \Omega $ on which the measure $%
\mu $ is \emph{locally finite}. We will also call the part $\Gamma _{\mu }$
the domain of the measure $\mu $ and denote $D(\mu )=\Gamma _{\mu }$.
\end{definition}

We notice that the restriction of $\mu $ to $\Gamma _{\mu }$ is a Radon
measure and $\mu (\Gamma _{\mu })<\infty $ if $\Omega $ is such that its
boundary is a compact set. For more properties of Borel measures and the
Hausdorff measure, we refer the reader to the monographs \cite{EG, Fal}.

\begin{example}
\emph{It is clear that by definition, if }$\mu (\partial \Omega )<\infty $%
\emph{, then }$D(\mu )=\partial \Omega $\emph{\ so that }$\Gamma _{\mu
}^{\infty }=\emptyset $\emph{. }

\emph{Next, let $0<a_{n+1}<b_{n+1}<a_{n}<1$ ($n\in {\mathbb{N}}$) be such
that $\lim_{n\rightarrow \infty }a_{n}=0$. Let
\begin{equation*}
\Omega :=\left\{ (x,y)\in (0,1)\times (0,1)\setminus \bigcup_{n\in {\mathbb{N%
}}}[a_{n},b_{n}]\times \left[ \frac{1}{2},1\right] \right\} \subset \mathbb{R%
}^{2}.
\end{equation*}%
Let $\sigma :=\mathcal{H}^{1}|_{\partial \Omega }$. It is easy to see that }$%
\sigma (\partial \Omega )$\emph{$=\infty $ and that }$\Gamma _{\sigma
}^{\infty }=\{0\}\times \lbrack 1/2,1]$\emph{. }
\end{example}

\begin{figure}[tbp]
\centering\includegraphics[scale=0.44]{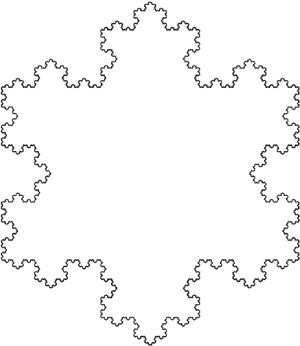}
\caption{The Koch snowflake}
\end{figure}

\begin{example}
\label{snow}\emph{Let $\Omega \subset {\mathbb{R}}^{2}$ be the Koch
snowflake (see Figure 1) and let }$\sigma :=\mathcal{H}^{1}|_{\partial
\Omega }$\emph{. Then }$\Gamma _{\sigma }^{\infty }=\partial \Omega $\emph{\
so that }$\Gamma _{\sigma }=D(\sigma )=\emptyset $\emph{. It is well-known
that }$\partial \Omega $ \emph{has} \emph{Hausdorff dimension equal to $%
d:=\ln (4)/\ln (3),$ see \cite{Wal}. In this case, the }$d$\emph{-dimensional%
} \emph{Hausdorff measure of} $\partial \Omega $ \emph{is finite}, $\mathcal{%
H}^{d}(\partial \Omega )<\infty ,$\emph{\ so that} $\Gamma _{\mathcal{H}%
^{d}}^{\infty }=\emptyset $\emph{\ and }$D(\mathcal{H}^{d})=\Gamma _{%
\mathcal{H}^{d}}=\partial \Omega $\emph{.}
\end{example}

Next, we give another example which is also meaningful for the applications.

\begin{example}
\label{ram}\emph{Let now }$\Omega \subset \mathbb{R}^{2}$\emph{\ be the
class of tree-shaped domains with self-similar fractal boundary introduced
in \cite{MF}. Such a geometry can be seen as a two-dimensional idealization
of the bronchial tree (see \cite{ADT}; cf. also \cite{AT, ATbis, ATtris, AT4}%
). In order to describe this set, we follow \cite{ADT}. Consider four real
numbers }$a,\alpha ,\beta ,\theta $\emph{\ such that }$0<a<1/\sqrt{2}\emph{,}
$ $\alpha >0$, $\beta >0$ \emph{and }$0<\theta <\pi /2$. \emph{Let }$%
f_{i},i=1,2,$\emph{\ be two similitudes in }$\mathbb{R}^{2}$\emph{\ given by}%
\begin{align*}
f_{1}\left( x_{1},x_{2}\right) & =\left( -\alpha ,\beta \right) +a\left(
x_{1}\cos \theta -x_{2}\sin \theta ,x_{1}\sin \theta +x_{2}\cos \theta
\right) , \\
f_{2}\left( x_{1},x_{2}\right) & =\left( \alpha ,\beta \right) +a\left(
x_{1}\cos \theta +x_{2}\sin \theta ,-x_{1}\sin \theta +x_{2}\cos \theta
\right) .
\end{align*}%
\emph{One can define by }$\Gamma _{f}$\emph{\ the self-similar set
associated to the similitudes }$f_{1},f_{2}$\emph{, i.e., the unique compact
subset of }$\mathbb{R}^{2}$\emph{\ such that }$\Gamma _{f}=f_{1}\left(
\Gamma _{f}\right) \cup f_{2}\left( \Gamma _{f}\right) $\emph{. To construct
the ramified domain whose boundary is }$\Gamma _{f}$\emph{, we further let }$%
\mathbb{A}_{n}$\emph{\ be the set containing all the }$2^{n}$\emph{\
mappings from }$\{1,...,n\}$\emph{\ to }$\{1,2\},$\emph{\ called strings of
length }$n$\emph{\ for }$n>1$\emph{, and define }$\mathbb{A}%
=\bigcup\limits_{n\geq 1}\mathbb{A}_{n}$\emph{\ as the set containing the
empty string and all the finite strings. Next, consider two points }$%
P_{1}=\left( -1,0\right) $\emph{\ and }$P_{2}=\left( 1,0\right) $\emph{\ and
let }$\Gamma _{b}$\emph{\ be the line segment }$\left[ P_{1},P_{2}\right] $%
\emph{. We impose that }$f_{2}\left( P_{1}\right) $\emph{\ and }$f_{2}\left(
P_{2}\right) $\emph{\ have positive coordinates (i.e., }$a\cos \theta
<\alpha $\emph{\ and }$a\sin \theta <\beta $\emph{). The first cell }$Y_{0}$%
\emph{\ of the tree domain }$\Omega $\emph{\ (i.e., the bottom of the tree,
see Figure 2) is constructed by assuming that }$Y_{0}$\emph{\ is the convex,
hexagonal, open domain inside the closed polygonal line joining the points }$%
P_{1},$\emph{\ }$P_{2}$\emph{, }$f_{2}\left( P_{2}\right) $\emph{, }$%
f_{2}\left( P_{1}\right) $\emph{, }$f_{1}\left( P_{2}\right) $\emph{, }$%
f_{1}\left( P_{1}\right) $\emph{, and }$P_{1}$\emph{\ in this order. With
the above assumptions on }$\theta $\emph{,}$\alpha ,\beta ,a,$\emph{\ this
is true if and only if }$\left( \alpha -1\right) \sin \theta +\beta \cos
\theta >0$\emph{. Under these assumptions, the domain }$Y_{0}$\emph{\ is
contained in the half-plane }$x_{2}>0$\emph{\ and is symmetric with respect
to the vertical axis }$x_{1}=0$\emph{. Next, we introduce }$K_{0}=\overline{%
Y_{0}}$\emph{. It is possible to glue together }$K_{0}$\emph{, }$f_{1}\left(
K_{0}\right) $\emph{\ and }$f_{2}\left( K_{0}\right) $\emph{\ and obtain a
new polygonal domain, also symmetric with respect to the axis }$x_{1}=0$%
\emph{. We define the ramified open domain }$\Omega $\emph{,}%
\begin{equation*}
\Omega =Interior\left( \bigcup\limits_{\sigma \in \mathbb{A}}f_{\sigma
}\left( K_{0}\right) \right) ,
\end{equation*}%
\emph{see Figure 2. Note that }$\Omega $\emph{\ is symmetric with respect to
the axis }$x_{1}=0$\emph{. In \cite{MF} it was proved that, for any }$%
0<\theta <\pi /2,$\emph{\ there exists a unique positive number }$a_{\ast
}<1/\sqrt{2},$\emph{\ which does not depend on }$\left( \alpha ,\beta
\right) ,$\emph{\ such that for }$0<a<a_{\ast }$\emph{, }$\Gamma _{f}$\emph{%
\ has "no self-contact". In this case the Hausdorff dimension of }$\Gamma
_{f}$\emph{\ is }$d=-\ln \left( 2\right) /\ln \left( a\right) >1$\emph{\ and
by} \cite{AT08}, $\Omega $\emph{\ possesses the }$W^{1,2}$\emph{-extension
property of Sobolev functions. Moreover, since }$\mathcal{H}^{d}(\Gamma
_{f})<\infty $\emph{\ we have }$\Gamma _{\mathcal{H}^{d}}^{\infty
}=\emptyset $\emph{\ and }$D(\mathcal{H}^{d})=\Gamma _{\mathcal{H}%
^{d}}=\partial \Omega \supset \Gamma _{f}$ \emph{(cf. Definition \ref%
{twosets}). On the other hand, note that for }$\sigma =\mathcal{H}%
^{1}|_{\partial \Omega }$, \emph{we have }$\Gamma _{\sigma }^{\infty
}=\Gamma _{f}$\emph{\ so that }$\Gamma _{\sigma }=D(\sigma )=\partial \Omega
\setminus \Gamma _{f}.$
\end{example}

\begin{figure}[tbp]
\centering\includegraphics[scale=0.66]{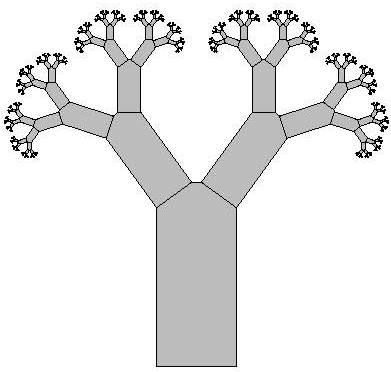}
\caption{Tree-domain ($0<a<a_{\ast }$)}
\end{figure}

\begin{definition}
Let $d\in \lbrack 0,N]$ and let $\mu $ be a regular Borel measure on $%
\partial \Omega $.

\begin{enumerate}
\item We say that $\mu $ is an \emph{upper $d$-Ahlfors measure} if there
exists a constant $C_{2}>0$ such that for every $x\in \partial \Omega $ and
every $r\in (0,1]$, one has
\begin{equation*}
\mu (B(x,r)\cap \partial \Omega )\leq C_{2}r^{d}.
\end{equation*}

\item We say that $\mu $ is a \emph{lower $d$-Ahlfors measure} if there
exists a constant $C_{1}>0$ such that for every $x\in \partial \Omega $ and
every $r\in (0,1]$, one has
\begin{equation*}
\mu (B(x,r)\cap \partial \Omega )\geq C_{1}r^{d}.
\end{equation*}
\end{enumerate}
\end{definition}

By \cite[Remark 2.20]{BW2}, every upper $d$-Ahlfors measure $\mu $ is
absolutely continuous with respect to the $d$-dimensional Hausdorff measure $%
\mathcal{H}^{d}$, that is, there exists a function $f\in L^{\infty
}(\partial \Omega ,\mathcal{H}^{d})$ such that $\mu (A)=\int_{A}f\left(
x\right) d\mathcal{H}^{d}$ for every Borel set $A\subset\partial \Omega$.
Moreover, by \cite[Lemma 1.17]{MZ}, the $d$-dimensional Hausdorff measure $%
\mathcal{H}^{d}$ is absolutely continuous with respect to every lower $d $%
-Ahlfors measure $\mu $.

\begin{definition}
\label{def_dset}Let $F$ be a closed set in ${\mathbb{R}}^{N}$. We say that $%
F $ is a $d$-set for some $d\in (0,N]$ in the sense that there exist a Borel
measure $\mu $ on $F$ and some constants $C_{2}>C_{1}>0$ such that
\begin{equation}
C_{1}r^{d}\leq \mu (F\cap B(x,r))\leq C_{2}r^{d}\;\text{ for all }x\in F
\text{ and all }r\in (0,1],  \label{hausdor}
\end{equation}%
where $B(x,r)$ is the Euclidean metric ball. Notice that a measure
satisfying \eqref{hausdor} is called a $d$-Ahlfors measure.
\end{definition}

Let now $\Omega \subset \mathbb{R}^{N}$ be an arbitrary bounded domain with
boundary $\partial \Omega .$ By \cite{JW} (see also \cite{CK}), $\partial
\Omega $ is a $d$-set if and only if \eqref{hausdor} holds with $\mu $ being
the restriction to $\partial \Omega $ of the $d$-dimensional Hausdorff
measure $\mathcal{H}^{d}$. Hence, it follows directly from \eqref{hausdor}
that $\mathcal{H}^{d}$ is an upper $d$-Ahlfors measure. In particular, $D(%
\mathcal{H}^{d})=\Gamma _{\mathcal{H}^{d}}=\partial \Omega $ so that $\Gamma
_{\mathcal{H}^{d}}^{\infty }=\emptyset $. If $d>N-1$, then $D(\sigma
)=\Gamma _{\sigma }=\emptyset $ so that $\Gamma _{\sigma }^{\infty
}=\partial \Omega$, where we recall that $\sigma=\mathcal{H}%
^{N-1}|_{\partial\Omega}$.

\begin{example}
\label{ex-d-set} \emph{Let }$\Omega \subset \mathbb{R}^{2}$\emph{\ be the
Koch snowflake and recall that $d=\ln (4)/\ln (3)$ is the Hausdorff
dimension of $\partial \Omega $ (cf. Example \ref{snow}). By \cite{Wal}, the
restriction of $\mathcal{H}^{d}$ to $\partial \Omega $ is an upper $d$%
-Ahlfors measure$.$ Let now }$\Omega \subset \mathbb{R}^{2}$\emph{\ be the
tree-domain in Example \ref{ram}. By \cite{H81}, there exists a unique
regular Borel invariant measure }$\mu $\emph{\ on }$\Gamma _{f}$\emph{\ in
the sense that for every Borel set }$B,$\emph{\ }$\mu \left( B\right) =\mu
\left( f_{1}^{-1}\left( B\right) \right) +\mu \left( f_{2}^{-1}\left(
B\right) \right) .$\emph{\ The set }$\Gamma _{f},$\emph{\ endowed with the
self-similar measure }$\mu ,$\emph{\ is a d-set where }$d=-\ln \left(
2\right) /\ln \left( a\right) $ \emph{(see \cite{ADT}), hence }$\mu _{\mid
\Gamma _{f}}=\mathcal{H}^{d}|_{\Gamma _{f}}$\emph{.}
\end{example}

For more properties of Ahlfors measures and $d$-sets we refer the reader to
\cite{CK,DGN,Fal,JW} and the references therein.

\subsection{The Maz'ya space}

In this subsection, we introduce the function spaces needed to study our
problem. We recall that $\Omega \subset {\mathbb{R}}^{N}$ ($N\ge 2$) is an
arbitrary bounded domain with boundary $\partial \Omega $ and the measure $%
\sigma =\mathcal{H}^{N-1}|_{\partial \Omega }$. \newline

For $p\in \lbrack 1,\infty )$, we denote by $W^{1,p}(\Omega )$ the first
order Sobolev space endowed with the norm
\begin{equation*}
\Vert u\Vert _{W^{1,p}(\Omega )}=\left( \Vert \nabla u\Vert _{p,\Omega
}^{p}+\Vert u\Vert _{p,\Omega }^{p}\right) ^{1/p}.
\end{equation*}%
Then $W^{1,p}(\Omega )$ is a Banach space and $W^{1,2}(\Omega )$ is a
Hilbert space. We let
\begin{equation*}
\mathcal{W}^{1,2}(\Omega ):=\overline{W^{1,2}(\Omega )\cap C(\overline{%
\Omega })}^{W^{1,2}(\Omega )}.
\end{equation*}%
Then $\mathcal{W}^{1,2}(\Omega )$ is a proper closed subspace of $%
W^{1,2}(\Omega )$ such that they coincide if, for instance, $\Omega $ has
the segment property (see \cite[Theorem 1, p.28]{MP}). We recall that the
latter property is equivalent to $\Omega $ being of class $C$, which is
slightly weaker than the Lipschitz property of $\Omega .$

The following important inequality is due to Maz'ya \cite[Section 3.6, p.189]%
{Maz85}.

\begin{theorem}[Maz'ya]
There is a constant $C=C(N)>0$ such that for every function $u\in
W^{1,1}(\Omega )\cap C(\overline{\Omega })$,
\begin{equation}
\Vert u\Vert _{\frac{N}{N-1},\Omega }\leq C(N)\left( \Vert \nabla u\Vert
_{1,\Omega }+\Vert u\Vert _{L^{1}(\partial \Omega ,\sigma )}\right) .
\label{M1}
\end{equation}%
In the inequality \eqref{M1}, the constant $C(N)$ is exactly the so called
isoperimetric constant.
\end{theorem}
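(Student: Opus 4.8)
The plan is to deduce \eqref{M1} from the sharp isoperimetric inequality in $\mathbb{R}^{N}$ by representing $u$ through its super-level sets; this is precisely the mechanism that produces the isoperimetric constant as the optimal $C(N)$. I would first dispose of two trivial reductions: we may assume the right-hand side of \eqref{M1} is finite (otherwise there is nothing to prove, since $\sigma$ need not even be a Radon measure), and, replacing $u$ by $|u|$, we may assume $u\ge 0$, because $|u|\in W^{1,1}(\Omega)\cap C(\overline{\Omega})$ with $|\nabla|u||\le|\nabla u|$ a.e.\ and with the same values on $\partial\Omega$ (the trace of a continuous function being its pointwise restriction), so a bound for $|u|$ transfers to $u$ with no loss in the constant. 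Writing $E_{t}:=\{x\in\Omega:\ u(x)>t\}$ for $t>0$ and combining the layer-cake identity $u(x)=\int_{0}^{\infty}\mathbf{1}_{E_{t}}(x)\,dt$ with Minkowski's integral inequality (legitimate since $\tfrac{N}{N-1}\ge 1$), one gets
\begin{equation*}
\|u\|_{\frac{N}{N-1},\Omega}\ \le\ \int_{0}^{\infty}\big\|\mathbf{1}_{E_{t}}\big\|_{\frac{N}{N-1},\Omega}\,dt\ =\ \int_{0}^{\infty}|E_{t}|^{\frac{N-1}{N}}\,dt,
\end{equation*}
so it suffices to control $\int_{0}^{\infty}|E_{t}|^{(N-1)/N}\,dt$.

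The heart of the argument — and the step I expect to be the main obstacle — is to show that for a.e.\ $t>0$ the set $E_{t}$ has finite perimeter in $\mathbb{R}^{N}$ with
\begin{equation*}
P(E_{t};\mathbb{R}^{N})\ \le\ P(E_{t};\Omega)+\sigma\big(\{x\in\partial\Omega:\ u(x)\ge t\}\big).
\end{equation*}
Since $\Omega$ is completely arbitrary, no trace theory or divergence theorem on $\partial\Omega$ is available, so this must be read off from the continuity of $u$ up to $\overline{\Omega}$. Concretely: as $E_{t}=\{u>t\}\cap\Omega$ is relatively open and $u\in C(\overline{\Omega})$, a point of $\Omega$ where $u<t$ or $u>t$ has a neighbourhood on which that strict inequality persists, and a point outside $\overline{\Omega}$ cannot lie in $\overline{E_{t}}\subseteq\overline{\Omega}$; hence the topological boundary satisfies $\partial E_{t}\subseteq(\{u=t\}\cap\Omega)\cup(\{u\ge t\}\cap\partial\Omega)$. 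By Chebyshev, $\sigma(\{u\ge t\}\cap\partial\Omega)\le t^{-1}\|u\|_{L^{1}(\partial\Omega,\sigma)}<\infty$ for every $t>0$, and by the coarea formula for $W^{1,1}(\Omega)$ we have $\int_{0}^{\infty}\mathcal{H}^{N-1}(\{u=t\}\cap\Omega)\,dt=\int_{\Omega}|\nabla u|\,dx<\infty$, so $\mathcal{H}^{N-1}(\{u=t\}\cap\Omega)<\infty$ for a.e.\ $t$. Thus $\mathcal{H}^{N-1}(\partial E_{t})<\infty$ for a.e.\ $t>0$, whence $E_{t}$ has finite perimeter by Federer's criterion; decomposing $P(E_{t};\mathbb{R}^{N})=\mathcal{H}^{N-1}(\partial^{*}E_{t})$ over the partition $\mathbb{R}^{N}=\Omega\cup\partial\Omega\cup(\mathbb{R}^{N}\setminus\overline{\Omega})$, noting the last piece contributes nothing and $\partial^{*}E_{t}\cap\partial\Omega\subseteq\partial E_{t}\cap\partial\Omega\subseteq\{u\ge t\}\cap\partial\Omega$, gives the displayed perimeter bound.

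Finally I would integrate in $t$ and invoke the sharp isoperimetric inequality $|E_{t}|^{(N-1)/N}\le\gamma_{N}\,P(E_{t};\mathbb{R}^{N})$ (with $\gamma_{N}$ the isoperimetric constant, equality for balls), together with the coarea formula in the form $\int_{0}^{\infty}P(E_{t};\Omega)\,dt=|Du|(\Omega)=\int_{\Omega}|\nabla u|\,dx$ (valid on an arbitrary open set since $W^{1,1}(\Omega)\subseteq BV(\Omega)$) and the layer-cake identity $\int_{0}^{\infty}\sigma(\{u\ge t\})\,dt=\int_{\partial\Omega}u\,d\sigma$. Chaining the estimates yields
\begin{equation*}
\|u\|_{\frac{N}{N-1},\Omega}\ \le\ \gamma_{N}\int_{0}^{\infty}P(E_{t};\mathbb{R}^{N})\,dt\ \le\ \gamma_{N}\Big(\|\nabla u\|_{1,\Omega}+\|u\|_{L^{1}(\partial\Omega,\sigma)}\Big),
\end{equation*}
which is \eqref{M1} with $C(N)=\gamma_{N}$; since no slack was introduced anywhere (the passage to $|u|$ being harmless), $C(N)$ is exactly the isoperimetric constant. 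In short, the only nonstandard point is the perimeter decomposition near $\partial\Omega$ for an arbitrary domain, handled via continuity of $u$ up to $\overline{\Omega}$ together with Chebyshev's inequality and Federer's theorem; the layer-cake, Minkowski, coarea, and isoperimetric ingredients are all classical.
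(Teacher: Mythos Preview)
The paper does not supply a proof of this theorem: it is stated with a citation to Maz'ya \cite[Section 3.6, p.189]{Maz85} and then used as a black box (for instance, to derive Corollary~\ref{CM2} and the embedding \eqref{M2-N}). So there is no ``paper's own proof'' to compare against; your outline is in fact the classical Maz'ya argument --- reduce to $u\ge 0$, write $u$ via its super-level sets, apply Minkowski's integral inequality, bound $|E_t|^{(N-1)/N}$ by the sharp isoperimetric inequality, and integrate in $t$ using the coarea formula together with the layer-cake identity on $\partial\Omega$. The overall strategy is correct and is exactly what the cited reference does.

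One step, however, is not justified as written. When you invoke ``the coarea formula for $W^{1,1}(\Omega)$'' in the form
\[
\int_{0}^{\infty}\mathcal{H}^{N-1}\big(\{u=t\}\cap\Omega\big)\,dt=\int_{\Omega}|\nabla u|\,dx,
\]
you are using the \emph{Lipschitz} coarea formula, not the one available for $W^{1,1}$. For $u\in W^{1,1}(\Omega)\subset BV(\Omega)$ the valid statement is the Fleming--Rishel version $\int_{0}^{\infty}P(E_t;\Omega)\,dt=\int_\Omega|\nabla u|\,dx$ (which you correctly quote later), and for a.e.\ $t$ one has $P(E_t;\Omega)=\mathcal{H}^{N-1}(\partial^{*}E_t\cap\Omega)$ with the \emph{reduced} boundary; the full topological level set $\{u=t\}\cap\Omega$ can be strictly larger, and nothing in the BV coarea formula bounds its $\mathcal{H}^{N-1}$-measure. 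Consequently the appeal to Federer's criterion via $\mathcal{H}^{N-1}(\partial E_t)<\infty$ is not immediately available from the hypotheses. This is exactly the ``nonstandard point'' you yourself single out, and it does need more than what you wrote. The standard remedy (and the one in Maz'ya's book) is to establish the inequality first for a class of functions for which the Lipschitz coarea formula is legitimate --- e.g.\ restrictions to $\Omega$ of functions in $C^{\infty}_{c}(\mathbb{R}^{N})$, or Lipschitz truncations of $u$ --- where your argument goes through verbatim with the sharp constant, and then pass to the limit using lower semicontinuity of the left-hand side and convergence of the right-hand side. With that adjustment the proof is complete and yields $C(N)$ equal to the isoperimetric constant.
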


The following result is a direct consequence of the Maz'ya inequality %
\eqref{M1}.

\begin{corollary}
(\cite[Corollary 2.11.2]{MP})\label{CM2} Let $1\leq p<\infty $ and $p^{\star
}:=pN/(N-1)$. There is a constant $C=C(N,p,\Omega )>0$ such that for every $%
u\in W^{1,p}(\Omega )\cap C(\overline{\Omega })$,
\begin{equation}
\Vert u\Vert _{p^{\star },\Omega }^{p}\leq C\left( \Vert \nabla u\Vert
_{p,\Omega }^{p}+\Vert u\Vert _{L^{p}(\partial \Omega ,\sigma )}^{p}\right) .
\label{M2-N-1}
\end{equation}
\end{corollary}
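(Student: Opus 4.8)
The plan is to apply Maz'ya's inequality \eqref{M1} to the function $v:=|u|^{p}$. Since $u\in C(\overline{\Omega})$ and $\overline{\Omega}$ is compact, $u$ is bounded, so $v\in C(\overline{\Omega})$; and since $t\mapsto|t|^{p}$ is Lipschitz on the bounded range of $u$, the standard chain rule for Sobolev functions gives $v\in W^{1,1}(\Omega)$ with weak gradient $\nabla v=p|u|^{p-1}\sgn(u)\,\nabla u$. By H\"older's inequality with exponents $p/(p-1)$ and $p$, $|\nabla v|\le p|u|^{p-1}|\nabla u|\in L^{1}(\Omega)$, because $|u|^{p-1}\in L^{p/(p-1)}(\Omega)$ and $\nabla u\in L^{p}(\Omega)$. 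Thus \eqref{M1} is applicable to $v$.

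Next I would simply read off the three terms in \eqref{M1} applied to $v$. On the left, $\|v\|_{N/(N-1),\Omega}=\|u\|_{pN/(N-1),\Omega}^{p}=\|u\|_{p^{\star},\Omega}^{p}$, which is finite because $u$ is bounded and $|\Omega|<\infty$. The boundary term is $\|v\|_{L^{1}(\partial\Omega,\sigma)}=\|u\|_{L^{p}(\partial\Omega,\sigma)}^{p}$. For the gradient term, the same H\"older estimate gives $\|\nabla v\|_{1,\Omega}\le p\,\|u\|_{p,\Omega}^{p-1}\,\|\nabla u\|_{p,\Omega}$. Hence \eqref{M1} yields
\begin{equation*}
\|u\|_{p^{\star},\Omega}^{p}\le C(N)\left(p\,\|u\|_{p,\Omega}^{p-1}\,\|\nabla u\|_{p,\Omega}+\|u\|_{L^{p}(\partial\Omega,\sigma)}^{p}\right).
\end{equation*}

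It remains to absorb the factor $\|u\|_{p,\Omega}^{p-1}$, which is where the only real care is needed. Since $\Omega$ is bounded, H\"older's inequality gives $\|u\|_{p,\Omega}\le|\Omega|^{1/(pN)}\|u\|_{p^{\star},\Omega}$ (using $1/p-1/p^{\star}=1/(pN)$), so the mixed term is bounded by $|\Omega|^{(p-1)/(pN)}\|u\|_{p^{\star},\Omega}^{p-1}\|\nabla u\|_{p,\Omega}$. Applying Young's inequality $ab\le\varepsilon\,a^{p/(p-1)}+C_{\varepsilon}\,b^{p}$ with $a=\|u\|_{p^{\star},\Omega}^{p-1}$ and $b=\|\nabla u\|_{p,\Omega}$, and choosing $\varepsilon$ small enough that $C(N)\,p\,|\Omega|^{(p-1)/(pN)}\varepsilon\le\tfrac12$, one transfers the resulting term $\tfrac12\|u\|_{p^{\star},\Omega}^{p}$ to the left-hand side — which is legitimate precisely because $\|u\|_{p^{\star},\Omega}<\infty$ — and obtains \eqref{M2-N-1} with $C=C(N,p,\Omega)$ depending only on $N$, $p$, the isoperimetric constant of \eqref{M1}, and $|\Omega|$. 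For $p=1$ the assertion is just \eqref{M1} itself. The main obstacle is thus the final absorption step, which hinges on the \emph{a priori} finiteness of $\|u\|_{p^{\star},\Omega}$ (a consequence of $u\in C(\overline{\Omega})$ and $|\Omega|<\infty$); everything else is a substitution into Maz'ya's inequality combined with H\"older's inequality.
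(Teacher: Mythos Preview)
Your proof is correct and is precisely the standard derivation: apply Maz'ya's inequality \eqref{M1} to $v=|u|^{p}$, use H\"older to control $\|\nabla v\|_{1,\Omega}$, and then absorb via Young's inequality using the finiteness of $|\Omega|$. The paper itself does not give a proof of this corollary; it merely states that the result is ``a direct consequence of the Maz'ya inequality \eqref{M1}'' and cites \cite[Corollary 2.11.2]{MP}, so your argument is exactly the intended one spelled out in detail.
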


\begin{remark}
\label{oppy}\emph{The exponent $p^{\star }$ in (\ref{M2-N-1}) is optimal in
the sense that it cannot be improved without further regularity assumptions
on $\Omega ,$ see \cite[Example 2.11, p.123]{MP}}.\newline
\end{remark}

Now, we let the classical Maz'ya space $W_{p,p}^{1}(\Omega ,\partial \Omega
) $ to be the abstract completion of
\begin{equation*}
W_{\sigma,p }:=\{u\in W^{1,p}(\Omega )\cap C(\overline{\Omega }%
):\;\int_{\partial \Omega }|u|^{p}d\sigma <\infty \}
\end{equation*}%
with respect to the norm
\begin{equation*}
\Vert u\Vert _{W_{p,p}^{1}(\Omega ,\partial \Omega )}=\left( \Vert \nabla
u\Vert _{p,\Omega }^{p}+\Vert u\Vert _{L^{p}(\partial \Omega ,\sigma
)}^{p}\right) ^{1/p}.
\end{equation*}%
By \eqref{M2-N-1}, it follows that%
\begin{equation}
W_{p,p}^{1}(\Omega ,\partial \Omega )\hookrightarrow L^{p^{\star }}(\Omega
)\;\mbox{(only continuous embedding)}.  \label{M2-N}
\end{equation}%
In this paper, we focus on the Hilbert space case $p=2$, that is, the space $%
W_{2,2}^{1}(\Omega ,\partial \Omega )$.

Next, let $\mu $ be an arbitrary regular Borel measure on the boundary $%
\partial \Omega $. We define the Maz'ya type space $W_{2,2}^{1}(\Omega
,\partial \Omega ,\mu )$ to be the abstract completion of
\begin{equation*}
W_{\mu,2 }:=\{u\in W^{1,2}(\Omega )\cap C(\overline{\Omega }%
):\;\int_{\partial \Omega }|u|^{2}d\mu <\infty \}
\end{equation*}%
with respect to the norm
\begin{equation*}
\Vert u\Vert _{W_{2,2}^{1}(\Omega ,\partial \Omega ,\mu )}=\left( \Vert
u\Vert _{W^{1,2}(\Omega )}^{2}+\int_{\partial \Omega }|u|^{2}\,d\mu \right)
^{1/2}.
\end{equation*}%
It follows from \eqref{M2-N-1} that the spaces $W_{2,2}^{1}(\Omega ,\partial
\Omega ,\sigma )$ and $W_{2,2}^{1}(\Omega ,\partial \Omega )$ coincide with
equivalent norms.

Now, for a measurable function $u$ on $\partial \Omega $ and $s\in (0,1)$,
we let
\begin{equation*}
\mathcal{N}_{s,\mu }(u):=\int \int_{\partial \Omega \times \partial \Omega
}K_{s}(x,y)|u(x)-u(y)|^{2}d\mu _{x}d\mu _{y},\text{ }K_{s}(x,y):=\frac{1}{%
|x-y|^{N-1+2s}}.
\end{equation*}%
We let the fractional order Sobolev-Besov type space
\begin{equation*}
{H}^{s}(\partial \Omega ,\mu ):=\{u\in L^{2}(\partial \Omega ,\mu ):\;\;%
\mathcal{N}_{s,\mu }(u)<\infty \}
\end{equation*}%
be equipped with the norm
\begin{equation*}
\Vert u\Vert _{{H}^{s}(\partial \Omega ,\mu )}:=\left( \Vert u\Vert
_{L^{2}(\partial \Omega ,\mu )}^{2}+\mathcal{N}_{s,\mu }(u)\right) ^{1/2}.
\end{equation*}%
For further properties of the space ${H}^{s}(\partial \Omega ,\mu )$ we
refer the reader to \cite{DGN} and the references therein. Let $({H}%
^{s}(\partial \Omega ,\mu ))^{\ast }$ denote the dual of the Hilbert space ${%
H}^{s}(\partial \Omega ,\mu )$. We define on the boundary a (nonlocal)
operator $\Theta _{\mu }:\;{H}^{s}(\partial \Omega ,\mu )\rightarrow ({H}%
^{s}(\partial \Omega ,\mu ))^{\ast }$ as follows: for $u,v\in {H}%
^{s}(\partial \Omega ,\mu )$ we set
\begin{equation}
\langle \Theta _{\mu }(u),v\rangle :=\int \int_{\partial \Omega \times
\partial \Omega }K_{s}(x,y)(u(x)-u(y))(v(x)-v(y))d\mu _{x}d\mu _{y},
\label{nonlocal-op}
\end{equation}%
where $\langle \cdot ,\cdot \rangle $ denotes the duality between ${H}%
^{s}(\partial \Omega ,\mu )$ and $({H}^{s}(\partial \Omega ,\mu ))^{\ast }$.
With some further abuse of notation, from now on $\langle \cdot ,\cdot
\rangle $ denotes the duality between any Banach space $X$ and its dual $%
X^{\ast }.$

Now, we let $\mathcal{V}_{\Theta }^{1,2}(\Omega ,\partial \Omega ,\mu )$ be
the abstract completion of
\begin{equation*}
W_{\mu ,\Theta }:=\{u\in W_{\mu ,2}:\;\int \int_{\partial \Omega \times
\partial \Omega }K_{s}(x,y)|u(x)-u(y)|^{2}d\mu _{x}d\mu _{y}<\infty \}
\end{equation*}%
with respect to the norm
\begin{equation*}
\Vert u\Vert _{\mathcal{V}_{\Theta }^{1,2}(\Omega ,\partial \Omega ,\mu
)}:=\left( \Vert u\Vert _{W^{1,2}(\Omega )}^{2}+\int_{\partial \Omega
}|u|^{2}d\mu +\int \int_{\partial \Omega \times \partial \Omega
}K_{s}(x,y)|u(x)-u(y)|^{2}d\mu _{x}d\mu _{y}\right) ^{1/2}.
\end{equation*}%
It is clear that $\mathcal{V}_{\Theta }^{1,2}(\Omega ,\partial \Omega ,\mu )$
is continuously embedded into $W_{2,2}^{1}(\Omega ,\partial \Omega ,\mu )$
and if $\mu =\sigma $, then
\begin{equation*}
\Vert |u\Vert |:=\left( \Vert \nabla u\Vert _{2,\Omega }^{2}+\int_{\partial
\Omega }|u|^{2}d\sigma +\int \int_{\partial \Omega \times \partial \Omega
}K_{s}(x,y)|u(x)-u(y)|^{2}d\sigma _{x}d\sigma _{y}\right) ^{1/2}.
\end{equation*}%
defines an equivalent norm on the space $\mathcal{V}_{\Theta }^{1,2}(\Omega
,\partial \Omega ,\sigma ),$ and this space is continuously embedded into $%
W_{2,2}^{1}(\Omega ,\partial \Omega )$. We notice that if $\Omega $ has
Lipschitz boundary, then the spaces $W_{2,2}^{1}(\Omega ,\partial \Omega
)=W_{2,2}^{1}(\Omega ,\partial \Omega ,\sigma )=W^{1,2}(\Omega )$ with
equivalent norms. Moreover, if the measure $\mu \equiv 0$, then we always
have $W_{2,2}^{1}(\Omega ,\partial \Omega ,0)=\mathcal{V}_{\Theta
}^{1,2}(\Omega ,\partial \Omega ,0)=\mathcal{W}^{1,2}(\Omega )$ with
equivalent norms. On the other hand, for arbitrary domains or/and for
arbitrary regular Borel measures, in order to have an explicit description
of the Maz'ya type space $W_{2,2}^{1}(\Omega ,\partial \Omega ,\mu )$ and
its subspace $\mathcal{V}_{\Theta }^{1,2}(\Omega ,\partial \Omega ,\mu )$,
we need the following notion of capacity.

\begin{definition}
The \emph{relative capacity} $Cap_{2,\Omega }$ with respect to $\Omega $ is
defined for sets $A\subset \overline{\Omega }$ by
\begin{equation*}
Cap_{2,\Omega }(A):=\inf \left\{ \Vert u\Vert _{W^{1,2}(\Omega )}^{2}\;:\;%
\begin{array}{l}
u\in {\mathcal{W}}^{1,2}(\Omega ),\;\exists \;O\subset {\mathbb{R}}^{N}\text{
open,} \\
A\subset O\text{ and }u\geq 1\text{ a.e. on }\Omega \cap O%
\end{array}%
\right\} .
\end{equation*}

\begin{itemize}
\item A set $P\subset \overline{\Omega }$ is called $Cap_{2,\Omega }$\emph{%
-polar} if $Cap_{2,\Omega }(P)=0$.

\item We say that a property holds $Cap_{2,\Omega }$-quasi everywhere
(briefly q.e.) on a set $A\subset \overline{\Omega }$, if there exists a $%
Cap_{2,\Omega }$-polar set $P$ such that the property holds for all $x\in
A\setminus P$.

\item A function $u$ is called $Cap_{2,\Omega }$-quasi continuous on a set $%
A\subset \overline{\Omega }$ if for all $\varepsilon >0$ there exists an
open set $O$ in the metric space $\overline{\Omega }$ such that $%
Cap_{2,\Omega }(O)\leq \varepsilon $ and $u$ restricted to $A\setminus O$ is
continuous.
\end{itemize}
\end{definition}

The relative capacity $Cap_{2,\Omega }$ has been introduced in \cite{AW1}
(see also \cite{AW2,War,W3} for a more general version) to study the Laplace
operator with linear (local) Robin boundary conditions on arbitrary open
subsets in ${\mathbb{R}}^{N}$. Note that if $\Omega ={\mathbb{R}}^{N}$, then
$Cap_{2,\mathbb{R}^{N}}=Cap_{2}$ is the classical Wiener capacity \cite{B,
EG}. By \cite[Section 2]{AW1}, for every $u\in {\mathcal{W}}^{1,2}(\Omega )$
there exists a unique (up to a $Cap_{2,\Omega }$-polar set) $Cap_{2,\Omega }$%
-quasi continuous function $\tilde{u}:\overline{\Omega }\rightarrow {\mathbb{%
R}}$ such that $\tilde{u}=u$ a.e. on $\Omega $. Moreover, if $u\in {\mathcal{%
W}}^{1,2}(\Omega )$ and $u_{n}\in {\mathcal{W}}^{1,2}(\Omega )$ is a
sequence which converges to $u$ in ${\mathcal{W}}^{1,2}(\Omega )$, then
there is a subsequence of $\tilde{u}_{n}$ which converges to $\tilde{u}$
q.e. on $\overline{\Omega }$.

We have the following situation regarding the Maz'ya type space $%
W_{2,2}^{1}(\Omega ,\partial \Omega ,\mu )$. Let $\Gamma _{\mu }^{\infty }$
be the relatively closed, and let $D(\mu )=\Gamma _{\mu }$ be the relatively
open subsets of $\partial \Omega $ defined in \eqref{GI} and \eqref{LF},
respectively. Then every function $u\in W_{\mu,2 }$ is such that $u|_{\Gamma
_{\mu }^{\infty }}=0$, where we recall that
\begin{equation*}
W_{\mu,2 }:=\{u\in W^{1,2}(\Omega )\cap C(\overline{\Omega }%
):\;\int_{\partial \Omega }|u|^{2}d\mu <\infty \}.
\end{equation*}%
Let $\Gamma \subset \partial \Omega $ be a relatively closed set. Since the
closure of the set $\{u\in W^{1,2}(\Omega )\cap C(\overline{\Omega }%
):\;u|_{\Gamma }=0\}$ in $W^{1,2}(\Omega )$ is the space $\{u\in \mathcal{W}%
^{1,2}(\Omega ):\;\tilde{u}=0\;\mbox{ q.e. on }\;\Gamma \}$, it follows that
every function $u$ in $W_{2,2}^{1}(\Omega ,\partial \Omega ,\mu )$ is zero
q.e. on $\Gamma _{\mu }^{\infty }$ (see \cite{AW1, AW2}). With these
observations, one has the following descriptions:
\begin{equation}
W_{2,2}^{1}(\Omega ,\partial \Omega ,\mu )=\{u\in {\mathcal{W}}^{1,2}(\Omega
),\;\tilde{u}=0\;\mbox{ q.e. on }\;\Gamma _{\mu }^{\infty }\;\mbox{ and }%
\int_{\Gamma _{\mu }}|\tilde{u}|^{2}d\mu <\infty \}  \label{sp10}
\end{equation}%
and
\begin{equation}
\mathcal{V}_{\Theta }^{1,2}(\Omega ,\partial \Omega ,\mu )=\{u\in
W_{2,2}^{1}(\Omega ,\partial \Omega ,\mu ),\;\;\int \int_{\Gamma _{\mu
}\times \Gamma _{\mu }}K_{s}(x,y)|\tilde{u}(x)-\tilde{u}(y)|^{2}d\mu
_{x}d\mu _{y}<\infty \}.  \label{sp11}
\end{equation}%
Note that if $\Gamma _{\mu }^{\infty }=\partial \Omega $ or $\func{Cap}%
_{2,\Omega}(\Gamma_\mu)=0$, then $W_{2,2}^{1}(\Omega ,\partial \Omega ,\mu )=%
\mathcal{V}_{\Theta }^{1,2}(\Omega ,\partial \Omega ,\mu
)=W_{0}^{1,2}(\Omega ):=\overline{\mathcal{D}(\Omega )}^{W^{1,2}(\Omega )}$.
This is the case for the well-known $2$-dimensional open set bounded by the
Koch snowflake if one takes the measure $\mu =\sigma $ and is also the case
for $d$-sets if $\mu =\sigma $ and $d>N-1$. \newline

Throughout the remainder, we identify each function $u\in {\mathcal{W}}%
^{1,2}(\Omega )$ with its quasi-continuous version $\tilde{u}$. With this
identification, we define the bilinear symmetric form $\mathcal{A}_{\Theta
,\mu }$ with domain $\mathcal{V}_{\Theta }^{1,2}(\Omega ,\partial \Omega
,\mu )$ by
\begin{align}
\mathcal{A}_{\Theta ,\mu }(u,v):=& \int_{\Omega }\nabla u\cdot \nabla
vdx+\int_{\partial \Omega }uvd\mu  \notag \\
& +\int \int_{\partial \Omega \times \partial \Omega
}K_{s}(x,y)(u(x)-u(y))(v(x)-v(y))d\mu _{x}d\mu _{y}.  \label{form0}
\end{align}%
Since every function $u\in \mathcal{V}_{\Theta }^{1,2}(\Omega ,\partial
\Omega ,\mu )$ is such that $u=0$ q.e. on $\Gamma _{\mu }^{\infty }$, one
has that the form $\mathcal{A}_{\Theta ,\mu }$ defined in \eqref{form0} is
given by
\begin{align}
\mathcal{A}_{\Theta ,\mu }(u,v):=& \int_{\Omega }\nabla u\cdot \nabla
vdx+\int_{\Gamma _{\mu }}uvd\mu  \notag \\
& +\int \int_{\Gamma _{\mu }\times \Gamma _{\mu
}}K_{s}(x,y)(u(x)-u(y))(v(x)-v(y))d\mu _{x}d\mu _{y}.  \label{form}
\end{align}%
The form $\mathcal{A}_{\Theta ,\mu }$ is closed on $L^{2}(\Omega )$ if and
only if the functional $\varphi _{\Theta ,\mu }:\;L^{2}(\Omega )\rightarrow
\lbrack 0,+\infty ]$ defined by
\begin{equation*}
\varphi _{\Theta ,\mu }(u):=%
\begin{cases}
\frac{1}{2}\mathcal{A}_{\Theta ,\mu }(u,u)\;\;\; & \mbox{ if }\;u\in
\mathcal{V}_{\Theta }^{1,2}(\Omega ,\partial \Omega ,\mu ), \\
+\infty & \mbox{ if }\;u\in L^{2}(\Omega )\setminus \mathcal{V}_{\Theta
}^{1,2}(\Omega ,\partial \Omega ,\mu )%
\end{cases}%
\end{equation*}%
is lower semi-continuous on $L^{2}(\Omega )$. To characterize the lower
semi-continuity of $\varphi _{\Theta ,\mu }$, and hence, the closedness of $%
\mathcal{A}_{\Theta ,\mu }$, we need to following notion of boundary
regularity and/or measure regularity.

\begin{definition}
\label{adm} We say that a subset $\Gamma $ of $\partial \Omega $ is \emph{$%
\func{Cap}_{2,\Omega }$}-admissible with respect to $\mu $, if for every
Borel set $A\subset \Gamma $, one has $\func{Cap}_{2,\Omega }(A)=0$ implies $%
\mu (A)=0$.
\end{definition}

The following result is taken from \cite[Theorems 4.4 and 5.2]{W3} (see also
\cite[Theorem 4.2.1]{War} and \cite[Theorem 2.11]{BW2}).

\begin{theorem}
\label{lower-semi} The following assertions are equivalent.

\begin{enumerate}
\item[(i)] The operator $R:\;W_{2,2}^{1}(\Omega ,\partial \Omega ,\mu
)\rightarrow L^{2}(\Omega )$, $u\mapsto u|_{\Omega}$ is injective.

\item[(ii)] The set $\Gamma _{\mu }$ is $\func{Cap}_{2,\Omega }$-admissible
with respect to $\mu $.

\item[(iii)] The bilinear form $\mathcal{A}_{\Theta ,\mu }$ is closed on $%
L^{2}(\Omega )$.
\end{enumerate}
\end{theorem}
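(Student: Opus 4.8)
The plan is to establish the cycle (ii)$\Rightarrow$(i)$\Rightarrow$(iii)$\Rightarrow$(ii); the equivalence (i)$\Leftrightarrow$(ii) carries the real content and is capacity-theoretic, while (i)$\Leftrightarrow$(iii) should follow softly from the abstract theory of closed symmetric forms. For (ii)$\Rightarrow$(i) I would take $u\in W_{2,2}^{1}(\Omega,\partial\Omega,\mu)$ with $Ru=0$ and approximate it by $u_{n}\in W_{\mu,2}$ in the $W_{2,2}^{1}(\Omega,\partial\Omega,\mu)$-norm. Since that norm dominates both the $W^{1,2}(\Omega)$-norm and the $L^{2}(\partial\Omega,\mu)$-norm, $(u_{n})$ is Cauchy in $W^{1,2}(\Omega)$ and converges there to $0$ (because $Ru=0$), while $u_{n}|_{\partial\Omega}$ converges in $L^{2}(\partial\Omega,\mu)$ to some $w$ with $\|u\|_{W_{2,2}^{1}(\Omega,\partial\Omega,\mu)}^{2}=\|u\|_{W^{1,2}(\Omega)}^{2}+\int_{\partial\Omega}|w|^{2}\,d\mu$, so it suffices to prove $w=0$ $\mu$-a.e. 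As $u_{n}\to0$ in $\mathcal{W}^{1,2}(\Omega)$, a subsequence of the (quasi-continuous, hence here literal) representatives converges to $0$ $\operatorname{Cap}_{2,\Omega}$-q.e.\ on $\overline{\Omega}$, in particular q.e.\ on $\partial\Omega$; every $u_{n}$ vanishes on $\Gamma_{\mu}^{\infty}$, and since $\Gamma_{\mu}$ is $\operatorname{Cap}_{2,\Omega}$-admissible with respect to $\mu$, the polar exceptional set is $\mu$-null, so $u_{n}\to0$ $\mu$-a.e.\ on $\partial\Omega$. Matching this with the $L^{2}(\partial\Omega,\mu)$-limit (along a further subsequence, $\mu$-a.e.) gives $w=0$ $\mu$-a.e., hence $u=0$ and $R$ is injective.

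For (i)$\Rightarrow$(iii): since $\mathcal{V}_{\Theta}^{1,2}(\Omega,\partial\Omega,\mu)$ embeds continuously into $W_{2,2}^{1}(\Omega,\partial\Omega,\mu)$, injectivity of $R$ there yields injectivity of $R$ on $\mathcal{V}_{\Theta}^{1,2}(\Omega,\partial\Omega,\mu)$; identifying this space with its image in $L^{2}(\Omega)$ turns $\mathcal{A}_{\Theta,\mu}$ into a nonnegative symmetric form whose form norm $\bigl(\mathcal{A}_{\Theta,\mu}(u,u)+\|u\|_{L^{2}(\Omega)}^{2}\bigr)^{1/2}$ equals $\|u\|_{\mathcal{V}_{\Theta}^{1,2}(\Omega,\partial\Omega,\mu)}$. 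Its domain is dense in $L^{2}(\Omega)$ (it contains $\mathcal{D}(\Omega)$) and complete in the form norm (being an abstract completion), and the form norm dominates the $L^{2}(\Omega)$-norm; the standard criterion for closedness of a form then gives that $\mathcal{A}_{\Theta,\mu}$ is closed on $L^{2}(\Omega)$, i.e.\ $\varphi_{\Theta,\mu}$ is lower semi-continuous.

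For (iii)$\Rightarrow$(ii) I would argue by contraposition. Failure of admissibility produces a Borel set $A\subset\Gamma_{\mu}$ with $\operatorname{Cap}_{2,\Omega}(A)=0$ and $\mu(A)>0$, which we may take compact by inner regularity of the Radon measure $\mu|_{\Gamma_{\mu}}$. Exploiting that $\operatorname{Cap}_{2,\Omega}$ is computed by continuous test functions, together with a fixed cut-off equal to $1$ near $A$ and supported in a neighbourhood $U$ of $A$ with $\mu(\overline{U}\cap\partial\Omega)<\infty$, I would build $v_{n}\in W_{\mu,\Theta}$ with $0\le v_{n}\le1$, $v_{n}\equiv1$ on $A$, $\|v_{n}\|_{W^{1,2}(\Omega)}\to0$, and $(v_{n})$ bounded in $\mathcal{V}_{\Theta}^{1,2}(\Omega,\partial\Omega,\mu)$, the nonlocal energies $\mathcal{N}_{s,\mu}(v_{n})$ being kept bounded using the behaviour of $\mu$ near $A$. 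Passing to a weak limit $v_{n}\rightharpoonup u$ in $\mathcal{V}_{\Theta}^{1,2}(\Omega,\partial\Omega,\mu)$, weak continuity of $R$ forces $Ru=0$ (as $Rv_{n}\to0$ in $L^{2}(\Omega)$), while weak continuity of the boundary restriction gives $\int_{A}u\,d\mu=\lim_{n}\int_{A}v_{n}\,d\mu=\mu(A)>0$, so $u\neq0$. Hence $R$ is not injective on $\mathcal{V}_{\Theta}^{1,2}(\Omega,\partial\Omega,\mu)$, the value $\mathcal{A}_{\Theta,\mu}(u,u)$ is not determined by $u|_{\Omega}$, and $\mathcal{A}_{\Theta,\mu}$ cannot be closed on $L^{2}(\Omega)$.

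The hard part will be this last construction: turning $\operatorname{Cap}_{2,\Omega}(A)=0$ into genuinely continuous Sobolev functions that are honestly $\equiv1$ near $A$ (so that their boundary restrictions stay bounded away from $0$ in $L^{2}(\partial\Omega,\mu)$) is delicate precisely because quasi-continuous representatives are defined only modulo $\operatorname{Cap}_{2,\Omega}$-polar sets, which under non-admissibility may carry positive $\mu$-mass; one therefore cannot argue with q.e.\ representatives and must instead pass to weak limits of honest approximants. The secondary technical point is keeping the nonlocal energies $\mathcal{N}_{s,\mu}(v_{n})$ bounded, which uses the interplay between $s$, the local dimension of $\partial\Omega$ near $A$, and the measure $\mu$.
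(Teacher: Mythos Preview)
The paper does not prove this theorem; it simply records that the result is taken from \cite[Theorems~4.4 and~5.2]{W3} (with related statements in \cite{War,BW2}). So there is no in-paper argument to compare against, and your proposal must be judged on its own.

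Your implications (ii)$\Rightarrow$(i) and (i)$\Rightarrow$(iii) are correct. For (ii)$\Rightarrow$(i) the capacity argument is exactly right: $u_n\to 0$ in $\mathcal{W}^{1,2}(\Omega)$ forces a subsequence of the (here genuine) boundary values to converge to $0$ $\operatorname{Cap}_{2,\Omega}$-q.e.\ on $\partial\Omega$, and admissibility turns this into $\mu$-a.e.\ convergence on $\Gamma_\mu$, which matches the $L^{2}(\partial\Omega,\mu)$-limit. For (i)$\Rightarrow$(iii) one only needs that the natural map $\mathcal{V}_{\Theta}^{1,2}(\Omega,\partial\Omega,\mu)\to W_{2,2}^{1}(\Omega,\partial\Omega,\mu)$ is injective; this follows because $H^{s}(\partial\Omega,\mu)$ is itself complete, so a $\mathcal{V}_{\Theta}^{1,2}$-Cauchy sequence whose $W_{2,2}^{1}$-part tends to $0$ also has $\mathcal{N}_{s,\mu}$-part tending to $0$.

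The genuine gap is in your contrapositive $\neg$(ii)$\Rightarrow\neg$(iii). You correctly isolate the obstruction: the capacity test functions $v_n$ furnished by $\operatorname{Cap}_{2,\Omega}(A)=0$ are only known to lie in $W^{1,2}(\Omega)\cap C(\overline\Omega)$, and there is no mechanism to force $\mathcal{N}_{s,\mu}(v_n)$ to be finite, let alone uniformly bounded. Your suggested fix --- exploiting ``the interplay between $s$, the local dimension of $\partial\Omega$ near $A$, and the measure $\mu$'' --- is unavailable in the stated generality: the theorem carries \emph{no} Ahlfors-type or dimensional hypothesis on $\mu$, and for an arbitrary regular Borel measure even Lipschitz functions on $\overline\Omega$ may fail to have finite $\mathcal{N}_{s,\mu}$. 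So as written this step is unproved, and the sketch does not close the cycle. The references \cite{W3,War,BW2} organise the argument differently; in particular \cite[Theorem~4.4]{W3} first establishes the purely local equivalence (i)$\Leftrightarrow$(ii) (where the counterexample for $\neg$(ii)$\Rightarrow\neg$(i) lives in $W_{\mu,2}$ and needs no control of the nonlocal energy), and the passage to the nonlocal form is handled separately in \cite[Theorem~5.2]{W3}. You should consult those sources for the missing device rather than attempt to bound $\mathcal{N}_{s,\mu}(v_n)$ directly.
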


To see that Theorem \ref{lower-semi} applies to a large class of rough
domains, let us recall that $\Omega $ is said to have the $W^{1,2}$%
-extension property if for every $u\in W^{1,2}(\Omega )$ there exists $U\in
W^{1,2}({\mathbb{R}}^{N})$ such that $U|_{\Omega }=u$ a.e. In that case, by
\cite[Theorem 5]{Kos}, there exists a bounded linear extension operator $%
\mathcal{E}$ from $W^{1,2}(\Omega )$ into $W^{1,2}({\mathbb{R}}^{N})$. It is
worth emphasizing that for extension domains the spaces $\mathcal{W}%
^{1,2}(\Omega )$ and $W^{1,2}(\Omega )$ coincide. The following important
result is proven in \cite[Lemma 4.7]{W3}.

\begin{theorem}
If $\Omega \subset {\mathbb{R}}^{N}$ ($N\geq 2$) has the $W^{1,2}$-extension
property, then $\partial \Omega $ is $\func{Cap}_{2,\Omega }$-admissible
with respect to $\mu =\mathcal{H}_{\mid \partial \Omega }^{d},$ the $d$%
-dimensional Hausdorff measure of $\partial \Omega ,$ for any $d\in (N-2,N).$
\end{theorem}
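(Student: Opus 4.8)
The assertion to verify is the one in Definition~\ref{adm}: for every Borel set $A\subset\partial\Omega$ with $\Cap_{2,\Omega}(A)=0$ one must have $\mu(A)=\mathcal H^{d}(A)=0$. The plan is to factor this through the classical ($\RR^{N}$) $W^{1,2}$-capacity $\Cap_{2}:=\Cap_{2,\RR^{N}}$, using the well-known relation between Sobolev capacity and Hausdorff dimension: if $A\subset\RR^{N}$ is Borel and $\Cap_{2}(A)=0$, then $\mathcal H^{d}(A)=0$ for every $d>N-2$ (see e.g.\ \cite{EG}); this is, incidentally, precisely why the range $d\in(N-2,N)$ occurs in the statement. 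Granting this, it suffices to prove the implication: \emph{if $\Omega$ has the $W^{1,2}$-extension property and $A\subset\partial\Omega$ is Borel with $\Cap_{2,\Omega}(A)=0$, then $\Cap_{2}(A)=0$.}

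To prove this implication I would use the two facts recalled just before the statement: for a $W^{1,2}$-extension domain one has $\mathcal W^{1,2}(\Omega)=W^{1,2}(\Omega)$, and there is a bounded linear extension operator $\mathcal E: W^{1,2}(\Omega)\to W^{1,2}(\RR^{N})$, with norm $C_{\mathcal E}$, say. Fix $\varepsilon>0$. Since $\Cap_{2,\Omega}(A)=0$, there exist $u\in\mathcal W^{1,2}(\Omega)$ with $\|u\|_{W^{1,2}(\Omega)}^{2}<\varepsilon$ and an open set $O\subset\RR^{N}$ with $A\subset O$ and $u\ge1$ a.e.\ on $\Omega\cap O$; replacing $u$ by $(u\vee0)\wedge1$ we may assume $0\le u\le1$ (truncation does not increase the $W^{1,2}$-norm and keeps $u\ge1$ a.e.\ on $\Omega\cap O$). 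Put $U:=\mathcal E u\in W^{1,2}(\RR^{N})$, so that $\|U\|_{W^{1,2}(\RR^{N})}^{2}\le C_{\mathcal E}^{2}\varepsilon$, and let $\tilde U$ be the $\Cap_{2}$-quasi-continuous representative of $U$. Then $\tilde U=U=u$ a.e.\ in $\Omega$, so $\tilde U\ge1$ a.e.\ on the open set $\Omega\cap O$; since $\tilde U$ is quasi-continuous, this upgrades to $\tilde U\ge1$ $\Cap_{2}$-q.e.\ on $\overline{\Omega\cap O}$. As $\Omega$ is open and $A\subset\partial\Omega\cap O$, every point of $A$ is a limit of points of $\Omega$ lying in the open set $O$, hence $A\subset\overline{\Omega\cap O}$ and therefore $\tilde U\ge1$ $\Cap_{2}$-q.e.\ on $A$.

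It remains to turn this quasi-everywhere lower bound into an estimate on $\Cap_{2}(A)$. Here I would invoke the standard weak-type (Chebyshev) estimate for the Sobolev capacity: for a quasi-continuous $v\in W^{1,2}(\RR^{N})$ and $\lambda>0$ one has $\Cap_{2}(\{\tilde v>\lambda\})\le\lambda^{-2}\|v\|_{W^{1,2}(\RR^{N})}^{2}$. Let $P:=\{x\in A:\tilde U(x)<1\}$, a $\Cap_{2}$-polar set, so $A\setminus P\subset\{\tilde U>\tfrac12\}$; then by subadditivity and monotonicity of $\Cap_{2}$,
\[ \Cap_{2}(A)\le\Cap_{2}(A\setminus P)+\Cap_{2}(P)\le\Cap_{2}\bigl(\{\tilde U>\tfrac12\}\bigr)\le 4\,\|U\|_{W^{1,2}(\RR^{N})}^{2}\le 4\,C_{\mathcal E}^{2}\,\varepsilon . \]
Letting $\varepsilon\downarrow0$ gives $\Cap_{2}(A)=0$, and the capacity/Hausdorff relation quoted in the first paragraph yields $\mathcal H^{d}(A)=0$ for all $d\in(N-2,N)$, i.e.\ $\partial\Omega$ is $\Cap_{2,\Omega}$-admissible with respect to $\mu=\mathcal{H}_{\mid\partial\Omega}^{d}$.

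I expect the crux to be the comparison of the two capacities in the second paragraph. The extension operator only controls the $W^{1,2}(\RR^{N})$-norm of $U$ and reproduces $u$ inside $\Omega$; it says nothing about the pointwise size of $U$ on the exterior side of $A\subset\partial\Omega$, so $U$ cannot be inserted directly into the neighbourhood-based definition of $\Cap_{2}(A)$. The device that circumvents this is to work throughout with the $\Cap_{2}$-quasi-continuous representative $\tilde U$: one transfers the a.e.\ lower bound on the open set $\Omega\cap O$ to a quasi-everywhere lower bound on its closure $\overline{\Omega\cap O}\supset A$, and then trades the ``neighbourhood'' description of $\Cap_{2}(A)$ for the equivalent ``q.e.\ on $A$'' description by means of the weak-type estimate. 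The only steps that require genuine care are the legitimacy of the truncation, the passage from ``a.e.\ on an open set'' to ``q.e.\ on its closure'' for quasi-continuous functions, and the precise range ($d>N-2$) in the classical capacity/Hausdorff estimate.
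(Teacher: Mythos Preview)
The paper does not supply its own proof of this theorem; it simply cites \cite[Lemma~4.7]{W3}. Your overall strategy---push the problem from $\Cap_{2,\Omega}$ to the classical capacity $\Cap_2$ via the extension operator, then invoke the standard fact that $\Cap_2$-polar sets have $\mathcal H^d$-measure zero for every $d>N-2$---is the natural one and matches what one expects in the cited source. The truncation, the weak-type capacitary inequality, and the Hausdorff-measure conclusion are all correctly stated.

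There is, however, a genuine gap at the step you yourself flag. The implication ``$\tilde U\ge 1$ a.e.\ on the open set $G=\Omega\cap O$, $\tilde U$ quasi-continuous $\Rightarrow$ $\tilde U\ge 1$ $\Cap_2$-q.e.\ on $\overline G$'' is \emph{false} for general open sets. Quasi-continuous functions are finely continuous q.e., so an a.e.\ (hence q.e.) lower bound on $G$ propagates only to the \emph{fine} closure of $G$, which can be strictly smaller than $\overline G$: the interior of a Lebesgue spine in $\RR^3$ is an open set that is thin at its tip, and at such a point the inequality can fail. As written, your argument does not yet use the extension hypothesis to rule this out, so the passage to $A\subset\partial\Omega$ is unjustified.

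The repair uses the extension property a second time, in an essential way. By \cite{Kos}, every $W^{1,2}$-extension domain satisfies the measure density condition: there exists $c>0$ such that $|\Omega\cap B(x,r)|\ge c\,r^N$ for all $x\in\overline\Omega$ and all small $r>0$. A set with positive lower Lebesgue density at $x$ is never thin at $x$ (the Wiener series diverges), so $\Omega$---and hence $\Omega\cap O$ for any open $O$ containing $x$---is not thin at any $x\in\partial\Omega$. Consequently every $x\in A$ lies in the fine closure of $\Omega\cap O$, and fine continuity of $\tilde U$ (which holds $\Cap_2$-q.e.) gives $\tilde U(x)\ge 1$ for $\Cap_2$-q.e.\ $x\in A$. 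With this insertion your argument is complete.
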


Furthermore, if $\mu $ is an upper $d$ -Ahlfors measure on $\partial \Omega $
for some $d\in (N-2,N)\cap (0,N)$, then by \cite[Remark 2.20]{BW2}, $%
\partial \Omega $ is $\func{Cap}_{2,\Omega }$-admissible with respect to $%
\mu $. In particular, since bounded Lipschitz domains and, for example, the
Koch snowflake and the ramified tree domains (see Examples \ref{snow} and %
\ref{ram}, respectively), have the $W^{1,2}$-extension property then their
boundaries are $\func{Cap}_{2,\Omega }$-admissible with respect to $\sigma $%
. For the Koch snowflake and the tree domain, $\partial \Omega $ is also $%
\func{Cap}_{2,\Omega }$-admissible with respect to $\mathcal{H}_{\mid
\partial \Omega }^{d}$ where $d=\ln (4)/\ln (3)$ for the snowflake and $%
d=-\ln (2)/\ln (a)$ for the tree domain, respectively. Some examples of open
sets, whose boundaries are not $\func{Cap}_{2,\Omega }$-admissible with
respect to $\sigma $ are contained in \cite[Examples 1.5, 1.6, 4.2, 4.3]{AW1}%
.

\subsection{The Laplacian with nonlocal Robin boundary conditions}

\label{sec-Maz}

In this subsection we define a realization of the Laplace operator with
nonlocal Robin boundary conditions on $L^{2}(\Omega )$. We also deduce some
properties of this operator that are needed to prove our main results.%
\newline

Throughout the remainder of this article, we assume that $\Omega \subset {%
\mathbb{R}}^{N}$ is a bounded domain with boundary $\partial \Omega $, $\mu $
is a regular Borel measure on $\partial \Omega $ such that its domain $%
\Gamma _{\mu }$ defined in \eqref{LF} is $\func{Cap}_{2,\Omega }$-admissible
with respect to $\mu $. Then, by Theorem \ref{lower-semi}, the bilinear form
$\mathcal{A}_{\Theta ,\mu }$ defined in \eqref{form} is closed on $%
L^{2}(\Omega )$. Let $A_{\Theta ,\mu }$ be the closed, linear, self-adjoint
operator associated with $\mathcal{A}_{\Theta ,\mu }$ in the sense that
\begin{equation}
\begin{cases}
D(A_{\Theta ,\mu }) & :=\{u\in \mathcal{V}_{\Theta }^{1,2}(\Omega ,\partial
\Omega ,\mu ),\;\exists \;v\in L^{2}(\Omega ),\;\mathcal{A}_{\Theta ,\mu
}(u,\varphi )=(v,\varphi )_{L^{2}(\Omega )},\;\forall \;\varphi \in \mathcal{%
V}_{\Theta }^{1,2}(\Omega ,\partial \Omega ,\mu )\} \\
A_{\Theta ,\mu }u & :=v.%
\end{cases}
\label{sa_op}
\end{equation}

The operator $A_{\Theta ,\mu }$ can be described explicitly as follows:%
\begin{equation*}
D(A_{\Theta ,\mu })=\{u\in \mathcal{V}_{\Theta }^{1,2}(\Omega ,\partial
\Omega ,\mu ):\Delta u\in L^{2}(\Omega ),\;\partial _{\nu }ud\sigma
+(u+\Theta _{\mu }(u))d\mu =0\;\mbox{ on }\;\partial \Omega \},
\end{equation*}%
and
\begin{equation*}
A_{\Theta ,\mu }u=-\Delta u.
\end{equation*}%
We note that if $\mu =\sigma $ or, more generally, if $\mu $ is absolutely
continuous with respect to $\sigma $ (in particular, $d\mu _{x}=b\left(
x\right) d\sigma _{x}$, for some nonnegative bounded measurable function $%
b\left( x\right) \in L^{\infty }\left( \partial \Omega \right) $), then
\begin{equation*}
D(A_{\Theta ,\sigma })=\{u\in \mathcal{V}_{\Theta }^{1,2}(\Omega ,\partial
\Omega ,\sigma ):\Delta u\in L^{2}(\Omega ),\;\partial _{\nu
}u+b(x)u+a(x,y)\Theta _{\sigma }(u)=0\;\mbox{ on }\;\partial \Omega \},
\end{equation*}%
which corresponds to the classical (or usual) nonlocal Robin boundary
conditions. Here $a\left( x,y\right) =b\left( x\right) b\left( y\right) $ is
a bounded measurable function on $\partial \Omega \times \partial \Omega $
such that $d\mu _{x}d\mu _{y}=a\left( x,y\right) d\sigma _{x}d\sigma _{y}$.
We also notice that the description \eqref{sp11} for $\mathcal{V}_{\Theta
}^{1,2}(\Omega ,\partial \Omega ,\mu )$ shows that one always has a
Dirichlet boundary conditions on the part $\Gamma _{\mu }^{\infty }$.
Clearly the operator $A_{\Theta ,\mu },$ defined above, satisfies
\begin{equation}
A_{\Theta ,\mu }:\;\mathcal{V}_{\Theta }^{1,2}(\Omega ,\partial \Omega ,\mu
)\rightarrow (\mathcal{V}_{\Theta }^{1,2}(\Omega ,\partial \Omega ,\mu
))^{\ast },  \label{op-star}
\end{equation}%
that is, $A_{\Theta ,\mu }$ maps $\mathcal{V}_{\Theta }^{1,2}(\Omega
,\partial \Omega ,\mu )$ into $(\mathcal{V}_{\Theta }^{1,2}(\Omega ,\partial
\Omega ,\mu ))^{\ast }$.\newline

Throughout the remainder of this article, if $\mu $ is absolutely continuous
with respect to $\sigma $, we will simply say that $\mu =\sigma $. We have
the following result.

\begin{theorem}
\label{3prop} Let $\Gamma _{\mu }$ be $\func{Cap}_{2,\Omega }$-admissible
with respect to $\mu $. Assume that either $\mu =\sigma $ or that
\begin{equation}
\mathcal{V}_{\Theta }^{1,2}(\Omega ,\partial \Omega ,\mu )\hookrightarrow
L^{2q}(\Omega )\mbox{ for some }q=q\left( N,\Omega \right) >1\;\mbox{ if }%
\;\mu \neq \sigma .  \label{CI-mu}
\end{equation}%
Then the following assertions hold.

\begin{enumerate}
\item The operator $-A_{\Theta ,\mu }$ generates a strongly continuous
semigroup $(e^{-tA_{\Theta ,\mu }})_{t\geq 0}$ of contractions on $%
L^{2}(\Omega ) $ which can be extended to contraction strongly continuous
semigroups on $L^{p}(\Omega )$ for every $p\in \lbrack 1,\infty )$.

\item The operator $A_{\Theta ,\mu }$ has a compact resolvent, and hence has
a discrete spectrum. The spectrum of $A_{\Theta ,\mu }$ is an increasing
sequence of real numbers $0\leq \lambda _{1}<\lambda _{2}<\cdots <\lambda
_{n}<\dots ,$ that converges to $+\infty $. Moreover, if $u_{n}$ is an
eigenfunction associated with $\lambda _{n}$, then $u_{n}\in D\left(
A_{\Theta ,\mu }\right) \cap L^{\infty }(\Omega )$.

\item Assume also that either $\func{Cap}_{2,\Omega }(\Gamma _{\mu }^{\infty
})>0$ or $\mu (\Gamma _{\mu })>0$. Then $0\in \rho (A_{\Theta ,\mu }),$ the
resolvent set of $A_{\Theta ,\mu }$, and there exists a constant $C>0$ such
that, for every $u\in \mathcal{V}_{\Theta }^{1,2}(\Omega ,\partial \Omega
,\mu )$,
\begin{equation}
\Vert u\Vert _{2,\Omega }^{2}\leq C\mathcal{A}_{\Theta ,\mu }(u,u).
\label{eq-coer}
\end{equation}

\item Denoting the generator of the semigroup on $L^{p}(\Omega )$ by $%
A_{p,\Theta ,\mu }$, so that $A_{\Theta ,\mu }=A_{2,\Theta ,\mu }$, then the
spectrum of $A_{p,\Theta ,\mu }$ is independent of $p$. The spectral bound
\begin{equation*}
s\left( A_{p,\Theta ,\mu }\right) =\inf \left\{ \func{Re}\lambda :\lambda
\in \sigma \left( A_{p,\Theta ,\mu }\right) \right\}
\end{equation*}%
of $A_{p,\Theta ,\mu }$ is an algebraically simple eigenvalue. It is the
only eigenvalue having a positive eigenfunction.

\item For each $\theta \in (0,1]$, there holds $D(A_{\Theta ,\mu }^{\theta
})\subset L^{\infty }\left( \Omega \right) $ provided that $\theta >\gamma .$
Here, $\gamma =N/2$ if $\mu =\sigma $ and $\gamma =q/2(q-1)$ if $\mu \neq
\sigma $.
\end{enumerate}
\end{theorem}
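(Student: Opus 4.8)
The plan is to derive every assertion from two inputs: the general theory of symmetric Dirichlet forms, and an ultracontractivity estimate furnished directly by the Maz'ya inequality \eqref{M2-N-1} (when $\mu=\sigma$) or by the standing embedding \eqref{CI-mu} (when $\mu\neq\sigma$). \emph{Item (1).} Since $\Gamma_\mu$ is $\Cap_{2,\Omega}$-admissible with respect to $\mu$, Theorem~\ref{lower-semi} gives that the symmetric nonnegative form $\mathcal{A}_{\Theta,\mu}$ with domain $\mathcal{V}_\Theta^{1,2}(\Omega,\partial\Omega,\mu)$ is closed on $L^2(\Omega)$, and it is densely defined because $\mathcal{D}(\Omega)\subset\mathcal{V}_\Theta^{1,2}(\Omega,\partial\Omega,\mu)$. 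Hence $A_{\Theta,\mu}$ is self-adjoint and nonnegative and $-A_{\Theta,\mu}$ generates an analytic $C_0$-semigroup of contractions on $L^2(\Omega)$. For the $L^p$-extrapolation I check the Beurling--Deny criteria: for $v:=(0\vee u)\wedge1$ one has $\|\nabla v\|_{2,\Omega}\leq\|\nabla u\|_{2,\Omega}$, $|v|\leq|u|$ $\mu$-a.e.\ on $\Gamma_\mu$, and $|v(x)-v(y)|\leq|u(x)-u(y)|$ because the truncation is a normal contraction, so $v\in\mathcal{V}_\Theta^{1,2}(\Omega,\partial\Omega,\mu)$ and $\mathcal{A}_{\Theta,\mu}(v,v)\leq\mathcal{A}_{\Theta,\mu}(u,u)$; thus $\mathcal{A}_{\Theta,\mu}$ is a Dirichlet form, the semigroup is submarkovian, and by interpolation it extends to a contraction $C_0$-semigroup on every $L^p(\Omega)$, $1\leq p<\infty$.

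\emph{Ultracontractivity and items (2), (5).} Adding $\|u\|_{2,\Omega}^2$ to \eqref{M2-N-1} and using $\|\nabla u\|_{2,\Omega}^2+\|u\|_{L^2(\partial\Omega,\sigma)}^2\leq\mathcal{A}_{\Theta,\mu}(u,u)+\|u\|_{2,\Omega}^2$ produces the Sobolev inequality $\|u\|_{p^\star,\Omega}^2\leq C\bigl(\mathcal{A}_{\Theta,\mu}(u,u)+\|u\|_{2,\Omega}^2\bigr)$, $p^\star=2N/(N-1)$, for $\mu=\sigma$; for $\mu\neq\sigma$ the same inequality with $2q$ in place of $p^\star$ is exactly \eqref{CI-mu}. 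Writing $p^\star=2d/(d-2)$ identifies the effective dimension $d=2N$ (resp.\ $d=2q/(q-1)$), and this Sobolev inequality is equivalent to a Nash inequality; the standard Nash/Moser iteration then yields $\|e^{-tA_{\Theta,\mu}}\|_{L^1(\Omega)\to L^\infty(\Omega)}\leq Ct^{-d/2}$, hence $\|e^{-tA_{\Theta,\mu}}\|_{L^2(\Omega)\to L^\infty(\Omega)}\leq Ct^{-d/4}$ for $t\in(0,1]$, with $C$ depending only on $N$ and $\Omega$. The gain of integrability in the Sobolev inequality, together with the $L^2$-bound on $\nabla u$, also makes the embedding $\mathcal{V}_\Theta^{1,2}(\Omega,\partial\Omega,\mu)\hookrightarrow L^2(\Omega)$ compact by a Rellich-type argument (as in \cite{W3,BW2}), so $A_{\Theta,\mu}$ has compact resolvent and, being self-adjoint nonnegative, has spectrum a discrete sequence $0\leq\lambda_1<\lambda_2<\cdots\to+\infty$. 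For (5), assuming $0\in\rho(A_{\Theta,\mu})$ (otherwise replace $A_{\Theta,\mu}$ by $I+A_{\Theta,\mu}$, which has the same fractional-power domains) I use $A_{\Theta,\mu}^{-\theta}=\Gamma(\theta)^{-1}\int_0^\infty t^{\theta-1}e^{-tA_{\Theta,\mu}}\,dt$ and bound $\|A_{\Theta,\mu}^{-\theta}g\|_{\infty,\Omega}\leq\Gamma(\theta)^{-1}\int_0^\infty t^{\theta-1}\|e^{-tA_{\Theta,\mu}}g\|_{\infty,\Omega}\,dt$; the short-time part converges (uniformly over $\|g\|_{2,\Omega}\leq1$) iff $\int_0^1 t^{\theta-1-d/4}\,dt<\infty$, i.e.\ $\theta>d/4=\gamma$ with $\gamma=N/2$ for $\mu=\sigma$ and $\gamma=q/2(q-1)$ for $\mu\neq\sigma$, while the long-time part is controlled by $\|e^{-tA_{\Theta,\mu}}\|_{2\to2}\leq1$; hence $D(A_{\Theta,\mu}^\theta)\subset L^\infty(\Omega)$ for $\theta>\gamma$. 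In particular every eigenfunction satisfies $u_n=e^{\lambda_n}e^{-A_{\Theta,\mu}}u_n\in L^\infty(\Omega)$ by ultracontractivity, which finishes (2).

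\emph{Items (3) and (4).} For \eqref{eq-coer} I argue by contradiction: if it fails, pick $u_n$ with $\|u_n\|_{2,\Omega}=1$ and $\mathcal{A}_{\Theta,\mu}(u_n,u_n)\to0$; then $(u_n)$ is bounded in $\mathcal{V}_\Theta^{1,2}(\Omega,\partial\Omega,\mu)$, so along a subsequence $u_n\rightharpoonup u$ there and $u_n\to u$ in $L^2(\Omega)$ by compactness, whence $\|u\|_{2,\Omega}=1$; weak lower semicontinuity gives $\mathcal{A}_{\Theta,\mu}(u,u)=0$, so $\nabla u\equiv0$ — hence $u$ is a.e.\ a constant $c$ because $\Omega$ is connected — together with $\int_{\Gamma_\mu}|u|^2\,d\mu=0$ and $u=0$ q.e.\ on $\Gamma_\mu^\infty$. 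If $\mu(\Gamma_\mu)>0$ the relation $c^2\mu(\Gamma_\mu)=0$ forces $c=0$; if $\Cap_{2,\Omega}(\Gamma_\mu^\infty)>0$ then $c$ vanishes on a set of positive relative capacity, so again $c=0$; either way $\|u\|_{2,\Omega}=0$, a contradiction. Thus \eqref{eq-coer} holds, $\lambda_1>0$, and $0\in\rho(A_{\Theta,\mu})$. For (4), $p$-independence of $\sigma(A_{p,\Theta,\mu})$ follows from the consistency of the semigroups together with ultracontractivity (the standard interpolation-of-spectrum result for consistent ultracontractive semigroups). Moreover $(e^{-tA_{\Theta,\mu}})$ is positivity preserving and irreducible — irreducibility because $\Omega$ is connected, so $L^2(\Omega)$ has no nontrivial closed invariant ideal — and has compact resolvent, so $s(A_{p,\Theta,\mu})=\lambda_1$ is a pole of the resolvent; the Perron--Frobenius theory of positive irreducible semigroups then gives that $\lambda_1$ is an algebraically simple eigenvalue with a strictly positive eigenfunction and is the only eigenvalue admitting a positive eigenfunction.

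\emph{The main obstacle.} The one genuinely domain-sensitive ingredient is the ``ultracontractivity + compact embedding'' package, which must be extracted solely from \eqref{M2-N-1} (or \eqref{CI-mu}) rather than from any Sobolev embedding requiring regularity of $\partial\Omega$; the delicate points are (i) running the Nash/Moser iteration through \eqref{M2-N-1} so that all constants depend only on $N$ and $\Omega$, and (ii) in item (3), keeping track of quasi-continuous representatives so that ``$u=0$ q.e.\ on $\Gamma_\mu^\infty$'' may legitimately be used against $\Cap_{2,\Omega}(\Gamma_\mu^\infty)>0$. Everything else is the standard form and semigroup machinery.
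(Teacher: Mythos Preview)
Your proposal is correct and follows essentially the same route as the paper: closedness of the form via Theorem~\ref{lower-semi} and the Beurling--Deny criteria for item~(a); the Sobolev-type inequality \eqref{M2-N-1}/\eqref{CI-mu} $\Rightarrow$ ultracontractivity $\Rightarrow$ compact resolvent and $L^\infty$-eigenfunctions for (b); the ``$\nabla u=0\Rightarrow u$ constant $\Rightarrow u=0$'' dichotomy for (c); ultracontractivity plus compact resolvent for $p$-independence and Perron--Frobenius for the rest of (d); and the integral representation of $(I+A_{\Theta,\mu})^{-\theta}$ split at $t=1$ for (e). The only cosmetic differences are that the paper obtains $u_n\in L^\infty(\Omega)$ via the resolvent formula $u_n=(\lambda_n+\alpha)(\alpha I+A_{\Theta,\mu})^{-1}u_n$ rather than $u_n=e^{\lambda_n}e^{-A_{\Theta,\mu}}u_n$, and proves (c) directly from a putative null eigenfunction rather than your minimizing-sequence compactness argument---both variants are equivalent.
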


\begin{proof}
(a) This part follows from \cite[Corollary 5.3]{W3} (where the assumption %
\eqref{CI-mu} is not needed).

(b) First, we notice that since we have assumed that $\Gamma _{\mu }$ is $%
\func{Cap}_{2,\Omega }$-admissible with respect to $\mu $, we have from
Theorem \ref{lower-semi} that the continuous embedding $\mathcal{V}_{\Theta
}^{1,2}(\Omega ,\partial \Omega ,\mu )\hookrightarrow L^{2}(\Omega )$ is
also an injection. Moreover it follows from \eqref{M2-N} if $\mu =\sigma $
and the assumption \eqref{CI-mu} if $\mu \neq \sigma $, that the embedding $%
\mathcal{V}_{\Theta }^{1,2}(\Omega ,\partial \Omega ,\mu )\hookrightarrow
L^{2}(\Omega )$ is compact since $\Omega $ is bounded. Since $A_{\Theta ,\mu
}$ is a nonnegative self-adjoint operator and has a compact resolvent, then
it has a discrete spectrum which is an increasing sequence of real numbers $%
0\leq \lambda _{1}<\lambda _{2}<\cdots <\lambda _{n}\dots ,$ that converges
to $+\infty $.


Now, let $u_{n}\in \mathcal{V}_{\Theta }^{1,2}(\Omega ,\partial \Omega ,\mu
) $ be an eigenfunction associated with $\lambda _{n}$. Then, for every $%
v\in \mathcal{V}_{\Theta }^{1,2}(\Omega ,\partial \Omega ,\mu )$,
\begin{align*}
& \int_{\Omega }\nabla u_{n}\cdot \nabla vdx+\int_{\Gamma _{\mu }}u_{n}vd\mu
\\
& +\int \int_{\Gamma _{\mu }\times \Gamma _{\mu }}K_{s}\left( x,y\right)
(u_{n}(x)-u_{n}(y))(v(x)-v(y))d\mu _{x}d\mu _{y}=\lambda _{n}\int_{\Omega
}u_{n}vdx.
\end{align*}%
This equality means that $A_{\Theta ,\mu }u_{n}=\lambda _{n}u_{n}$. Let $%
\alpha >0$ be a real number. Since $\alpha \in \rho (-A_{\Theta ,\mu })$, we
have that $\alpha I+A_{\Theta ,\mu }$ is invertible. From $A_{\Theta ,\mu
}u_{n}=\lambda _{n}u_{n}$ we have that
\begin{equation*}
u_{n}=(\alpha I+A_{\Theta ,\mu })^{-1}(\lambda _{n}+\alpha )u_{n}=(\lambda
_{n}+\alpha )(\alpha I+A_{\Theta ,\mu })^{-1}(u_{n}).
\end{equation*}%
By \cite[Theorem 5.4]{W3}, the semigroup $(e^{-tA_{\Theta ,\mu }})_{t\geq 0}$
is ultracontrative in the sense that it maps $L^{2}(\Omega )$ into $%
L^{\infty }(\Omega )$ (this follows from \eqref{M2-N} if $\mu =\sigma ,$ and
from the assumption \eqref{CI-mu} if $\mu \neq \sigma $). More precisely,
there is a constant $C>0$ and $\gamma\in (0,1)$ such that for every $f\in
L^{2}(\Omega )$ and $t\in (0,1)$,
\begin{equation}
\Vert e^{-tA_{\Theta ,\mu}}f\Vert _{\infty ,\Omega }\leq
Ct^{-\gamma}\left\Vert f\right\Vert _{2,\Omega }  \label{ultra}
\end{equation}%
where $\gamma=N/2$ if $\mu=\sigma$ and $\gamma=q/2(q-1)$ if $\mu\ne\sigma$.
Since for every $f\in L^{2}(\Omega )$ and $\alpha >0$,
\begin{equation*}
(\alpha I+A_{\Theta ,\mu })^{-1}f=\int_{0}^{\infty }e^{-\alpha
t}e^{-tA_{\Theta ,\mu }}fdt,
\end{equation*}%
it follows from \eqref{ultra} that there exists a constant $M>0$ such that
\begin{equation*}
\Vert u_{n}\Vert _{\infty ,\Omega }\leq M(\lambda _{n}+\alpha )\Vert
u_{n}\Vert _{2,\Omega }.
\end{equation*}%
This completes the proof of part (b).

(c) Assume to the contrary that $0\notin \rho (A_{\Theta ,\mu })$ (i.e., $0$
is an eigenvalue for $A_{\Theta ,\mu }$). Then there exists a nonzero
function $u\in D(A_{\Theta ,\mu })$ such that $A_{\Theta ,\mu }u=0$, that
is, $\mathcal{A}_{\Theta ,\mu }(u,v)=0,$ for every $v\in \mathcal{V}_{\Theta
}^{1,2}(\Omega ,\partial \Omega ,\mu )$. In particular, we have%
\begin{equation*}
\mathcal{A}_{\Theta ,\mu }(u,u):=\int_{\Omega }|\nabla u|^{2}dx+\int_{\Gamma
_{\mu }}|u|^{2}d\mu +\int \int_{\Gamma _{\mu }\times \Gamma _{\mu
}}K_{s}(x,y)|u(x)-u(y)|^{2}d\mu _{x}d\mu _{y}=0.
\end{equation*}%
This shows that $|\nabla u|=0$. Since $\Omega $ is bounded and connected, we
get that $u=C_{1}$ for some constant $C_{1}$. Since $u=0$ q.e. on $\Gamma
_{\mu }^{\infty }$ (as $u$ belongs to $\mathcal{V}_{\Theta }^{1,2}(\Omega
,\partial \Omega ,\mu )$), there are two alternatives. First, $\func{Cap}%
_{2,\Omega }(\Gamma _{\mu }^{\infty })>0$ in which case $u=C_{1}\not\in
D(A_{\Theta ,\mu })$, whence a contradiction. On the other hand, from the
facts that $u=C_{1}$, we get from the previous equation that $C_{1}\mu
(\Gamma _{\mu })=0$. The second alternative $\mu (\Gamma _{\mu })>0$ then
implies that $u=C_{1}=0$, again a contradiction. We have shown that $0\in
\rho (A_{\Theta ,\mu })$. Now the estimate \eqref{eq-coer} follows from
standard properties of semigroups (see, e.g., \cite{Dav, FW}), since we have
just shown that there exists a constant $w>0$ such that $\Vert
e^{-tA_{\Theta ,\mu }}\Vert _{\mathcal{L}(L^{2}(\Omega ),L^{2}(\Omega
))}\leq e^{-wt}$ for every $t>0$ . This completes the proof of part (c).

(d) Let $A_{p,\Theta ,\mu }$ be the generator of the semigroup on $%
L^{p}(\Omega )$. Since $A_{\Theta ,\mu }=A_{2,\Theta ,\mu }$ has a compact
resolvent and $\Omega $ is bounded, it follows from the ultracontractivity
that each semigroup has a compact resolvent on $L^{p}(\Omega )$ for $p\in
\lbrack 1,\infty ]$. Now it follows from \cite[Corollary 1.6.2]{Dav} that
the spectrum of $A_{p,\Theta ,\mu }$ is independent of $p$. The last
assertion in (d) follows from \cite[Proposition 5.9 and Corollary 5.10]{D1}.

(e) Since $I+A_{\Theta ,\mu }$ is invertible we have that the $L^{2}\left(
\Omega \right) $-norm of $\left( I+A_{\Theta ,\mu }\right) ^{\theta }$
defines an equivalent norm on $D(A_{\Theta ,\mu }^{\theta })$ and%
\begin{equation*}
\left( I+A_{\Theta ,\mu }\right) ^{-\theta }=\frac{1}{\Gamma \left( \theta
\right) }\int_{0}^{\infty }t^{\theta -1}e^{-t}e^{-tA_{\Theta ,\mu }}fdt.
\end{equation*}%
Using (\ref{ultra}) for $t\in \left( 0,1\right) $ and the contractivity of $%
e^{-tA_{\Theta ,\mu }}$ for $t>1$, for $u\in D(A_{\Theta ,\mu }^{\theta })$,
we deduce%
\begin{equation*}
\left\Vert u\right\Vert _{L^{\infty }\left( \Omega \right) }\leq C\left\Vert
u\right\Vert _{D(A_{\Theta ,\mu }^{\theta })}\int_{0}^{1}t^{\theta -1-\gamma
}dt+C\left\Vert u\right\Vert _{D(A_{\Theta ,\mu }^{\theta
})}\int_{1}^{\infty }e^{-t}dt.
\end{equation*}%
The first integral is finite if and only if $\gamma <\theta $. The proof of
theorem is finished.
\end{proof}

\begin{remark}
\label{3rem}
\end{remark}

\begin{enumerate}
\item \emph{We remark that either }$\func{Cap}_{2,\Omega }(\Gamma _{\mu
}^{\infty })>0$\emph{\ or }$\mu (\Gamma _{\mu })>0,$\emph{\ in part (c) of
Theorem \ref{3prop}, is actually not an assumption about the regularity of
the open set }$\Omega $\emph{, but it is only an assumption about the
measure }$\mu $\emph{. Roughly speaking, it means that the trivial choice }$%
\mu \equiv 0$\emph{\ (the zero measure) is not allowed.}

\item \emph{In the case of domains with Lipschitz continuous boundary }$%
\partial \Omega ,$\emph{\ further interesting spectral properties for the
operator }$A_{\Theta ,\mu }$\emph{\ can be also derived (see \cite{GM})}.

\item \emph{Assuming that either $\func{Cap}_{2,\Omega }(\Gamma _{\mu
}^{\infty })>0$ or $\mu (\Gamma _{\mu })>0$ is satisfied, we deduce from %
\eqref{eq-coer},%
\begin{equation}
\Vert |u\Vert |_{\mathcal{V}_{\Theta }^{1,2}(\Omega ,\partial \Omega ,\mu
)}:=\left( \Vert \nabla u\Vert _{2,\Omega }^{2}+\Vert u\Vert _{L^{2}(\Gamma
_{\mu })}^{2}+\mathcal{N}_{s,\mu }(u)\right) ^{1/2}  \label{eq-norm1}
\end{equation}%
defines an equivalent norm on the space $\mathcal{V}_{\Theta }^{1,2}(\Omega
,\partial \Omega ,\mu )$. In particular, under the same assumption, we also
have that
\begin{equation}
\Vert |u\Vert |_{W_{2,2}^{1}(\Omega ,\partial \Omega ,\mu )}:=\left( \Vert
\nabla u\Vert _{2,\Omega }^{2}+\Vert u\Vert _{L^{2}(\Gamma _{\mu
})}^{2}\right) ^{1/2}  \label{eq-norm2}
\end{equation}%
defines an equivalent norm on the space $W_{2,2}^{1}(\Omega ,\partial \Omega
,\mu )$.}

\item \emph{In the case when }$\theta \in (0,1]$ \emph{and }$\mu =\sigma $%
\emph{, the embedding }$D(A_{\Theta ,\mu }^{\theta })\subset L^{\infty
}\left( \Omega \right) $\emph{\ holds provided that }$N<2\theta $\emph{.
This consideration is optimal for an arbitrary open subset }$\Omega \subset
R^{N}$\emph{\ in view of Remark \ref{oppy}. Finally, when }$\Omega $\emph{\
has the }$W^{1,2}$\emph{-extension property of Sobolev functions, we have }$%
q=N/\left( N-2\right) $\emph{\ for }$N\geq 3$\emph{\ and }$q\in \left(
1,\infty \right) $\emph{\ for }$N=2$\emph{. In that case }$D(A_{\Theta ,\mu
}^{\theta })\subset L^{\infty }\left( \Omega \right) $\emph{\ provided that }%
$N<4\theta $\emph{\ if }$N\geq 3$\emph{. In a smooth situation (say when }$%
\partial \Omega \in \mathcal{C}^{2}$\emph{), one has }$D(A_{\Theta ,\mu
})\subset W^{2,2}\left( \Omega \right) $\emph{\ and thus }$D(A_{\Theta ,\mu
}^{\theta })\subset W^{2\theta ,2}\left( \Omega \right) $\emph{. It is
well-known that the embedding }$W^{2\theta ,2}\left( \Omega \right) \subset
L^{\infty }\left( \Omega \right) $\emph{\ holds in the smooth setting
provided that }$N<4\theta .$\emph{\ This shows that the stated embedding }$%
D(A_{\Theta ,\mu }^{\theta })\subset L^{\infty }\left( \Omega \right) $
\emph{appears to be optimal in the non-smooth setting in this sense.}
\end{enumerate}

It is important to observe that the operator $A_{\Theta ,\mu }$ depends on
the choice of the measure $\mu $ on $\partial \Omega .$ In order to see
this, consider Example \ref{snow} when $\Omega $ is the Koch snowflake in $%
\mathbb{R}^{2}$ and take $\mu =\sigma =\mathcal{H}_{\mid \partial \Omega
}^{1}$. For this choice, $\sigma $ is locally infinite on $\partial \Omega $
so that there are no Robin boundary conditions for $\Omega $, on account of
the description (\ref{sp10})-(\ref{sp11}). Indeed, we have \cite[Theorem 2.3]%
{AW1},%
\begin{equation*}
D\left( \mathcal{A}_{\Theta ,\sigma }\right) =\left\{ u\in \mathcal{W}%
^{1,2}\left( \Omega \right) :u=0\text{ q.e. on }\partial \Omega \right\}
=W_{0}^{1,2}\left( \Omega \right)
\end{equation*}%
and, consequently, $A_{\Theta ,\sigma }$ coincides with the realization of
the Laplace operator $-\Delta $ with homogeneous Dirichlet boundary
conditions on $\partial \Omega $. However, with a different choice $\mu =%
\mathcal{H}_{\mid \partial \Omega }^{d}$ (where $d=\ln \left( 4\right) /\ln
\left( 3\right) $ is the Hausdorff dimension of $\partial \Omega $), we have
the generalized Robin boundary condition (\ref{1.2})\ on $\partial \Omega $
since $\Gamma _{\mu }^{\infty }=\emptyset $ (see Example \ref{snow}). This
is also the case for the tree-domain $\Omega $ of Example \ref{ram}, in
which case $A_{\Theta ,\sigma }$ coincides exactly with the realization of
the Laplace operator with homogeneous Dirichlet boundary conditions on $%
\Gamma _{f}$ (the small structures), and nonlocal Robin boundary conditions
on $\partial \Omega \setminus \Gamma _{f}$. However, recall that the fractal
boundary $\Gamma _{f}$ of\ $\Omega $ is a $d$-set, with $d=-\ln \left(
2\right) /\ln \left( a\right) .$ Thus, the natural measure $\mu $ of $%
\partial \Omega $ should be once again the $d$-dimensional Hausdorff measure
$\mu =\mathcal{H}_{\mid \partial \Omega }^{d}$ so that the generalized
(nonlocal) Robin boundary condition (\ref{1.2}) is still realized on the
portion $\Gamma _{f}$. In fact, a more appropriate choice for $\mu $ is%
\begin{equation*}
\mu =\left\{
\begin{array}{ll}
\mathcal{H}^{d}, & \text{on }\Gamma _{f}, \\
\sigma , & \text{on }\partial \Omega \setminus \Gamma _{f},%
\end{array}%
\right.
\end{equation*}%
so that one has the usual (nonlocal) Robin boundary conditions on the
portion $\partial \Omega \setminus \Gamma _{f},$ and the generalized
(nonlocal) Robin boundary conditions on $\Gamma _{f}$. In the general case
when $\Omega \subset \mathbb{R}^{N}$, such arguments also hold if $\partial
\Omega $ is a $d$-set with $d\in (N-1,N].$%
\begin{figure}[tbp]
\centering\includegraphics[scale=0.77]{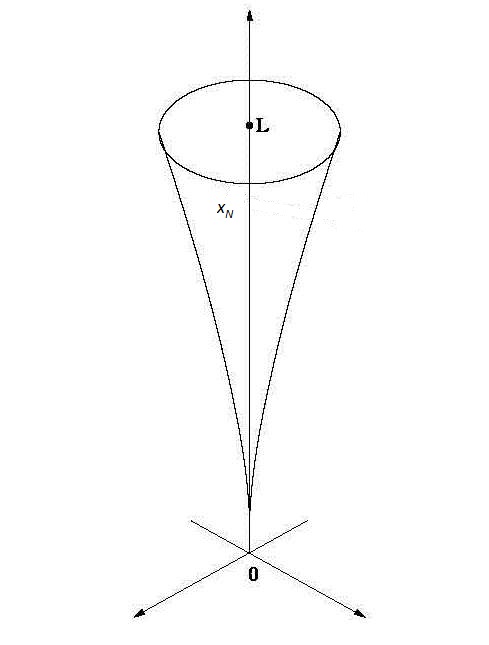}
\caption{Cusp-shaped domain}
\end{figure}

Finally, it is worth emphasizing that the assumption \eqref{CI-mu} in
Theorem \ref{3prop} is satisfied with $q=N/(N-2)$ if, for example, $\Omega $
has the $W^{1,2}$-extension property. On the other hand, if $\Omega \subset {%
\mathbb{R}}^{N}$ is a non-Lipschitz bounded domain whose boundary admits a
finite numbers of outward or inward H\"{o}lder cusps points and exponent at
cusp points $0<\gamma <1$, then $\Omega $ does \emph{not} enjoy the
extension property. However, for such domains $\Omega $ we still have the
following continuous embeddings:%
\begin{equation}
\mathcal{V}_{\Theta }^{1,2}(\Omega ,\partial \Omega ,\mu )\hookrightarrow
W^{1,2}(\Omega )\hookrightarrow L^{2q}(\Omega )\;\mbox{ with }\;q:=\frac{%
(N-1+\gamma )}{N-1-\gamma }>1,  \label{cusp_ineq}
\end{equation}%
see \cite{FT1,FT2}. Many examples of domains with cusps, which are the
simplest non-Lipschitz domains used in the applications, can be found in
\cite[Chapters 7-8]{MP}; see, for instance, Example \ref{cusp} below. For H%
\"{o}lder $C^{\gamma }$-domains and for the measure $\mu =\sigma =\mathcal{H}%
_{\mid \partial \Omega }^{N-1},$ we have from Definition \ref{twosets} that $%
\Gamma _{\sigma }^{\infty }=\emptyset $, whereas $\Gamma _{\sigma }=D\left(
\sigma \right) =\partial \Omega $. Thus, it is clear that in such cases, the
usual (nonlocal) Robin boundary condition (\ref{1.2}) is satisfied on $%
\Gamma _{\sigma }=\partial \Omega .$

\begin{example}
\label{cusp}(see Figure 3) Let $\Omega _{hc}\subset \mathbb{R}^{N}$, $N\geq
2 $ be the following set:%
\begin{equation*}
\Omega _{hc}:=\left\{ x=\left( x_{1},x_{2},...,x_{N}\right) \in \mathbb{R}%
^{N}:x_{N}\in \left( 0,L\right)
,0<\sum\nolimits_{j=1}^{N-1}x_{i}^{2}<lx_{N}^{2/\gamma }\right\} ,
\end{equation*}%
for some $L,l>0$ and $\gamma \in \left( 0,1\right) $. The inequality (\ref%
{cusp_ineq}) is fulfilled for $\Omega =\Omega _{hc}$ and $N\geq 2$.
Generally, the Sobolev inequality (\ref{cusp_ineq}) is satisfied in any H%
\"{o}lder domain $\Omega $ of class $C^{\gamma }$, $\gamma \in \left(
0,1\right) $. By a (bounded) H\"{o}lder $C^{\gamma }$-domain\ we mean the
following: there exist a finite number of balls $B\left( x_{i},r_{i}\right) $%
, $i=1,...,m_{N}$, whose union contains $\Omega $, and for each $%
i=1,...,m_{N}$, there exists a H\"{o}lder-continuous ($C^{\gamma },$ $\gamma
\in \left( 0,1\right) $)\ function $F_{i}:\mathbb{R}^{N-1}\rightarrow
\mathbb{R}$ such that for some coordinate system, $B\left(
x_{i},r_{i}\right) \cap \Omega $ is equal to the intersection of $B\left(
x_{i},r_{i}\right) $ with the region above the graph of $F_{i}$.
\end{example}

\section{Well-posedness and regularity}

\label{wel}

In this section $\Omega \subset {\mathbb{R}}^{N}$ is an arbitrary bounded
domain with boundary $\partial \Omega $, $\mu $ is a regular Borel measure
on $\partial \Omega $ such that $\mu $ is either the $\left( N-1\right) $%
-dimensional Hausdorff meausure $\mathcal{H}_{\mid \partial \Omega }^{N-1}$
or another "arbitrary" regular Borel measure, according to the assumptions
of Section \ref{prelim} (see (H$_{\mu }$) below). We begin this section by
stating all the hypotheses on $f$ we need, even though not all of them must
be satisfied at the same time.

\begin{enumerate}
\item[\textbf{(H1)}] $f\in C_{\text{loc}}^{1}\left( \mathbb{R}\right) $
satisfies%
\begin{equation*}
\liminf_{\left\vert s\right\vert \rightarrow +\infty }\frac{f\left( s\right)
}{s}>-\lambda _{\ast },
\end{equation*}%
for some constant $\lambda _{\ast }\in \lbrack 0,C_{\Omega })$, where $%
C_{\Omega }=C\left( N,\Omega \right) >0$ is the best Sobolev/Poincar\'{e}
constant into (\ref{M2-N-1}) and (\ref{CI-mu}), respectively.

\item[\textbf{(H2)}] $f\in C_{\text{loc}}^{1}\left( \mathbb{R}\right) $
satisfies%
\begin{equation*}
\begin{array}{ll}
C_{f}\left\vert s\right\vert ^{p}-c_{f}\leq f\left( s\right) s\leq
\widetilde{C}_{f}\left\vert s\right\vert ^{p}+\widetilde{c}_{f}, & \text{
for all }s\in {\mathbb{R}}\text{,}%
\end{array}%
\end{equation*}%
for some appropriate positive constants and some $p>1$.

\item[\textbf{(H3)}] $f\in C^{1}\left( \mathbb{R}\right) $ satisfies%
\begin{equation*}
f^{^{\prime }}\left( s\right) \geq -C_{f},\text{ for all }s\in {\mathbb{R}}%
\text{,}
\end{equation*}%
for some positive constant $C_{f}.$
\end{enumerate}

Finally, throughout the remainder of this article our assumption about the
measure $\mu $ will be as follows.

\begin{enumerate}
\item[\textbf{(H$_{\protect\mu }$)}] Assume that $\Gamma _{\mu }$ is $\func{%
Cap}_{2,\Omega }$-admissible with respect to $\mu ,$ and either $\func{Cap}%
_{2,\Omega }(\Gamma _{\mu }^{\infty })>0$ or $\mu (\Gamma _{\mu })>0$.%
\newline
\end{enumerate}

In what follows we shall use classical (linear/nonlinear semigroup)
definitions of strong and generalized (weak) solutions to (\ref{1.1})-(\ref%
{1.3}). \textquotedblleft Strong\textquotedblright\ solutions are defined
via nonlinear semigroup theory for sufficiently smooth initial data and
satisfy the differential equations almost everywhere in $t>0$.
\textquotedblleft Generalized\textquotedblright\ or weak solutions are
defined as (strong) limits of strong solutions. We first introduce the
rigorous notion of (global) weak solutions to the system (\ref{1.1})-(\ref%
{1.3}). Throughout the remainder of this article the solution of the system (%
\ref{1.1})-(\ref{1.3}) is a function that depends on the variables $t\in
[0,\infty)$ and $x\in\Omega$, but in our notation we sometime omit the
dependence in $x$.

\begin{definition}
\label{weak} Let $u_{0}\in L^{2}(\Omega )$ be given and assume (H2) holds
for some $p>1$. The function $u$ is said to be a \emph{weak} solution of (%
\ref{1.1})-(\ref{1.3}) if, for a.e. $t\in \left( 0,T\right) ,$ for any $T>0$%
, the following properties are valid:

\begin{itemize}
\item Regularity:%
\begin{equation}
\left\{
\begin{array}{l}
u\in L^{\infty }\left( 0,T;L^{2}\left( \Omega \right) \right) \cap
L^{p}\left( \left( 0,T\right) \times \Omega \right) \cap L^{2}(0,T;\mathcal{V%
}_{\Theta }^{1,2}(\Omega ,\partial \Omega ,\mu )), \\
\partial _{t}u\in L^{2}(0,T;(\mathcal{V}_{\Theta }^{1,2}(\Omega ,\partial
\Omega ,\mu ))^{\ast })+L^{p^{^{\prime }}}\left( \left( 0,T\right) \times
\Omega \right) ,%
\end{array}%
\right.  \label{reg_weak}
\end{equation}%
where $p^{^{\prime }}=p/\left( p-1\right) .$

\item Variational identity: for the weak solutions the following equality%
\begin{equation}
\left\langle \partial _{t}u\left( t\right) ,\xi \right\rangle +\mathcal{A}%
_{\Theta ,\mu }(u\left( t\right) ,\xi )+\left\langle f\left( u\left(
t\right) \right) ,\xi \right\rangle =0  \label{de_form}
\end{equation}%
holds for all $\xi \in \mathcal{V}_{\Theta }^{1,2}(\Omega ,\partial \Omega
,\mu )\cap L^{p}\left( \Omega \right) ,$ a.e. $t\in \left( 0,T\right) $.
Finally, we have, in the space $L^{2}\left( \Omega \right) ,$ $u\left(
0\right) =u_{0}$ almost everywhere.

\item Energy identity: weak solutions satisfy the following identity%
\begin{equation}
\frac{1}{2}\left\Vert u\left( t\right) \right\Vert _{2,\Omega
}^{2}+\int_{0}^{t}\left[ \mathcal{A}_{\Theta ,\mu }\left( u\left( s\right)
,u\left( s\right) \right) +\left\langle f\left( u\left( s\right) \right)
,u\left( s\right) \right\rangle \right] ds=\frac{1}{2}\left\Vert u\left(
0\right) \right\Vert _{2,\Omega }^{2}.  \label{en_id}
\end{equation}
\end{itemize}
\end{definition}

\begin{remark}
\label{weakrem}Note that by (\ref{reg_weak}), $u \in C_{\text{weak}}\left( %
\left[ 0,T\right] ;L^{2}\left( \Omega \right) \right) $. Therefore the
initial value $u\left( 0\right) =u_{0}$ is meaningful when $u_{0}\in
L^{2}\left( \Omega \right) $.
\end{remark}

Finally, our notion of (global) strong solution is as follows.

\begin{definition}
\label{strong} Let $u_0\in L^\infty(\Omega)$ be given. A weak solution $u$
is "\emph{strong"} if, in addition, it fulfills the regularity properties:%
\begin{equation}
u\in W_{\text{loc}}^{1,\infty }((0,T];L^{2}(\Omega ))\cap C\left( \left[ 0,T%
\right] ;L^{\infty }\left( \Omega \right) \right) ,  \label{time_reg}
\end{equation}%
such that $u\left( t\right) \in D(A_{\Theta ,\mu }),$ a.e. $t\in \left(
0,T\right) ,$ for any $T>0.$
\end{definition}

This section consists of three main parts. At first we will establish the
existence and uniqueness of a strong solution on a finite time interval
using the theory of monotone operators, and exploiting a (modified)
Alikakos-Moser iteration-type argument to show that the strong solution is
actually a global solution. In the second part, we will show the existence
of \ (globally-defined) weak solutions which satisfy the energy identity (%
\ref{en_id}) and the variational form (\ref{de_form}). We shall also discuss
their uniqueness. Finally, by using the energy method combined with a
refined iteration scheme, we show that the weak solution is actually more
regular on intervals of the form $[\delta ,\infty )$, for every $\delta >0.$

\subsection{Strong solutions}

Now we state the main theorem of this (sub)section.

\begin{theorem}
\label{SG} Assume that the nonlinearity $f$\ obeys (H1) and $\mu $ satisfies
(H$_{\mu }$). For every $u_{0}\in L^{\infty }\left( \Omega \right) ,$ there
exists a unique strong solution of (\ref{1.1})-(\ref{1.3}) in the sense of
Definition \ref{strong}.
\end{theorem}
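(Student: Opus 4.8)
The plan is to build the (global) strong solution in three moves: a local mild solution obtained by a contraction argument in which the $L^{\infty}$–bound is built in from the outset; a parabolic bootstrap promoting this mild solution to the regularity class of Definition \ref{strong} (using analyticity of $e^{-tA_{\Theta,\mu}}$ and the Appendix regularity theorem for the associated non-autonomous linear equation); and a global a priori $L^{\infty}$ estimate, using only \textbf{(H1)}, that allows continuation for all time. Uniqueness is then a short Gronwall argument.

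\emph{Local existence.} By Theorem \ref{3prop}(a) the semigroup $(e^{-tA_{\Theta,\mu}})$ is sub-Markovian, hence $L^{\infty}$–nonexpansive: $\|e^{-tA_{\Theta,\mu}}g\|_{\infty,\Omega}\le\|g\|_{\infty,\Omega}$. Put $R:=2\|u_{0}\|_{\infty,\Omega}+1$, let $K_{R}:=\sup_{|s|\le R}|f(s)|$ and $L_{R}$ the Lipschitz constant of $f$ on $[-R,R]$ (both finite since $f\in C^{1}_{\mathrm{loc}}(\mathbb{R})$), and on the closed set $X_{T_{0}}:=\{v\in C([0,T_{0}];L^{2}(\Omega)):\ \|v(t)\|_{\infty,\Omega}\le R\ \forall t\}$ consider $\Phi(v)(t):=e^{-tA_{\Theta,\mu}}u_{0}-\int_{0}^{t}e^{-(t-\tau)A_{\Theta,\mu}}f(v(\tau))\,d\tau$. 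Using $L^{\infty}$–nonexpansiveness, $\|\Phi(v)(t)\|_{\infty,\Omega}\le\|u_{0}\|_{\infty,\Omega}+tK_{R}$, so $\Phi$ maps $X_{T_{0}}$ into itself once $T_{0}K_{R}\le\|u_{0}\|_{\infty,\Omega}+1$, while $\|\Phi(v_{1})(t)-\Phi(v_{2})(t)\|_{2,\Omega}\le L_{R}\int_{0}^{t}\|v_{1}-v_{2}\|_{2,\Omega}$, so $\Phi$ is a contraction for $T_{0}$ small depending only on $\|u_{0}\|_{\infty,\Omega}$. (Equivalently, after truncating $f$ outside $[-R,R]$ to a globally Lipschitz $f_{R}$, one may invoke the theory of maximal monotone operators, as $A_{\Theta,\mu}+f_{R}(\cdot)+L_{R}I$ is $m$–accretive on $L^{2}(\Omega)$.) The fixed point $u$ is a mild solution on $[0,T_{0}]$ with $\|u(t)\|_{\infty,\Omega}\le R$.

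\emph{Regularity.} Since $A_{\Theta,\mu}$ is nonnegative self-adjoint on $L^{2}(\Omega)$ with $0\in\rho(A_{\Theta,\mu})$ (Theorem \ref{3prop}(c), valid under \textbf{(H$_{\mu}$)}), $(e^{-tA_{\Theta,\mu}})$ is analytic and $\|A_{\Theta,\mu}^{\theta}e^{-tA_{\Theta,\mu}}\|_{\mathcal{L}(L^{2}(\Omega))}\le Ct^{-\theta}$, $t\in(0,1]$, $\theta\in(0,1)$. As $f(u(\cdot))$ is bounded in $L^{\infty}(\Omega)\hookrightarrow L^{2}(\Omega)$ on $[0,T_{0}]$, the variation-of-constants formula gives $u(t)\in D(A_{\Theta,\mu}^{\theta})$ for every $\theta<1$ and $t\in(0,T_{0}]$, and $u$ Hölder into $L^{2}(\Omega)$ on compact subsets of $(0,T_{0}]$; since $f$ is Lipschitz on the range of $u$, so is $t\mapsto f(u(t))$. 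The Appendix regularity theorem (Theorem \ref{ap_reg_thm}) applied to $\partial_{t}u+A_{\Theta,\mu}u=-f(u)$ then yields $u\in W^{1,\infty}_{\mathrm{loc}}((0,T_{0}];L^{2}(\Omega))$ with $u(t)\in D(A_{\Theta,\mu})$ a.e., and the ultracontractivity \eqref{ultra} (or $D(A_{\Theta,\mu}^{\theta})\hookrightarrow L^{\infty}(\Omega)$ when $\gamma<1$) gives $u\in C((0,T_{0}];L^{\infty}(\Omega))$. Together with $u\in C([0,T_{0}];L^{2}(\Omega))$ and the uniform $L^{\infty}$–bound, this is exactly the regularity of Definition \ref{strong}, and $u$ satisfies $\partial_{t}u+A_{\Theta,\mu}u+f(u)=0$ in $L^{2}(\Omega)$ for a.e.\ $t$; so $u$ is a strong solution on $[0,T_{0}]$.

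\emph{Uniqueness, global continuation, and the main obstacle.} If $u,v$ are strong solutions on $[0,T]$, then $w:=u-v$ satisfies $\tfrac12\tfrac{d}{dt}\|w\|_{2,\Omega}^{2}+\mathcal{A}_{\Theta,\mu}(w,w)+\langle f(u)-f(v),w\rangle=0$; with $\mathcal{A}_{\Theta,\mu}(w,w)\ge0$ and $|\langle f(u)-f(v),w\rangle|\le L\|w\|_{2,\Omega}^{2}$ (both solutions bounded in $L^{\infty}$), Gronwall and $w(0)=0$ give $w\equiv0$. For global existence it suffices to bound $\|u(t)\|_{\infty,\Omega}$ on any $[0,T]$ independently of the local existence time. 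By \textbf{(H1)} there is $K\ge0$ with $-f(s)\le\lambda_{\ast}|s|+K$ for $s\ge0$ and $f(s)\le\lambda_{\ast}|s|+K$ for $s\le0$; letting $M(t)$ solve $M'=\lambda_{\ast}M+K$, $M(0)=\|u_{0}\|_{\infty,\Omega}$ (explicit and finite on $[0,T]$), we test the equation with $(u(t)-M(t))^{+}$ and $(-u(t)-M(t))^{+}$. These lie in $\mathcal{V}_{\Theta}^{1,2}(\Omega,\partial\Omega,\mu)$ — by the lattice descriptions \eqref{sp10}--\eqref{sp11} and $M(t)\ge0$ (so they still vanish q.e.\ on $\Gamma_{\mu}^{\infty}$) — and since $s\mapsto(s-M(t))^{+}$ is nondecreasing $1$-Lipschitz and $s\mapsto(-s-M(t))^{+}$ nonincreasing $1$-Lipschitz, each of the three pieces of $\mathcal{A}_{\Theta,\mu}$ (gradient, boundary $L^{2}(\Gamma_{\mu})$, nonlocal) keeps the right sign; the choice of $M$ absorbs the $f$–contribution, and Gronwall with $(u(0)-M(0))^{+}=(-u(0)-M(0))^{+}=0$ yields $\|u(t)\|_{\infty,\Omega}\le M(t)$. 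Since the local existence time depends only on the $L^{\infty}$–norm at the starting instant, $u$ extends to $[0,\infty)$. The genuinely delicate step is the hidden time regularity $u\in W^{1,\infty}_{\mathrm{loc}}((0,T];L^{2}(\Omega))$ — monotone-operator theory by itself yields only $W^{1,2}_{\mathrm{loc}}$, so one must pass through the Appendix theorem after first squeezing enough Hölder-in-time regularity out of the analytic smoothing; a secondary point is verifying that $\mathcal{A}_{\Theta,\mu}$ is compatible with the Stampacchia truncations, i.e.\ that the truncated functions stay in $\mathcal{V}_{\Theta}^{1,2}(\Omega,\partial\Omega,\mu)$ and the nonlocal and boundary integrals do not spoil the sign.
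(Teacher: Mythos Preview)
Your local existence and regularity arguments parallel the paper's closely (fixed point in an $L^{\infty}$-ball, then the Appendix machinery), with one small slip: Theorem~\ref{ap_reg_thm} requires $g=-f(u)\in W^{1,2}([\delta,T];L^{2}(\Omega))$, and H\"older continuity of $t\mapsto f(u(t))$ does not give this. The paper closes the loop differently: it first invokes Theorem~\ref{m1} (which needs only $g\in L^{2}$) to obtain $\partial_{t}u\in L^{2}(\delta,T;L^{2}(\Omega))$, whence $g'=-f'(u)\partial_{t}u\in L^{2}$ since $f'(u)$ is bounded on the range of $u$; only then does Theorem~\ref{ap_reg_thm} apply to upgrade to $W^{1,\infty}_{\mathrm{loc}}$. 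Your analytic-smoothing detour is unnecessary here and does not substitute for this step.

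The substantive difference is in the global $L^{\infty}$ bound. The paper spends Steps~2--3 on an $L^{2}$ energy estimate followed by a full Alikakos--Moser iteration (via the Maz'ya inequality, since Gagliardo--Nirenberg is unavailable on rough domains), obtaining a \emph{uniform-in-time} bound $\sup_{t>0}\|u(t)\|_{\infty,\Omega}\le C(\|u_{0}\|_{\infty,\Omega})$. Your Stampacchia comparison with the supersolution $M(t)$ is correct and much shorter: the truncations do lie in $\mathcal{V}_{\Theta}^{1,2}(\Omega,\partial\Omega,\mu)$ by the lattice description \eqref{sp10}--\eqref{sp11} and the $1$-Lipschitz monotonicity indeed makes all three pieces of $\mathcal{A}_{\Theta,\mu}$ nonnegative (for the boundary term one uses $u(u-M)^{+}=((u-M)^{+})^{2}+M(u-M)^{+}\ge 0$ as $M\ge 0$). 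This yields $\|u(t)\|_{\infty,\Omega}\le M(t)$, which grows like $e^{\lambda_{\ast}t}$ but is finite on every compact interval, so global existence follows. What you lose is precisely the time-uniform dissipative bound; the paper's iteration machinery is not wasted, however, since it is recycled verbatim in Proposition~\ref{bounded} to prove the $L^{2}$--$L^{\infty}$ smoothing property, which is the engine behind the absorbing-set and attractor results of Section~\ref{gl}.
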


\begin{proof}
\emph{Step 1} (Local existence). Let $u_{0}\in L^{\infty }\left( \Omega
\right) \subset L^{2}\left( \Omega \right) =\overline{D(\mathcal{A}_{\Theta
,\mu })}^{L^{2}(\Omega )}=\overline{D({\varphi }_{\Theta ,\mu })}%
^{L^{2}(\Omega )}$. By Theorem \ref{lower-semi}, ${\varphi }_{\Theta ,\mu }$
is proper, convex and lower semi-continuous on the Hilbert space $%
L^{2}(\Omega )$ (cf. also \cite[Theorem 5.2]{W3}). Moreover, from Theorem %
\ref{3prop} we know that $-A_{\Theta ,\mu }$ ($=-\partial \varphi _{\Theta
,\mu }$) generates a strongly continuous (linear) semigroup $(e^{-tA_{\Theta
,\mu }})_{t\geq 0}$ of contraction operators on $L^{2}(\Omega )$. Finally,
by Theorem \ref{SGbis} (Appendix), $e^{-tA_{\Theta ,\mu }}$ is non-expansive
on $L^{\infty }\left( \Omega \right) ,$
\begin{equation}
\left\Vert e^{-tA_{\Theta ,\mu }}u_{0}\right\Vert _{\infty ,\Omega }\leq
\left\Vert u_{0}\right\Vert _{\infty ,\Omega },\text{ }t\geq 0\text{ and }%
u_{0}\in L^{\infty }\left( \Omega \right) ,  \label{nonexp}
\end{equation}%
and $A_{\Theta ,\mu }=\partial \varphi _{\Theta ,\mu }$ is strongly
accretive by Theorem \ref{3prop}-(c), see (\ref{eq-coer}). Thus, we can give
an operator theoretic version of the original problem (\ref{1.1})-(\ref{1.3}%
):%
\begin{equation}
\partial _{t}u=-A_{\Theta ,\mu }u-f\left( u\right) ,  \label{op_v}
\end{equation}%
with $f$ as a locally Lipschitz perturbation of a monotone operator, on
account of the fact that $f\in C_{\text{loc}}^{1}\left( \mathbb{R}\right) $.
We construct the (locally-defined) strong solution by a fixed point
argument. To this end, fix $0<T^{\ast }\leq T$, consider the space
\begin{equation*}
\mathcal{X}_{T^{\ast },R^{\ast }}\equiv \left\{ u\in C\left( \left[
0,T^{\ast }\right] ;L^{\infty }\left( \Omega \right) \right) :\left\Vert
u\left( t\right) \right\Vert _{\infty ,\Omega }\leq R^{\ast }\right\}
\end{equation*}%
and define the following mapping%
\begin{equation}
\Sigma \left( u\right) \left( t\right) =e^{-tA_{\Theta ,\mu
}}u_{0}-\int_{0}^{t}e^{-\left( t-s\right) A_{\Theta ,\mu }}f\left( u\left(
s\right) \right) ds,\text{ }t\in \left[ 0,T^{\ast }\right] .  \label{mapping}
\end{equation}%
Note that $\mathcal{X}_{T^{\ast },R^{\ast }}$, when endowed with the norm of
$C\left( \left[ 0,T^{\ast }\right] ;L^{\infty }\left( \Omega \right) \right)
,$ is a closed subset of $C\left( \left[ 0,T^{\ast }\right] ;L^{\infty
}\left( \Omega \right) \right) ,$ and since $f$ is continuously
differentiable, $\Sigma \left( u\right) \left( t\right) $ is continuous on $%
\left[ 0,T^{\ast }\right] $. We will show that, by properly choosing $%
T^{\ast },R^{\ast }>0$, $\Sigma :\mathcal{X}_{T^{\ast },R^{\ast
}}\rightarrow \mathcal{X}_{T^{\ast },R^{\ast }}$ is a contraction mapping
with respect to the metric induced by the norm of $C\left( \left[ 0,T^{\ast }%
\right] ;L^{\infty }\left( \Omega \right) \right) .$ The appropriate choices
for $T^{\ast },R^{\ast }>0$ will be specified below. First, we show that $%
u\in \mathcal{X}_{T^{\ast },R^{\ast }}$ implies that $\Sigma \left( u\right)
\in \mathcal{X}_{T^{\ast },R^{\ast }}$, that is, $\Sigma $ maps $\mathcal{X}%
_{T^{\ast },R^{\ast }}$ to itself. From (\ref{nonexp}) and the fact that $%
f\in C_{\text{loc}}^{1}\left( \mathbb{R}\right) $, we observe that the
mapping $\Sigma $\ satisfies the following estimate%
\begin{align*}
\left\Vert \Sigma \left( u\left( t\right) \right) \right\Vert _{\infty
,\Omega }& \leq \left\Vert u_{0}\right\Vert _{\infty ,\Omega
}+\int_{0}^{t}\left\Vert e^{-\left( t-s\right) A_{\Theta ,\mu }}\left(
f\left( 0\right) +\left( f\left( u\left( s\right) \right) -f\left( 0\right)
\right) \right) \right\Vert _{\infty ,\Omega }ds \\
& \leq \left\Vert u_{0}\right\Vert _{\infty ,\Omega }+t\left( \left\vert
f\left( 0\right) \right\vert +Q_{f^{^{\prime }}}\left( R^{\ast }\right)
R^{\ast }\right) ,
\end{align*}%
for some positive continuous function $Q_{f^{^{\prime }}}$ which depends
only on the size of the nonlinearity $f^{^{\prime }}$. Thus, provided that
we set $R^{\ast }\geq 2\left\Vert u_{0}\right\Vert _{\infty ,\Omega }$, we
can find a sufficiently small time $T^{\ast }>0$ such that
\begin{equation}
2T^{\ast }\left( \left\vert f\left( 0\right) \right\vert +Q_{f^{^{\prime
}}}\left( R^{\ast }\right) R^{\ast }\right) \leq R^{\ast },  \label{3.21time}
\end{equation}%
in which case $\Sigma \left( u\left( t\right) \right) \in \mathcal{X}%
_{T^{\ast },R^{\ast }}$, for any $u\left( t\right) \in \mathcal{X}_{T^{\ast
},R^{\ast }}.$ Next, we show that by possibly choosing $T^{\ast }>0$
smaller, $\Sigma :\mathcal{X}_{T^{\ast },R^{\ast }}\rightarrow \mathcal{X}%
_{T^{\ast },R^{\ast }}$ is also a contraction. Indeed, for any $%
u_{1},u_{2}\in \mathcal{X}_{T^{\ast },R^{\ast }}$, exploiting again (\ref%
{nonexp}), we estimate%
\begin{align}
\left\Vert \Sigma \left( u_{1}\left( t\right) \right) -\Sigma \left(
u_{2}\left( t\right) \right) \right\Vert _{\infty ,\Omega }& \leq
Q_{f^{^{\prime }}}\left( R^{\ast }\right) \int_{0}^{t}\left\Vert e^{-\left(
t-s\right) A_{\Theta ,\mu }}(u_{1}\left( s\right) -u_{2})\left( s\right)
\right\Vert _{\infty ,\Omega }ds  \label{3.21} \\
& \leq tQ_{f^{^{\prime }}}\left( R^{\ast }\right) \left\Vert
u_{1}-u_{2}\right\Vert _{C\left( \left[ 0,T^{\ast }\right] ;L^{\infty
}\left( \Omega \right) \right) }.  \notag
\end{align}%
This shows that $\Sigma $ is a contraction on $\mathcal{X}_{T^{\ast
},R^{\ast }}$ (compare with (\ref{3.21time})) provided that
\begin{equation*}
\max \left\{ \frac{2T^{\ast }(|f(0)|+Q_{f^{\prime }}(R^{\ast })R^{\ast })}{%
R^{\ast }},T^{\ast }Q_{f^{\prime }}(R^{\ast })\right\} =\frac{2T^{\ast
}(|f(0)|+Q_{f^{\prime }}(R^{\ast }))R^{\ast }}{R^{\ast }}<1.
\end{equation*}%
Therefore, owing to the contraction mapping principle, we conclude that
problem (\ref{op_v}) has a unique local solution $u\in \mathcal{X}_{T^{\ast
},R^{\ast }}$. This solution can certainly be (uniquely) extended on a right
maximal time interval $[0,T_{\max })$, with $T_{\max }>0$ depending on $%
\left\Vert u_{0}\right\Vert _{\infty ,\Omega },$ such that, either $T_{\max
}=\infty $ or $T_{\max }<\infty $, in which case $\lim_{t\nearrow T_{\max
}}\left\Vert u\left( t\right) \right\Vert _{\infty ,\Omega }=\infty .$
Indeed, if $T_{\max }<\infty $ and the latter condition does not hold, we
can find a sequence $t_{n}\nearrow T_{\max }$ such that $\left\Vert u\left(
t_{n}\right) \right\Vert _{\infty ,\Omega }\leq C.$ This would allow us to
extend $u$ as a solution to Equation (\ref{op_v}) to an interval $%
[0,t_{n}+\delta ),$ for some $\delta >0$ independent of $n$. Hence $u$ can
be extended beyond $T_{\max }$ which contradicts the construction of $%
T_{\max }>0$. To conclude that the solution $u$ belongs to the class in
Definition \ref{strong}, let us further set $\mathcal{G}\left( t\right)
:=-f\left( u\left( t\right) \right) ,$ for $u\in C\left( [0,T_{\max }\right)
;L^{\infty }\left( \Omega \right) )$ and notice that $u$ is the
"generalized" solution of%
\begin{equation}
\partial _{t}u+A_{\Theta ,\mu }u=\mathcal{G}\left( t\right) ,\text{ }t\in
\lbrack 0,T_{\max }),  \label{3.44}
\end{equation}%
such that $u\left( 0\right) =u_{0}\in L^{\infty }\left( \Omega \right)
\subset L^{2}(\Omega )=\overline{D(A_{\Theta ,\mu })}.$ By Theorem \ref{m1}
(Appendix), the "generalized" solution $u$ has the additional regularity $%
\partial _{t}u\in L^{2}\left( \delta ,T_{\max };L^{2}\left( \Omega \right)
\right) ,$ which together with the facts that $u$ is continuous on $%
[0,T_{\max })$ and $f\in C_{\text{loc}}^{1}\left( \mathbb{R}\right) $, yield
\begin{equation}
\mathcal{G}\in W^{1,2}\left( \delta ,T_{\max };L^{2}\left( \Omega \right)
\right) \cap L^{\infty }\left( \delta ,T_{\max };L^{2}\left( \Omega \right)
\right) .  \label{3.45}
\end{equation}%
Thus, we can apply Theorem \ref{ap_reg_thm} and Corollary \ref{ap_reg_co}
(see Appendix), to deduce the following regularity properties for $u:$%
\begin{equation}
u\in L^{\infty }\left( [\delta ,T_{\max });D\left( A_{\Theta ,\mu }\right)
\right) \cap W^{1,\infty }\left( [\delta ,T_{\max });L^{2}\left( \Omega
\right) \right) ,  \label{3.46}
\end{equation}%
such that the solution $u$ is Lipschitz continuous on $[\delta ,T_{\max })$,
for every $\delta >0.$ Thus, we have obtained a locally-defined strong
solution in the sense of Definition \ref{strong}. As to the variational
equality in Definition \ref{weak}, we note that this equality is satisfied
even pointwise (in time $t\in \left( 0,T_{\max }\right) $) by the strong
solutions. Our final point is to show that $T_{\max }=\infty ,$ because of
condition (H1). This ensures that the strong solution constructed above is
also global.

\emph{Step 2} (Energy estimate) Let $m\geq 1$ and consider the function $%
E_{m}:\;(0,\infty )\rightarrow \lbrack 0,\infty )$ defined by $%
E_{m}(t):=\Vert u(t)\Vert _{m+1,\Omega }^{m+1}$. First, notice that $E_{m}$
is well-defined on $\left( 0,T_{\max }\right) $ because $u$ is bounded in $%
\Omega \times (0,T_{\max })$ and because $\Omega $ has finite measure. Since
$u$ is a strong solution on $\left( 0,T_{\max }\right) ,$ see Definition \ref%
{strong} (or (\ref{3.46})), recall that $u$ is continuous from $[0,T_{\max
})\rightarrow L^{\infty }(\Omega )$ and Lipschitz continuous on $[\delta
,T_{\max })$ for every $\delta >0$. Thus, $u$ (as function of $t$) is
differentiable a.e., whence, the function $E_{m}(t)$ is also differentiable
for a.e. $t\in \left( 0,T_{\max }\right) .$

For strong solutions and $t\in \left( 0,T_{\max }\right) $, integration by
parts procedure yields the following standard energy identity:%
\begin{equation}
\frac{1}{2}\frac{d}{dt}E_{1}\left( t\right) +\mathcal{A}_{\Theta ,\mu
}\left( u\left( t\right) ,u\left( t\right) \right) +\int_{\Omega }f\left(
u\left( t\right) \right) u\left( t\right) dx=0.  \label{3.1}
\end{equation}%
Assumption (H1) implies that%
\begin{equation}
f\left( s\right) s\geq -\lambda _{\ast }s^{2}-C_{f},  \label{H1bis}
\end{equation}%
for some $C_{f}>0$ and for all $s\in \mathbb{R}$. This inequality allows us
to estimate the nonlinear term in (\ref{3.1}). We have (by using the
equivalent norm \eqref{eq-norm1})
\begin{equation}
\frac{1}{2}\frac{d}{dt}E_{1}\left( t\right) +C_{\Omega }\left\Vert u\left(
t\right) \right\Vert _{\mathcal{V}_{\Theta }^{1,2}(\Omega ,\partial \Omega
,\mu )}^{2}\leq C_{f}\left\vert \Omega \right\vert +\lambda _{\ast
}E_{1}\left( t\right) ,  \label{3.2}
\end{equation}%
where $\left\vert \Omega \right\vert $ denotes the $N$-dimensional Lebesgue
measure of $\Omega .$ In view of \eqref{eq-norm1} and Gronwall's inequality,
(\ref{3.2}) gives the following estimate for $t\in \left( 0,T_{\max }\right)
,$%
\begin{equation}
\left\Vert u\left( t\right) \right\Vert _{2,\Omega }^{2}+2\left( C_{\Omega
}-\lambda _{\ast }\right) \int_{0}^{t}\left\Vert u\left( s\right)
\right\Vert _{\mathcal{V}_{\Theta }^{1,2}(\Omega ,\partial \Omega ,\mu
)}^{2}ds\leq \left\Vert u_{0}\right\Vert _{2,\Omega }^{2}e^{-\rho t}+C\left(
f,\left\vert \Omega \right\vert \right) ,  \label{3.3}
\end{equation}%
for some constants $\rho =\rho \left( N,\Omega \right) >0$, $C\left(
f,\left\vert \Omega \right\vert \right) >0.$

\emph{Step 3} (The iteration argument). In this step, $c>0$ will denote a
constant that is independent of $t,$ $T_{\max }$, $m,$ $k$ and initial data,
which only depends on the other structural parameters of the problem. Such a
constant may vary even from line to line. Moreover, we shall denote by $%
Q_{\tau }\left( m\right) $ a monotone nondecreasing function in $m$ of order
$\tau ,$ for some nonnegative constant $\tau ,$ independent of $m.$ More
precisely, $Q_{\tau }\left( m\right) \sim cm^{\tau }$ as $m\rightarrow
+\infty .$ We begin by showing that $E_{m}\left( t\right) $ satisfies a
local recursive relation which can be used to perform an iterative argument.
Again standard procedure for the strong solutions on $\left( 0,T_{\max
}\right) $ (i.e., testing the variational equation (\ref{de_form}) with $%
\left\vert u\right\vert ^{m-1}u,$ $m\geq 1;$ see, e.g., \cite[Lemma 9.3.1]%
{CD}, \cite[Theorem 3.2]{G} or \cite[Theorem 3.1]{Ali}) gives on account of (%
\ref{H1bis}) 
the following inequality:%
\begin{eqnarray}
&&\frac{d}{dt}E_{m}\left( t\right) +\gamma \int_{\Omega }\left\vert \nabla
\left\vert u\right\vert ^{\frac{m+1}{2}}\right\vert ^{2}dx+\int_{\Gamma
_{\mu }}\left\vert u\right\vert ^{m+1}d\mu  \label{eqn1} \\
&\leq &Q_{1}\left( m+1\right) \left( E_{m}\left( t\right) +\left(
E_{m}\left( t\right) \right) ^{\frac{m}{m+1}}\right) ,  \notag
\end{eqnarray}%
for some constant $\gamma >0$ independent of $m,t$ and $T_{\max }$. Next,
set $m_{k}+1=2^{k},$ $k\in \mathbb{N}$, and define%
\begin{equation}
M_{k}:=\sup_{t\in \left( 0,T_{\max }\right) }\int_{\Omega }\left\vert
u\left( t,x\right) \right\vert ^{2^{k}}dx=\sup_{t\in \left( 0,T_{\max
}\right) }E_{m_{k}}\left( t\right) .  \label{def}
\end{equation}%
Our goal is to derive a recursive inequality for $M_{k}$ using (\ref{eqn1}).
In order to do so, we define%
\begin{equation*}
\overline{p}_{k}:=\frac{m_{k}-m_{k-1}}{q\left( 1+m_{k}\right) -\left(
1+m_{k-1}\right) }=\frac{1}{2q-1}<1,\text{ }\overline{q}_{k}:=1-\overline{p}%
_{k}=2\frac{q-1}{2q-1}
\end{equation*}%
since $q>1.$ We aim to estimate the terms on the right-hand side of (\ref%
{eqn1}) in terms of the $L^{1+m_{k-1}}\left( \Omega \right) $-norm of $u.$
First, H\"{o}lder inequality and Maz'ya inequality (i.e., $%
W_{2,2}^{1}(\Omega ,\partial \Omega ,\mu )\subset L^{2q}\left( \Omega
\right) $, for some $q=q\left( N,\Omega \right) >1$, see (\ref{M2-N}) with $%
q=N/\left( N-1\right) $ and $\mu =\sigma ,$ and (\ref{CI-mu}) when $\mu \neq
\sigma $) yield (by using the equivalent norm \eqref{eq-norm2})%
\begin{align}
\int_{\Omega }\left\vert u\right\vert ^{1+m_{k}}dx& \leq \left( \int_{\Omega
}\left\vert u\right\vert ^{\left( 1+m_{k}\right) q}dx\right) ^{\overline{p}%
_{k}}\left( \int_{\Omega }\left\vert u\right\vert ^{1+m_{k-1}}dx\right) ^{%
\overline{q}_{k}}  \label{ee5} \\
& \leq c\left( \int_{\Omega }\left\vert \nabla \left\vert u\right\vert ^{%
\frac{\left( 1+m_{k}\right) }{2}}\right\vert ^{2}dx+\int_{\Gamma _{\mu
}}\left\vert u\right\vert ^{1+m_{k}}d\mu \right) ^{\overline{s}_{k}}\left(
\int_{\Omega }\left\vert u\right\vert ^{1+m_{k-1}}dx\right) ^{\overline{q}%
_{k}},  \notag
\end{align}%
with $\overline{s}_{k}=\overline{p}_{k}q\equiv q/\left( 2q-1\right) \in
\left( 0,1\right) $. Applying Young's inequality on the right-hand side of (%
\ref{ee5}), we get for every $\varepsilon >0,$%
\begin{eqnarray}
Q_{1}\left( m_{k}+1\right) \int_{\Omega }\left\vert u\right\vert
^{1+m_{k}}dx &\leq &\varepsilon \int_{\Omega }\left\vert \nabla \left\vert
u\right\vert ^{\frac{1+m_{k}}{2}}\right\vert ^{2}dx+\varepsilon \int_{\Gamma
_{\mu }}\left\vert u\right\vert ^{1+m_{k}}d\mu  \label{ee5bis} \\
&&+Q_{\alpha }\left( m_{k}+1\right) \left( \int_{\Omega }\left\vert
u\right\vert ^{1+m_{k-1}}dx\right) ^{\max \left\{ 2,z_{k}\right\} },  \notag
\end{eqnarray}%
for some $\alpha >0$ independent of $k$, where in fact $z_{k}:=\overline{q}%
_{k}/\left( 1-\overline{s}_{k}\right) \equiv 2$, for each $k$, since $%
m_{k}+1\equiv 2^{k}$. In order to estimate the last term on the right-hand
side of (\ref{eqn1}), we define two decreasing and increasing sequences $%
\left( l_{k}\right) _{k\in \mathbb{N}}$ and $\left( w_{k}\right) _{k\in
\mathbb{N}},$ respectively, such that%
\begin{equation*}
l_{k}=\frac{m_{k}+1}{\overline{s}_{k}m_{k}}\text{ and }w_{k}:=\frac{%
\overline{q}_{k}m_{k}}{m_{k}\left( 1-\overline{s}_{k}\right) +1},
\end{equation*}%
and observe that they satisfy%
\begin{equation*}
1<l_{k}\leq 2\left( 2-\frac{1}{q}\right) ,\frac{2\left( q-1\right) }{3q-2}%
\leq w_{k}\leq 2
\end{equation*}%
for all $k\in \mathbb{N}$ (in particular, $w_{k}\rightarrow 2$ as $%
k\rightarrow \infty $). The application of Young's inequality in (\ref%
{ee5bis}) yields again%
\begin{eqnarray}
Q_{1}\left( m_{k}+1\right) \left( \int_{\Omega }\left\vert u\right\vert
^{1+m_{k}}dx\right) ^{\frac{m_{k}}{m_{k}+1}} &\leq &\varepsilon \left(
\int_{\Omega }\left\vert \nabla \left\vert u\right\vert ^{\frac{1+m_{k}}{2}%
}\right\vert ^{2}dx+\int_{\Gamma _{\mu }}\left\vert u\right\vert
^{1+m_{k}}d\mu \right)  \label{ee5tris} \\
&&+Q_{\beta _{k}}\left( m_{k}+1\right) \left( \int_{\Omega }\left\vert
u\right\vert ^{1+m_{k-1}}dx\right) ^{w_{k}},  \notag
\end{eqnarray}%
for every $\varepsilon >0$, where now%
\begin{equation*}
Q_{\beta _{k}}\left( m_{k}+1\right) \sim \frac{c}{\varepsilon ^{1/\left(
l_{k}-1\right) }}\left( 1+m_{k}\right) ^{\beta _{k}}
\end{equation*}%
with $\beta _{k}:=\left( m_{k}+1\right) /\left( m_{k}\left( 1-\overline{s}%
_{k}\right) +1\right) \rightarrow \left( 2q-1\right) /\left( q-1\right) $,
as $k\rightarrow \infty $. Hence, inserting (\ref{ee5bis}), (\ref{ee5tris})
into equation (\ref{eqn1}), choosing a sufficiently small $0<\varepsilon
\leq \varepsilon _{0}:=\frac{1}{4}\min \left( \gamma ,1\right) $, and
simplifying, we obtain for $t\in \left( 0,T_{\max }\right) ,$%
\begin{align}
& \frac{d}{dt}\int_{\Omega }\left\vert u\left( t,x\right) \right\vert
^{1+m_{k}}dx+\frac{\varepsilon _{0}}{2}\left( \int_{\Omega }\left\vert
\nabla \left\vert u\right\vert ^{\frac{1+m_{k}}{2}}\right\vert
^{2}dx+\int_{\Gamma _{\mu }}\left\vert u\right\vert ^{m_{k}+1}d\mu \right)
\label{ee6} \\
& \leq Q_{\delta }\left( m_{k}+1\right) \left( \int_{\Omega }\left\vert
u\right\vert ^{1+m_{k-1}}dx\right) ^{2},  \notag
\end{align}%
for some positive constant $\delta >0$ independent of $k.$

Next, we have $\left\vert u\right\vert ^{\frac{1+m_{k}}{2}}\in
W_{2,2}^{1}\left( \Omega ,\partial \Omega ,\mu \right) $ (recall that $%
u\left( t\right) \in W_{2,2}^{1}\left( \Omega ,\partial \Omega ,\mu \right)
\cap L^{\infty }\left( \Omega \right) ,$ for a.e. $t\in \left( 0,T_{\max
}\right) $), so that by using the equivalent norm \eqref{eq-norm2}, we can
find another positive constant $c=c\left( \Omega ,N\right) $, depending on $%
\Omega $ and $N,$ such that%
\begin{equation}
\int_{\Omega }\left\vert \nabla \left\vert u\right\vert ^{\frac{1+m_{k}}{2}%
}\right\vert ^{2}dx+\int_{\Gamma _{\mu }}\left\vert u\right\vert
^{m_{k}+1}d\mu \geq c\int_{\Omega }\left\vert u\right\vert ^{m_{k}+1}dx.
\label{ee6bis}
\end{equation}%
We can now combine (\ref{ee6bis}) with (\ref{ee6}) to deduce%
\begin{equation}
\frac{d}{dt}\int_{\Omega }\left\vert u\left( t,x\right) \right\vert
^{2^{k}}dx+\frac{c\varepsilon _{0}}{2}\int_{\Omega }\left\vert u\left(
t,x\right) \right\vert ^{2^{k}}dx\leq Q_{\delta }\left( 2^{k}\right)
M_{k-1}^{2},  \label{ee7bis}
\end{equation}%
for $t\in \left( 0,T_{\max }\right) .$ Integrating (\ref{ee7bis}) over $%
\left( 0,t\right) $, we infer from Gronwall-Bernoulli's inequality \cite[%
Lemma 1.2.4]{CD} that there exists yet another constant $c>0,$ independent
of $k$, such that%
\begin{equation}
M_{k}\leq \max \left\{ \int_{\Omega }\left\vert u_{0}\right\vert
^{2^{k}}dx,c2^{k\delta }M_{k-1}^{2}\right\} ,\text{ for all }k\geq 2.
\label{claim2}
\end{equation}%
On the other hand, let us observe that there exists a positive constant $%
C_{\infty }=C_{\infty }(\left\Vert u_{0}\right\Vert _{\infty ,\Omega })\geq
1,$ independent of $k$, such that $\left\Vert u_{0}\right\Vert
_{2^{k},\Omega }\leq C_{\infty }$. Taking the $2^{k}$-th root on both sides
of (\ref{claim2}), and defining $X_{k}:=\sup_{t\in \left( 0,T_{\max }\right)
}\left\Vert u\left( t\right) \right\Vert _{2^{k},\Omega },$ we easily arrive
at%
\begin{equation}
X_{k}\leq \max \left\{ C_{\infty },\left( c2^{\delta k}\right) ^{\frac{1}{%
2^{k}}}X_{k-1}\right\} ,\text{ for all }k\geq 2.  \label{ee7}
\end{equation}%
By straightforward induction in (\ref{ee7}) (see \cite[Lemma 3.2]{Ali}; cf.
also \cite[Lemma 9.3.1]{CD}), we finally obtain the estimate%
\begin{equation}
\sup_{t\in \left( 0,T_{\max }\right) }\left\Vert u\left( t\right)
\right\Vert _{\infty ,\Omega }\leq \lim_{k\rightarrow +\infty }X_{k}\leq
c\max \left\{ C_{\infty },\sup_{t\in \left( 0,T_{\max }\right) }\left\Vert
u\left( t\right) \right\Vert _{2,\Omega }\right\} .  \label{ee8}
\end{equation}%
It remains to notice that (\ref{ee8}) together with the bound (\ref{3.3})
shows that $\left\Vert u\left( t\right) \right\Vert _{\infty ,\Omega }$ is
uniformly bounded for all times $t>0$ with a bound, independent of $T_{\max
},$ depending only on $\left\Vert u_{0}\right\Vert _{\infty ,\Omega }$, the
"size" of the domain and the non-linear function $f.$ This gives $T_{\max
}=+\infty $ so that strong solutions are in fact global. This completes the
proof of the theorem.
\end{proof}

\begin{remark}
\label{improved_reg}\emph{Strong solutions to Eqs. (\ref{1.1})--(\ref{1.3})
exhibit an improved regularity in time, we have}%
\begin{equation}
u\in C\left( \left[ 0,T\right] ;L^{\infty }\left( \Omega \right) \right)
\cap C((\delta ,T];\mathcal{V}_{\Theta }^{1,2}(\Omega ,\partial \Omega ,\mu
)),  \label{imp_reg}
\end{equation}%
\emph{for any }$T>\delta >0.$ \emph{This follows from the fact that the
nonlinear function }$f$\emph{\ is continuously differentiable. Note that the
second regularity in (\ref{imp_reg}) is a consequence of the first one, the
time regularity in (\ref{time_reg}) (see Definition \ref{strong}) and the
variational identity (}$\emph{\ref{de_form}}$\emph{).}
\end{remark}

The following result is immediate.

\begin{corollary}
\label{dyn_sys_str}Let the assumptions of Theorem \ref{SG} be satisfied. The
reaction-diffusion system (\ref{1.1})-(\ref{1.3}) defines a (nonlinear)
continuous semigroup $\mathcal{T}\left( t\right) :L^{\infty }\left( \Omega
\right) \rightarrow L^{\infty }\left( \Omega \right) $, given by
\begin{equation}
\mathcal{T}\left( t\right) u_{0}=u\left( t\right) ,  \label{3.7}
\end{equation}%
where $u$ is the (unique)\ strong solution in the sense of Definition \ref%
{strong}.
\end{corollary}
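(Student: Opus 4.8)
The plan is to derive the corollary directly from Theorem~\ref{SG}, that is, from the global existence and uniqueness of strong solutions, together with the uniform-in-time $L^{\infty}$ bound obtained in its proof and the nonexpansivity (\ref{nonexp}) of the linear semigroup $e^{-tA_{\Theta,\mu}}$ on $L^{\infty}(\Omega)$. First I would record well-definedness: for every $u_{0}\in L^{\infty}(\Omega)$, Theorem~\ref{SG} yields a unique global strong solution $u$ in the sense of Definition~\ref{strong}, and the estimates (\ref{ee8})-(\ref{3.3}) from Step~3 of its proof show $\sup_{t\ge 0}\|u(t)\|_{\infty,\Omega}\le M(\|u_{0}\|_{\infty,\Omega})<\infty$ for some nondecreasing $M$; hence $u(t)\in L^{\infty}(\Omega)$ for all $t\ge 0$, so that (\ref{3.7}) does define a map $\mathcal{T}(t):L^{\infty}(\Omega)\to L^{\infty}(\Omega)$, with $\mathcal{T}(0)=\mathrm{Id}$ since $u(0)=u_{0}$. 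For the semigroup law I would exploit that (\ref{1.1})-(\ref{1.3}) is autonomous: given $s\ge 0$, the function $v(\tau):=u(\tau+s)$ solves the same problem with datum $v(0)=u(s)=\mathcal{T}(s)u_{0}\in L^{\infty}(\Omega)$ and inherits on every interval the regularity required in Definition~\ref{strong} (because $u$ has it on $[s,s+T]$); uniqueness in Theorem~\ref{SG} then forces $v(\tau)=\mathcal{T}(\tau)\bigl(\mathcal{T}(s)u_{0}\bigr)$, and evaluating at $\tau=t$ gives $\mathcal{T}(t+s)=\mathcal{T}(t)\mathcal{T}(s)$.

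Continuity of $t\mapsto\mathcal{T}(t)u_{0}$ in $L^{\infty}(\Omega)$ is already contained in Definition~\ref{strong}, since strong solutions satisfy $u\in C([0,T];L^{\infty}(\Omega))$. The main step is continuous dependence on the datum in $L^{\infty}(\Omega)$, and here I would use the mild formulation: from $\partial_{t}u+A_{\Theta,\mu}u=-f(u)$ and the variation of constants formula for $e^{-tA_{\Theta,\mu}}$ (legitimate since $f(u)\in C([0,\infty);L^{\infty}(\Omega))\subset L^{1}_{\text{loc}}(0,\infty;L^{2}(\Omega))$), every global strong solution satisfies $u(t)=e^{-tA_{\Theta,\mu}}u_{0}-\int_{0}^{t}e^{-(t-s)A_{\Theta,\mu}}f(u(s))\,ds$ for all $t\ge 0$, exactly as in (\ref{mapping}). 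For two data $u_{0,1},u_{0,2}$ in the closed ball of radius $R$ in $L^{\infty}(\Omega)$, both trajectories remain in the ball of radius $M(R)$ by (\ref{ee8})-(\ref{3.3}), so $f$ restricted to $[-M(R),M(R)]$ is Lipschitz with a constant $L=L(R)$ because $f\in C_{\text{loc}}^{1}(\mathbb{R})$. Subtracting the two mild identities and bounding the $L^{\infty}$ norm through the semigroup by (\ref{nonexp}), as in the contraction estimate (\ref{3.21}), yields
\[
\|u_{1}(t)-u_{2}(t)\|_{\infty,\Omega}\le\|u_{0,1}-u_{0,2}\|_{\infty,\Omega}+L(R)\int_{0}^{t}\|u_{1}(s)-u_{2}(s)\|_{\infty,\Omega}\,ds,
\]
and Gronwall's inequality then gives $\|u_{1}(t)-u_{2}(t)\|_{\infty,\Omega}\le e^{L(R)t}\|u_{0,1}-u_{0,2}\|_{\infty,\Omega}$ for all $t\ge 0$. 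Thus each $\mathcal{T}(t)$ is locally Lipschitz on $L^{\infty}(\Omega)$, uniformly for $t$ in bounded intervals, which together with the time continuity above shows that $\{\mathcal{T}(t)\}_{t\ge 0}$ is a (nonlinear) continuous semigroup on $L^{\infty}(\Omega)$.

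I expect the only points requiring care to be the control of the Lipschitz constant of $f$ along the trajectories -- which is precisely where the uniform-in-time $L^{\infty}$ a priori bound from the proof of Theorem~\ref{SG} enters -- and the global (as opposed to merely local-in-time) validity of the variation of constants representation; both are routine given the material already at hand. A self-contained alternative avoiding the mild formula would be to test the equation satisfied by $u_{1}-u_{2}$ with $u_{1}-u_{2}$, use $\mathcal{A}_{\Theta,\mu}\ge 0$ together with the local Lipschitz bound for $f$, and conclude an analogous estimate in $L^{2}(\Omega)$; I would nonetheless keep the $L^{\infty}$ argument above, as it produces continuity in the topology of the phase space appearing in the statement.
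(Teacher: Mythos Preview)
Your argument is correct. The paper itself gives no proof for this corollary, simply declaring it ``immediate'' from Theorem~\ref{SG}; your write-up spells out exactly the reasoning that makes it immediate, using the same ingredients already developed in Step~1 and Step~3 of that proof (the mild formulation (\ref{mapping}), the $L^{\infty}$-nonexpansivity (\ref{nonexp}), the contraction-type estimate (\ref{3.21}), and the global $L^{\infty}$ bound (\ref{ee8})--(\ref{3.3})).
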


\subsection{Weak Solutions}

We aim to prove the existence of weak solutions in the sense of Definition %
\ref{weak}. For this, Theorem \ref{SG} proves essential in the sense that we
can proceed by an approximation argument, which we briefly describe below.
We consider, for each $\epsilon >0$, the following system%
\begin{equation}
\left\{
\begin{array}{ll}
\partial _{t}u_{\epsilon }-\Delta u_{\epsilon }+f\left( u_{\epsilon }\right)
=0, & \text{in }\Omega \times \left( 0,\infty \right) , \\
\partial _{\nu }u_{\epsilon }d\sigma +(u_{\epsilon }+\Theta _{\mu }\left(
u_{\epsilon }\right) )d\mu =0, & \text{on }\partial \Omega \times \left(
0,\infty \right) ,%
\end{array}%
\right.  \label{ap1}
\end{equation}%
subject to the initial condition%
\begin{equation}
u_{\epsilon }\left( 0\right) =u_{0\epsilon },  \label{ap2}
\end{equation}%
where $u_{0\epsilon }\in L^{\infty }\left( \Omega \right) \cap \mathcal{V}%
_{\Theta }^{1,2}(\Omega ,\partial \Omega ,\mu )$ such that%
\begin{equation}
u_{0\epsilon }\rightarrow u_{0}=u\left( 0\right) \text{ in }L^{2}\left(
\Omega \right) .  \label{ap3}
\end{equation}%
Then, by Theorem \ref{SG}, under the assumption that $f$ satisfies (H1), the
approximate problem (\ref{ap1})-(\ref{ap2}) admits a unique strong solution
with%
\begin{equation}
u_{\epsilon }\in W_{\text{loc}}^{1,\infty }((0,T_{\ast }];L^{2}(\Omega
))\cap C\left( \left[ 0,T_{\ast }\right] ;L^{\infty }\left( \Omega \right)
\right) \cap L_{\text{loc}}^{\infty }\left( (0,T_{\ast }];D\left( A_{\Theta
,\mu }\right) \right) ,  \label{smooth}
\end{equation}%
for some $T_{\ast }>0$ and each $\epsilon >0$, such that both (\ref{de_form}%
) and (\ref{en_id}) are satisfied even pointwise in time $t\in \left(
0,T_{\ast }\right) $. The advantage of this construction is that now every
weak solution can be approximated by the strong ones and the rigorous
justification of our estimates for such solutions is immediate.

We shall now deduce the first result concerning the solvability of problem (%
\ref{1.1})-(\ref{1.3}) with the new assumption (H2). Note that a function
that satisfies (H2) also satisfies the assumption (H1).

\begin{theorem}
\label{T1}Assume that the nonlinearity $f$\ obeys (H2) and $\mu $ satisfies
(H$_{\mu }$). Then, for any initial data $u_{0}\in L^{2}\left( \Omega
\right) ,$ there exists at least one (globally-defined) weak solution
\begin{equation*}
u\in C\left( \left[ 0,T\right] ;L^{2}\left( \Omega \right) \right)
\end{equation*}%
in the sense of Definition \ref{weak}.
\end{theorem}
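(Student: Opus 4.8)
The plan is to obtain $u$ as a limit of the strong solutions $u_{\epsilon}$ of the approximating problems \eqref{ap1}--\eqref{ap2} set up above. Since (H2) forces $\liminf_{|s|\to\infty}f(s)/s=+\infty$, hypothesis (H1) holds and Theorem \ref{SG} provides, for each $\epsilon>0$, a unique strong solution $u_{\epsilon}$ with the regularity \eqref{smooth} for which the variational identity \eqref{de_form} and the energy identity \eqref{en_id} hold even pointwise in $t$. First I would derive $\epsilon$-uniform a priori bounds. Testing \eqref{de_form} with $\xi=u_{\epsilon}(t)$, using that $\mathcal{A}_{\Theta,\mu}(u_{\epsilon},u_{\epsilon})\geq c\,\|u_{\epsilon}\|_{\mathcal{V}_{\Theta}^{1,2}(\Omega,\partial\Omega,\mu)}^{2}$ — which follows by combining the coercivity \eqref{eq-coer} with the equivalent norm \eqref{eq-norm1} — together with the lower bound $f(s)s\geq C_{f}|s|^{p}-c_{f}$ from (H2), one obtains
\[
\tfrac12\tfrac{d}{dt}\|u_{\epsilon}(t)\|_{2,\Omega}^{2}+c\,\|u_{\epsilon}(t)\|_{\mathcal{V}_{\Theta}^{1,2}(\Omega,\partial\Omega,\mu)}^{2}+C_{f}\|u_{\epsilon}(t)\|_{p,\Omega}^{p}\leq c_{f}|\Omega|.
\]
Integrating over $(0,t)$ and using $\sup_{\epsilon}\|u_{0\epsilon}\|_{2,\Omega}<\infty$ (a consequence of \eqref{ap3}), this gives bounds for $\{u_{\epsilon}\}$, uniform in $\epsilon$, in $L^{\infty}(0,T;L^{2}(\Omega))\cap L^{2}(0,T;\mathcal{V}_{\Theta}^{1,2}(\Omega,\partial\Omega,\mu))\cap L^{p}((0,T)\times\Omega)$. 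The upper bound in (H2) yields $|f(s)|\leq c(1+|s|^{p-1})$, hence $\{f(u_{\epsilon})\}$ is bounded in $L^{p'}((0,T)\times\Omega)$ with $p'=p/(p-1)$, and reading off \eqref{de_form} then bounds $\{\partial_{t}u_{\epsilon}\}$ in $L^{2}(0,T;(\mathcal{V}_{\Theta}^{1,2}(\Omega,\partial\Omega,\mu))^{\ast})+L^{p'}((0,T)\times\Omega)$; these are exactly the classes in \eqref{reg_weak}.

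Next I would extract a convergent subsequence. By the uniform bounds, $u_{\epsilon}\rightharpoonup u$ weakly in $L^{2}(0,T;\mathcal{V}_{\Theta}^{1,2}(\Omega,\partial\Omega,\mu))$, weakly-$\ast$ in $L^{\infty}(0,T;L^{2}(\Omega))$, weakly in $L^{p}((0,T)\times\Omega)$, and $\partial_{t}u_{\epsilon}\rightharpoonup\partial_{t}u$ in the above sum space. Since the embedding $\mathcal{V}_{\Theta}^{1,2}(\Omega,\partial\Omega,\mu)\hookrightarrow L^{2}(\Omega)$ is compact — by \eqref{M2-N} when $\mu=\sigma$ and by \eqref{CI-mu} when $\mu\neq\sigma$, $\Omega$ being bounded, as already exploited in the proof of Theorem \ref{3prop}(b) — while $L^{2}(\Omega)\hookrightarrow(\mathcal{V}_{\Theta}^{1,2}(\Omega,\partial\Omega,\mu))^{\ast}$ continuously, the Aubin--Lions--Simon compactness lemma gives $u_{\epsilon}\to u$ strongly in $L^{2}((0,T)\times\Omega)$, hence, along a further subsequence, $u_{\epsilon}\to u$ a.e. in $(0,T)\times\Omega$. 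Thus $f(u_{\epsilon})\to f(u)$ a.e., and since $\{f(u_{\epsilon})\}$ is bounded in $L^{p'}$ with $p'>1$, the standard lemma (a.e.\ convergence plus boundedness in $L^{p'}$, $p'>1$, implies weak convergence in $L^{p'}$) gives $f(u_{\epsilon})\rightharpoonup f(u)$ weakly in $L^{p'}((0,T)\times\Omega)$. Passing to the limit in \eqref{de_form} is then routine: the bilinear form $\mathcal{A}_{\Theta,\mu}$ and the $\langle\partial_{t}u_{\epsilon},\xi\rangle$ term pass by weak convergence, and the reaction term by the weak $L^{p'}$ convergence tested against $\xi\in\mathcal{V}_{\Theta}^{1,2}(\Omega,\partial\Omega,\mu)\cap L^{p}(\Omega)$. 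The initial condition $u(0)=u_{0}$ is recovered from $u\in C_{\mathrm{weak}}([0,T];L^{2}(\Omega))$ (Remark \ref{weakrem}) together with $u_{\epsilon}(0)=u_{0\epsilon}\to u_{0}$.

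It remains to prove the energy identity \eqref{en_id} and the continuity $u\in C([0,T];L^{2}(\Omega))$, and this is the step I expect to be the main obstacle. Weak and weak-$\ast$ convergence by themselves yield, via lower semicontinuity of the $\mathcal{V}_{\Theta}^{1,2}$- and $L^{p}$-norms, only the energy \emph{inequality}; upgrading it to an identity requires the chain rule $\tfrac{d}{dt}\|u(t)\|_{2,\Omega}^{2}=2\langle\partial_{t}u(t),u(t)\rangle$ for the mixed-space pair $(u,\partial_{t}u)$, i.e.\ for $u\in L^{2}(0,T;V)\cap L^{p}(0,T;L^{p}(\Omega))$ with $\partial_{t}u\in L^{2}(0,T;V^{\ast})+L^{p'}(0,T;L^{p'}(\Omega))$, where $V=\mathcal{V}_{\Theta}^{1,2}(\Omega,\partial\Omega,\mu)$. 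I would establish this by the usual mollification-in-time/density argument, after checking that the pairing $\langle\partial_{t}u,u\rangle$ is meaningful (it is, because $u$ lies in the space predual to the one containing $\partial_{t}u$) and that time-regular functions valued in $V\cap L^{p}(\Omega)$ are dense in that intersection space — the only point where the roughness of $\Omega$ could in principle intervene. Granting this lemma one may legitimately test \eqref{de_form} with $\xi=u(t)$, integrate over $(0,t)$, and arrive at \eqref{en_id}; the same lemma gives $u\in C([0,T];L^{2}(\Omega))$, which completes the proof.
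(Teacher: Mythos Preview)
Your proposal is correct and follows essentially the same route as the paper: approximate by the strong solutions $u_{\epsilon}$ from Theorem \ref{SG}, derive the uniform energy bounds from (H2), use the compact embedding $\mathcal{V}_{\Theta}^{1,2}(\Omega,\partial\Omega,\mu)\hookrightarrow L^{2}(\Omega)$ and Aubin--Lions to get strong $L^{2}$ convergence and hence weak $L^{p'}$ convergence of $f(u_{\epsilon})$, and pass to the limit. For the energy identity and $C([0,T];L^{2}(\Omega))$-continuity the paper simply invokes \cite[Theorem II.1.8]{CV} for the chain rule in the sum space $L^{2}(0,T;(\mathcal{V}_{\Theta}^{1,2})^{\ast})+L^{p'}((0,T)\times\Omega)$, which is exactly the lemma you propose to verify by mollification and density; your more explicit discussion of this step is accurate.
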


\begin{proof}
We divide the proof into two main steps. For practical purposes, $C$ will
denote a positive constant that is independent of time, $T_{\ast }$, $%
\epsilon >0$ and initial data, but which only depends on the other
structural parameters. Such a constant may vary even from line to line.

\emph{Step 1.} (The main energy estimate). Since $f$ obeys (H2), then (H1)
is satisfied and by Theorem \ref{SG}, the approximate problem (\ref{ap1})-(%
\ref{ap2}) has a strong solution that also satisfies the weak formulation (%
\ref{de_form}). Thus, in view of (\ref{smooth}) the key choice $\xi
=u_{\epsilon }\left( t\right) $ in (\ref{de_form}) is justified. We have the
following energy identity%
\begin{equation}
\frac{1}{2}\frac{d}{dt}\left\Vert u_{\epsilon }\left( t\right) \right\Vert
_{2,\Omega }^{2}+\mathcal{A}_{\Theta ,\mu }\left( u_{\epsilon }\left(
t\right) ,u_{\epsilon }\left( t\right) \right) +\int_{\Omega }f\left(
u_{\epsilon }\left( t\right) \right) u_{\epsilon }\left( t\right) dx=0,
\notag
\end{equation}%
for all $t\in (0,T_{\ast }).$ Invoking assumption (H2), we infer%
\begin{equation}
\frac{1}{2}\frac{d}{dt}\left\Vert u_{\epsilon }\left( t\right) \right\Vert
_{2,\Omega }^{2}+C_{\Omega }\left\Vert u_{\epsilon }\left( t\right)
\right\Vert _{\mathcal{V}_{\Theta }^{1,2}(\Omega ,\partial \Omega ,\mu
)}^{2}+\left\Vert u_{\epsilon }\left( t\right) \right\Vert _{p,\Omega
}^{p}\leq C\left\vert \Omega \right\vert  \label{3.5}
\end{equation}%
where we have used the equivalent norm \eqref{eq-norm1}. We can now
integrate this inequality over $\left( 0,T_{\ast }\right) $ to deduce%
\begin{equation}
\left\Vert u_{\epsilon }\left( t\right) \right\Vert _{2,\Omega
}^{2}+\int_{0}^{t}\left( C_{\Omega }\left\Vert u_{\epsilon }\left( s\right)
\right\Vert _{\mathcal{V}_{\Theta }^{1,2}(\Omega ,\partial \Omega ,\mu
)}^{2}+C_{f}\left\Vert u_{\epsilon }\left( s\right) \right\Vert _{p,\Omega
}^{p}\right) ds\leq \left\Vert u_{\epsilon }\left( 0\right) \right\Vert
_{2,\Omega }^{2}e^{-\rho t}+C  \label{3.6}
\end{equation}%
for all $t\in \left( 0,T_{\ast }\right) ,$ for some $\rho >0$ independent of
$\epsilon >0$. Incidently, by (\ref{ap3}) this uniform estimate also shows
that we can take $T_{\ast }=\infty ,$ i.e., the weak solution that we
construct is in fact globally-defined. On account of (\ref{3.6}), we deduce
the following uniform (in $\epsilon >0$) bounds%
\begin{align}
u_{\epsilon }& \in L^{\infty }\left( 0,T;L^{2}\left( \Omega \right) \right)
\cap L^{p}\left( 0,T;L^{p}\left( \Omega \right) \right) ,  \label{est1} \\
u_{\epsilon }& \in L^{2}(0,T;\mathcal{V}_{\Theta }^{1,2}(\Omega ,\partial
\Omega ,\mu )),  \notag
\end{align}%
for any $T>0.$ Hence, by virtue of \eqref{op-star} and (\ref{est1}), we also
get%
\begin{equation}
A_{\Theta ,\mu }u_{\epsilon }\in L^{2}(0,T;(\mathcal{V}_{\Theta
}^{1,2}(\Omega ,\partial \Omega ,\mu ))^{\ast }),\text{ }f\left( u_{\epsilon
}\right) \in L^{p^{^{\prime }}}(0,T;L^{p^{^{\prime }}}\left( \Omega \right)
),  \label{est2}
\end{equation}%
uniformly in $\epsilon >0$. Here, recall that $A_{\Theta ,\mu }$ is the
positive self-adjoint operator associated with the bilinear form $\mathcal{A}%
_{\Theta ,\mu }$ (see Section 2.2). By \eqref{op-star}, we can interpret (%
\ref{de_form}) as the equality%
\begin{equation}
-\partial _{t}u_{\epsilon }=A_{\Theta ,\mu }u_{\epsilon }+f\left(
u_{\epsilon }\right)  \label{fform}
\end{equation}%
in $L^{2}(0,T;(\mathcal{V}_{\Theta }^{1,2}(\Omega ,\partial \Omega ,\mu
))^{\ast })+L^{p^{^{\prime }}}(0,T;L^{p^{^{\prime }}}\left( \Omega \right)
), $ where $p^{^{\prime }}=p/\left( p-1\right) $ (see also (\ref{op_v})).

\emph{Step 2.} (Passage to limit). From the above properties (\ref{est1})-(%
\ref{est2}), we see that there exists a subsequence $\left\{ u_{\epsilon
}\right\} _{\epsilon >0}$ (still denoted by $\left\{ u_{\epsilon }\right\} $%
), such that as $\epsilon \rightarrow 0^{+}$,%
\begin{equation}
\begin{array}{l}
u_{\epsilon }\rightarrow u\text{ weakly star in }L^{\infty }\left(
0,T;L^{2}\left( \Omega \right) \right) , \\
u_{\epsilon }\rightarrow u\text{ weakly in }L^{p}\left( 0,T;L^{p}\left(
\Omega \right) \right) , \\
\partial _{t}u_{\epsilon }\rightarrow \partial _{t}u\text{ weakly in }%
L^{2}(0,T;(\mathcal{V}_{\Theta }^{1,2}(\Omega ,\partial \Omega ,\mu ))^{\ast
})+L^{p^{^{\prime }}}(0,T;L^{p^{^{\prime }}}\left( \Omega \right) ).%
\end{array}
\label{2.27}
\end{equation}%
Since the continuous embedding $\mathcal{V}_{\Theta }^{1,2}(\Omega ,\partial
\Omega ,\mu )\hookrightarrow L^{2}\left( \Omega \right) $ is compact, then
we can exploit standard embedding results for vector valued functions (see,
e.g., \cite{CV}), to deduce%
\begin{equation}
u_{\epsilon }\rightarrow u\text{ strongly in }L^{2}\left( 0,T;L^{2}\left(
\Omega \right) \right) .  \label{2.30}
\end{equation}%
By refining in \eqref{2.30}, $u_{\epsilon }$ converges to $u$ a.e. in $%
\Omega \times \left( 0,T\right) $. Then, by means of known results in
measure theory \cite{CV}, the continuity of $f$ and the convergence of %
\eqref{2.30} imply that $f\left( u_{\epsilon }\right) $ converges weakly to $%
f\left( u\right) $ in $L^{p^{^{\prime }}}(\left( 0,T\right) \times \Omega )$%
, while from (\ref{est1})-(\ref{est2}) and the linearity of $A_{\Theta ,\mu
} $, we further see that%
\begin{equation}
A_{\Theta ,\mu }u_{\epsilon }\rightarrow A_{\Theta ,\mu }u\text{ weakly in }%
L^{2}(0,T;(\mathcal{V}_{\Theta }^{1,2}(\Omega ,\partial \Omega ,\mu ))^{\ast
}).  \label{2.32}
\end{equation}%
We can now pass to the limit as $\epsilon \rightarrow 0$ in equation (\ref%
{fform}) to deduce the desired weak solution $u$, satisfying the variational
identity (\ref{de_form}) and the regularity properties (\ref{reg_weak}). In
order to show the energy identity (\ref{en_id}), we can proceed in a
standard way as in \cite[Theorem II.1.8]{CV}, by observing that any
distributional derivative $\partial _{t}u\left( t\right) $ from $\mathcal{D}%
^{^{\prime }}(\left[ 0,T\right] ;(\mathcal{V}_{\Theta }^{1,2}(\Omega
,\partial \Omega ,\mu ))^{\ast }+ L^{p^{^{\prime }}}\left( \Omega \right) )$
can be represented as $\partial _{t}u\left( t\right) =\mathcal{Z}_{1}\left(
t\right) +\mathcal{Z}_{2}\left( t\right) ,$ where%
\begin{equation*}
\mathcal{Z}_{1}\left( t\right) :=-A_{\Theta ,\mu }u\left( t\right) \in
L^{2}(0,T;(\mathcal{V}_{\Theta }^{1,2}(\Omega ,\partial \Omega ,\mu ))^{\ast
}),\text{ }\mathcal{Z}_{2}\left( t\right) :=-f\left( u\left( t\right)
\right) \in L^{p^{^{\prime }}}(0,T;L^{p^{^{\prime }}}\left( \Omega \right) ),
\end{equation*}%
which are precisely the dual of the spaces $L^{2}(0,T;(\mathcal{V}_{\Theta
}^{1,2}(\Omega ,\partial \Omega ,\mu )))$, and $L^{p}(0,T;L^{p}\left( \Omega
\right) )$, respectively. In particular, we obtain that every weak solution $%
u\in C\left( \left[ 0,T\right] ;L^{2}\left( \Omega \right) \right) ,$ and
that the map $t\mapsto \left\Vert u\left( t\right) \right\Vert _{2,\Omega
}^{2}$ is absolutely continuous on $\left[ 0,T\right] $, such that $u$
satisfies the energy identity%
\begin{equation}
\frac{1}{2}\frac{d}{dt}\left\Vert u\left( t\right) \right\Vert _{2,\Omega
}^{2}+\left\langle A_{\Theta ,\mu }u\left( t\right) ,u\left( t\right)
\right\rangle +\left\langle f\left( u\left( t\right) \right) ,u\left(
t\right) \right\rangle =0,  \label{en_id2}
\end{equation}%
a.e. $t\in \left[ 0,T\right] $, whence (\ref{en_id}) follows. The proof of
Theorem \ref{T1} is finished.
\end{proof}

As in the classical case \cite{CV, R, T}, we can prove the following
stability result for the class of weak solutions constructed by means of
Definition \ref{weak}.

\begin{proposition}
\label{uniq}Let the assumptions of Theorem \ref{T1} be satisfied, and in
addition, assume that (H3) holds. Then, there exists a unique weak solution
to problem (\ref{1.1})-(\ref{1.3}), which depends continuously on the
initial data in a Lipschitz way.
\end{proposition}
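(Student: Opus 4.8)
The plan is the classical energy argument applied to the difference of two solutions. Let $u_{1},u_{2}$ be two weak solutions in the sense of Definition~\ref{weak} corresponding to initial data $u_{01},u_{02}\in L^{2}(\Omega)$, and set $w:=u_{1}-u_{2}$. By (\ref{reg_weak}) we have $w\in L^{2}(0,T;\mathcal{V}_{\Theta}^{1,2}(\Omega,\partial\Omega,\mu))\cap L^{p}((0,T)\times\Omega)$ and $\partial_{t}w\in L^{2}(0,T;(\mathcal{V}_{\Theta}^{1,2}(\Omega,\partial\Omega,\mu))^{\ast})+L^{p'}((0,T)\times\Omega)$, so for a.e.\ $t$ the function $w(t)$ belongs to $\mathcal{V}_{\Theta}^{1,2}(\Omega,\partial\Omega,\mu)\cap L^{p}(\Omega)$ and is an admissible test function in (\ref{de_form}). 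Subtracting the variational identities for $u_{1}$ and $u_{2}$ and choosing $\xi=w(t)$ gives, for a.e.\ $t\in(0,T)$,
\[
\langle\partial_{t}w(t),w(t)\rangle+\mathcal{A}_{\Theta,\mu}(w(t),w(t))+\big\langle f(u_{1}(t))-f(u_{2}(t)),w(t)\big\rangle=0 .
\]

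First I would bound the three terms. The form is nonnegative, $\mathcal{A}_{\Theta,\mu}(w,w)\ge 0$, since by (\ref{form}) it is a sum of nonnegative contributions (and, under (H$_{\mu}$), it even controls $\|w\|_{\mathcal{V}_{\Theta}^{1,2}(\Omega,\partial\Omega,\mu)}^{2}$ via the coercivity estimate (\ref{eq-coer})). For the nonlinear term, assumption (H3) yields the one-sided Lipschitz bound $(f(a)-f(b))(a-b)\ge -C_{f}\,|a-b|^{2}$ for all $a,b\in\mathbb{R}$; since $f(u_{i})\in L^{p'}((0,T)\times\Omega)$ and $u_{i}\in L^{p}((0,T)\times\Omega)$, the product $(f(u_{1})-f(u_{2}))w$ is integrable and $\langle f(u_{1})-f(u_{2}),w\rangle=\int_{\Omega}(f(u_{1})-f(u_{2}))w\,dx\ge -C_{f}\|w\|_{2,\Omega}^{2}$. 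Finally, exactly as in the derivation of the energy identity (\ref{en_id}) in the proof of Theorem~\ref{T1} --- decomposing $\partial_{t}w=-A_{\Theta,\mu}w-(f(u_{1})-f(u_{2}))$ into its components in $L^{2}(0,T;(\mathcal{V}_{\Theta}^{1,2}(\Omega,\partial\Omega,\mu))^{\ast})$ and $L^{p'}((0,T)\times\Omega)$ and invoking \cite[Theorem~II.1.8]{CV} --- one obtains that $t\mapsto\|w(t)\|_{2,\Omega}^{2}$ is absolutely continuous on $[0,T]$ with $\langle\partial_{t}w(t),w(t)\rangle=\tfrac12\tfrac{d}{dt}\|w(t)\|_{2,\Omega}^{2}$ for a.e.\ $t$.

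Putting these together gives $\tfrac12\tfrac{d}{dt}\|w(t)\|_{2,\Omega}^{2}\le C_{f}\|w(t)\|_{2,\Omega}^{2}$ for a.e.\ $t$, and Gronwall's inequality on $[0,T]$ yields
\[
\|u_{1}(t)-u_{2}(t)\|_{2,\Omega}^{2}\le e^{2C_{f}t}\,\|u_{01}-u_{02}\|_{2,\Omega}^{2},\qquad t\in[0,T],
\]
which is the claimed Lipschitz continuous dependence on the initial data; taking $u_{01}=u_{02}$ forces $w\equiv 0$, hence uniqueness. I expect the only delicate point to be the absolute-continuity / chain-rule step for $\|w(t)\|_{2,\Omega}^{2}$, because $\partial_{t}w$ is a priori only in the sum space $L^{2}(0,T;(\mathcal{V}_{\Theta}^{1,2}(\Omega,\partial\Omega,\mu))^{\ast})+L^{p'}((0,T)\times\Omega)$ rather than in a single Hilbert dual; this is precisely the situation already handled for (\ref{en_id}) in Theorem~\ref{T1}, so it requires no new tools beyond careful bookkeeping of the duality pairings.
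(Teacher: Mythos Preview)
Your proof is correct and follows essentially the same approach as the paper: test the difference equation with $w=u_{1}-u_{2}$, use (H3) for the one-sided Lipschitz bound on the nonlinearity, drop (or use the coercivity of) $\mathcal{A}_{\Theta,\mu}(w,w)$, and conclude via Gronwall. The paper's proof is slightly terser---it simply cites the energy-identity machinery (\ref{en_id})/(\ref{en_id2}) for the chain rule---whereas you spell out explicitly why the absolute-continuity step goes through; this is exactly the right point to flag and your justification is the same as the paper's.
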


\begin{proof}
As usual, consider any two weak solutions $u_{1},u_{2},$ and set $u\left(
t\right) =u_{1}\left( t\right) -u_{2}\left( t\right) $. Then, according to (%
\ref{en_id}) (cf. also (\ref{en_id2})), $u\left( t\right) $ satisfies the
identity%
\begin{equation*}
\frac{1}{2}\frac{d}{dt}\left\Vert u\left( t\right) \right\Vert _{2,\Omega
}^{2}+\mathcal{A}_{\Theta ,\mu }\left( u\left( t\right) ,u\left( t\right)
\right) =-\left\langle f\left( u_{1}\left( t\right) \right) -f\left(
u_{2}\left( t\right) \right) ,u\left( t\right) \right\rangle ,
\end{equation*}%
a.e. $t\in \left[ 0,T\right] .$ Assumption (H3) implies (by using the
equivalent norm \eqref{eq-norm1})
\begin{equation}
\frac{d}{dt}\left\Vert u\left( t\right) \right\Vert _{2,\Omega
}^{2}+C\left\Vert u\left( t\right) \right\Vert _{\mathcal{V}%
_{\Theta}^{1,2}(\Omega ,\partial \Omega ,\mu )}^{2}\leq 2C_{f}\left\Vert
u\left( t\right) \right\Vert _{2,\Omega }^{2}.  \label{3.8}
\end{equation}%
This yields the desired result by application of Gronwall's inequality.
\end{proof}

As an immediate consequence of this stability result, problem (\ref{1.1})-(%
\ref{1.3}) defines a dynamical system in the classical sense \cite{BV, CV,
R, T}.

\begin{corollary}
\label{dyn_system}Let the assumptions of Proposition \ref{uniq} be
satisfied. The reaction-diffusion system (\ref{1.1})-(\ref{1.3}) defines a
(nonlinear) continuous semigroup $\mathcal{S}\left( t\right) :L^{2}\left(
\Omega \right) \rightarrow L^{2}\left( \Omega \right) $, given by
\begin{equation}
\mathcal{S}\left( t\right) u_{0}=u\left( t\right) ,  \label{2.15}
\end{equation}%
where $u$ is the (unique)\ weak solution in the sense of Definition \ref%
{weak}.
\end{corollary}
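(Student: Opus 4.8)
The plan is to verify the three defining properties of a (nonlinear) continuous semigroup in turn, all of which are now essentially bookkeeping on top of Theorem \ref{T1}, Proposition \ref{uniq}, and the uniform bound (\ref{3.6}). First, \emph{well-definedness of the map}: given $u_{0}\in L^{2}(\Omega )$, Theorem \ref{T1} furnishes a globally-defined weak solution $u\in C([0,\infty );L^{2}(\Omega ))$ (recall that (\ref{3.6}) allows one to take $T_{\ast }=\infty $), and Proposition \ref{uniq} — which is where (H3) enters — guarantees it is the only one; hence $\mathcal{S}(t)u_{0}:=u(t)$ is unambiguously defined for every $t\geq 0$, with $\mathcal{S}(0)=\mathrm{Id}$ on $L^{2}(\Omega )$.

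Second, the \emph{semigroup identity} $\mathcal{S}(t+s)=\mathcal{S}(t)\mathcal{S}(s)$ for all $t,s\geq 0$. Here I would exploit the autonomy of (\ref{1.1})--(\ref{1.3}): if $u$ is the weak solution with datum $u_{0}$, fix $s\geq 0$ and set $v(\tau ):=u(s+\tau )$. One checks directly from Definition \ref{weak} that $v$ inherits the regularity (\ref{reg_weak}) on $[0,T]$, satisfies the variational identity (\ref{de_form}) for a.e.\ $\tau $, and — using that $\tau \mapsto \left\Vert u(\tau )\right\Vert _{2,\Omega }^{2}$ is absolutely continuous on $[0,\infty )$, so that the energy identity (\ref{en_id}) may be recentred at time $s$ — also satisfies (\ref{en_id}) with initial value $v(0)=u(s)=\mathcal{S}(s)u_{0}$. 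Thus $v$ is \emph{the} weak solution with datum $\mathcal{S}(s)u_{0}$, and uniqueness (Proposition \ref{uniq}) forces $v(\tau )=\mathcal{S}(\tau )\mathcal{S}(s)u_{0}$, i.e.\ $\mathcal{S}(s+\tau )u_{0}=\mathcal{S}(\tau )\mathcal{S}(s)u_{0}$ for every $u_{0}$.

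Third, \emph{continuity}. Continuity of $t\mapsto \mathcal{S}(t)u_{0}$ on $[0,\infty )$ into $L^{2}(\Omega )$ is precisely the membership $u\in C([0,T];L^{2}(\Omega ))$ already recorded in Theorem \ref{T1}. For continuous dependence on the initial data, I would take two data $u_{0}^{1},u_{0}^{2}$, apply the differential inequality (\ref{3.8}) from the proof of Proposition \ref{uniq} to the difference $u=\mathcal{S}(\cdot )u_{0}^{1}-\mathcal{S}(\cdot )u_{0}^{2}$, and integrate via Gronwall's inequality to obtain
\begin{equation*}
\left\Vert \mathcal{S}(t)u_{0}^{1}-\mathcal{S}(t)u_{0}^{2}\right\Vert _{2,\Omega }^{2}\leq e^{2C_{f}t}\left\Vert u_{0}^{1}-u_{0}^{2}\right\Vert _{2,\Omega }^{2},\qquad t\geq 0,
\end{equation*}
which is Lipschitz dependence, uniform for $t$ in bounded intervals. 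Combining this with the time-continuity yields joint continuity of $(t,u_{0})\mapsto \mathcal{S}(t)u_{0}$, so that $\left\{ \mathcal{S}(t)\right\} _{t\geq 0}$ is a continuous (nonlinear) semigroup on $L^{2}(\Omega )$.

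There is no serious obstacle; the corollary is a formal consequence of the preceding results. The only point requiring a little care is the translation step in the semigroup identity — one must confirm that it is the \emph{integrated} form of the energy identity (rather than its pointwise differential form) that propagates under recentring, which is legitimate precisely because $t\mapsto \left\Vert u(t)\right\Vert _{2,\Omega }^{2}$ was shown to be absolutely continuous in the proof of Theorem \ref{T1}.
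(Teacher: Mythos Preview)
Your proposal is correct and follows the same approach the paper intends: the corollary is stated as an immediate consequence of Theorem \ref{T1} and Proposition \ref{uniq}, and the paper gives no further argument beyond that. You have simply spelled out the standard verification of the semigroup axioms and continuity that the paper leaves implicit.
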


\subsection{Regularity of solutions}

The next proposition is a direct consequence of estimate (\ref{ee7bis}) of
Theorem \ref{SG}. Strong solutions guaranteed by Theorem \ref{SG} provide
sufficient regularity to justify all the calculations performed in the proof
of Proposition \ref{bounded} below. In this case, at the very end we pass to
the limit and obtain the estimate even for the generalized solutions.

\begin{proposition}
\label{bounded}Let the assumptions of Theorem \ref{SG} or Theorem \ref{T1}
be satisfied. Let $\tau ^{^{\prime }}>\tau >0$ and fix $\mu :=\tau
^{^{\prime }}-\tau $. There exists a positive constant $C=C\left( \mu
\right) \sim \mu ^{-\eta }$ (for some $\eta >0$), independent of $t$ and the
initial data, such that%
\begin{equation}
\sup_{t\geq \tau ^{^{\prime }}}\left\Vert u\left( t\right) \right\Vert
_{\infty ,\Omega }\leq C\sup_{s\geq \tau }\left\Vert u\left( s\right)
\right\Vert _{2,\Omega }.  \label{sup-b}
\end{equation}
\end{proposition}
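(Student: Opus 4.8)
The plan is to run a time-shifted Moser--Alikakos iteration built on the differential inequalities already obtained inside the proof of Theorem~\ref{SG}. Those inequalities were derived for strong solutions, and under the hypotheses of Theorem~\ref{T1} alone they hold for the strong approximations $u_{\epsilon}$ of \eqref{ap1}--\eqref{ap2}; hence I will carry out the whole computation for such a solution and let $\epsilon\to0^{+}$ at the very end, all constants below being independent of $\epsilon$. Throughout, $c$ denotes a structural constant (depending only on $N,\Omega,f$) that may change from line to line, $Q_{\delta}$ is as in the proof of Theorem~\ref{SG}, and I keep the notation $m_{k}+1=2^{k}$, $E_{m_{k}}(t)=\|u(t)\|_{2^{k},\Omega}^{2^{k}}$. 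We may assume $\sup_{s\ge\tau}\|u(s)\|_{2,\Omega}<\infty$ (true by the basic energy estimate \eqref{3.3}, resp.\ \eqref{3.6}); if this supremum vanishes the statement is trivial.

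First I fix the times $\tau_{k}:=\tau^{\prime}-\mu\,2^{-(k-1)}$ for $k\ge1$, so that $\tau_{1}=\tau$, $\tau_{k}\uparrow\tau^{\prime}$, $\tau_{k}-\tau_{k-1}=\mu\,2^{-(k-1)}$, and $\tau_{k-1}\ge\tau$ for every $k\ge2$; put $N_{k}:=\sup_{t\ge\tau_{k}}E_{m_{k}}(t)$, which is finite for strong solutions (and the statement is trivial if some $N_{k-1}=0$). The two starting points are inequality \eqref{ee6}, whose right-hand side is at most $Q_{\delta}(2^{k})N_{k-1}^{2}$ for $t\ge\tau_{k-1}$, and the interpolated Maz'ya estimate \eqref{ee5}, which with $\lambda:=q/(2q-1)\in(\tfrac12,1)$ (the exponent $\overline{s}_{k}$ appearing there) reads
\[
E_{m_{k}}(t)\le c\,\Phi_{k}(t)^{\lambda}\,E_{m_{k-1}}(t)^{2(1-\lambda)},\qquad
\Phi_{k}(t):=\int_{\Omega}\bigl|\nabla|u(t)|^{\frac{1+m_{k}}{2}}\bigr|^{2}\,dx+\int_{\Gamma_{\mu}}|u(t)|^{m_{k}+1}\,d\mu .
\]
Consequently $\Phi_{k}(t)\ge c^{-1}N_{k-1}^{-2(1-\lambda)/\lambda}\,E_{m_{k}}(t)^{1/\lambda}$ for $t\ge\tau_{k-1}$, and replacing the dissipation term $\tfrac{\varepsilon_{0}}{2}\Phi_{k}$ in \eqref{ee6} by this lower bound turns \eqref{ee6}, on $[\tau_{k-1},\infty)$, into the Bernoulli-type ordinary differential inequality
\[
\frac{d}{dt}E_{m_{k}}(t)+A_{k}\,E_{m_{k}}(t)^{1/\lambda}\le B_{k},\qquad A_{k}:=c\,N_{k-1}^{-2(1-\lambda)/\lambda},\quad B_{k}:=Q_{\delta}(2^{k})\,N_{k-1}^{2}.
\]

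The crucial point, and the step I expect to be the main obstacle, is that there is no a priori control on $E_{m_{k}}(\tau_{k-1})$, so a linear Gronwall argument is useless; one must instead exploit the super-linearity of the dissipation ($1/\lambda>1$). For this I will invoke the elementary comparison fact that any $y\ge0$ satisfying $y^{\prime}+Ay^{\beta}\le B$ on $[t_{0},\infty)$ with $A,B>0$ and $\beta>1$ obeys
\[
y(t)\le(B/A)^{1/\beta}+\bigl(A(\beta-1)(t-t_{0})\bigr)^{-1/(\beta-1)}\qquad\text{for all }t>t_{0},
\]
which follows by comparing $y$ with the right-hand side, a supersolution because $(a+b)^{\beta}\ge a^{\beta}+b^{\beta}$ for $a,b\ge0$. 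Applying this with $t_{0}=\tau_{k-1}$, $\beta=1/\lambda$ and $t\ge\tau_{k}$ (so $t-t_{0}\ge\mu\,2^{-(k-1)}$), and simplifying the exponents --- both resulting summands are constant multiples of $N_{k-1}^{2}$ --- I obtain the recursion
\[
N_{k}\le c\,\bigl(1+\mu^{-\eta_{0}}\bigr)\,2^{k\delta_{0}}\,N_{k-1}^{2},\qquad k\ge2,
\]
with $\eta_{0}=\lambda/(1-\lambda)=q/(q-1)$ and some $\delta_{0}>0$ depending only on $q$ and $\delta$.

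It remains to iterate. Writing $X_{k}:=N_{k}^{1/2^{k}}=\sup_{t\ge\tau_{k}}\|u(t)\|_{2^{k},\Omega}$, the recursion becomes $X_{k}\le\bigl(c(1+\mu^{-\eta_{0}})2^{k\delta_{0}}\bigr)^{1/2^{k}}X_{k-1}$ for $k\ge2$, and the standard iteration lemma \cite[Lemma~3.2]{Ali} (cf.\ \cite[Lemma~9.3.1]{CD}) gives $X_{k}\to X_{\infty}\le C(\mu)\,X_{1}$, where the convergent product $C(\mu)=\prod_{k\ge2}\bigl(c(1+\mu^{-\eta_{0}})2^{k\delta_{0}}\bigr)^{1/2^{k}}$ behaves like $\mu^{-\eta}$ as $\mu\to0^{+}$, with $\eta=\tfrac12\eta_{0}=q/\bigl(2(q-1)\bigr)$ (which, incidentally, coincides with the exponent $\gamma$ from part (e) of Theorem~\ref{3prop}). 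Since $X_{1}=\sup_{t\ge\tau}\|u(t)\|_{2,\Omega}$ and since $\tau_{k}<\tau^{\prime}$ forces $t\ge\tau_{k}$ for every $k$ whenever $t\ge\tau^{\prime}$ --- so that $\|u(t)\|_{\infty,\Omega}=\lim_{k}\|u(t)\|_{2^{k},\Omega}\le X_{\infty}$ there --- this is exactly \eqref{sup-b}. Letting $\epsilon\to0^{+}$ (all constants uniform in $\epsilon$) transfers the bound to the weak solutions of Theorem~\ref{T1}, completing the plan.
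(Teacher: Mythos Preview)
Your argument is correct and delivers the same $L^{2}$--$L^{\infty}$ smoothing estimate, but the mechanism is genuinely different from the paper's. The paper derives the recursive relation \eqref{rec_rel} by a \emph{time cut-off trick}: it multiplies $E_{m_{k}}$ by a smooth function $\zeta$ supported in $[t_{k},\infty)$, uses the linear differential inequality \eqref{ee7bis} (coming from the Poincar\'e-type bound \eqref{ee6bis}), and integrates $Z_{k}=\zeta E_{m_{k}}$ from the point where $Z_{k}$ vanishes; the vanishing initial value replaces any a~priori control on $E_{m_{k}}(t_{k})$. You instead keep the full dissipation term $\Phi_{k}$ from \eqref{ee6}, combine it with the interpolated Maz'ya estimate \eqref{ee5} to produce a \emph{super-linear} (Bernoulli) differential inequality $E_{m_{k}}'+A_{k}E_{m_{k}}^{1/\lambda}\le B_{k}$, and then invoke the comparison with the explicit supersolution $(B_{k}/A_{k})^{\lambda}+(A_{k}(\beta-1)(t-\tau_{k-1}))^{-\lambda/(1-\lambda)}$, whose blow-up at $t=\tau_{k-1}$ again makes the unknown initial value irrelevant. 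Both devices are standard in Moser iteration; yours avoids the auxiliary cut-off and has the pleasant by-product that the exponent $\eta=q/\bigl(2(q-1)\bigr)$ matches the ultracontractivity exponent $\gamma$ of Theorem~\ref{3prop}(e), while the paper's route gives a slightly less explicit $C(\mu)$ but stays with a first-order linear ODE throughout.
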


\begin{proof}
The argument leading to (\ref{sup-b}) is analogous to \cite[Theorem 2.3]{G}
(cf. also \cite[Theorem 2.3]{GalNS}). It is based on the following recursive
inequality for $E_{m_{k}}\left( t\right) $, which is a consequence of (\ref%
{ee7bis}) and (\ref{ee5})-(\ref{ee6}):%
\begin{equation}
\sup_{t\geq t_{k-1}}E_{m_{k}}\left( t\right) \leq C\left( 2^{k}\right)
^{\delta }\left( \sup_{s\geq t_{k}}E_{m_{k-1}}\left( s\right) \right) ^{2},%
\text{ \ for all }k\geq 1,  \label{rec_rel}
\end{equation}%
where the sequence $\left\{ t_{k}\right\} _{k\in \mathbb{N}}$ is defined
recursively $t_{k}=t_{k-1}-\mu /2^{k},$ $k\geq 1$, $t_{0}=\tau ^{^{\prime
}}. $ Here we recall that $C=C\left( \mu \right) >0,$ $\delta >0$ are
independent of $k.$ For the sake of completeness, we report a sketch of the
argument for (\ref{rec_rel}). To this end, let $\zeta \left( s\right) $ be a
positive function $\zeta :\mathbb{R}_{+}\rightarrow \left[ 0,1\right] $ such
that $\zeta \left( s\right) =0$ for $s\in \left[ 0,t-\mu /2^{k}\right] ,$ $%
\zeta \left( s\right) =1$ if $s\in \left[ t,+\infty \right) $ and $%
\left\vert d\zeta /ds\right\vert \leq 2^{k}/\mu $, if $s\in \left( t-\mu
/2^{k},t\right) $. We define $Z_{k}\left( s\right) =\zeta \left( s\right)
E_{m_{k}}\left( s\right) $ and notice that%
\begin{align}
\frac{d}{ds}Z_{k}\left( s\right) & \leq \zeta \left( s\right) \frac{d}{ds}%
E_{m_{k}}\left( s\right) +\frac{2^{k}}{\mu }E_{m_{k}}\left( s\right)
\label{e11bis} \\
& =\zeta \left( s\right) \frac{d}{ds}E_{m_{k}}\left( s\right) +Q_{1}\left(
2^{k}\right) \int_{\Omega }\left\vert u\right\vert ^{1+m_{k}}dx.  \notag
\end{align}%
The last integral in (\ref{e11bis}) can be estimated as in (\ref{ee5bis})
and (\ref{ee6bis}). Combining the above estimates and the fact that $%
Z_{k}\leq E_{m_{k}}$, we deduce the following inequality:%
\begin{equation}
\frac{d}{ds}Z_{k}\left( s\right) +C2^{k}Z_{k}\left( s\right) \leq C\left(
2^{k}\right) ^{\sigma }(\sup_{s\geq t-\mu /2^{k}}E_{m_{k-1}}\left( s\right)
)^{2},\text{ for all }s\in \left[ t-\mu /2^{k},+\infty \right) .  \label{e12}
\end{equation}%
Note that $C=C\left( \mu \right) \sim \mu ^{-1}$ as $\mu \rightarrow 0$, and
$C\left( \mu \right) $ is bounded if $\mu $ is bounded away from zero.
Integrating (\ref{e12}) with respect to $s$ from $t-\mu /2^{k}$ to $t,$ and
taking into account the fact that $Z_{k}\left( t-\mu /2^{k}\right) =0,$ we
obtain that $E_{m_{k}}\left( t\right) =Z_{k}\left( t\right) \leq C\left(
2^{k}\right) ^{\sigma }(\sup_{s\geq t-\mu /2^{k}}E_{m_{k-1}}\left( s\right)
)^{2}\left( 1-e^{-C\mu }\right) $, which proves the claim (\ref{rec_rel}).
Thus, we can iterate in (\ref{rec_rel}) with respect to $k\geq 1$ and obtain
that%
\begin{align}
\sup_{t\geq t_{k-1}}E_{m_{k}}\left( t\right) & \leq \left( C\left(
2^{k}\right) ^{\delta }\right) \left( C\left( 2^{k-1}\right) ^{\delta
}\right) ^{2}\cdot ...\cdot \left( C\left( 2\right) ^{\delta }\right)
^{2^{k}}(\sup_{s\geq \tau }\left\Vert u\left( s\right) \right\Vert
_{2,\Omega })^{2^{k}}  \label{3.9} \\
& \leq C^{\left( 2^{k}\sum_{i=1}^{k}\frac{1}{2^{i}}\right) }2^{\left( \delta
2^{k}\sum_{i=1}^{k}\frac{i}{2^{i}}\right) }(\sup_{s\geq \tau }\left\Vert
u\left( s\right) \right\Vert _{2,\Omega })^{2^{k}},  \notag
\end{align}%
Therefore, we can take the $1+m_{k}=2^{k}$-th root on both sides of (\ref%
{3.9}) and let $k\rightarrow +\infty $. Using the facts that $\zeta
:=\sum_{i=1}^{\infty }\frac{1}{2^{i}}<\infty ,$ $\overline{\zeta }%
:=\sum_{i=1}^{\infty }\frac{i}{2^{i}}<\infty $, we deduce%
\begin{equation}
\sup_{t\geq t_{0}=\tau ^{^{\prime }}}\left\Vert u\left( t\right) \right\Vert
_{\infty ,\Omega }\leq \lim_{k\rightarrow +\infty }\sup_{t\geq t_{0}}\left(
E_{m_{k}}\left( t\right) \right) ^{1/\left( 1+m_{k}\right) }\leq C^{\zeta
}2^{\delta \overline{\zeta }}\sup_{s\geq \tau }\left\Vert u\left( s\right)
\right\Vert _{2,\Omega }.  \notag
\end{equation}%
This clearly proves the proposition.
\end{proof}

Combining estimate (\ref{3.6}) (which is also satisfied by the "generalized"
solution) with Proposition \ref{bounded}, and arguing as in the proof of
Theorem \ref{SG}, we obtain the following.

\begin{corollary}
\label{weak-strong}Let the assumptions of Proposition \ref{uniq} be
satisfied. For every $u_{0}\in L^{2}\left( \Omega \right) ,$ the (unique)
orbit $u\left( t\right) =\mathcal{S}\left( t\right) u_{0}$ is a strong
solution for $t\geq \rho ,$ for all $\rho >0.$
\end{corollary}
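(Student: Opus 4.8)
The plan is to upgrade the weak orbit to a strong one by exploiting the instantaneous $L^{\infty}$-smoothing that is already available, and then \emph{restarting} the evolution from a small positive time, which reduces everything to Theorem~\ref{SG} together with the uniqueness of weak solutions from Proposition~\ref{uniq}. Fix an arbitrary $\rho_{0}>0$; it suffices to show that $u(t)=\mathcal{S}(t)u_{0}$ is a strong solution on $[\rho_{0},\infty)$, since $\rho_{0}$ is then arbitrary.

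\emph{First step: the orbit becomes bounded.} I would begin by noting that the (unique) weak solution itself satisfies the energy identity (\ref{en_id}), equivalently (\ref{en_id2}); hence, invoking (H2) and arguing exactly as in (\ref{3.5})-(\ref{3.6}) (now for $u$ itself rather than for the approximations $u_{\epsilon}$), one obtains $\|u(t)\|_{2,\Omega}^{2}\le\|u_{0}\|_{2,\Omega}^{2}+C$ for all $t\ge0$, and in particular $\sup_{t\ge0}\|u(t)\|_{2,\Omega}<\infty$. (This also follows by passing (\ref{3.6}) to the limit using lower semicontinuity of the norms and (\ref{ap3}).) Next I would apply Proposition~\ref{bounded}, whose hypotheses hold under the assumptions of Theorem~\ref{T1}, with $\tau:=\rho_{0}/4$ and $\tau^{\prime}:=\rho_{0}/2$; combined with the uniform $L^{2}$-bound just obtained, this produces a constant $M=M(\rho_{0},\|u_{0}\|_{2,\Omega})$ with $\|u(t)\|_{\infty,\Omega}\le M$ for a.e.\ $t\ge\rho_{0}/2$. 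In particular the set of $t\in[\rho_{0}/2,\rho_{0})$ for which $u(t)\in L^{\infty}(\Omega)$ has full measure, so I may fix some such $t_{1}\in[\rho_{0}/2,\rho_{0})$.

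\emph{Second step: restart and glue.} Since (H2) implies (H1) (the observation made just before Theorem~\ref{T1}), Theorem~\ref{SG} applies with the bounded datum $u(t_{1})$ and furnishes a unique strong solution $w$ of (\ref{1.1})-(\ref{1.3}) with $w(0)=u(t_{1})$, enjoying the regularity of Definition~\ref{strong}. On the other hand, because (\ref{1.1})-(\ref{1.3}) is autonomous and every requirement in Definition~\ref{weak} is local in time, the time-translate $\widetilde{u}(t):=u(t_{1}+t)$ is again a weak solution with $\widetilde{u}(0)=u(t_{1})$: the variational identity (\ref{de_form}) for $\widetilde{u}$ holds a.e., the regularity (\ref{reg_weak}) on $[0,T]$ for $\widetilde{u}$ follows from that of $u$ on $[0,t_{1}+T]$, and the energy identity (\ref{en_id}) for $\widetilde{u}$ is obtained by subtracting the identity (\ref{en_id}) for $u$ at time $t_{1}$ from the one at time $t_{1}+t$ (legitimate since, by Theorem~\ref{T1}, $t\mapsto\|u(t)\|_{2,\Omega}^{2}$ is absolutely continuous, so (\ref{en_id}) holds at every instant). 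By the uniqueness of weak solutions (Proposition~\ref{uniq}, which uses (H3)), the strong solution $w$ and the weak solution $\widetilde{u}$ must coincide, i.e.\ $u(t_{1}+t)=w(t)$ for all $t\ge0$; pulling the regularity of $w$ back along this shift shows that $u$ satisfies the conditions of Definition~\ref{strong} on $[t_{1},\infty)\supset[\rho_{0},\infty)$, namely $u\in W_{\text{loc}}^{1,\infty}((\rho_{0},T];L^{2}(\Omega))\cap C([\rho_{0},T];L^{\infty}(\Omega))$ with $u(t)\in D(A_{\Theta,\mu})$ for a.e.\ $t$, for every $T>\rho_{0}$. Since $\rho_{0}>0$ was arbitrary, this is exactly the assertion.

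\emph{Main obstacle.} The only point requiring genuine care is the gluing step: one must check that a positive time-translate of a weak solution is again a weak solution, the nontrivial ingredient being that the integrated energy identity (\ref{en_id}) holds at \emph{every} time (thanks to the absolute continuity of $t\mapsto\|u(t)\|_{2,\Omega}^{2}$ from Theorem~\ref{T1}), so that it may be subtracted at two instants. Once this is in place, the uniqueness of weak solutions does the rest; one should also keep in mind that Proposition~\ref{bounded} only delivers an almost-everywhere-in-$t$ $L^{\infty}$ bound, which is nonetheless amply sufficient for selecting the restarting time $t_{1}$.
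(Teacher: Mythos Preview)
Your proof is correct and follows essentially the same approach as the paper: the paper's terse argument combines the dissipative estimate (\ref{3.6}) with the $L^{2}$--$L^{\infty}$ smoothing of Proposition~\ref{bounded} and then refers back to ``arguing as in the proof of Theorem~\ref{SG}'', which is precisely what you have spelled out via the restart-and-uniqueness mechanism. Your additional care about the a.e.-in-$t$ nature of the $L^{\infty}$ bound and the verification that a time-translate of a weak solution is again a weak solution simply makes explicit the details the paper leaves to the reader.
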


Next, since we already know that there exists an absorbing set $\mathcal{B}%
\subset L^{2}\left( \Omega \right) $ (cf. Ineq. (\ref{3.6})) for the
dynamical system $\left( \mathcal{S}\left( t\right) ,L^{2}\left( \Omega
\right) \right) $, it will be important to show that the semigroup $\mathcal{%
S}$ is also asymptotically smooth. This is essential in the attractor theory
and the related results we shall present in the next section. Clearly, we
want to make use of the fact that the weak solutions are sufficiently smooth
on the interval $[\rho ,\infty )$, for any $\rho >0$. It will suffice to
derive a uniform bound in $\mathcal{V}_{\Theta }^{1,2}(\Omega ,\partial
\Omega ,\mu ).$ Since the argument relies on the use of key test functions
(more precisely, we need to take $\xi =\partial _{t}u\left( t\right) $ into
the variational equation (\ref{de_form})), we will actually need to require
more regularity of the strong solution, i.e., $u\in W_{\text{loc}%
}^{1,s}((0,\infty );\mathcal{V}_{\Theta }^{1,2}(\Omega ,\partial \Omega ,\mu
)),$ $s>1$. However, lacking any further knowledge on these solutions (note
that $\Omega $ is an arbitrary bounded open set), we will work with
"truncated" solutions which can be obtained with the Galerkin approach and
has independent interest.

\begin{lemma}
\label{h1est}Let the assumptions of Proposition \ref{uniq} be satisfied.
Then, for $u_{0}\in L^{2}\left( \Omega \right) $ any orbit $u\left( t\right)
=\mathcal{S}\left( t\right) u_{0}$ of (\ref{1.1})-(\ref{1.3}) satisfies%
\begin{equation*}
u\in L^{\infty }([\rho ,\infty );\mathcal{V}_{\Theta }^{1,2}(\Omega
,\partial \Omega ,\mu ))\cap W^{1,2}\left( [\rho ,\infty );L^{2}\left(
\Omega \right) \right) ,
\end{equation*}%
for every $\rho >0$, and the following estimate holds:%
\begin{equation}
\sup_{t\geq \rho }\left( \left\Vert u\left( t\right) \right\Vert _{\mathcal{V%
}_{\Theta }^{1,2}(\Omega ,\partial \Omega ,\mu )}^{2}+\int_{0}^{t}\left\Vert
\partial _{t}u\left( s\right) \right\Vert _{2,\Omega }^{2}ds\right) \leq
C_{\delta },  \label{as_sm}
\end{equation}%
for some constant $C_{\delta }=C\left( \rho \right) >0,$ independent of $t$
and initial data.
\end{lemma}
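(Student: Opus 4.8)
The plan is to test the equation with $\xi = \partial_t u(t)$ in the variational identity \eqref{de_form}, but since we have no a priori guarantee that the strong solution has enough time-regularity to justify this choice (the domain $\Omega$ is an arbitrary bounded open set), I would first carry out the estimate at the level of Galerkin approximations $u_n$ (finite-dimensional truncations obtained by projecting onto the span of the first $n$ eigenfunctions $\{u_k\}$ of $A_{\Theta,\mu}$ from Theorem \ref{3prop}(b), which lie in $D(A_{\Theta,\mu})\cap L^\infty(\Omega)$), derive the bound uniformly in $n$, and then pass to the limit using lower semicontinuity of the norms. For the Galerkin solution the choice $\xi=\partial_t u_n(t)$ is legitimate, giving
\begin{equation*}
\left\Vert \partial_t u_n(t)\right\Vert_{2,\Omega}^2 + \frac{d}{dt}\varphi_{\Theta,\mu}(u_n(t)) = -\left\langle f(u_n(t)),\partial_t u_n(t)\right\rangle,
\end{equation*}
where $\varphi_{\Theta,\mu}(u)=\tfrac12\mathcal{A}_{\Theta,\mu}(u,u)$. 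Handling the right-hand side is where the main work lies: I would write $\langle f(u_n),\partial_t u_n\rangle = \frac{d}{dt}\int_\Omega F(u_n)\,dx$ with $F(s)=\int_0^s f(\sigma)\,d\sigma$ (using (H1)/(H2), which make this integral finite and controlled — note $F(s)\ge -\tfrac{\lambda_\ast}{2}s^2 - C_f s$ up to constants from (H1bis)), so that the identity becomes
\begin{equation*}
\left\Vert \partial_t u_n(t)\right\Vert_{2,\Omega}^2 + \frac{d}{dt}\Psi(u_n(t)) = 0, \qquad \Psi(v):=\varphi_{\Theta,\mu}(v)+\int_\Omega F(v)\,dx.
\end{equation*}
This is the natural Lyapunov-type functional; the coercivity inequality \eqref{eq-coer} together with (H1) (which forces $\lambda_\ast < C_\Omega$) shows $\Psi(v) \ge c\Vert v\Vert_{\mathcal{V}_{\Theta}^{1,2}(\Omega,\partial\Omega,\mu)}^2 - C$, so $\Psi$ is bounded below and controls the $\mathcal{V}_{\Theta}^{1,2}$-norm from above.

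Next, since already $u \in L^2(0,T;\mathcal{V}_{\Theta}^{1,2}(\Omega,\partial\Omega,\mu))$ uniformly (from \eqref{3.6}, i.e. the energy estimate \eqref{3.3}/\eqref{3.6} which also holds for the Galerkin solutions), and since $\Psi$ is continuous in that norm, we have $\int_0^T \Psi(u_n(s))\,ds \le C$ uniformly in $n$ and in the initial data once $u_0$ ranges over a bounded set of $L^2(\Omega)$ — in particular once we restrict to $t \ge$ some fixed small time and use the absorbing property. Fix $\rho>0$. For $t\ge\rho$ pick $s\in[\rho/2,\rho]$ (or more carefully, run the standard "uniform Gronwall"/averaging argument): integrating the differential identity from $s$ to $t$ gives $\Psi(u_n(t)) + \int_s^t\Vert\partial_t u_n\Vert_{2,\Omega}^2 \le \Psi(u_n(s))$, and integrating once more in $s$ over $[\rho/2,\rho]$ and dividing by $\rho/2$ yields
\begin{equation*}
\Psi(u_n(t)) + \int_\rho^t \left\Vert\partial_t u_n(\tau)\right\Vert_{2,\Omega}^2\,d\tau \le \frac{2}{\rho}\int_{\rho/2}^{\rho}\Psi(u_n(s))\,ds \le C_\delta
\end{equation*}
for all $t\ge\rho$, with $C_\delta = C(\rho)$ independent of $t$, $n$, and the initial data (because $\int_{\rho/2}^\rho\Psi(u_n(s))\,ds$ is bounded by the $L^2(0,T;\mathcal{V}_{\Theta}^{1,2})$-bound, which by \eqref{3.6} depends only on the absorbing-set radius once we are past a fixed time). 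Using $\Psi(v)\ge c\Vert v\Vert_{\mathcal{V}_{\Theta}^{1,2}(\Omega,\partial\Omega,\mu)}^2 - C$ gives the desired bound on $\Vert u_n(t)\Vert_{\mathcal{V}_{\Theta}^{1,2}(\Omega,\partial\Omega,\mu)}^2 + \int_0^t\Vert\partial_t u_n(s)\Vert_{2,\Omega}^2\,ds$, uniformly in $n$.

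Finally, passing to the limit $n\to\infty$: the uniform bounds give $u_n \rightharpoonup u$ weakly-$*$ in $L^\infty([\rho,\infty);\mathcal{V}_{\Theta}^{1,2}(\Omega,\partial\Omega,\mu))$ and $\partial_t u_n\rightharpoonup \partial_t u$ weakly in $L^2([\rho,\infty);L^2(\Omega))$, and the limit coincides with the unique weak solution by Proposition \ref{uniq}; weak lower semicontinuity of the norms then transfers estimate \eqref{as_sm} to $u$, and the regularity $u\in L^\infty([\rho,\infty);\mathcal{V}_{\Theta}^{1,2}(\Omega,\partial\Omega,\mu))\cap W^{1,2}([\rho,\infty);L^2(\Omega))$ follows. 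I expect the main obstacle to be the first step — rigorously justifying the test function $\partial_t u$ and controlling the term $\langle f(u),\partial_t u\rangle$ without any growth restriction on $f$ beyond (H1), which is precisely why the argument is run through Galerkin truncations where the chain rule $\langle f(u_n),\partial_t u_n\rangle = \frac{d}{dt}\int_\Omega F(u_n)$ is unambiguous; a secondary technical point is ensuring all constants in the averaging step genuinely do not depend on the bad geometry of $\partial\Omega$, which is handled by systematically using the equivalent norms \eqref{eq-norm1}–\eqref{eq-norm2} and the coercivity \eqref{eq-coer} from Theorem \ref{3prop}(c).
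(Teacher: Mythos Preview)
Your proposal is correct and follows essentially the same route as the paper: Galerkin truncation using the eigenfunctions of $A_{\Theta,\mu}$, testing with $\partial_t u_n$ to obtain the Lyapunov identity $\frac{d}{dt}\Psi(u_n(t))+\|\partial_t u_n(t)\|_{2,\Omega}^2=0$, then a time-smoothing step followed by passage to the limit via weak lower semicontinuity (relying on uniqueness from Proposition~\ref{uniq}). The only difference is cosmetic: for the smoothing step the paper multiplies the identity by $t$ and integrates once (the ``$t$-trick''), whereas you integrate from $s$ to $t$ and then average in $s$ over $[\rho/2,\rho]$; both are standard and yield the same bound $\Psi(u_n(t))\le C(1+1/t)$ for $t\ge\rho$.

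One small imprecision: to bound $\int_{\rho/2}^{\rho}\Psi(u_n(s))\,ds$ you need not only the $L^2(0,T;\mathcal{V}_{\Theta}^{1,2})$-bound but also the $L^p((0,T)\times\Omega)$-bound on $u_n$, since $\Psi$ contains $\int_\Omega F(u_n)$ and $|F(s)|\le C(1+|s|^p)$ by (H2). Both bounds are present in \eqref{3.6}, which you cite, so the argument goes through; just be explicit that you are using (H2)--(H3) (the hypotheses of Proposition~\ref{uniq}) rather than (H1) alone.
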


\begin{proof}
We recall that by Theorem \ref{3prop}, $A_{\Theta ,\mu }$ is a positive and
self-adjoint operator in $L^{2}\left( \Omega \right) $. Then, we have a
complete system of eigenfunctions $\left\{ \xi _{i}\right\} _{i\in \mathbb{N}%
}$ for the operator $A_{\Theta ,\mu }$ in $L^{2}\left( \Omega \right) $ with
$\xi _{i}\in D\left( A_{\Theta ,\mu }\right) \subset \mathcal{V}_{\Theta
}^{1,2}(\Omega ,\partial \Omega ,\mu ),$ and $\xi _{i}\in L^{\infty }\left(
\Omega \right) .$ According to the general spectral theory, the eigenvalues $%
\lambda _{i}$ can be increasingly ordered and counted according to their
multiplicities in order to form a real divergent sequence. Moreover, the
respective eigenvectors $\xi _{i}$ turn out to form an orthogonal basis in $%
\mathcal{V}_{\Theta }^{1,2}(\Omega ,\partial \Omega ,\mu )$ and $L^{2}\left(
\Omega \right) ,$ respectively. The eigenvectors $\xi _{i}$ may be assumed
to be normalized in $L^{2}\left( \Omega \right) $. To this end, we can now
define the finite-dimensional spaces
\begin{equation*}
\mathcal{P}_{n}=span\left\{ \xi _{1},\xi _{2},...,\xi _{n}\right\} ,\quad
\mathcal{P}_{\infty }=\cup _{n=1}^{\infty }\mathcal{P}_{n}.
\end{equation*}%
Clearly, $\mathcal{P}_{\infty }$ is a dense subspace of $\mathcal{V}_{\Theta
}^{1,2}(\Omega ,\partial \Omega ,\mu )$. As usual, for any $n\in \mathbb{N}$%
, we look for functions of the form%
\begin{equation}
u_{n}(t)=\sum_{i=1}^{n}e_{i}\left( t\right) \xi _{i}  \label{Proj}
\end{equation}%
solving a suitable approximating problem. More precisely, for any given $%
n\geq 1$ we look for $C^{1}$-real functions $e_{i}\left( \cdot \right) $, $%
i=1,\dots ,n$, only depending on time, which solve the approximating problem
$\mathbf{P}\left( n\right) $ given by
\begin{equation}
\left( \partial _{t}u_{n},\Psi \right) _{2,\Omega }+\left( A_{\Theta ,\mu
}u_{n},\Psi \right) _{2,\Omega }+\left( f\left( u_{n}\right) ,\Psi \right)
_{2,\Omega }=0,  \label{Approx_p}
\end{equation}%
with initial condition%
\begin{equation}
\left\langle u_{n}\left( 0\right) ,\Psi \right\rangle _{2,\Omega
}=\left\langle u_{n0},\Psi \right\rangle _{2,\Omega },  \label{Approx_IC}
\end{equation}%
for all $\Psi \in \mathcal{P}_{n}.$ Here $u_{n0}$ is the orthogonal
projection of $u_{0}$ onto $\mathcal{P}_{n}.$ Observe that
\begin{equation}
\underset{n\rightarrow \infty }{\lim }u_{n0}=u_{0},\quad \text{ in }%
L^{2}\left( \Omega \right) \text{.}  \label{IC_limit}
\end{equation}

Notice that by the Cauchy-Lipschitz theorem, one can find a unique maximal
solution%
\begin{equation*}
u_{n}\in C^{1}([0,T_{n});D\left( A_{\Theta ,\mu }\right) \cap L^{\infty
}\left( \Omega \right) )
\end{equation*}%
to (\ref{Approx_p})-(\ref{Approx_IC}), for some $T_{n}\in (0,T)$. As in the
case when $\Omega $ is smooth (see the monographs \cite{BV, CV, R, T}), the
existence of a generalized solution, defined on the whole interval $[0,T],$
for every $n\in \mathbb{N}$, can be also obtained with this approach. In
particular, notice that (\ref{3.6}) is also satisfied by $u_{n}$. Here we
are interested to derive the bound (\ref{as_sm}). Notice that the key choice
of the test function $\xi =\partial _{t}u_{n}\in $ $C([0,T];\mathcal{V}%
_{\Theta }^{1,2}(\Omega ,\partial \Omega ,\mu )\cap L^{p}\left( \Omega
\right) )$ into the variational equation (\ref{de_form}) is now allowed for
these truncated solutions. We deduce%
\begin{equation*}
\frac{d}{dt}\left( \left\Vert u_{n}\left( t\right) \right\Vert _{\mathcal{V}%
_{\Theta }^{1,2}(\Omega ,\partial \Omega ,\mu )}^{2}+\left( F\left(
u_{n}\left( t\right) \right) ,1\right) _{2,\Omega }\right) +2\left\Vert
\partial _{t}u_{n}\left( t\right) \right\Vert _{2,\Omega }^{2}=0,
\end{equation*}%
for all $t\geq 0.$ Here and below, $F$ denotes the primitive of $f$, i.e., $%
F\left( s\right) =\int_{0}^{s}f\left( y\right) dy.$ Multiply this equation
by $t\geq \rho >0$ and integrate over $\left( 0,t\right) $ to get%
\begin{align*}
& t\left( \left\Vert u_{n}\left( t\right) \right\Vert _{\mathcal{V}_{\Theta
}^{1,2}(\Omega ,\partial \Omega ,\mu )}^{2}+\left( F\left( u_{n}\left(
t\right) \right) ,1\right) _{2,\Omega }\right) +2\int_{0}^{t}s\left\Vert
\partial _{t}u_{n}\left( s\right) \right\Vert _{2,\Omega }^{2}ds \\
& =\int_{0}^{t}\left( \left\Vert u_{n}\left( s\right) \right\Vert _{\mathcal{%
V}_{\Theta }^{1,2}(\Omega ,\partial \Omega ,\mu )}^{2}+\left( F\left(
u_{n}\left( s\right) \right) ,1\right) _{2,\Omega }\right) ds,
\end{align*}%
for all $t\geq \rho .$ Recalling that, due to (H2)-(H3), $F$ is bounded from
below, independently of $n$, and $\left\vert F\left( s\right) \right\vert
\leq C\left( 1+\left\vert s\right\vert ^{p}\right) $, we infer from (\ref%
{3.6}),%
\begin{equation}
\left\Vert u_{n}\left( t\right) \right\Vert _{\mathcal{V}_{\Theta
}^{1,2}(\Omega ,\partial \Omega ,\mu )}^{2}+2\int_{0}^{t}\left\Vert \partial
_{t}u_{n}\left( s\right) \right\Vert _{2,\Omega }^{2}ds\leq c\left( 1+\frac{1%
}{t}\right) ,  \label{as_sm2}
\end{equation}%
for some constant $c>0$ independent of $t,n$ and $\rho .$ On the basis of a
lower-semicontinuity argument, we easily obtain the desired estimate (\ref%
{as_sm}). The proof is finished.
\end{proof}

Notice that the uniqueness of the weak solutions (see Proposition \ref{uniq}%
) is important here. We will eventually need to use the fact that each such
solution $u$ belongs to $L^{\infty }([\rho ,\infty );L^{\infty }\left(
\Omega \right) \cap \mathcal{V}_{\Theta }^{1,2}(\Omega ,\partial \Omega ,\mu
)),$ for every $\rho >0,$ in order to apply a sequence of abstracts results
in the next section and obtain the desired "finite-dimensional" description
of the long-term dynamics of (\ref{1.1})-(\ref{1.3}) in terms of attractors
which possess good properties. Indeed, even though the weak solution of
Definition \ref{weak} can be also constructed with aid from the Galerkin
approach presented above, the application of this scheme seems quite
problematic for the proof of Proposition \ref{bounded} (see also the proof
of Theorem \ref{SG}) and it cannot be longer used in that context. In other
words, the procedure of approximating by the strong solutions $u_{\epsilon }$
gives a weak solution $\widetilde{u},$ while the usual limit procedure in
the Galerkin truncations $u_{n}$ gives another weak solution $\widehat{u}$.
The solution $\widehat{u}$ coincides with $\widetilde{u},$ if uniqueness is
known for (\ref{1.1})-(\ref{1.3}). Here is where the two theories seem to
depart if the latter property is not known.

\section{Finite dimensional attractors}

\label{gl}

The present section is focused on the long-term analysis of problem (\ref%
{1.1})-(\ref{1.3}). We proceed to investigate the asymptotic properties of (%
\ref{1.1})-(\ref{1.3}), using the notion of a global attractor. We begin
with the following.

\begin{definition}
\label{gl_notion}A set $\mathcal{G}_{\Theta ,\mu }\subset L^{2}\left( \Omega
\right) $ is a global attractor of the semigroup $\mathcal{S}\left( t\right)
$ on $L^2(\Omega)$ associated with (\ref{1.1})-(\ref{1.3}) if

\begin{itemize}
\item $\mathcal{G}_{\Theta ,\mu }$ is compact in $L^{2}\left( \Omega \right)
$;

\item $\mathcal{G}_{\Theta ,\mu }$ is strictly invariant, that is, $\mathcal{%
S}(t)\mathcal{G}_{\Theta ,\mu }=\mathcal{G}_{\Theta ,\mu }\;\forall\; t\ge 0$%
;

\item $\mathcal{G}_{\Theta ,\mu }$ attracts the images of all bounded
subsets of $L^{2}\left( \Omega \right) $, namely, for every bounded subset $%
B $ of $L^{2}\left( \Omega \right) $ and every neighborhood $\mathcal{O}$ of
$\mathcal{G}_{\Theta ,\mu }$ in the topology of $L^{2}\left( \Omega \right) $%
, there exists a constant $T=T(B;\mathcal{O})>0$ such that $\mathcal{S}%
(t)B\subset \mathcal{O}$, for every $t\geq T$.
\end{itemize}
\end{definition}

The next proposition gives the existence of such an attractor.

\begin{proposition}
\label{glatt}Let the assumptions of Proposition \ref{uniq} be satisfied. The
semigroup $\mathcal{S}\left( t\right)$ on $L^2(\Omega)$ associated with the
reaction-diffusion system (\ref{1.1})-(\ref{1.3}) possesses a global
attractor $\mathcal{G}_{\Theta ,\mu }$ in the sense of Definition \ref%
{gl_notion}. As usual, this attractor is generated by all complete bounded
trajectories of (\ref{1.1})-(\ref{1.3}), that is, $\mathcal{G}_{\Theta ,\mu
}=\mathcal{K}_{\mid t=0}$, where $\mathcal{K}$ is the set of all strong
solutions $u$ which are defined for all $t\in \mathbb{R}_+$ and bounded in
the $L^{\infty }\left( \Omega \right) \cap \mathcal{V}_{\Theta
}^{1,2}(\Omega ,\partial \Omega ,\mu )$-norm.
\end{proposition}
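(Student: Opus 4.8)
The plan is to check the two classical hypotheses that guarantee a global attractor for the continuous semigroup $\mathcal{S}(t)$ on $L^{2}(\Omega)$ — the existence of a bounded absorbing set and asymptotic compactness — and then to invoke the standard abstract existence theorem (see, e.g., \cite{BV, CV, R, T}). The continuity of $\mathcal{S}(t)$ on $L^{2}(\Omega)$ has already been established in Corollary \ref{dyn_system}, and essentially all of the analytic work needed here has been done in Section \ref{wel}; what remains is largely bookkeeping.

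First I would extract a bounded absorbing set from the energy estimate (\ref{3.6}): since it gives $\|\mathcal{S}(t)u_{0}\|_{2,\Omega}^{2}\le\|u_{0}\|_{2,\Omega}^{2}e^{-\rho t}+C$ with $\rho,C>0$ independent of $u_{0}$, the ball $\mathcal{B}_{0}:=\{v\in L^{2}(\Omega):\|v\|_{2,\Omega}^{2}\le 2C\}$ absorbs every bounded subset of $L^{2}(\Omega)$ in finite time. Next I would upgrade this to a \emph{compact} absorbing set: fixing $\rho>0$ and applying Proposition \ref{bounded} and Lemma \ref{h1est} to orbits issuing from $\mathcal{B}_{0}$, one obtains a constant $C_{\rho}$, independent of the initial datum in $\mathcal{B}_{0}$, with $\sup_{t\ge\rho}\bigl(\|\mathcal{S}(t)u_{0}\|_{\infty,\Omega}+\|\mathcal{S}(t)u_{0}\|_{\mathcal{V}_{\Theta}^{1,2}(\Omega,\partial\Omega,\mu)}\bigr)\le C_{\rho}$ for $u_{0}\in\mathcal{B}_{0}$. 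Since the embedding $\mathcal{V}_{\Theta}^{1,2}(\Omega,\partial\Omega,\mu)\hookrightarrow L^{2}(\Omega)$ is compact (established inside the proof of Theorem \ref{3prop}(b), using (\ref{M2-N}) when $\mu=\sigma$ and (\ref{CI-mu}) otherwise, together with the boundedness of $\Omega$), the set $\mathcal{B}_{1}:=\overline{\mathcal{S}(\rho)\mathcal{B}_{0}}^{\,L^{2}(\Omega)}$ is compact in $L^{2}(\Omega)$ and, by the previous step, still absorbing. In particular $\mathcal{S}(t)$ is asymptotically compact.

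With these facts in hand, the standard theory yields the global attractor $\mathcal{G}_{\Theta,\mu}=\omega(\mathcal{B}_{1})=\bigcap_{s\ge0}\overline{\bigcup_{t\ge s}\mathcal{S}(t)\mathcal{B}_{1}}$, which is compact and strictly invariant in $L^{2}(\Omega)$ and attracts all bounded sets, in the sense of Definition \ref{gl_notion}. For the characterization $\mathcal{G}_{\Theta,\mu}=\mathcal{K}_{\mid t=0}$, I would argue that by strict invariance every $v\in\mathcal{G}_{\Theta,\mu}$ lies on a complete trajectory $u:\mathbb{R}\to\mathcal{G}_{\Theta,\mu}$ with $u(0)=v$, which is bounded in $L^{2}(\Omega)$ because $\mathcal{G}_{\Theta,\mu}$ is; shifting time so that the relevant initial datum lies in $\mathcal{B}_{0}$ and invoking Corollary \ref{weak-strong} together with Proposition \ref{bounded} and Lemma \ref{h1est}, $u$ is a strong solution bounded in the $L^{\infty}(\Omega)\cap\mathcal{V}_{\Theta}^{1,2}(\Omega,\partial\Omega,\mu)$-norm. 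Conversely, every such complete bounded trajectory is attracted by $\mathcal{G}_{\Theta,\mu}$ and hence contained in it, which gives the asserted identification.

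The only point that requires a little care is the passage from the forward-in-time smoothing estimates — valid only on $[\rho,\infty)$ — to a genuine compact absorbing set and to the regularity of the complete bounded trajectories on all of $\mathbb{R}$; this is handled uniformly by applying those estimates after a time shift, exploiting that any complete bounded trajectory lies in the absorbing ball $\mathcal{B}_{0}$ from some (hence, by invariance, from every sufficiently negative) time on. No difficulty beyond the preparatory results of Section \ref{wel} arises.
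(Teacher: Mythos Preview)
Your proposal is correct and follows essentially the same approach as the paper: combine the dissipative estimate (\ref{3.6}) with the smoothing estimates of Proposition \ref{bounded} and Lemma \ref{h1est} to obtain a compact (in $L^{2}$) absorbing set bounded in $L^{\infty}(\Omega)\cap\mathcal{V}_{\Theta}^{1,2}(\Omega,\partial\Omega,\mu)$, and then invoke the classical existence theorem. The paper's proof is terser---it directly takes the absorbing ball in $\mathcal{X}=L^{\infty}(\Omega)\cap\mathcal{V}_{\Theta}^{1,2}(\Omega,\partial\Omega,\mu)$ and does not spell out the characterization $\mathcal{G}_{\Theta,\mu}=\mathcal{K}_{\mid t=0}$---but the substance is identical.
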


\begin{proof}
Due to the dissipative estimates (\ref{3.6}), (\ref{sup-b}) and (\ref{as_sm}%
), the ball%
\begin{equation*}
\mathcal{B}_{0}=\left\{ u\in \mathcal{X}:=L^{\infty }\left( \Omega \right)
\cap \mathcal{V}_{\Theta }^{1,2}(\Omega ,\partial \Omega ,\mu ):\left\Vert
u\right\Vert _{\mathcal{X}}\leq R\right\} ,
\end{equation*}%
for a sufficiently large radius $R>0,$ is an absorbing set for $\mathcal{S}%
\left( t\right) $ in $L^{2}\left( \Omega \right) $. Indeed, in light of (\ref%
{3.6}), it is not difficult to see that, for any bounded set $B\subset
L^{2}\left( \Omega \right) $, there exists a time $t_{\ast }=t_{\ast }(B)>0$
such that $\mathcal{S}\left( t\right) B\subset L^{2}\left( \Omega \right) $,
for all $t\geq t_{\ast }$. Next, we can choose $\tau ^{^{\prime }}=\tau
+2\mu $ with $\tau =t_{\ast }$ and $\mu =1$, so that the $L^{2}$-$L^{\infty
} $ smoothing property (\ref{sup-b}) together with the estimate (\ref{as_sm}%
) entails the desired assertion. Obviously, the ball $\mathcal{B}_{0}$ is
compact in the topology of $L^{2}\left( \Omega \right) $. Thus, $\mathcal{S}%
\left( t\right) $ possesses a compact absorbing set. On the other hand, due
to Proposition \ref{uniq} (see also (\ref{diffbis}) below), for every fixed $%
t\geq 0$, the map $\mathcal{S}\left( t\right) $ is continuous on $\mathcal{B}%
_{0}$ in the $L^{2}$-topology and, consequently, the existence of the global
attractor follows now from the classical attractor's existence theorem (see,
e.g. \cite{CV}).
\end{proof}

Let us now construct a Lyapunov functional for (\ref{1.1})-(\ref{1.3}).

\begin{lemma}
\label{L2}Make the assumptions of Corollary \ref{weak-strong} (or Theorem %
\ref{SG}). Then the functional%
\begin{equation*}
\mathcal{L}_{\Theta }\left( u(t)\right) :=\frac{1}{2}\left\Vert u\left(
t\right) \right\Vert _{\mathcal{V}_{\Theta }^{1,2}(\Omega ,\partial \Omega
,\mu )}^{2}+\left( F\left( u\left( t\right) \right) ,1\right) _{2,\Omega },
\end{equation*}%
has along the strong solutions of (\ref{1.1})-(\ref{1.3}), the derivative%
\begin{equation*}
\frac{d}{dt}\mathcal{L}_{\Theta }\left( u\left( t\right) \right)
=-\int_{\Omega }\left\vert \partial _{t}u\left( t\right) \right\vert ^{2}dx,%
\text{ }a.e.\text{ }t>0.
\end{equation*}%
In other words, the functional $\mathcal{L}_{\Theta }$ is decreasing, and
becomes stationary exactly on equilibria $u_{\ast }$, which are solutions of
the system:%
\begin{equation}
-\Delta u+f\left( u\right) =0\text{ in }\Omega ,\text{ }\partial _{\nu
}ud\sigma +(u+\Theta_\mu \left( u\right) )d\mu =0\text{ on }\partial \Omega .
\label{equi}
\end{equation}
\end{lemma}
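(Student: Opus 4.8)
The plan is to test the operator form of the equation, $\partial_t u=-A_{\Theta,\mu}u-f(u)$ (see \eqref{op_v}), against $\partial_t u$ itself; this is legitimate for strong solutions precisely because of the regularity recorded in \eqref{3.46} and Remark \ref{improved_reg}. Indeed, on every interval $[\delta,T]$ with $0<\delta<T$ one has $u\in W^{1,\infty}(\delta,T;L^{2}(\Omega))\subset W^{1,2}(\delta,T;L^{2}(\Omega))$, $u(t)\in D(A_{\Theta,\mu})$ for a.e. $t$ with $t\mapsto A_{\Theta,\mu}u(t)\in L^{\infty}(\delta,T;L^{2}(\Omega))$, and $u\in C([\delta,T];L^{\infty}(\Omega))\cap C((\delta,T];\mathcal{V}_{\Theta}^{1,2}(\Omega,\partial\Omega,\mu))$. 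Splitting $\mathcal{L}_{\Theta}(u(t))=\varphi_{\Theta,\mu}(u(t))+\big(F(u(t)),1\big)_{2,\Omega}$, where by \eqref{form}--\eqref{eq-norm1} we have $2\varphi_{\Theta,\mu}(u)=\mathcal{A}_{\Theta,\mu}(u,u)=\|u\|_{\mathcal{V}_{\Theta}^{1,2}(\Omega,\partial\Omega,\mu)}^{2}$ in the equivalent norm, it suffices to differentiate each summand.

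For the first summand I use the classical chain rule for a proper, convex, lower semicontinuous functional along an absolutely continuous curve that lies in the domain of its subdifferential: since $A_{\Theta,\mu}=\partial\varphi_{\Theta,\mu}$ is single-valued, $u\in W^{1,2}(\delta,T;L^{2}(\Omega))$, $u(t)\in D(A_{\Theta,\mu})$ for a.e. $t$ and $A_{\Theta,\mu}u\in L^{2}(\delta,T;L^{2}(\Omega))$, the map $t\mapsto\varphi_{\Theta,\mu}(u(t))$ is absolutely continuous on $[\delta,T]$ and
\[
\frac{d}{dt}\,\varphi_{\Theta,\mu}(u(t))=\big(A_{\Theta,\mu}u(t),\partial_t u(t)\big)_{2,\Omega}\qquad\text{for a.e. }t\in(\delta,T).
\]
For the nonlinear summand, I use that on $[\delta,T]$ there is $M=M(\delta,T)>0$ with $\|u(t)\|_{\infty,\Omega}\le M$; since $f\in C_{\text{loc}}^{1}(\mathbb{R})$ (hence $F\in C^{2}$ and $f$ is Lipschitz on $[-M,M]$) and $u\in W^{1,2}(\delta,T;L^{2}(\Omega))$ with pointwise-in-time values bounded by $M$, a routine application of the chain rule for the Nemytskii operator together with the dominated convergence theorem gives that $t\mapsto\big(F(u(t)),1\big)_{2,\Omega}$ is absolutely continuous on $[\delta,T]$ with derivative $\big(f(u(t)),\partial_t u(t)\big)_{2,\Omega}$ for a.e. $t$. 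Adding the two identities and invoking \eqref{op_v},
\[
\frac{d}{dt}\,\mathcal{L}_{\Theta}(u(t))=\big(A_{\Theta,\mu}u(t)+f(u(t)),\partial_t u(t)\big)_{2,\Omega}=-\big(\partial_t u(t),\partial_t u(t)\big)_{2,\Omega}=-\int_{\Omega}|\partial_t u(t)|^{2}\,dx
\]
for a.e. $t\in(\delta,T)$; since $0<\delta<T$ are arbitrary, this holds for a.e. $t>0$.

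Finally, the identity shows $\mathcal{L}_{\Theta}$ is nonincreasing along strong solutions and is constant on an interval exactly when $\partial_t u\equiv0$ there, i.e. when $u(t)\equiv u_{\ast}$ is independent of time; by \eqref{op_v} and the explicit description of $D(A_{\Theta,\mu})$ from Section \ref{sec-Maz}, such a $u_{\ast}$ is precisely a solution of \eqref{equi}, and conversely any solution $u_{\ast}$ of \eqref{equi} lies in $D(A_{\Theta,\mu})\subset L^{\infty}(\Omega)$ (Theorem \ref{3prop}) and the constant function $u(t)\equiv u_{\ast}$ is a stationary strong solution; hence $\mathcal{L}_{\Theta}$ is a strict Lyapunov functional whose equilibria are exactly the solutions of \eqref{equi}. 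The main obstacle in this argument is the rigorous differentiation of the quadratic form $\mathcal{A}_{\Theta,\mu}(u(t),u(t))$: for a rough domain $\Omega$ the spaces $\mathcal{V}_{\Theta}^{1,2}(\Omega,\partial\Omega,\mu)$ and $D(A_{\Theta,\mu})$ are only abstractly described, so one cannot differentiate under the integral sign, and the clean justification is exactly the convex-analysis chain rule above, which is available here only because strong solutions enjoy $\partial_t u\in L^{2}_{\text{loc}}((0,\infty);L^{2}(\Omega))$ and $u(t)\in D(A_{\Theta,\mu})$ a.e. — precisely the regularity produced in Theorem \ref{SG} and Corollary \ref{weak-strong} (alternatively, one may first establish the corresponding integrated identity for the Galerkin truncations $u_{n}$ of Lemma \ref{h1est}, for which the choice $\xi=\partial_t u_{n}$ is exactly admissible, and then pass to the limit using the energy identity \eqref{as_sm2}).
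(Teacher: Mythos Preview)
Your proof is correct and is precisely the ``simple calculation'' the paper alludes to but does not spell out; the paper's own proof is a single sentence referring to the regularity in Definition~\ref{strong} and Remark~\ref{improved_reg}, and your argument supplies exactly those details (the convex-analysis chain rule for $\varphi_{\Theta,\mu}$ and the Nemytskii chain rule for the $F$-term, justified by the $W^{1,\infty}_{\mathrm{loc}}$-in-time regularity and the $L^\infty$-bound of strong solutions). Your identification of $\tfrac12\|u\|_{\mathcal{V}_{\Theta}^{1,2}}^{2}$ with $\varphi_{\Theta,\mu}(u)=\tfrac12\mathcal{A}_{\Theta,\mu}(u,u)$ via the equivalent norm \eqref{eq-norm1} is also the intended reading, consistent with the computation in Lemma~\ref{h1est}.
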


\begin{proof}
The proof is a simple calculation which relies essentially on the fact that
strong solutions are smooth enough, see Definition \ref{strong} and Remark %
\ref{improved_reg}.
\end{proof}

Corollary \ref{weak-strong} together with Lemma \ref{L2} can now be used to
study the asymptotic behavior of the solutions of (\ref{1.1})-(\ref{1.3}) by
means of the LaSalle's invariance principle (see, e.g., \cite{R, T}). To
this end, to any (weak) trajectory of Eqns. (\ref{1.1})-(\ref{1.3}) we
associate the respective (positive) $\omega $-limit set $\Lambda
_{L^{2}}^{+} $:%
\begin{equation*}
\Lambda _{L^{2}}^{+}:=\left\{ y\in L^{2}\left( \Omega \right) :\exists
t_{n}\rightarrow \infty ,\text{ }y_{n}\in L^{2}\left( \Omega \right) \text{
such that }\mathcal{S}\left( t_{n}\right) y_{n}\rightarrow y\text{ in }L^{2}%
\text{-topology}\right\} .
\end{equation*}

The following lemma states some basic properties of the $\omega $-limit sets
(independent of any Lyapunov function). Its proof is immediate owing to the
continuity properties of the strong solutions (see Definition \ref{strong},
Corollary \ref{weak-strong} and Remark \ref{improved_reg}) and the
compactness of the embedding $\mathcal{V}_{\Theta }^{1,2}(\Omega ,\partial
\Omega ,\mu ) \hookrightarrow L^{2}\left( \Omega \right) .$

\begin{lemma}
\begin{enumerate}
\item[(i)] Any $\omega$-limit set $\Lambda _{L^{2}}^{+}$ is nonempty,
compact and connected.

\item[(ii)] The trajectory approaches its own limit set in the norm of $%
L^{2}\left( \Omega \right) $, i.e.,
\begin{equation*}
\lim_{t\rightarrow \infty }dist_{L^{2}\left( \Omega \right) }\left( \mathcal{%
S}\left( t\right) u_{0},\Lambda _{L^{2}}^{+}\right) =0.
\end{equation*}

\item[(iii)] Any $\omega $-limit set is invariant: new trajectories which
start at some point in $\Lambda _{L^{2}}^{+}$ remain in $\Lambda
_{L^{2}}^{+} $ for all times $t>0$.
\end{enumerate}
\end{lemma}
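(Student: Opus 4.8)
The plan is to run the three classical arguments on $\omega$-limit sets, the only inputs being the existence of a compact absorbing set and the continuity properties established above. Fix $u_0\in L^2(\Omega)$ and write $u(t)=\mathcal{S}(t)u_0$. By Ineq.~(\ref{3.6}) and the smoothing estimates (\ref{sup-b})--(\ref{as_sm}) used in Proposition~\ref{glatt}, there is $t_\ast=t_\ast(u_0)\ge 0$ such that $u(t)\in\mathcal{B}_0$ for every $t\ge t_\ast$, where $\mathcal{B}_0$ is the ball of Proposition~\ref{glatt}, which is compact in $L^2(\Omega)$; moreover $t\mapsto\mathcal{S}(t)u_0$ is continuous from $[0,\infty)$ into $L^2(\Omega)$ (Theorem~\ref{T1}), and each map $\mathcal{S}(t):L^2(\Omega)\to L^2(\Omega)$ is Lipschitz (Proposition~\ref{uniq}).

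For (i): nonemptiness is immediate, since for any $t_n\to\infty$ the family $\{\mathcal{S}(t_n)u_0\}_{t_n\ge t_\ast}\subset\mathcal{B}_0$ is precompact in $L^2(\Omega)$, so some subsequence converges to a point of $\Lambda_{L^2}^+$. Compactness follows from the representation $\Lambda_{L^2}^+=\bigcap_{s\ge t_\ast}\overline{\{\mathcal{S}(t)u_0:\ t\ge s\}}$, a nested intersection of nonempty closed subsets of the compact set $\mathcal{B}_0$. For connectedness I argue by contradiction: if $\Lambda_{L^2}^+=K_1\cup K_2$ with $K_1,K_2$ nonempty, compact and disjoint, choose disjoint open $\mathcal{O}_i\supset K_i$ in $L^2(\Omega)$; since the trajectory accumulates on both $K_1$ and $K_2$, the continuous curve $t\mapsto u(t)$ ($t\ge t_\ast$) leaves $\mathcal{O}_1$ for $\mathcal{O}_2$ infinitely often, hence there is $s_n\to\infty$ with $u(s_n)\in\mathcal{B}_0\setminus(\mathcal{O}_1\cup\mathcal{O}_2)$; a convergent subsequence of $\{u(s_n)\}$ then yields a point of $\Lambda_{L^2}^+$ lying outside $K_1\cup K_2$, a contradiction.

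Part (ii) uses precompactness once more: if $\dist_{L^2(\Omega)}(\mathcal{S}(t)u_0,\Lambda_{L^2}^+)\not\to 0$, there are $\varepsilon>0$ and $t_n\to\infty$ with $\dist_{L^2(\Omega)}(\mathcal{S}(t_n)u_0,\Lambda_{L^2}^+)\ge\varepsilon$, yet a subsequence of $\{\mathcal{S}(t_n)u_0\}$ converges to some $y\in\Lambda_{L^2}^+$, which is absurd. For the invariance in (iii), let $y\in\Lambda_{L^2}^+$ with $\mathcal{S}(t_n)u_0\to y$, $t_n\to\infty$, and fix $t\ge 0$; continuity of $\mathcal{S}(t)$ gives $\mathcal{S}(t+t_n)u_0=\mathcal{S}(t)\mathcal{S}(t_n)u_0\to\mathcal{S}(t)y$, and since $t+t_n\to\infty$ we get $\mathcal{S}(t)y\in\Lambda_{L^2}^+$, i.e. $\mathcal{S}(t)\Lambda_{L^2}^+\subset\Lambda_{L^2}^+$; extracting a convergent subsequence of $\{\mathcal{S}(t_n-t)u_0\}$ and using continuity of $\mathcal{S}(t)$ once more gives the reverse inclusion, so in fact $\mathcal{S}(t)\Lambda_{L^2}^+=\Lambda_{L^2}^+$ for all $t\ge 0$.

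I do not expect any genuine difficulty: every ingredient has already been secured, namely the compact absorbing set (Proposition~\ref{glatt}), continuity in time into $L^2(\Omega)$ (Theorem~\ref{T1}, Corollary~\ref{weak-strong} and Remark~\ref{improved_reg}) and the Lipschitz continuity of each $\mathcal{S}(t)$ (Proposition~\ref{uniq}). The only point meriting a word of caution is that the orbit is a genuine \emph{strong} solution, and hence continuous into the finer norms, only on intervals $[\rho,\infty)$ with $\rho>0$; since every argument above concerns the regime $t\to\infty$, it suffices to work on $t\ge\max\{t_\ast,\rho\}$ for a fixed $\rho>0$, so this causes no trouble.
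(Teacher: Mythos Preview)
Your proof is correct and follows exactly the route the paper indicates: the paper states only that the lemma ``is immediate owing to the continuity properties of the strong solutions \ldots\ and the compactness of the embedding $\mathcal{V}_{\Theta}^{1,2}(\Omega,\partial\Omega,\mu)\hookrightarrow L^{2}(\Omega)$,'' and you have simply written out the classical arguments that this sentence abbreviates, drawing the compact absorbing set $\mathcal{B}_0$ from Proposition~\ref{glatt} (which is compact in $L^2$ precisely because of that embedding). Note that statement~(iii) only asks for the forward inclusion $\mathcal{S}(t)\Lambda_{L^2}^+\subset\Lambda_{L^2}^+$, so your proof of full invariance is a bit more than required, though of course harmless.
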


The dynamical system $\left( \mathcal{S}\left( t\right) ,L^{2}\left( \Omega
\right) \right) $ is a "gradient" system, namely, we have the following
(see, e.g., \cite[Theorem 10.13]{R}).

\begin{theorem}
\label{Lf}Let the assumptions of Proposition \ref{uniq} be satisfied. The
global attractor $\mathcal{G}_{\Theta ,\mu }$ coincides with the unstable
set of equilibria $E_{\ast },$ which consists of solutions of (\ref{equi}).
\end{theorem}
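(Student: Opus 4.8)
The plan is to invoke the standard characterization of the global attractor of a gradient system: once we know (i) the semigroup $\mathcal{S}(t)$ is a gradient system, i.e., it possesses a strict Lyapunov functional, (ii) the set $E_{\ast}$ of equilibria is bounded (in fact compact in $L^{2}(\Omega)$), and (iii) the global attractor $\mathcal{G}_{\Theta,\mu}$ exists, then the classical result (as stated in, e.g., \cite[Theorem 10.13]{R}; see also \cite{BV,T}) yields $\mathcal{G}_{\Theta,\mu}=\mathcal{M}^{u}(E_{\ast})$, the unstable manifold of the equilibrium set. So the proof is essentially a verification that all the hypotheses of that abstract theorem are met in our setting, for which the hard work has already been done in the preceding sections.

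First I would recall that Proposition \ref{glatt} gives the existence of $\mathcal{G}_{\Theta,\mu}$, compact in $L^{2}(\Omega)$ and bounded in $\mathcal{X}=L^{\infty}(\Omega)\cap\mathcal{V}_{\Theta}^{1,2}(\Omega,\partial\Omega,\mu)$, and that it is generated by complete bounded trajectories that are strong solutions. Next, Lemma \ref{L2} provides the strict Lyapunov functional $\mathcal{L}_{\Theta}$: along strong solutions it is non-increasing with $\frac{d}{dt}\mathcal{L}_{\Theta}(u(t))=-\|\partial_{t}u(t)\|_{2,\Omega}^{2}$, and it is constant exactly on the equilibria solving \eqref{equi}. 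Combined with Corollary \ref{weak-strong}, which guarantees that every orbit becomes a strong solution for $t\geq\rho>0$, this shows $\mathcal{L}_{\Theta}$ decreases strictly unless the trajectory is an equilibrium, so the LaSalle invariance principle applies: every bounded complete trajectory converges (in $L^{2}(\Omega)$) to the set $E_{\ast}$, both as $t\to+\infty$ and as $t\to-\infty$. In particular any $\omega$-limit set and any $\alpha$-limit set consists of equilibria.

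I would then check that $E_{\ast}$ is compact: any equilibrium $u_{\ast}$ is a strong solution constant in time, hence (as in Proposition \ref{glatt}/Corollary \ref{weak-strong}) $u_{\ast}\in\mathcal{G}_{\Theta,\mu}$, so $E_{\ast}\subset\mathcal{G}_{\Theta,\mu}$ is a closed subset of a compact set. The inclusion $\mathcal{M}^{u}(E_{\ast})\subset\mathcal{G}_{\Theta,\mu}$ is immediate since the unstable manifold is made of complete bounded trajectories. For the reverse inclusion, take $u_{0}\in\mathcal{G}_{\Theta,\mu}$; then there is a complete bounded trajectory $u(\cdot)$ through $u_{0}$ lying in $\mathcal{G}_{\Theta,\mu}$, and since $\mathcal{L}_{\Theta}(u(t))$ is bounded and monotone, it has finite limits as $t\to\pm\infty$; the LaSalle argument forces the $\alpha$-limit set of this trajectory to consist of equilibria, so $u_{0}\in\mathcal{M}^{u}(E_{\ast})$. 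This gives $\mathcal{G}_{\Theta,\mu}=\mathcal{M}^{u}(E_{\ast})=E_{\ast}^{u}$.

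The main obstacle is making sure the Lyapunov-function machinery is legitimately applicable despite the low regularity of the setting: the functional $\mathcal{L}_{\Theta}$ is only differentiable along \emph{strong} solutions, and the identity $\frac{d}{dt}\mathcal{L}_{\Theta}=-\|\partial_{t}u\|_{2,\Omega}^{2}$ relies on being able to test the equation with $\partial_{t}u$, which is justified only for $t\ge\rho$ via Corollary \ref{weak-strong} and the improved regularity of Remark \ref{improved_reg} (or, at the truncated level, Lemma \ref{h1est}). Since $\mathcal{G}_{\Theta,\mu}$ consists entirely of strong solutions bounded in $\mathcal{V}_{\Theta}^{1,2}(\Omega,\partial\Omega,\mu)\cap L^{\infty}(\Omega)$, this difficulty does not actually bite on the attractor, and the continuity of $\mathcal{S}(t)$ on $\mathcal{B}_{0}$ in $L^{2}$ (Proposition \ref{uniq}) together with the compactness of $\mathcal{V}_{\Theta}^{1,2}\hookrightarrow L^{2}(\Omega)$ supplies the needed compactness for the LaSalle argument. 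Everything else is a routine transcription of the abstract gradient-system theorem.
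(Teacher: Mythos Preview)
Your proposal is correct and follows exactly the approach the paper takes: the paper does not give an explicit proof but simply invokes the standard gradient-system characterization \cite[Theorem 10.13]{R}, relying on Lemma \ref{L2} for the Lyapunov functional and Proposition \ref{glatt} for the existence of the attractor. Your write-up in fact supplies more detail than the paper itself, carefully noting why the Lyapunov machinery is legitimate on $\mathcal{G}_{\Theta,\mu}$ via Corollary \ref{weak-strong}, which is precisely the implicit content of the paper's one-line reference.
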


Thus, the asymptotic behavior of solutions of (\ref{1.1})-(\ref{1.3}) is
properly described by the existence of the global attractor $\mathcal{G}%
_{\Theta ,\mu }$. However, the global attractor does not provide an actual
control of the convergence rate of trajectories and might be unstable with
respect to perturbations. A more suitable object to have an effective
control on the longtime dynamics is the exponential attractor (see e.g.\cite%
{MZrev}). Contrary to the global one, the exponential attractor is not
unique (thus, in some sense, is an artificial object), and it is only
semi-invariant. However, it has the advantage of being stable with respect
to perturbations, and it provides an exponential convergence rate which can
be explicitly computed. Our construction of an exponential attractor is
based on the following abstract result \cite[Proposition 4.1]{EZ}.

\begin{proposition}
\label{abstract}Let $\mathcal{H}$,$\mathcal{V}$,$\mathcal{V}_{1}$ be Banach
spaces such that the embedding $\mathcal{V}_{1}\hookrightarrow \mathcal{V}$
is compact. Let $\mathbb{B}$ be a closed bounded subset of $\mathcal{H}$ and
let $\mathbb{S}:\mathbb{B}\rightarrow \mathbb{B}$ be a map. Assume also that
there exists a uniformly Lipschitz continuous map $\mathbb{T}:\mathbb{B}%
\rightarrow \mathcal{V}_{1}$, i.e.,%
\begin{equation}
\left\Vert \mathbb{T}b_{1}-\mathbb{T}b_{2}\right\Vert _{\mathcal{V}_{1}}\leq
L\left\Vert b_{1}-b_{2}\right\Vert _{\mathcal{H}},\quad \forall
b_{1},b_{2}\in \mathbb{B},  \label{gl1}
\end{equation}%
for some $L\geq 0$, such that%
\begin{equation}
\left\Vert \mathbb{S}b_{1}-\mathbb{S}b_{2}\right\Vert _{\mathcal{H}}\leq
\gamma \left\Vert b_{1}-b_{2}\right\Vert _{\mathcal{H}}+K\left\Vert \mathbb{T%
}b_{1}-\mathbb{T}b_{2}\right\Vert _{\mathcal{V}},\quad \forall
b_{1},b_{2}\in \mathbb{B},  \label{gl2}
\end{equation}%
for some constant $0\le \gamma <\frac{1}{2}$ and $K\geq 0$. Then, there
exists a (discrete) exponential attractor $\mathcal{M}_{d}\subset \mathbb{B}$
of the semigroup $\{\mathbb{S}(n):=\mathbb{S}^{n},n\in \mathbb{Z}_{+}\}$
with discrete time in the phase space $\mathcal{H}$, which satisfies the
following properties:

\begin{itemize}
\item semi-invariance: $\mathbb{S}\left( \mathcal{M}_{d}\right) \subset
\mathcal{M}_{d}$;

\item compactness: $\mathcal{M}_{d}$ is compact in $\mathcal{H}$;

\item exponential attraction: $dist_{\mathcal{H}}(\mathbb{S}^{n}\mathbb{B},%
\mathcal{M}_{d})\leq Ce^{-\alpha n},$ for all $n\in \mathbb{N}$ and for some
$\alpha >0$ and $C\geq 0$, where $dist_{\mathcal{H}}$ denotes the standard
Hausdorff semidistance between sets in $\mathcal{H}$;

\item finite-dimensionality: $\mathcal{M}_{d}$ has finite fractal dimension
in $\mathcal{H}$.
\end{itemize}
\end{proposition}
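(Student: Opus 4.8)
The plan is to deduce Proposition~\ref{abstract} from the standard covering construction of exponential attractors, exactly as in \cite[Proposition~4.1]{EZ}; below I describe the architecture and indicate where the real work lies.

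\emph{Normalisation and the basic smoothing lemma.} After rescaling the norm of $\mathcal{H}$ we may assume $R:=\operatorname{diam}_{\mathcal{H}}\mathbb{B}<\infty$ (it is, $\mathbb{B}$ being bounded). First note that \eqref{gl1}--\eqref{gl2} force $\mathbb{S}$ to be Lipschitz, hence continuous, on $\mathbb{B}$: $\|\mathbb{S}b_1-\mathbb{S}b_2\|_{\mathcal{H}}\le(\gamma+Kc_0L)\|b_1-b_2\|_{\mathcal{H}}$, where $c_0$ is the norm of the compact embedding $\mathcal{V}_1\hookrightarrow\mathcal{V}$. The engine of the whole argument is the claim that there exist $q\in(0,1)$ and an integer $N_{\ast}\ge1$, depending only on $\gamma,K,L$ and on that embedding, such that for every $r>0$ and every $x_0\in\mathbb{B}$ the set $\mathbb{S}(B_{\mathcal{H}}(x_0,r)\cap\mathbb{B})$ can be covered by at most $N_{\ast}$ balls of $\mathcal{H}$ of radius $qr$ with centres in $\mathbb{B}$. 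Indeed, by \eqref{gl1} the image $\mathbb{T}(B_{\mathcal{H}}(x_0,r)\cap\mathbb{B})$ lies in a $\mathcal{V}_1$-ball of radius $Lr$; compactness of the embedding lets us cover it by $N_{\ast}$ balls of $\mathcal{V}$ of radius $\kappa_0 r$, where $N_{\ast}$ is independent of $r$ by scaling and $\kappa_0>0$ is chosen so small that $q:=2(\gamma+K\kappa_0)<1$ --- possible precisely because $\gamma<\tfrac12$. Pulling these back splits $B_{\mathcal{H}}(x_0,r)\cap\mathbb{B}$ into $N_{\ast}$ pieces $C_1,\dots,C_{N_{\ast}}$, on each of which $\|\mathbb{T}b-\mathbb{T}b'\|_{\mathcal{V}}\le2\kappa_0 r$, so by \eqref{gl2} $\operatorname{diam}_{\mathcal{H}}\mathbb{S}(C_\ell)\le 2\gamma r+2K\kappa_0 r=qr$; hence $\mathbb{S}(C_\ell)$ sits inside the ball of radius $qr$ about any of its points, which belongs to $\mathbb{S}(\mathbb{B})\subset\mathbb{B}$.

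\emph{Iterated covering and assembly of $\mathcal{M}_d$.} Starting from $\mathbb{B}\subset B_{\mathcal{H}}(x_0,R)$ and applying the lemma inductively --- discarding at each stage the balls that do not meet the current iterate --- one produces finite centre--sets $\mathcal{V}_k\subset\mathbb{S}^k\mathbb{B}\subset\mathbb{B}$ with $\#\mathcal{V}_k\le N_{\ast}^{\,k}$, $\mathbb{S}^k\mathbb{B}\subset\bigcup_{v\in\mathcal{V}_k}B_{\mathcal{H}}(v,q^kR)$, organised so that the balls of all generations form an $N_{\ast}$-ary tree with radii $q^kR\to0$; in particular $\operatorname{dist}_{\mathcal{H}}(\mathbb{S}^k\mathbb{B},\mathcal{V}_k)\le q^kR$. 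Following \cite{EZ} one then sets $\mathcal{M}_d:=\overline{\bigcup_{n\ge0}\mathbb{S}^n\!\big(\bigcup_{k\ge0}\mathcal{V}_k\big)}^{\mathcal{H}}$. Semi-invariance is clear, since $\bigcup_{n,k}\mathbb{S}^n(\mathcal{V}_k)$ is $\mathbb{S}$-invariant and $\mathbb{S}$ is continuous on $\mathbb{B}$; exponential attraction follows from $\mathcal{V}_n\subset\mathcal{M}_d$ and $\operatorname{dist}_{\mathcal{H}}(\mathbb{S}^n\mathbb{B},\mathcal{M}_d)\le q^nR=C e^{-\alpha n}$ with $\alpha=\ln(1/q)>0$. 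For the last two properties one shows that, at scale $\delta$, choosing the generation $m(\delta)\sim\ln(1/\delta)/\ln(1/q)$, the set $\mathcal{M}_d$ is covered by at most $C N_{\ast}^{\,m(\delta)}$ balls of radius $\delta$; this gives $\dim_f(\mathcal{M}_d,\mathcal{H})\le\ln N_{\ast}/\ln(1/q)<\infty$, and since $\mathcal{H}$ is complete and $\mathcal{M}_d$ is closed and totally bounded, it is compact.

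\emph{The main obstacle} is precisely this covering count for $\mathcal{M}_d$. A single generation $\mathcal{V}_m$ is covered by $N_{\ast}^{\,m}$ balls of radius $q^mR$, balancing the geometric gain $q^m$ in radius against the geometric growth $N_{\ast}^{\,m}$ in cardinality; the difficulty is that the forward iterates $\mathbb{S}^{m-k}(\mathcal{V}_k)$ one must adjoin to make $\mathcal{M}_d$ semi-invariant can spread out spatially, because the Lipschitz constant $\gamma+Kc_0L$ of $\mathbb{S}$ is not assumed to be $<1$. One therefore has to organise the simultaneous covering of the whole family $\{\mathbb{S}^{m-k}(\mathcal{V}_k):0\le k\le m\}$ along the tree so that, uniformly in $\delta$, only $O(N_{\ast}^{\,m(\delta)})$ balls of radius $\delta$ are ever required. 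This uniform bookkeeping is the technical core; it is carried out in \cite[Proposition~4.1]{EZ}, which we invoke.
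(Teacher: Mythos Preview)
The paper does not prove Proposition~\ref{abstract}: it is quoted verbatim as \cite[Proposition~4.1]{EZ} and used as a black box, with only the remark that the constants and fractal dimension can be expressed through $L$, $K$, $\gamma$, $\|\mathbb{B}\|_{\mathcal{H}}$ and the Kolmogorov $\kappa$-entropy of the compact embedding $\mathcal{V}_1\hookrightarrow\mathcal{V}$. Your sketch is therefore not competing against a proof in the paper but rather reconstructing the standard covering argument behind the cited result, and it does so correctly; in particular, your identification of the key step --- using $\gamma<\tfrac12$ to choose $\kappa_0$ small enough that $q=2(\gamma+K\kappa_0)<1$, and then iterating the $N_\ast$-to-$1$ covering along a tree --- is exactly the mechanism in \cite{EZ}, and your closing remark that the delicate point is the covering bookkeeping for the forward iterates needed to ensure semi-invariance is accurate and appropriately deferred to the reference.
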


Moreover, the constants $C$ and $\alpha ,$ and the fractal dimension of $%
\mathcal{M}_{d}$ can be explicitly expressed in terms of $L$, $K$, $\gamma $%
, $\left\Vert \mathbb{B}\right\Vert _{\mathcal{H}}$ and Kolmogorov's $\kappa
$-entropy of the compact embedding $\mathcal{V}_{1}\hookrightarrow \mathcal{V%
},$ for some $\kappa =\kappa \left( L,K,\gamma \right) $. We recall that the
Kolmogorov $\kappa $-entropy of the compact embedding $\mathcal{V}%
_{1}\hookrightarrow\mathcal{V}$ is the logarithm of the minimum number of
balls of radius $\kappa $ in $\mathcal{V}$ necessary to cover the unit ball
of $\mathcal{V}_{1}$ (see, e.g., \cite{CV}).

We are now ready to state and prove

\begin{theorem}
\label{expo}Assume that the nonlinearity $f$\ obeys (H2)-(H3) and $\mu $
satisfies (H$_{\mu }$). The semigroup $\mathcal{S}\left( t\right) $ on $%
L^{2}(\Omega )$ associated with (\ref{1.1})-(\ref{1.3}) possesses an
exponential attractor $\mathcal{E}_{\Theta ,\mu }$ in the following sense:

\begin{itemize}
\item $\mathcal{E}_{\Theta ,\mu }$ is bounded in $L^{\infty }\left( \Omega
\right) \cap \mathcal{V}_{\Theta }^{1,2}(\Omega ,\partial \Omega ,\mu )$ and
compact in $L^{2}\left( \Omega \right) $;

\item $\mathcal{E}_{\Theta ,\mu }$ is semi-invariant: $\mathcal{S}\left(
t\right) \left( \mathcal{E}_{\Theta ,\mu }\right) \subset \mathcal{E}%
_{\Theta ,\mu }$, $t\geq 0$;

\item $\mathcal{E}_{\Theta ,\mu }$ attracts the images of bounded (in $%
L^{2}\left( \Omega \right) $) subsets exponentially in the metric of $%
L^{\infty }\left( \Omega \right) $, i.e. there exist $\beta >0$ and a
monotone function $Q$ such that, for every bounded set $B\subset L^{2}\left(
\Omega \right) $,%
\begin{equation*}
dist_{L^{\infty }\left( \Omega \right) }\left( \mathcal{S}\left( t\right) B,%
\mathcal{E}_{\Theta ,\mu }\right) \leq Q\left( \left\Vert B\right\Vert
_{L^{2}\left( \Omega \right) }\right) e^{-\beta t},\text{ for all }t\geq 0%
\text{;}
\end{equation*}

\item $\mathcal{E}_{\Theta ,\mu }$ has the finite fractal dimension in $%
L^{\infty }\left( \Omega \right) $.
\end{itemize}
\end{theorem}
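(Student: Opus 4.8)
The plan is to verify the hypotheses of the abstract Proposition \ref{abstract} and then pass from the discrete-time exponential attractor to a continuous-time one in the standard way. First I would fix the compact absorbing set. By Proposition \ref{glatt} (and the dissipative estimates \eqref{3.6}, \eqref{sup-b}, \eqref{as_sm}) there is a bounded set $\mathcal{B}_0\subset\mathcal{X}:=L^\infty(\Omega)\cap\mathcal{V}_\Theta^{1,2}(\Omega,\partial\Omega,\mu)$, closed in $L^2(\Omega)$, which is absorbing and positively invariant after some time; replacing $\mathcal{B}_0$ by $\overline{\bigcup_{t\ge t_\ast}\mathcal{S}(t)\mathcal{B}_0}$ (closure in $L^2$) we may assume $\mathcal{S}(t)\mathcal{B}_0\subset\mathcal{B}_0$ for all $t\ge 0$. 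I then fix a time step $T_\ast>0$ large enough (to be specified by the contraction estimate below) and set $\mathbb{B}:=\mathcal{B}_0$, $\mathbb{S}:=\mathcal{S}(T_\ast)$, $\mathcal{H}:=L^2(\Omega)$, $\mathcal{V}:=L^2(\Omega)$, $\mathcal{V}_1:=\mathcal{V}_\Theta^{1,2}(\Omega,\partial\Omega,\mu)$; the embedding $\mathcal{V}_1\hookrightarrow\mathcal{V}$ is compact by part (b) of Theorem \ref{3prop} (using \eqref{M2-N} when $\mu=\sigma$ and \eqref{CI-mu} otherwise).

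The heart of the argument is the smoothing/decomposition estimate \eqref{gl2}. Take $u_0^1,u_0^2\in\mathbb{B}$ with orbits $u^i=\mathcal{S}(\cdot)u_0^i$ and put $v=u^1-u^2$. Testing the difference equation with $v$ and using (H3), exactly as in the proof of Proposition \ref{uniq}, gives
\begin{equation}
\frac{d}{dt}\|v(t)\|_{2,\Omega}^2+C_\Omega\|v(t)\|_{\mathcal{V}_\Theta^{1,2}(\Omega,\partial\Omega,\mu)}^2\le 2C_f\|v(t)\|_{2,\Omega}^2,
\label{diffbis}
\end{equation}
which by Gronwall yields $\|v(t)\|_{2,\Omega}^2\le e^{2C_f t}\|v(0)\|_{2,\Omega}^2$ and, after integrating, a bound on $\int_0^{T_\ast}\|v(s)\|_{\mathcal{V}_1}^2\,ds$ in terms of $\|v(0)\|_{2,\Omega}^2$. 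To obtain a genuine contraction up to a compact remainder I would split $v=w+z$ where $w$ solves the linear problem $\partial_t w+A_{\Theta,\mu}w=0$ with $w(0)=v(0)$ and $z$ solves $\partial_t z+A_{\Theta,\mu}z=-(f(u^1)-f(u^2))$ with $z(0)=0$. Since $A_{\Theta,\mu}$ is strongly accretive (Theorem \ref{3prop}(c), estimate \eqref{eq-coer}), $\|w(T_\ast)\|_{2,\Omega}\le e^{-\omega T_\ast}\|v(0)\|_{2,\Omega}$, so choosing $T_\ast$ large makes the coefficient $\gamma:=e^{-\omega T_\ast}<\tfrac12$. For $z$, testing the $z$-equation with $z$ and with $t\,\partial_t z$ (justified on the truncated/Galerkin level as in Lemma \ref{h1est}, then passed to the limit) together with the local Lipschitz bound on $f$ on the $L^\infty$-bounded set $\mathbb{B}$ gives $\|z(T_\ast)\|_{\mathcal{V}_1}\le K\sup_{[0,T_\ast]}\|v(s)\|_{2,\Omega}\le K'\|v(0)\|_{2,\Omega}$. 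Setting $\mathbb{T}v(0):=z(T_\ast)$ makes $\mathbb{T}:\mathbb{B}\to\mathcal{V}_1$ uniformly Lipschitz into $\mathcal{H}$, i.e. \eqref{gl1} holds, while $\|\mathbb{S}u_0^1-\mathbb{S}u_0^2\|_{\mathcal{H}}\le\gamma\|v(0)\|_{\mathcal{H}}+\|\mathbb{T}v(0)\|_{\mathcal{V}}$ is exactly \eqref{gl2} with $K=1$. Proposition \ref{abstract} then produces a discrete exponential attractor $\mathcal{M}_d\subset\mathbb{B}$ for $\{\mathcal{S}(nT_\ast)\}$.

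Finally I would pass to continuous time in the classical way: set $\mathcal{E}_{\Theta,\mu}:=\bigcup_{t\in[0,T_\ast]}\mathcal{S}(t)\mathcal{M}_d$. Lipschitz continuity of $t\mapsto\mathcal{S}(t)u_0$ on $[0,T_\ast]$ uniformly on $\mathbb{B}$ (which follows from the strong-solution regularity on $[\rho,\infty)$, Lemma \ref{h1est}, and \eqref{diffbis}) shows this set still has finite fractal dimension in $L^2(\Omega)$, is compact in $L^2(\Omega)$ and bounded in $\mathcal{X}$, is semi-invariant under $\{\mathcal{S}(t)\}_{t\ge0}$, and attracts bounded sets of $L^2(\Omega)$ at an exponential rate in the $L^2$-metric; combining this $L^2$-exponential attraction with the $L^2$–$L^\infty$ smoothing estimate \eqref{sup-b} upgrades the attraction to the $L^\infty(\Omega)$-metric with a rate $\beta\sim\alpha/T_\ast$, and upgrades finite-dimensionality to hold in $L^\infty(\Omega)$. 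The main obstacle I anticipate is the rigorous justification of the $t\,\partial_t z$ test and the resulting $\mathcal{V}_1$-bound on $z(T_\ast)$ for an arbitrary rough $\Omega$: as in Lemma \ref{h1est} this cannot be done directly on the weak solution and must be carried out on Galerkin truncations (or on the $u_\epsilon$-approximations of \eqref{ap1}) and then passed to the limit by lower semicontinuity, being careful that all constants stay independent of the approximation parameter and of $\partial\Omega$.
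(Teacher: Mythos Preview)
Your overall strategy (apply Proposition~\ref{abstract} to $\mathbb{S}=\mathcal{S}(T_\ast)$ on a compact absorbing set, then pass to continuous time via $\bigcup_{t\in[0,T_\ast]}\mathcal{S}(t)\mathcal{M}_d$ using H\"older/Lipschitz continuity in $t$ and the $L^2$--$L^\infty$ smoothing) is exactly the paper's. The difference is in how you verify \eqref{gl1}--\eqref{gl2}, and here your proposal does not match the hypotheses of Proposition~\ref{abstract}.

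Proposition~\ref{abstract} requires a \emph{single} map $\mathbb{T}:\mathbb{B}\to\mathcal{V}_1$ and the bound $\|\mathbb{S}b_1-\mathbb{S}b_2\|_{\mathcal{H}}\le\gamma\|b_1-b_2\|_{\mathcal{H}}+K\|\mathbb{T}b_1-\mathbb{T}b_2\|_{\mathcal{V}}$. Your $z(T_\ast)$, the solution of $\partial_t z+A_{\Theta,\mu}z=-(f(u^1)-f(u^2))$, depends on \emph{both} $u_0^1$ and $u_0^2$, so ``$\mathbb{T}v(0):=z(T_\ast)$'' is not a map on individual points of $\mathbb{B}$ and the inequality you write, $\|\mathbb{S}u_0^1-\mathbb{S}u_0^2\|_{\mathcal{H}}\le\gamma\|v(0)\|_{\mathcal{H}}+\|\mathbb{T}v(0)\|_{\mathcal{V}}$, is not of the required form $\gamma\|b_1-b_2\|_{\mathcal{H}}+K\|\mathbb{T}b_1-\mathbb{T}b_2\|_{\mathcal{V}}$. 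Your decomposition $v=w+z$ is the standard ``contraction $+$ compact'' scheme, but that needs a different abstract result than the one quoted here.

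The paper sidesteps this by choosing trajectory spaces: $\mathbb{T}u_0:=u\in\mathcal{V}_1=L^2(0,T^\ast;\mathcal{V}_\Theta^{1,2})\cap W^{1,2}(0,T^\ast;(\mathcal{V}_\Theta^{1,2})^\ast)$ is the solving operator, $\mathcal{V}=L^2(0,T^\ast;L^2(\Omega))$, and the compact embedding $\mathcal{V}_1\hookrightarrow\mathcal{V}$ is Aubin--Lions. Then \eqref{gl2} is read off directly from Lemma~\ref{L5}\,\eqref{diff1}, obtained from \eqref{3.8} by the variation-of-constants form of Gronwall (no $w/z$ splitting, no $t\,\partial_t z$ test), and \eqref{gl1} is \eqref{diff2}. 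This route is shorter and avoids the extra regularity justification you flagged as the main obstacle; on the other hand your splitting would give a pointwise-in-time $\mathcal{V}_\Theta^{1,2}$ bound on the compact part, which is stronger than what the paper actually needs.
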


Since an exponential attractor always contains the global one, the theorem
implies, in particular, the following result.

\begin{corollary}
\label{gl_fin}The fractal dimension of the global attractor $\mathcal{G}%
_{\Theta ,\mu }$ of Proposition \ref{glatt} is finite in $L^{\infty }\left(
\Omega \right) $. Moreover, we have%
\begin{equation*}
\lim_{t\rightarrow +\infty }dist_{L^{\infty }\left( \Omega \right) }\left(
\mathcal{S}\left( t\right) B,\mathcal{G}_{\Theta ,\mu }\right) =0,
\end{equation*}%
for every bounded set $B\subset L^{2}\left( \Omega \right) .$
\end{corollary}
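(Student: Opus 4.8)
The plan is to derive both assertions from Theorem~\ref{expo}, whose exponential attractor $\mathcal{E}_{\Theta,\mu}$ is bounded in $L^{\infty}(\Omega)\cap\mathcal{V}_{\Theta}^{1,2}(\Omega,\partial\Omega,\mu)$, compact in $L^{2}(\Omega)$, semi-invariant, of finite fractal dimension in $L^{\infty}(\Omega)$ (hence precompact there), and attracts bounded subsets of $L^{2}(\Omega)$ in the $L^{\infty}$-metric. Writing $d_{X}(A,B):=\sup_{a\in A}\inf_{b\in B}\|a-b\|_{X}$ for the Hausdorff semidistance, the cornerstone is the inclusion $\mathcal{G}_{\Theta,\mu}\subseteq\mathcal{E}_{\Theta,\mu}$, from which the finiteness of the $L^{\infty}$-dimension is immediate and on top of which the $L^{\infty}$-attraction is built.

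First I would establish $\mathcal{G}_{\Theta,\mu}\subseteq\mathcal{E}_{\Theta,\mu}$. Since $\mathcal{G}_{\Theta,\mu}$ is compact in $L^{2}(\Omega)$ (Proposition~\ref{glatt}), it is in particular a bounded subset of $L^{2}(\Omega)$, so the exponential-attraction property in Theorem~\ref{expo} gives $d_{L^{\infty}}(\mathcal{S}(t)\mathcal{G}_{\Theta,\mu},\mathcal{E}_{\Theta,\mu})\to 0$ as $t\to\infty$. By strict invariance $\mathcal{S}(t)\mathcal{G}_{\Theta,\mu}=\mathcal{G}_{\Theta,\mu}$ for every $t\ge 0$, so the left-hand side equals the constant $d_{L^{\infty}}(\mathcal{G}_{\Theta,\mu},\mathcal{E}_{\Theta,\mu})$, which must therefore vanish; since $L^{\infty}$-convergence implies $L^{2}$-convergence on the finite-measure domain $\Omega$ and $\mathcal{E}_{\Theta,\mu}$ is $L^{2}$-compact, this forces $\mathcal{G}_{\Theta,\mu}\subseteq\mathcal{E}_{\Theta,\mu}$. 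In particular $\mathcal{G}_{\Theta,\mu}$ is precompact in $L^{\infty}(\Omega)$, and as the box-counting (fractal) dimension is monotone under set inclusion, the fractal dimension of $\mathcal{G}_{\Theta,\mu}$ in $L^{\infty}(\Omega)$ is bounded by that of $\mathcal{E}_{\Theta,\mu}$, hence finite. This settles the first assertion.

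For the $L^{\infty}$-attraction I would upgrade the $L^{2}$-attraction built into the definition of $\mathcal{G}_{\Theta,\mu}$. Fix a bounded $B\subset L^{2}(\Omega)$; by Definition~\ref{gl_notion}, $d_{L^{2}}(\mathcal{S}(t)B,\mathcal{G}_{\Theta,\mu})\to 0$, and by the dissipative bound \eqref{3.6} the orbit enters, after a finite time, the absorbing set bounded in $L^{\infty}(\Omega)\cap\mathcal{V}_{\Theta}^{1,2}$. The key intermediate fact I would prove is a short-time $L^{2}\!\to\!L^{\infty}$ continuous-dependence estimate on bounded $L^{\infty}$-sets: for two solutions $u_{1},u_{2}$ issuing from a fixed bounded ball of $L^{\infty}(\Omega)$ and any $\rho>0$, $\|\mathcal{S}(\rho)u_{1}-\mathcal{S}(\rho)u_{2}\|_{\infty,\Omega}\le C(\rho)\|u_{1}-u_{2}\|_{2,\Omega}$. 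This follows by applying the ultracontractive smoothing of Theorem~\ref{3prop} to the linear inhomogeneous equation $\partial_{t}w+A_{\Theta,\mu}w=f(u_{2})-f(u_{1})$ for the difference $w=u_{1}-u_{2}$, where the $L^{2}$-Lipschitz bound \eqref{3.8} and the uniform $L^{\infty}$-boundedness (so that $f$ is Lipschitz along the orbits) control the forcing. Granting this, I would write $\mathcal{S}(t)B=\mathcal{S}(\rho)\bigl(\mathcal{S}(t-\rho)B\bigr)$, choose for each $a\in\mathcal{S}(t-\rho)B$ a nearest $g_{a}\in\mathcal{G}_{\Theta,\mu}$ in $L^{2}$, and use $\mathcal{S}(\rho)g_{a}\in\mathcal{G}_{\Theta,\mu}$ (strict invariance) to obtain $d_{L^{\infty}}(\mathcal{S}(t)B,\mathcal{G}_{\Theta,\mu})\le C(\rho)\,d_{L^{2}}(\mathcal{S}(t-\rho)B,\mathcal{G}_{\Theta,\mu})$; letting $t\to\infty$ with $\rho$ fixed gives the claim. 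Equivalently, one argues by transitivity through $\mathcal{E}_{\Theta,\mu}$, which attracts $B$ in $L^{\infty}$ exponentially, while $\mathcal{G}_{\Theta,\mu}$ attracts $\mathcal{E}_{\Theta,\mu}$ in $L^{\infty}$ because on the $L^{\infty}$-compact set $\mathcal{E}_{\Theta,\mu}$ the $L^{2}$- and $L^{\infty}$-topologies coincide.

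The hard part will be precisely this promotion of attraction from $L^{2}$ to $L^{\infty}$. On a rough domain the absorbing set, though bounded in $L^{\infty}(\Omega)\cap\mathcal{V}_{\Theta}^{1,2}$, need not be precompact in $L^{\infty}(\Omega)$, so one cannot merely invoke a coincidence of topologies on it; the exponential attractor is exactly what supplies an $L^{\infty}$-precompact intermediate object. The technical crux inside the continuous-dependence estimate is that the ultracontractivity exponent $\gamma$ in \eqref{ultra} may exceed $1$, so the single-step variation-of-constants convolution need not converge; I expect to resolve this by a finite bootstrap $L^{2}\to L^{q_{1}}\to\cdots\to L^{\infty}$ through the Maz'ya embedding, each step using a smoothing kernel with integrable singularity, which yields a finite constant $C(\rho)$ for every $\rho>0$ and completes the proof.
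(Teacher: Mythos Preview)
Your proposal is correct and follows the same route the paper takes: the paper's entire ``proof'' of Corollary~\ref{gl_fin} is the single sentence preceding it, namely that an exponential attractor always contains the global one, so that Theorem~\ref{expo} immediately yields both conclusions. Your derivation of $\mathcal{G}_{\Theta,\mu}\subseteq\mathcal{E}_{\Theta,\mu}$ via strict invariance plus $L^{\infty}$-attraction, and the monotonicity of fractal dimension under inclusion, is exactly this standard argument made explicit.

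For the $L^{\infty}$-attraction you supply considerably more detail than the paper does, and both of your routes are valid. Your option~(a), the $L^{2}\!\to\!L^{\infty}$ continuous-dependence estimate on $L^{\infty}$-bounded sets combined with strict invariance of $\mathcal{G}_{\Theta,\mu}$, is already available in the paper: the proof of Lemma~\ref{time_cont} and the last paragraph of the proof of Theorem~\ref{expo} explicitly invoke this $L^{2}$-$L^{\infty}$ smoothing for differences (obtained by rerunning the Alikakos--Moser iteration of Proposition~\ref{bounded} on the difference equation, since $f$ becomes globally Lipschitz once solutions sit in the $L^{\infty}$-absorbing ball). So the bootstrap you flag as the ``hard part'' and propose to redo via a finite $L^{2}\to L^{q_{1}}\to\cdots\to L^{\infty}$ chain is unnecessary extra work---the paper already circumvents the integrability issue of the ultracontractive kernel by iteration rather than a single Duhamel step. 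Your option~(b) through the $L^{\infty}$-precompact set $\mathcal{E}_{\Theta,\mu}$ is also fine and arguably the cleanest way to see it.
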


We first recall that, due to the proof of Proposition \ref{glatt}, the
semigroup $\mathcal{S}\left( t\right) $ possesses an absorbing ball $%
\mathcal{B}_{0}$ in the phase space $\mathcal{X}$. Thus, it suffices to
construct the exponential attractor for the restriction of this semigroup on
$\mathcal{B}_{0}$ only. In order to apply Proposition \ref{abstract} to our
situation, we need to verify the proper estimate for the difference of
solutions, which is done in the following lemma.

\begin{lemma}
\label{L5}Let the assumptions of Theorem \ref{expo} hold, and let $u_{1}$
and $u_{2}$ be two weak solutions of (\ref{1.1})-(\ref{1.3}) such that $%
u_{i}\left( 0\right) \in \mathcal{B}_{0}.$ Then the following estimates are
valid:%
\begin{equation}
\left\Vert u_{1}\left( t\right) -u_{2}\left( t\right) \right\Vert _{2,\Omega
}^{2}\leq M\left\Vert u_{1}\left( 0\right) -u_{2}\left( 0\right) \right\Vert
_{2,\Omega }^{2}e^{-\omega t}+K\left\Vert u_{1}-u_{2}\right\Vert
_{L^{2}\left( \left[ 0,t\right] ;L^{2}\left( \Omega \right) \right) }^{2},
\label{diff1}
\end{equation}%
and
\begin{equation}
\left\Vert \partial _{t}u_{1}-\partial _{t}u_{2}\right\Vert _{L^{2}(\left[
0,t\right] ;(\mathcal{V}_{\Theta }^{1,2}(\Omega ,\partial \Omega ,\mu
))^{\ast })}^{2}+\int_{0}^{t}\left\Vert u_{1}\left( s\right) -u_{2}\left(
s\right) \right\Vert _{\mathcal{V}_{\Theta }^{1,2}(\Omega ,\partial \Omega
,\mu )}^{2}ds\leq Ce^{\nu t}\left\Vert u_{1}\left( 0\right) -u_{2}\left(
0\right) \right\Vert _{2,\Omega }^{2},  \label{diff2}
\end{equation}%
for some $\omega ,\nu >0,$ $M,K,C\geq 0,$ all independent of $t$ and $u_{i}.$
\end{lemma}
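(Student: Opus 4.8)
Throughout write $\mathcal{V}:=\mathcal{V}_{\Theta }^{1,2}(\Omega ,\partial \Omega ,\mu )$ and $u:=u_{1}-u_{2}$. The plan is to derive both estimates from a single differential inequality for $u$, essentially the one already used in the proof of Proposition~\ref{uniq}. Subtracting the energy identities \eqref{en_id2} written for $u_{1}$ and $u_{2}$, using the one-sided bound $(f(u_{1})-f(u_{2}))(u_{1}-u_{2})\ge -C_{f}|u_{1}-u_{2}|^{2}$ from (H3), and the coercive lower bound $\mathcal{A}_{\Theta ,\mu }(u,u)\ge C_{\Omega }\Vert u\Vert _{\mathcal{V}}^{2}$ afforded by the equivalent norm \eqref{eq-norm1}, one obtains the master inequality
\[
\frac{d}{dt}\Vert u(t)\Vert _{2,\Omega }^{2}+2C_{\Omega }\Vert u(t)\Vert _{\mathcal{V}}^{2}\le 2C_{f}\Vert u(t)\Vert _{2,\Omega }^{2},\qquad\text{a.e. }t>0 .
\]
The one extra ingredient, needed only for \eqref{diff2}, is the \emph{uniform} $L^{\infty}(\Omega)$-bound on $u_{1},u_{2}$: since $u_{i}(0)\in\mathcal{B}_{0}$, Corollary~\ref{weak-strong} together with the smoothing estimate \eqref{sup-b} and \eqref{3.6} (equivalently the Alikakos bound \eqref{ee8}) yields $\sup_{t\ge0}\Vert u_{i}(t)\Vert _{\infty,\Omega}\le R'$ with $R'$ depending only on the radius $R$ of $\mathcal{B}_{0}$.

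For \eqref{diff1} I would not discard the good term $\Vert u\Vert _{\mathcal{V}}^{2}$ outright. Using \eqref{eq-coer} in the form $\Vert u\Vert _{2,\Omega }^{2}\le C\mathcal{A}_{\Theta ,\mu }(u,u)$, I split off a positive multiple $\omega\Vert u\Vert _{2,\Omega }^{2}$ of the coercive term, so that the master inequality gives $\frac{d}{dt}\Vert u\Vert _{2,\Omega }^{2}+\omega\Vert u\Vert _{2,\Omega }^{2}\le 2C_{f}\Vert u\Vert _{2,\Omega }^{2}$ for some $\omega=\omega(N,\Omega)>0$. Multiplying by $e^{\omega t}$, integrating on $[0,t]$ and bounding $\int_{0}^{t}e^{\omega s}\Vert u(s)\Vert _{2,\Omega }^{2}\,ds\le e^{\omega t}\int_{0}^{t}\Vert u(s)\Vert _{2,\Omega }^{2}\,ds$ yields \eqref{diff1} with $M=1$, $K=2C_{f}$ and this $\omega$. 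The key analytic point is that the strict coercivity guaranteed by (H$_\mu$) is exactly what turns a bare Gronwall bound into the genuinely decaying factor $e^{-\omega t}$, which is what will later make Proposition~\ref{abstract} applicable after a suitably large fixed time step.

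For \eqref{diff2}, integrating the master inequality on $[0,t]$ and feeding in the elementary bound $\Vert u(t)\Vert _{2,\Omega }^{2}\le e^{2C_{f}t}\Vert u(0)\Vert _{2,\Omega }^{2}$ gives at once $\int_{0}^{t}\Vert u(s)\Vert _{\mathcal{V}}^{2}\,ds\le Ce^{2C_{f}t}\Vert u(0)\Vert _{2,\Omega }^{2}$, which is the second half of \eqref{diff2}. It then remains to bound $\partial_{t}u$ in $L^{2}(0,t;\mathcal{V}^{\ast})$. From \eqref{fform} one reads $\partial_{t}u=-A_{\Theta ,\mu }u-(f(u_{1})-f(u_{2}))$; the linear term is controlled by $\Vert A_{\Theta ,\mu }u(s)\Vert _{\mathcal{V}^{\ast}}\le c\Vert u(s)\Vert _{\mathcal{V}}$ via \eqref{op-star}, and the nonlinear term by the uniform $L^{\infty}$-bound: $|f(u_{1})-f(u_{2})|\le L_{R'}|u|$ pointwise, so, using the continuous embedding $L^{2}(\Omega)\hookrightarrow\mathcal{V}^{\ast}$, $\Vert f(u_{1}(s))-f(u_{2}(s))\Vert _{\mathcal{V}^{\ast}}\le c\,L_{R'}\Vert u(s)\Vert _{2,\Omega }\le c'\Vert u(s)\Vert _{\mathcal{V}}$. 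Hence $\Vert \partial_{t}u(s)\Vert _{\mathcal{V}^{\ast}}^{2}\le c''\Vert u(s)\Vert _{\mathcal{V}}^{2}$, and integrating and inserting the bound just obtained on $\int_{0}^{t}\Vert u\Vert _{\mathcal{V}}^{2}$ completes \eqref{diff2} with $\nu=2C_{f}$.

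The step I expect to be the main obstacle is precisely this control of the nonlinearity in the dual norm $\mathcal{V}^{\ast}$ in \eqref{diff2}: since (H2) permits arbitrary polynomial growth $p>1$, the embedding $\mathcal{V}\hookrightarrow L^{p}(\Omega)$ may fail, so $f(u_{1})-f(u_{2})$ cannot be estimated in $\mathcal{V}^{\ast}$ by a Sobolev argument alone — one genuinely must exploit that, on the absorbing ball $\mathcal{B}_{0}$, the solutions are uniformly bounded in $L^{\infty}(\Omega)$ on all of $[0,\infty)$, which is exactly why the lemma is stated for data in $\mathcal{B}_{0}$ and why Corollary~\ref{weak-strong} and \eqref{sup-b} are invoked here. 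A minor additional point is the legitimacy of subtracting the two energy identities for merely weak solutions (hence of the master inequality): this is fine because under (H3) uniqueness holds (Proposition~\ref{uniq}) and every weak solution satisfies \eqref{en_id2}, so the subtraction is valid in $L^{2}(\Omega)$.
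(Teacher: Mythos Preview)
Your proposal is correct and follows essentially the same route as the paper: both derive \eqref{diff1} and the $\int_{0}^{t}\Vert u\Vert_{\mathcal{V}}^{2}$ part of \eqref{diff2} from the differential inequality \eqref{3.8} of Proposition~\ref{uniq}, and both control $\partial_{t}u$ in $\mathcal{V}^{\ast}$ by pairing the difference equation with test functions and invoking the uniform $L^{\infty}$-bound \eqref{4.5} on trajectories from $\mathcal{B}_{0}$ to make the local Lipschitz property of $f$ effective. One small wording issue: the master inequality is not obtained by literally subtracting the two scalar identities \eqref{en_id2}, but by testing the equation for $u=u_{1}-u_{2}$ with $u$ itself (as in the proof of Proposition~\ref{uniq}); the inequality you write down is nonetheless exactly \eqref{3.8}, so the argument is unaffected.
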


\begin{proof}
The injection $\mathcal{V}_{\Theta }^{1,2}(\Omega ,\partial \Omega ,\mu
)\hookrightarrow L^{2}\left( \Omega \right) $ is compact and continuous. The
application of Gronwall's inequality in (\ref{3.8}) entails the desired
estimate (\ref{diff1}). The second term on the left-hand side of (\ref{diff2}%
) can be easily controlled by integration over $\left( 0,t\right) $ in (\ref%
{3.8}). More precisely, we have%
\begin{equation}
\left\Vert u_{1}\left( t\right) -u_{2}\left( t\right) \right\Vert _{2,\Omega
}^{2}+\int_{0}^{t}\left\Vert u_{1}\left( s\right) -u_{2}\left( s\right)
\right\Vert _{\mathcal{V}_{\Theta }^{1,2}(\Omega ,\partial \Omega ,\mu
)}^{2}ds\leq \left\Vert u_{1}\left( 0\right) -u_{2}\left( 0\right)
\right\Vert _{2,\Omega }^{2}e^{\nu t}.  \label{diffbis}
\end{equation}%
Furthermore, in light of Corollary \ref{weak-strong} and estimates (\ref%
{sup-b}), (\ref{h1est}), recall that we have%
\begin{equation}
\sup_{t\geq 0}\left\Vert u_{i}\left( t\right) \right\Vert _{L^{\infty
}\left( \Omega \right) \cap \mathcal{V}_{\Theta }^{1,2}(\Omega ,\partial
\Omega ,\mu )}\leq C,  \label{4.5}
\end{equation}%
for some positive constant $C$ independent of $t$ and $u_{i}$. Thus, for any
test function $\xi $ $\in \mathcal{V}_{\Theta }^{1,2}(\Omega ,\partial
\Omega ,\mu )$, using the variational identity (\ref{de_form}) (which
actually holds pointwise for $t\geq 0$), we have for the function $%
u:=u_{1}-u_{2},$%
\begin{align*}
\left\langle \partial _{t}u\left( t\right) ,\xi \right\rangle & =-\mathcal{A}%
_{\Theta ,\mu }\left( u\left( t\right) ,\xi \right) -\left\langle f\left(
u_{1}\left( t\right) \right) -f\left( u_{2}\left( t\right) \right) ,\xi
\right\rangle \\
& \leq C\left\Vert u\left( t\right) \right\Vert _{\mathcal{V}_{\Theta
}^{1,2}(\Omega ,\partial \Omega ,\mu )}\left\Vert \xi \right\Vert _{\mathcal{%
V}_{\Theta }^{1,2}(\Omega ,\partial \Omega ,\mu )},
\end{align*}%
since $f\in C_{\text{loc}}^{1}\left( \mathbb{R}\right) $, owing to (\ref{4.5}%
). This estimate together with (\ref{diffbis}) gives the desired control on
the time derivative in (\ref{diff2}).
\end{proof}

The last ingredient we need is the uniform H\"{o}lder continuity of the time
map $t\mapsto \mathcal{S}\left( t\right) u_{0}$ in the $L^{\infty }$-norm,
namely,

\begin{lemma}
\label{time_cont}Let the assumptions of Theorem \ref{expo} be satisfied.
Consider $u\left( t\right) =\mathcal{S}\left( t\right) u_{0}$ with $u_{0}\in
\mathcal{B}_{0}$. Then, for every $t\geq 0$, the following estimate holds:%
\begin{equation}
\left\Vert u\left( t\right) -u\left( s\right) \right\Vert _{\infty ,\Omega
}\leq C\left\vert t-s\right\vert ^{\lambda },\text{ for all }t,s\geq 0,
\label{4.6}
\end{equation}%
where $\lambda <1$, $C>0$ are independent of $t,s$, $u$ and the initial data.
\end{lemma}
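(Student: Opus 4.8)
The plan is to prove \eqref{4.6} in three stages, first reducing to small time increments and then upgrading the regularity of the increment from $L^{2}$ to $L^{\infty}$. Since, by \eqref{4.5}, $\sup_{t\geq 0}\Vert u(t)\Vert_{\infty,\Omega}\leq C$ uniformly for $u_{0}\in\mathcal{B}_{0}$, whenever $|t-s|\geq 1$ one bounds $\Vert u(t)-u(s)\Vert_{\infty,\Omega}\leq 2C\leq 2C|t-s|^{\lambda}$ for any $\lambda\in(0,1]$; hence it suffices to treat $0\leq s\leq t$ with $h:=t-s\leq 1$, so that the content of the lemma is a \emph{local} time-modulus estimate. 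Because the problem is autonomous and the uniform bound \eqref{4.5} holds along the whole trajectory, the argument can be organized around the single increment $v(r):=u(r+h)-u(r)$, $r\geq 0$.

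First I would record a uniform $L^{2}$ time-Hölder bound, exploiting that $u_{0}\in\mathcal{B}_{0}$ lies in $\mathcal{V}_{\Theta}^{1,2}(\Omega,\partial\Omega,\mu)\cap L^{\infty}(\Omega)$ and not merely in $L^{2}(\Omega)$. The Lyapunov identity of Lemma \ref{L2}, integrated from $0$ to $t$, gives
\begin{equation*}
\mathcal{L}_{\Theta}(u(t))+\int_{0}^{t}\Vert\partial_{t}u(\tau)\Vert_{2,\Omega}^{2}\,d\tau=\mathcal{L}_{\Theta}(u_{0}),
\end{equation*}
and since $\mathcal{L}_{\Theta}(u_{0})\leq C$ uniformly on $\mathcal{B}_{0}$ (as $\Vert u_{0}\Vert_{\mathcal{V}_{\Theta}^{1,2}(\Omega,\partial\Omega,\mu)}\leq R$ and $F(u_{0})$ is bounded) while $F$ is bounded below, this yields $\int_{0}^{\infty}\Vert\partial_{t}u\Vert_{2,\Omega}^{2}\,d\tau\leq C$ uniformly. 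By Cauchy--Schwarz, $\Vert u(t)-u(s)\Vert_{2,\Omega}\leq C\,h^{1/2}$ for all $t,s\geq 0$, with $C$ independent of $t,s$ and of $u_{0}\in\mathcal{B}_{0}$; equivalently, $\sup_{r\geq 0}\Vert v(r)\Vert_{2,\Omega}\leq C\,h^{1/2}$.

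The key step is to upgrade this $L^{2}$ bound on $v$ to an $L^{\infty}$ bound. The increment $v$ solves the \emph{linear} problem $\partial_{r}v-\Delta v+c(x,r)v=0$ under the same nonlocal Robin condition, where $c(x,r):=\int_{0}^{1}f'\big(u(r)+\vartheta v(r)\big)\,d\vartheta$ satisfies $|c|\leq L$ by \eqref{4.5} and $f\in C_{\mathrm{loc}}^{1}(\mathbb{R})$. The $L^{2}$--$L^{\infty}$ smoothing property of Proposition \ref{bounded} was obtained by a Moser/Alikakos iteration that uses only the boundedness of the zeroth-order coefficient, so the same iteration applies verbatim to $v$ and gives, for each fixed $\tau'>0$,
\begin{equation*}
\sup_{r\geq\tau'}\Vert v(r)\Vert_{\infty,\Omega}\leq C\,\sup_{\rho\geq 0}\Vert v(\rho)\Vert_{2,\Omega}\leq C\,h^{1/2}.
\end{equation*}
This already establishes \eqref{4.6} (with $\lambda=1/2$) for all $t,s\geq\tau'$, and the nonlinear Duhamel contribution is harmless, since $\Vert\int_{s}^{t}e^{-(t-\tau)A_{\Theta,\mu}}f(u(\tau))\,d\tau\Vert_{\infty,\Omega}\leq C_{f}\,|t-s|$ by the non-expansiveness \eqref{nonexp} of the semigroup on $L^{\infty}(\Omega)$.

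The main obstacle is the \emph{initial layer} $s\in[0,\tau']$, where the smoothing constant of Proposition \ref{bounded} degenerates. Here I would again exploit the enhanced regularity of $\mathcal{B}_{0}$: because $f$ is Lipschitz on the range of the (uniformly bounded) solution, one has $\Vert f(u(\tau))\Vert_{\mathcal{V}_{\Theta}^{1,2}(\Omega,\partial\Omega,\mu)}\leq C$ uniformly in $\tau\geq 0$, using $\mathcal{N}_{s,\mu}(f(u))\leq L^{2}\mathcal{N}_{s,\mu}(u)$ together with $\Vert\nabla f(u)\Vert_{2,\Omega}\leq L\Vert\nabla u\Vert_{2,\Omega}$. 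Feeding this into the Duhamel representation $u(t)=e^{-tA_{\Theta,\mu}}u_{0}-\int_{0}^{t}e^{-(t-\tau)A_{\Theta,\mu}}f(u(\tau))\,d\tau$ and invoking the analyticity and ultracontractivity \eqref{ultra} of $(e^{-tA_{\Theta,\mu}})_{t\geq 0}$, a short bootstrap should place the Duhamel integral in $D(A_{\Theta,\mu}^{\theta})\subset L^{\infty}(\Omega)$ for some $\theta>\gamma$ (Theorem \ref{3prop}(e)) with norm bounded uniformly on $[0,\tau']$, so that its time-increment is Hölder in $L^{\infty}$. The only remaining piece is the free term $e^{-tA_{\Theta,\mu}}u_{0}$, whose increment satisfies the spectral bound $\Vert(e^{-hA_{\Theta,\mu}}-I)u_{0}\Vert_{2,\Omega}\leq h^{1/2}\Vert u_{0}\Vert_{D(A_{\Theta,\mu}^{1/2})}\leq C\,h^{1/2}$; I would combine this $L^{2}$ rate with the uniform $L^{\infty}$ bound \eqref{4.5}, against the ultracontractive smoothing window, to control it on $[0,\tau']$. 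I expect precisely this last point, namely extracting an $L^{\infty}$ rate for the free term uniformly over $\mathcal{B}_{0}$ from only the $\mathcal{V}_{\Theta}^{1,2}$-regularity of the datum, to be the delicate heart of the proof, the rest being routine once the increment $v$ is treated as a linear parabolic problem amenable to Proposition \ref{bounded}.
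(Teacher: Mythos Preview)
Your approach coincides with the paper's. Both arguments (i) obtain the $L^{2}$ time-H\"older bound $\Vert u(t)-u(s)\Vert_{2,\Omega}\leq C|t-s|^{1/2}$ from $\partial_{t}u\in L^{2}(0,\infty;L^{2}(\Omega))$---you via the Lyapunov identity of Lemma~\ref{L2}, the paper via \eqref{as_sm} and comparison in \eqref{de_form}---and then (ii) upgrade to $L^{\infty}$ by noting that the increment $v(r)=u(r+h)-u(r)$ solves a linear equation $\partial_{r}v+A_{\Theta,\mu}v+c\,v=0$ with $|c|\leq L$ (since $f'$ is bounded on the range of the uniformly $L^{\infty}$-bounded solution), to which the $L^{2}$--$L^{\infty}$ iteration of Theorem~\ref{SG}/Proposition~\ref{bounded} applies. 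The paper is terse on (ii), simply asserting that ``the $L^{2}$--$L^{\infty}$ continuous dependence estimate \dots\ is actually reduced to the same iteration procedure we used in the proof of Theorem~\ref{SG}''.

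You are more explicit than the paper about the initial layer $s\in[0,\tau']$, where the smoothing constant of Proposition~\ref{bounded} degenerates; this is a genuine subtlety that the paper does not address. Your proposed patch, however, does not quite close it: placing the Duhamel term in $D(A_{\Theta,\mu}^{\theta})\subset L^{\infty}(\Omega)$ requires $\theta>\gamma$ (Theorem~\ref{3prop}(e)), and for $\mu=\sigma$ one has $\gamma=N/2\geq 1$, so no $\theta\in(0,1]$ is available in general; likewise, combining the $L^{2}$ rate on the free term with ultracontractivity does not by itself yield an $L^{\infty}$ rate at $t=0$ for arbitrary $u_{0}\in\mathcal{B}_{0}$. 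In practice this is harmless for the paper's purposes: in the proof of Theorem~\ref{expo} the H\"older estimate is only invoked on $\mathbb{B}=\mathcal{S}(1)B_{1}$, whose elements are images under $\mathcal{S}(1)$ and hence carry a built-in smoothing window of length $1$, so the initial-layer issue never actually arises there.
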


\begin{proof}
According to (\ref{4.5}), the following bound holds for $u$:%
\begin{equation*}
\sup_{t\geq 0}\left\Vert u\left( t\right) \right\Vert _{L^{\infty }\left(
\Omega \right) \cap \mathcal{V}_{\Theta }^{1,2}(\Omega ,\partial \Omega ,\mu
)}\leq C.
\end{equation*}%
Consequently, by comparison in (\ref{de_form}) and from (\ref{as_sm}), we
have that%
\begin{equation*}
\sup_{t\geq 0}\left( \left\Vert \partial _{t}u\left( t\right) \right\Vert _{(%
\mathcal{V}_{\Theta }^{1,2}(\Omega ,\partial \Omega ,\mu ))^{\ast
}}+\left\Vert \partial _{t}u\right\Vert _{L^{2}(\left[ 0,t\right]
;L^{2}\left( \Omega \right) )}\right) \leq C
\end{equation*}%
which entails the inequality%
\begin{equation}
\left\Vert u\left( t\right) -u\left( s\right) \right\Vert _{2,\Omega }\leq
C\left\vert t-s\right\vert ^{1/2}\text{, for all }t,s\geq 0.  \label{4.6bis}
\end{equation}%
Inequality (\ref{4.6}) is a consequence of (\ref{4.6bis}) and the $L^{2}$-$%
L^{\infty }$ smoothing property (\ref{sup-b}). This follows from the fact
that the nonlinear function $f$\ is continuously differentiable. Indeed, due
to the boundedness of $u\left( t\right) \in L^{\infty }\left( \Omega \right)
,$ a.e. $t\geq 0$\ and $u\left( t\right) \in V_{\Theta }^{1,2}(\Omega
,\partial \Omega ,\mu )$, $a.e.$ $t\geq \delta >0$ (cf., Lemma \ref{h1est}),
the nonlinearity $f$\ becomes subordinated to the linear part of the
equation (\ref{1.1}) (no matter how fast it grows). More precisely,
obtaining the $L^{2}$-$L^{\infty }$\ continuous dependance estimate for the
difference $u\left( t\right) -u\left( s\right) $\ of any two strong
solutions $u\left( t\right) ,u\left( s\right) ,$\ corresponding to the same
initial datum, is actually reduced to the same iteration procedure we used
in the proof of Theorem \ref{SG}. The proof is completed.
\end{proof}

We can now finish the proof of the main theorem of this section, using the
abstract scheme of Proposition \ref{abstract}.


\begin{proof}[Proof of Theorem \protect\ref{expo}]
First, we construct the exponential attractor $\mathcal{M}_{d}$ of the
discrete map $\mathcal{S}\left( T^{\ast }\right) $ on $\mathcal{B}_{0}$ (the
above constructed absorbing ball in $L^{\infty }\left( \Omega \right) \cap
\mathcal{V}_{\Theta }^{1,2}(\Omega ,\partial \Omega ,\mu )$), for a
sufficiently large $T^{\ast }$. Indeed, let $B_{1}=\left[ \cup _{t\geq
T^{\ast }}\mathcal{S}\left( t\right) \mathcal{B}_{0}\right] _{L^{2}}$, where
$\left[ \cdot \right] _{L^{2}}$ denotes the closure in the space $%
L^{2}\left( \Omega \right) $ and then set $\mathbb{B}:=\mathcal{S}\left(
1\right) B_{1}$. Thus, $\mathbb{B}$ is a semi-invariant compact (for the $%
L^{2}$-metric) subset of the phase space $L^{2}(\Omega )$ and $\mathcal{S}%
\left( T^{\ast }\right) :\mathbb{B}\rightarrow \mathbb{B}$, provided that $%
T^{\ast }$ is large enough. Then, we apply Proposition \ref{abstract} on the
set $\mathbb{B}$ with $\mathcal{H}=L^{2}\left( \Omega \right) $ and $\mathbb{%
S}=\mathcal{S}\left( T^{\ast }\right) ,$ with $T^{\ast }>0$ large enough so
that $Me^{-\omega T^{\ast }}=\gamma <\frac{1}{2}$ (see (\ref{diff1})).
Besides, letting%
\begin{align*}
\mathcal{V}_{1}& =L^{2}(\left[ 0,T^{\ast }\right] ;\mathcal{V}_{\Theta
}^{1,2}(\Omega ,\partial \Omega ,\mu ))\cap W^{1,2}(\left[ 0,T^{\ast }\right]
;(\mathcal{V}_{\Theta }^{1,2}(\Omega ,\partial \Omega ,\mu ))^{\ast }), \\
\mathcal{V}& =L^{2}(\left[ 0,T^{\ast }\right] ;L^{2}\left( \Omega \right) ),
\end{align*}%
we have that $\mathcal{V}_{1}\hookrightarrow \mathcal{V}$ is compact, with
reference to Theorem \ref{3prop}. Secondly, define $\mathbb{T}:\mathbb{B}%
\rightarrow \mathcal{V}_{1}$ to be the solving operator for (\ref{1.1})-(\ref%
{1.3}) on the time interval $\left[ 0,T^{\ast }\right] $ such that $\mathbb{T%
}u_{0}:=u\in \mathcal{V}_{1},$ with $u\left( 0\right) =u_{0}\in \mathbb{B}.$
Due to Lemma \ref{L5}, (\ref{diff2}), we have the global Lipschitz
continuity (\ref{gl1}) of $\mathbb{T}$ from $\mathbb{B}$ to $\mathcal{V}_{1}$%
, and (\ref{diff1}) gives us the basic estimate (\ref{gl2}) for the map $%
\mathbb{S}=\mathcal{S}\left( T^{\ast }\right) $. Therefore, the assumptions
of Proposition \ref{abstract} are verified and, consequently, the map $%
\mathbb{S}=\mathcal{S}\left( T^{\ast }\right) $ possesses an exponential
attractor $\mathcal{M}_{d}$ on $\mathbb{B}$. In order to construct the
exponential attractor $\mathcal{E}_{\Theta ,\mu }$ for the semigroup $%
\mathcal{S}(t)$ with continuous time, we note that, due to Proposition \ref%
{uniq}, this semigroup is Lipschitz continuous with respect to the initial
data in the topology of $L^{2}\left( \Omega \right) $, see also (\ref%
{diffbis}). Moreover, by Lemma \ref{time_cont} the map $\left(
t,u_{0}\right) \mapsto \mathcal{S}\left( t\right) u_{0}$ is also uniformly H%
\"{o}lder continuous on $\left[ 0,T^{\ast }\right] \times \mathbb{B}$, where
$\mathbb{B}$ is endowed with the metric topology of $L^{2}\left( \Omega
\right) $ (actually, even with respect of the metric topology of $L^{\infty
}\left( \Omega \right) $ due to the $L^{2}$-$L^{\infty }$ smoothing property
of $\mathcal{S}$ and the fact that $f\in C_{\text{loc}}^{1}(\mathbb{R})$).
Hence, the desired exponential attractor $\mathcal{E}_{\Theta ,\mu }$ for
the continuous semigroup $\mathcal{S}(t)$ can be obtained by the standard
formula%
\begin{equation}
\mathcal{E}_{\Theta ,\mu }=\bigcup_{t\in \left[ 0,T^{\ast }\right] }\mathcal{%
S}\left( t\right) \mathcal{M}_{d}.  \label{st}
\end{equation}%
Finally, the finite-dimensionality of $\mathcal{E}_{\Theta ,\mu }$ follows
from the finite dimensionality of $\mathcal{M}_{d}$ and the $L^{2}$-$%
(L^{\infty }\cap \mathcal{V}_{\Theta }^{1,2})$ smoothing property of the
semigroup $\mathcal{S}\left( t\right) $. The remaining properties of $%
\mathcal{E}_{\Theta ,\mu }$ are immediate. Theorem \ref{expo} is now proved.
\end{proof}

We conclude with the following result for the dynamical system $\left(
\mathcal{T}\left( t\right) ,L^{\infty }\left( \Omega \right) \right) ,$
associated with strong solutions of problem (\ref{1.1})-(\ref{1.3}) (see
Definition \ref{strong}).

\begin{theorem}
\label{expo2}Assume that the nonlinearity $f$\ obeys (H1) and $\mu $
satisfies (H$_{\mu }$). The semigroup $\mathcal{T}\left( t\right) $ on $%
L^{\infty }(\Omega )$ associated with strong solutions of (\ref{1.1})-(\ref%
{1.3}) possesses an exponential attractor $\mathcal{Y}_{\Theta ,\mu }$ in
the following sense:

\begin{itemize}
\item $\mathcal{Y}_{\Theta ,\mu }$ is bounded in $L^{\infty }\left( \Omega
\right) \cap D\left( A_{\Theta ,\mu }\right) $ and compact in $L^{2}\left(
\Omega \right) $;

\item $\mathcal{Y}_{\Theta ,\mu }$ is semi-invariant: $\mathcal{T}\left(
t\right) \left( \mathcal{Y}_{\Theta ,\mu }\right) \subset \mathcal{Y}%
_{\Theta ,\mu }$, $t\geq 0$;

\item $\mathcal{Y}_{\Theta ,\mu }$ attracts the images of bounded (in $%
L^{\infty }\left( \Omega \right) $) subsets exponentially in the metric of $%
L^{\infty }\left( \Omega \right) $, i.e. there exist $\beta >0$ and a
monotone function $Q$ such that, for every bounded set $B\subset L^{\infty
}\left( \Omega \right) $,%
\begin{equation*}
dist_{L^{\infty }\left( \Omega \right) }\left( \mathcal{T}\left( t\right) B,%
\mathcal{Y}_{\Theta ,\mu }\right) \leq Q\left( \left\Vert B\right\Vert
_{L^{\infty }\left( \Omega \right) }\right) e^{-\beta t},\text{ for all }%
t\geq 0\text{;}
\end{equation*}

\item $\mathcal{Y}_{\Theta ,\mu }$ has the finite fractal dimension in $%
L^{\infty }\left( \Omega \right) .$
\end{itemize}
\end{theorem}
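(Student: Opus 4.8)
The plan is to reproduce the argument of Theorem~\ref{expo}, observing that hypotheses (H2)--(H3) enter that proof only through the construction of a bounded absorbing set and through the control of the difference of two solutions, and that both of these are available under (H1) alone \emph{once the trajectories have been confined to a ball of} $L^{\infty}(\Omega)$, on which $f\in C^{1}_{\text{loc}}(\mathbb{R})$ is automatically globally Lipschitz.

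\emph{Step 1} (absorbing set in $L^{\infty}\cap D(A_{\Theta,\mu})$). By Theorem~\ref{SG} and Corollary~\ref{dyn_sys_str}, $\mathcal{T}(t)$ is a well-defined continuous semigroup on $L^{\infty}(\Omega)$, and the energy estimate \eqref{3.3} --- which uses only (H1) --- together with the $L^{2}$--$L^{\infty}$ smoothing property \eqref{sup-b} of Proposition~\ref{bounded} shows that $\{\,v:\|v\|_{\infty,\Omega}\le R\,\}$ is an absorbing set in $L^{\infty}(\Omega)$ for a suitable $R>0$. If one restarts the equation past the time at which an orbit has entered this ball, the source $\mathcal{G}(t)=-f(u(t))$ is bounded in $L^{\infty}(\Omega)\subset L^{2}(\Omega)$ by a constant depending only on $R$, so applying the abstract regularity results of the Appendix (Theorem~\ref{ap_reg_thm}, Corollary~\ref{ap_reg_co}) to $\partial_{t}u+A_{\Theta,\mu}u=\mathcal{G}$ on sliding unit time windows $[t,t+1]$ with $\delta=1/2$ gives $\|u(t+1)\|_{D(A_{\Theta,\mu})}+\|\partial_{t}u(t+1)\|_{2,\Omega}\le\Psi(R)$ uniformly in large $t$; see \eqref{3.46} and \eqref{imp_reg}. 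Hence there is an absorbing set $\mathcal{B}_{0}$ for $\mathcal{T}(t)$ bounded in $L^{\infty}(\Omega)\cap D(A_{\Theta,\mu})$, which by Theorem~\ref{3prop} (compact resolvent and $D(A_{\Theta,\mu}^{\theta})\subset L^{\infty}(\Omega)$ for $\theta>\gamma$) is precompact in $L^{2}(\Omega)$ and in $L^{\infty}(\Omega)$. This replaces, under (H1) only, the role played by Lemma~\ref{h1est} in the proof of Theorem~\ref{expo}.

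\emph{Step 2} (difference estimate, H\"older continuity, abstract scheme). All orbits issued from $\mathcal{B}_{0}$ stay in the $L^{\infty}$-ball, so $f$ may be treated as a globally Lipschitz $C^{1}$ function along them, with Lipschitz constant $C_{f}$ depending only on $R$; the computation of Proposition~\ref{uniq} then applies verbatim and yields, exactly as in Lemma~\ref{L5}, the quantitative bounds \eqref{diff1} and \eqref{diff2} for the solving operator $\mathbb{T}:u_{0}\mapsto u|_{[0,T^{\ast}]}$ and for the discrete map $\mathcal{T}(T^{\ast})$ on $\mathcal{B}_{0}$. The uniform bound on $\|u\|_{L^{\infty}\cap\mathcal{V}_{\Theta}^{1,2}}$ and on $\partial_{t}u$ from Step~1 gives $\|u(t)-u(s)\|_{2,\Omega}\le C|t-s|^{1/2}$, which the smoothing \eqref{sup-b} upgrades to the uniform H\"older estimate of Lemma~\ref{time_cont}, $\|u(t)-u(s)\|_{\infty,\Omega}\le C|t-s|^{\lambda}$ with $\lambda<1$. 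Now set $\mathbb{B}:=\mathcal{T}(1)\big[\bigcup_{t\ge T^{\ast}}\mathcal{T}(t)\mathcal{B}_{0}\big]_{L^{2}}$ and apply Proposition~\ref{abstract} with $\mathcal{H}=L^{2}(\Omega)$, $\mathbb{S}=\mathcal{T}(T^{\ast})$ for $T^{\ast}$ so large that $Me^{-\omega T^{\ast}}<\tfrac12$ in \eqref{diff1}, $\mathcal{V}_{1}=L^{2}([0,T^{\ast}];\mathcal{V}_{\Theta}^{1,2}(\Omega,\partial\Omega,\mu))\cap W^{1,2}([0,T^{\ast}];(\mathcal{V}_{\Theta}^{1,2}(\Omega,\partial\Omega,\mu))^{\ast})$, $\mathcal{V}=L^{2}([0,T^{\ast}];L^{2}(\Omega))$ and $\mathbb{T}$ as above: \eqref{diff2} is the Lipschitz hypothesis and \eqref{diff1} the squeezing hypothesis. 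This gives a discrete exponential attractor $\mathcal{M}_{d}\subset\mathbb{B}$, and $\mathcal{Y}_{\Theta,\mu}:=\bigcup_{t\in[0,T^{\ast}]}\mathcal{T}(t)\mathcal{M}_{d}$ is the sought continuous-time exponential attractor (cf.\ \eqref{st}); the H\"older continuity preserves finite fractal dimension and exponential attraction, while the $L^{2}$--$(L^{\infty}\cap D(A_{\Theta,\mu}))$ smoothing of Step~1 transfers compactness, finite-dimensionality and the boundedness in $L^{\infty}(\Omega)\cap D(A_{\Theta,\mu})$ to the required topologies; attraction of an arbitrary bounded $B\subset L^{\infty}(\Omega)$ follows since $B$ is absorbed into $\mathbb{B}$ in finite time.

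\emph{The main obstacle} is Step~1: one must produce a \emph{dissipative} (uniform-in-$t$) bound in $D(A_{\Theta,\mu})$, not merely the local-in-time regularity of \eqref{3.46}, without the polynomial control on the primitive $F$ that drives Lemma~\ref{h1est}. The resolution rests on the observation that the $L^{\infty}$-bound already forces $-f(u)$ to be uniformly bounded in $L^{2}(\Omega)$, so the linear parabolic regularity theory of the Appendix, run on sliding unit windows, closes the estimate; the point requiring care is to verify that the constants in Theorem~\ref{ap_reg_thm} and Corollary~\ref{ap_reg_co} depend only on the $L^{\infty}$-size of the restarting datum and on the window length, and therefore do not deteriorate as $t\to\infty$.
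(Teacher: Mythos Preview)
Your proof is correct and follows the same overall architecture as the paper's proof: obtain an $L^{\infty}$-absorbing set from \eqref{3.3} and \eqref{sup-b}, upgrade it to an absorbing set with higher spatial regularity, and then feed Lemma~\ref{L5}, Lemma~\ref{time_cont} and Proposition~\ref{abstract} exactly as in the proof of Theorem~\ref{expo}. The difference lies in how you obtain the dissipative higher-regularity bound in Step~1.

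The paper appeals to the Galerkin scheme of Lemma~\ref{h1est}: once an orbit sits in a fixed $L^{\infty}$-ball, the primitive $F(u_n)$ is uniformly bounded above and below (no longer needing the polynomial growth of (H2)), so the energy identity obtained by testing with $\partial_t u_n$ yields a uniform bound in $\mathcal{V}_{\Theta}^{1,2}(\Omega,\partial\Omega,\mu)$; the $D(A_{\Theta,\mu})$ bound then comes by comparison in the equation. You instead run the linear parabolic regularity of the Appendix (Theorem~\ref{ap_reg_thm}, Corollary~\ref{ap_reg_co}) on sliding unit windows with source $\mathcal{G}=-f(u)$ bounded in $L^{\infty}(L^{2})$ and $\mathcal{G}'=-f'(u)\partial_t u$ bounded in $L^{2}(L^{2})$ after an initial application of Theorem~\ref{m1}. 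This is a genuinely different device: it avoids the Galerkin approximations altogether (and with them the delicate question of whether the projections $P_n u_0$ inherit the $L^{\infty}$ bound uniformly in $n$), and it lands directly in $D(A_{\Theta,\mu})$, matching the statement of the theorem without the extra comparison step. The paper's route has the virtue of parallelism with Section~3.3; yours exploits more fully the machinery already assembled in the Appendix for the proof of Theorem~\ref{SG}. Your closing remark correctly isolates the only point requiring care, namely that the constants in \eqref{append1} depend on $\|u_0\|_{H}$, $\|g\|_{L^{\infty}}$, $\|g'\|_{L^{2}}$ and a fixed $v\in D(A)$, all of which are controlled by the radius of the $L^{\infty}$-absorbing ball and the window length, hence uniform in~$t$.
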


\begin{proof}
It is also not difficult to complete the proof of the theorem, using the
abstract scheme of Proposition \ref{abstract}. Indeed, by estimate (\ref{3.3}%
), the semigroup $\mathcal{T}\left( t\right) $ admits a bounded absorbing
set in $L^{2}\left( \Omega \right) ,$ which combined with the estimate (\ref%
{sup-b}) of Proposition \ref{bounded}, gives a bounded absorbing set in $%
L^{\infty }\left( \Omega \right) $. Arguing now as in the proof of Lemma \ref%
{h1est}, by using Galerkin truncations of the strong solution, we can easily
deduce that the same ball $\mathcal{B}_{0}$ is an absorbing set for $%
\mathcal{T}\left( t\right) $ in $L^{\infty }\left( \Omega \right) .$ Hence,
the above scheme applies and we can once again obtain an exponential
attractor $\mathcal{Y}_{\Theta ,\mu }$ with the stated properties. This
completes the proof.
\end{proof}

\begin{remark}
\begin{enumerate}
\item \emph{All the results in Sections 3-4 also hold if the nonlocal
operator in (\ref{nonlocal-op}) is replaced by a more general one,%
\begin{equation*}
\langle \Theta _{\mu }(u),v\rangle :=\int \int_{\partial \Omega \times
\partial \Omega }K(x-y)(u(x)-u(y))(v(x)-v(y))d\mu _{x}d\mu _{y},
\end{equation*}%
with a nonnegative symmetric kernel $K\left( x\right) =k\left( \left\vert
x\right\vert \right) $ such that $K\in L^{1}\left( \Gamma _{\mu },d\mu
\right) $. This choice renders the nonlocal term dissipative everywhere in
the estimates, in particular, see the proof of Theorem \ref{SG}. Clearly,
the choice $k\left( r\right) =r^{-\left( N-1+2s\right) },$ $s\in \left(
0,1\right) $, gives (\ref{nonlocal-op}). We have restricted our attention to
this type of kernels only for the sake of convenience and simplicity of
presentation. }

\item \emph{All the results of this article also hold with no changes in all
the proofs if $\Omega \subset \mathbb{R}^{N}$ is an arbitrary open connected
set with finite Lebesgue measure. In the definition of all the spaces
involved, one has only to replace $W^{1,p}(\Omega )\cap C(\bar{\Omega})$
with $W^{1,p}(\Omega )\cap C_{c}(\bar{\Omega})$ where $C_{c}(\bar{\Omega})$
denotes the space of continuous functions on $\bar{\Omega}$ with compact
support. We have made the presentation with $\Omega $ bounded only for the
convenience of the reader.}
\end{enumerate}
\end{remark}

\section{Summary}

\label{sum}

In this article, we considered a general (scalar) reaction-diffusion
equation on $N$-dimensional bounded domains $\Omega $\ with non-smooth
boundary $\partial \Omega $, with $N\geq 2$. Our system captures most of the
specific and variants of the diffusion models considered in the Introduction
and analyzed in the literature. Our setting also captures a number of
additional models that have not been specifically identified or analyzed in
the literature. We give a unified analysis of the system using tools in
nonlinear potential analysis and Sobolev function theory on "rough" domains,
and then use them to obtain the sharpest results. In Section \ref{prelim},
we established our notations and gave some basic preliminary results for the
operators and spaces appearing in our framework. In Section \ref{wel}, we
built some well-posedness results for our nonlinear diffusion model, which
included existence results (Sections 3.1, 3.2), regularity results (Section
3.3), and uniqueness and stability results (Section 3.2). In Section \ref{gl}%
, we showed the existence of a finite-dimensional global attractor and gave
some further properties, then we also established the existence of an
exponential attractor. In addition to establishing a number of technical
results for our model in this general setting, the framework we developed
can recover most of the existing existence, regularity, uniqueness,
stability, attractor existence and dimension results for the well-known
reaction-diffusion equation on smooth domains.

The present unified analysis can be exploited to extend and establish
existence, regularity and existence of finite dimensional attractor results
for other important models based on (weakly)\ damped wave equations, systems
of reaction-diffusion equations for a vector $\overrightarrow{u}=\left(
u_{1},...,u_{k}\right) $ ($k\geq 2$), parabolic problems with degenerate
diffusion, and many others. For instance, our framework requires only minor
modifications to include reaction-diffusion systems for the vector-valued
function $\overrightarrow{u}$; the function spaces become product spaces,
and the principal dissipation and "smoothing" operators become block
operators on these product spaces, typically with block diagonal form. The
nonlinearities in these models can be treated in a similar way as in Section %
\ref{wel} (see \cite[Chapter II, Section 4]{CV}). Furthermore, we remark
that one can also easily allow for time-dependent external forces $h\left(
t\right) ,$ $h\in C_{b}^{1}\left( \mathbb{R};L^{2}\left( \Omega \right)
\right) ,$\ acting on the right-hand side of Eqn. (\ref{1.1}). Indeed, the
existence results still hold and one can generalize the notion of global
attractor and replace it by the notion of pullback attractor, for example.
One can still study the set of all complete bounded trajectories, that is,
trajectories which are bounded for all $t\in \mathbb{R}_+.$ All the results
that we have presented in this paper are still true in that case. We will
consider such questions in forthcoming contributions.

\section{Appendix}

To make the paper reasonably self-contained, we include supporting material
on regularity results for the operator $\left\{ e^{-tA_{\Theta ,\mu
}}\right\} _{t\geq 0}$ and some basic results from monotone operator theory
(see, e.g., \cite{Scho}). Let us recall the following result \cite[Corollary
5.3]{W3}.

\begin{theorem}
\label{SGbis} Let $\Omega \subset {\mathbb{R}}^{N}$ be an arbitrary open set
with finite measure and assume that $\mu $ satisfies (H$_{\mu }$). Then the
subgradient $-\partial \varphi _{\Theta ,\mu }(=-A_{\Theta ,\mu })$
generates a strongly continuous (linear) semigroup $\left\{ e^{-tA_{\Theta
,\mu }}\right\} _{t\geq 0}$ of contractions on $L^{2}(\Omega )$. In
particular, for every $u_{0}\in L^{2}(\Omega )$, the orbit $%
u(t)=e^{-tA_{\Theta ,\mu }}u_{0}$ is the unique strong solution of the first
order Cauchy problem%
\begin{equation}
\partial _{t}u\left( t,x\right) +\partial \varphi _{\Theta _{\mu }}\left(
u\left( t,x\right) \right) =0,\;\;\left( t,x\right) \in \mathbb{R}_{+}\times
\Omega ,\;\;u_{\mid t=0}=u_{0}\text{ in }\Omega ,  \label{C-pro}
\end{equation}%
such that%
\begin{equation*}
u\in C([0,\infty );L^{2}(\Omega ))\cap W_{\text{loc}}^{1,\infty }((0,\infty
);L^{2}(\Omega ))\;\mbox{ and }\;u(t,\cdot )\in D(A_{\Theta ,\mu })\text{,
a.e. on }\left( 0,\infty \right) .
\end{equation*}%
Moreover, the (linear) semigroup $\left\{ e^{-tA_{\Theta ,\mu }}\right\}
_{t\geq 0}$ is non-expansive on $L^{\infty }(\Omega )$ in the sense that%
\begin{equation}
\Vert e^{-tA_{\Theta ,\mu }}u_{0}\Vert _{\infty ,\Omega }\leq \left\Vert
u_{0}\right\Vert _{\infty ,\Omega },  \label{sup-nonexp}
\end{equation}%
for every $t\geq 0$ and $\,u_{0}\in L^{\infty }(\Omega ).$
\end{theorem}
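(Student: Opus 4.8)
The plan is to deduce the statement from the classical theory of gradient flows of convex, lower semicontinuous functionals, after recording the structural properties of $\varphi_{\Theta,\mu}$ that the hypotheses supply. First I would note that $\varphi_{\Theta,\mu}$ is proper and convex: it is finite on $\mathcal{V}_{\Theta}^{1,2}(\Omega,\partial\Omega,\mu)\supseteq\mathcal{D}(\Omega)$, a linear subspace dense in $L^{2}(\Omega)$, and on that subspace it equals $\tfrac12\mathcal{A}_{\Theta,\mu}(\cdot,\cdot)$, a nonnegative quadratic form; in particular $\overline{D(\varphi_{\Theta,\mu})}^{L^{2}(\Omega)}=L^{2}(\Omega)$. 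Next, lower semicontinuity of $\varphi_{\Theta,\mu}$ on $L^{2}(\Omega)$ is, by construction, exactly the closedness of the form $\mathcal{A}_{\Theta,\mu}$, which holds under (H$_\mu$) by Theorem~\ref{lower-semi} (the $\Cap_{2,\Omega}$-admissibility of $\Gamma_{\mu}$). Therefore $\partial\varphi_{\Theta,\mu}$ is maximal monotone, and by the first representation theorem for closed symmetric forms it coincides with the self-adjoint operator $A_{\Theta,\mu}$ of \eqref{sa_op}; the Komura--Br\'ezis theorem then gives that $-A_{\Theta,\mu}=-\partial\varphi_{\Theta,\mu}$ generates a strongly continuous semigroup of contractions $\{e^{-tA_{\Theta,\mu}}\}_{t\ge0}$ on $L^{2}(\Omega)$.

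For the Cauchy problem \eqref{C-pro} I would invoke the regularity theorem of Br\'ezis for the subgradient flow $u'(t)+\partial\varphi_{\Theta,\mu}(u(t))\ni 0$ (see, e.g., \cite{Scho}): for every $u_{0}\in L^{2}(\Omega)=\overline{D(\varphi_{\Theta,\mu})}^{L^{2}(\Omega)}$ there is a unique strong solution $u$ with $u\in C([0,\infty);L^{2}(\Omega))$, $u(t)\in D(\partial\varphi_{\Theta,\mu})=D(A_{\Theta,\mu})$ for a.e. $t>0$, $t\mapsto\varphi_{\Theta,\mu}(u(t))$ nonincreasing, and $t^{1/2}\,\partial_{t}u\in L^{2}_{\mathrm{loc}}([0,\infty);L^{2}(\Omega))$. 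Since for subgradient flows $t\mapsto\|\partial_{t}u(t)\|_{2,\Omega}=\|A_{\Theta,\mu}u(t)\|_{2,\Omega}$ is nonincreasing, $u$ is in fact Lipschitz on $[\delta,\infty)$ for each $\delta>0$, i.e. $u\in W^{1,\infty}_{\mathrm{loc}}((0,\infty);L^{2}(\Omega))$; uniqueness of strong solutions is immediate from the monotonicity of $A_{\Theta,\mu}$.

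The non-expansiveness \eqref{sup-nonexp} is the only genuinely computational point, and I would obtain it from Ouhabaz's criterion for invariance of a closed convex set under the semigroup associated with a form. Fix $c>0$, put $C_{c}:=\{v\in L^{2}(\Omega):|v|\le c\text{ a.e.}\}$, whose $L^{2}$-projection is the truncation $P_{C_{c}}v=v^{(c)}:=(-c)\vee v\wedge c$. The criterion states that $e^{-tA_{\Theta,\mu}}C_{c}\subseteq C_{c}$ provided $v^{(c)}\in\mathcal{V}_{\Theta}^{1,2}(\Omega,\partial\Omega,\mu)$ and $\mathcal{A}_{\Theta,\mu}(v^{(c)},v-v^{(c)})\ge0$ for every $v\in\mathcal{V}_{\Theta}^{1,2}(\Omega,\partial\Omega,\mu)$. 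Membership follows because $t\mapsto t^{(c)}$ is a normal contraction: it preserves $W^{1,2}(\Omega)$, keeps $v^{(c)}$ trivially in $L^{2}(\Gamma_{\mu},\mu)$, does not increase the seminorm $\mathcal{N}_{s,\mu}$ since $|v^{(c)}(x)-v^{(c)}(y)|\le|v(x)-v(y)|$, and its quasi-continuous representative is the truncation of that of $v$, hence still vanishes $\Cap_{2,\Omega}$-q.e. on $\Gamma_{\mu}^{\infty}$; thus $v^{(c)}$ lies in $\mathcal{V}_{\Theta}^{1,2}(\Omega,\partial\Omega,\mu)$ by the descriptions \eqref{sp10}--\eqref{sp11}. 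For the inequality I would split $\mathcal{A}_{\Theta,\mu}(v^{(c)},v-v^{(c)})$ into the three terms of \eqref{form}: the Dirichlet term $\int_{\Omega}\nabla v^{(c)}\cdot\nabla(v-v^{(c)})\,dx$ vanishes because $\nabla v^{(c)}$ is supported on $\{v=v^{(c)}\}$; the boundary term $\int_{\Gamma_{\mu}}v^{(c)}(v-v^{(c)})\,d\mu\ge0$ because on $\{|v|>c\}$ the factors $v^{(c)}=\pm c$ and $v-v^{(c)}$ have the same sign (and $v-v^{(c)}=0$ elsewhere); and the nonlocal term is nonnegative by the pointwise bound $(a^{(c)}-b^{(c)})\bigl((a-a^{(c)})-(b-b^{(c)})\bigr)\ge0$, which holds because $t\mapsto t^{(c)}$ is monotone and $1$-Lipschitz, so $(a^{(c)}-b^{(c)})(a-b)\ge(a^{(c)}-b^{(c)})^{2}$. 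Hence $C_{c}$ is invariant for every $c>0$, and taking $c=\|u_{0}\|_{\infty,\Omega}$ for $u_{0}\in L^{\infty}(\Omega)\subseteq L^{2}(\Omega)$ (recall $|\Omega|<\infty$) yields \eqref{sup-nonexp}.

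The main obstacle, as indicated, is this last verification: both the pointwise sign computation for the nonlocal bilinear form and, more delicately, the stability of the abstract completion $\mathcal{V}_{\Theta}^{1,2}(\Omega,\partial\Omega,\mu)$ under truncation together with the correct behaviour of quasi-continuous representatives on $\Gamma_{\mu}^{\infty}$. Everything else is a direct application of the Komura--Br\'ezis theory of gradient flows, which is available here precisely because (H$_\mu$) renders the form $\mathcal{A}_{\Theta,\mu}$ closed on $L^{2}(\Omega)$ (Theorem~\ref{lower-semi}).
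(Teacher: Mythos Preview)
Your proof is correct. Note, however, that the paper does not itself supply a proof of this theorem: it is stated in the Appendix with the introduction ``Let us recall the following result \cite[Corollary 5.3]{W3}'' and no proof environment follows. So there is no in-paper argument to compare against; your write-up is precisely the kind of argument one expects to find in the cited reference.

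Your route is the standard one: closedness of $\mathcal{A}_{\Theta,\mu}$ (via Theorem~\ref{lower-semi}) gives lower semicontinuity of $\varphi_{\Theta,\mu}$, the Komura--Br\'ezis theory of subgradient flows (or simply the spectral theorem, since $A_{\Theta,\mu}$ is linear self-adjoint and nonnegative) yields the contraction semigroup on $L^{2}(\Omega)$ together with the stated regularity, and the $L^{\infty}$ non-expansiveness follows from the Ouhabaz invariance criterion applied to the truncation map. Your verification of the three pieces of $\mathcal{A}_{\Theta,\mu}(v^{(c)},v-v^{(c)})\ge 0$ is correct, including the pointwise inequality $(a^{(c)}-b^{(c)})(a-b)\ge(a^{(c)}-b^{(c)})^{2}$ for the nonlocal kernel; this amounts to checking that $\mathcal{A}_{\Theta,\mu}$ is a Dirichlet form, which is exactly what \cite{W3} establishes. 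The only point where care is genuinely needed, and you flag it, is that truncation preserves membership in the abstract completion $\mathcal{V}_{\Theta}^{1,2}(\Omega,\partial\Omega,\mu)$; your appeal to the concrete descriptions \eqref{sp10}--\eqref{sp11} via quasi-continuous representatives is the right way to handle this.
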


We will now state some results for the non-homogeneous Cauchy problem
associated with (\ref{C-pro}). The first one is taken from \cite[Chapter IV,
Theorem 4.3]{Scho}.

\begin{theorem}
\label{m1}Let $\varphi :H\rightarrow (-\infty,+\infty] $ be a proper,
convex, and lower-semicontinuous functional on the Hilbert space $H$ and set
$A=\partial \varphi .$ Let $u$ be the generalized solution of%
\begin{equation}
\left\{
\begin{array}{ll}
u^{\prime}(t)+A\left( u\right) \ni g\left( t\right) , & t\in \left[ 0,T%
\right] , \\
u_{\mid t=0}=u_{0}\text{,} &
\end{array}%
\right.  \label{nonhom}
\end{equation}%
with $g\in L^{2}\left( 0,T;H\right) $ and $u_{0}\in \overline{D\left(
A\right) }.$ Then $\varphi \left( u\right) \in L^{1}\left( 0,T\right) ,$ $%
\sqrt{t}u^{\prime}(t)\in L^{2}\left( 0,T;H\right) $ and $u\left( t\right)
\in D\left( A\right) $ for a.e. $t\in \left[ 0,T\right] .$
\end{theorem}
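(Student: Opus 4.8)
The plan is to prove Theorem \ref{m1} by the classical Yosida-regularization scheme (as in Br\'ezis and Showalter), the point being that the regularity of the generalized solution $u$ degenerates like $\sqrt{t}$ near $t=0$ precisely because $u_0$ is only assumed to lie in $\overline{D(A)}$ and not in $D(A)$. First I would fix, once and for all, a point $x_0\in D(A)$ and an element $g_0\in Ax_0$, which exist since $\varphi$ is proper, convex and lower-semicontinuous; denoting by $\varphi_\lambda$ the Moreau--Yosida approximation and $A_\lambda=\nabla\varphi_\lambda$ the Yosida approximation of $A$, I record that $\varphi_\lambda\nearrow\varphi$ as $\lambda\downarrow 0$, so that for $0<\lambda\le 1$ one has the $\lambda$-uniform affine minorant $\varphi_\lambda(y)\ge c_0-|g_0|\,\|y-x_0\|_H$ with $c_0:=\varphi_1(x_0)>-\infty$. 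Then, for each $\lambda>0$, I would solve the regularized Cauchy problem
\begin{equation*}
u_\lambda'(t)+A_\lambda u_\lambda(t)=g(t),\qquad u_\lambda(0)=u_0,
\end{equation*}
which has a unique global solution $u_\lambda\in C^1([0,T];H)$ because $A_\lambda$ is globally Lipschitz, and which satisfies $u_\lambda\to u$ in $C([0,T];H)$ as $\lambda\to0^+$, $u$ being the generalized solution of \eqref{nonhom} in the statement.

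The heart of the matter is a set of a priori estimates uniform in $\lambda\in(0,1]$. Testing the regularized equation with $u_\lambda(t)-x_0$ and using the subgradient inequality $(A_\lambda u_\lambda,u_\lambda-x_0)\ge\varphi_\lambda(u_\lambda)-\varphi(x_0)$ together with the affine minorant, a Gronwall argument yields $\sup_{[0,T]}\|u_\lambda(t)-x_0\|_H\le C$ and hence $\int_0^T|\varphi_\lambda(u_\lambda(s))|\,ds\le C$, with $C$ independent of $\lambda$. Next I would test with $t\,u_\lambda'(t)$; using $t\,\tfrac{d}{dt}\varphi_\lambda(u_\lambda)=\tfrac{d}{dt}\big(t\,\varphi_\lambda(u_\lambda)\big)-\varphi_\lambda(u_\lambda)$, integrating over $(0,t)$, and absorbing the term $\int_0^t s\,(g,u_\lambda')\,ds$ by Young's inequality, I obtain
\begin{equation*}
\tfrac12\int_0^t s\,\|u_\lambda'(s)\|_H^2\,ds+t\,\varphi_\lambda(u_\lambda(t))\le\int_0^t|\varphi_\lambda(u_\lambda(s))|\,ds+\tfrac12\int_0^t s\,\|g(s)\|_H^2\,ds\le C,
\end{equation*}
so that $\sqrt{t}\,u_\lambda'$ is bounded in $L^2(0,T;H)$ and $t\,\varphi_\lambda(u_\lambda(t))$ is bounded above, uniformly in $\lambda$. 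In particular $\sqrt{t}\,A_\lambda u_\lambda=\sqrt{t}\,(g-u_\lambda')$ is bounded in $L^2(0,T;H)$, and $\lambda\,\|A_\lambda u_\lambda(t)\|_H^2$ is bounded for $t$ in compact subsets of $(0,T]$, which gives $\|J_\lambda u_\lambda(t)-u_\lambda(t)\|_H=\lambda\|A_\lambda u_\lambda(t)\|_H\to0$ there.

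Finally I would pass to the limit $\lambda\to0^+$. Along a subsequence, $\sqrt{t}\,u_\lambda'\rightharpoonup\sqrt{t}\,u'$ and $A_\lambda u_\lambda\rightharpoonup w$ weakly in $L^2(\delta,T;H)$ for each $\delta>0$; since $u_\lambda\to u$ and $J_\lambda u_\lambda\to u$ strongly in $C([\delta,T];H)$, demiclosedness of the maximal monotone graph of $A$ gives $w(t)\in Au(t)$, hence $u(t)\in D(A)$ and $u'(t)+Au(t)\ni g(t)$ for a.e.\ $t\in(0,T]$. Weak lower-semicontinuity of the $L^2$-norm yields $\sqrt{t}\,u'\in L^2(0,T;H)$, and lower-semicontinuity of $\varphi$ applied to $J_\lambda u_\lambda(t)\to u(t)$, combined with Fatou's lemma (legitimized by the uniform affine lower bound on $\varphi_\lambda(u_\lambda)$), gives $\int_0^T\varphi(u(s))\,ds\le\liminf_\lambda\int_0^T\varphi_\lambda(u_\lambda(s))\,ds<\infty$, whence $\varphi(u)\in L^1(0,T)$.

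The main obstacle is the derivation of the $\lambda$-uniform estimates: controlling $\int_0^t\varphi_\lambda(u_\lambda)\,ds$ through the affine minorant and closing the Gronwall loop for $\|u_\lambda-x_0\|_H$, and then checking that the demiclosedness passage is valid on each $[\delta,T]$ despite the $\sqrt{t}$-degeneracy at $t=0$. This degeneracy is unavoidable here: the hypothesis $u_0\in\overline{D(A)}$ (rather than $u_0\in D(A)$) is exactly what forces the weight $\sqrt{t}$ and prevents $u'\in L^2(0,T;H)$ from holding up to $t=0$.
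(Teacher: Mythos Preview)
The paper does not actually prove Theorem~\ref{m1}; it simply records it as ``taken from \cite[Chapter~IV, Theorem~4.3]{Scho}'' and moves on to Theorem~\ref{ap_reg_thm}. Your proposal reconstructs the classical Yosida-regularization argument that underlies the cited result in Showalter (and Br\'ezis), and the scheme you outline---the affine minorant for $\varphi_\lambda$, the energy test with $u_\lambda-x_0$, the weighted test with $t\,u_\lambda'$, and the demiclosedness passage on $[\delta,T]$---is correct and is precisely the standard route. One cosmetic point: your affine lower bound $\varphi_\lambda(y)\ge c_0-|g_0|\,\|y-x_0\|$ with $c_0=\varphi_1(x_0)$ is not quite self-consistent as written (the slope should be $|A_1x_0|$ if you anchor at $\varphi_1$, or you should derive the bound directly from the subgradient inequality for $\varphi$ and the infimum defining $\varphi_\lambda$, which yields $\varphi_\lambda(y)\ge\varphi(x_0)+(g_0,y-x_0)-\tfrac{\lambda}{2}|g_0|^2$); either fix gives the $\lambda$-uniform minorant you need, so the argument is unaffected.
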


The proof of Theorem \ref{SG} (Section 3) is based on the following
regularity result for the evolution problem (\ref{nonhom}) with a generic $m$%
-accretive operator $A$. Since we could not find a proof for it in the
literature, we choose to include here for the convenience of the reader. The
theorem is a more general version of \cite[Chapter IV, Proposition 3.2]{Scho}
with some modified assumptions.

\begin{theorem}
\label{ap_reg_thm}Let the assumptions of Theorem \ref{m1} be satisfied.
Assume that $A=\partial \varphi $ is \emph{strongly} accretive in $H$, that
is, $A-\omega I$ is accretive for some $\omega >0$ and, in addition,%
\begin{equation*}
g\in L^{\infty }\left( [\delta ,\infty );H\right) \cap W^{1,2}\left( [\delta
,\infty );H\right) ,
\end{equation*}%
for every $\delta >0$. Let $u$ be the generalized solution of (\ref{nonhom})
for $u_{0}\in \overline{D\left( A\right) }$. It follows that $u\left(
t\right) \in D\left( A\right) ,$ $t\geq \delta >0$, and the following
estimate holds for some $v\in D\left( A\right) $:%
\begin{equation}
\left\Vert A^{0}u\left( t\right) \right\Vert _{H}\leq C\left( \left\Vert
A^{0}v\right\Vert _{H}+\frac{1}{\delta }\left\Vert u_{0}-v\right\Vert
_{H}+\left\Vert g\right\Vert _{L^{\infty }\left( [\delta ,\infty );H\right)
}+\left\Vert g^{^{\prime }}\right\Vert _{L^{2}\left( [\delta ,\infty
);H\right) }\right) ,  \label{append1}
\end{equation}%
for each $u_{0}\in \overline{D\left( A\right) }$ and $t\geq \delta >0.$ The
constant $C>0$ is independent of $t,\delta ,u,v,u_{0}$ and $g.$ Here $A^{0}$
denotes the minimal section of $A$.
\end{theorem}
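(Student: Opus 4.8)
The plan is to establish the regularity $u(t)\in D(A)$ and the estimate \eqref{append1} by a combination of differentiation in time of the evolution equation and the strong accretivity of $A$. First I would note the standard consequence of monotone operator theory (cf.\ \cite[Chapter IV, Theorem 4.3]{Scho}, i.e.\ Theorem \ref{m1}): for $g\in L^2(0,T;H)$ and $u_0\in\overline{D(A)}$, the generalized solution satisfies $\sqrt{t}\,u'(t)\in L^2(0,T;H)$ and $u(t)\in D(A)$ a.e.; moreover, if $u_0\in D(A)$ one has $u'(t)\in L^\infty(0,T;H)$ with a bound in terms of $\|A^0u_0\|_H$ and $\|g\|_{W^{1,1}}$. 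The strategy is to upgrade this from "a.e." to "for all $t\ge\delta$" and to make the bound quantitative and independent of $t,\delta$.

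The key technical step is the following. Fix $\delta>0$. Comparing the solution $u$ with the solution $w$ of the same equation started at time $\delta$ from the datum $u(\delta)$, one uses the contraction property to reduce to estimating $u$ on $[\delta,\infty)$. Pick any $v\in D(A)$ (the value whose $A^0$-norm appears in \eqref{append1}). Using strong accretivity of $A=\partial\varphi$, i.e.\ $A-\omega I$ accretive with $\omega>0$: testing the difference of the equations for $u$ and the constant function $v$ (for which $A^0 v\in A(v)$) against $u-v$, one obtains
\begin{equation*}
\frac{1}{2}\frac{d}{dt}\|u(t)-v\|_H^2+\omega\|u(t)-v\|_H^2\le \langle g(t)-A^0v,\,u(t)-v\rangle,
\end{equation*}
which by Gronwall gives an exponential decay estimate of $\|u(t)-v\|_H$ controlled by $\|u_0-v\|_H$, $\|g\|_{L^\infty([\delta,\infty);H)}$ and $\|A^0v\|_H$. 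Next, to control $u'(t)$ itself: for $h>0$ the difference quotient $u_h(t):=h^{-1}(u(t+h)-u(t))$ satisfies (by accretivity of $A$) $\frac{d}{dt}\|u_h(t)\|_H^2\le 2\langle g_h(t),u_h(t)\rangle$ where $g_h$ is the difference quotient of $g$; letting $h\to0$ and using $g\in W^{1,2}([\delta,\infty);H)$ one gets $u'\in L^\infty_{\mathrm{loc}}([\delta,\infty);H)$ with, after multiplying by a cutoff vanishing at $t=\delta$ and integrating (exactly the device used in the proof of Lemma \ref{h1est} and in Proposition \ref{bounded}), the bound
\begin{equation*}
\|u'(t)\|_H\le C\Big(\tfrac{1}{\delta}\|u_0-v\|_H+\|g\|_{L^\infty([\delta,\infty);H)}+\|g'\|_{L^2([\delta,\infty);H)}+\|A^0v\|_H\Big),\qquad t\ge\delta,
\end{equation*}
with $C$ independent of $t,\delta,u,v,u_0,g$; the factor $1/\delta$ comes precisely from $|d\zeta/dt|\lesssim 1/\delta$ for the cutoff $\zeta$. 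Then from the equation $A^0u(t)\ni g(t)-u'(t)$ one reads off $u(t)\in D(A)$ for every $t\ge\delta$ and $\|A^0u(t)\|_H\le\|g(t)\|_H+\|u'(t)\|_H$, which is exactly \eqref{append1}.

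The main obstacle I expect is the passage from difference quotients to the genuine time derivative while keeping all constants uniform: one must justify that $u_h\to u'$ in the right topology and that the limiting differential inequality holds, using only $g\in L^\infty\cap W^{1,2}$ rather than $g\in W^{1,1}$ with a pointwise bound, and simultaneously keep track of the explicit $1/\delta$ dependence coming from the cutoff near the initial time. A clean way around this is to approximate: replace $u_0$ by $u_0^n\to u_0$ in $H$ with $u_0^n\in D(A)$, apply the $u_0\in D(A)$ version of Theorem \ref{m1} to get the bound on $u^n$ (for which $u^n{}'\in L^\infty$ is already known), derive the differential inequality rigorously there, then pass to the limit $n\to\infty$ using the contraction $\|u^n(t)-u(t)\|_H\le\|u_0^n-u_0\|_H$ together with lower semicontinuity of $\varphi$ and closedness of $A$. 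The rest of the argument — the Gronwall steps and the cutoff integration — is routine and follows the template already used several times in Section \ref{wel}.
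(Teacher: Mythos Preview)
Your overall strategy is sound and close in spirit to the paper's own argument: both rely on (i) a time--difference--quotient inequality for $\|u'(t)\|_H$ coming from the (strong) accretivity of $A$, (ii) a time--weighting device to absorb the lack of regularity at $t=0$, and (iii) an approximation step to make the formal computations rigorous. The paper regularizes by Yosida approximation $A_\alpha=\partial\varphi_\alpha$ (so $u_\alpha\in C^1([0,T];H)$ solves $u_\alpha'+A_\alpha u_\alpha=g_\alpha$), whereas you propose to regularize the initial datum $u_0^n\in D(A)$ and appeal to the $u_0\in D(A)$ case of Theorem~\ref{m1}; both are legitimate and lead to the same final bound after passing to the limit.

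There is, however, one genuine soft spot in your outline. Your ``step~1'' --- the exponential decay of $\|u(t)-v\|_H$ obtained by testing against $u-v$ --- is not the estimate that feeds into the cutoff argument. What you actually need, after the difference--quotient inequality $\|u'(t)\|_H^2\le \|u'(s)\|_H^2+\omega^{-1}\int_s^t\|g'\|_H^2\,d\tau$, is a \emph{weighted integral} bound of the form
\[
\int_0^T t\,\|u'(t)\|_H^2\,dt \;\le\; C\bigl(\|u_0-v\|_H^2+T^2\|A^0v\|_H^2+T^2\|g\|_{L^\infty}^2\bigr),
\]
from which the pointwise bound on $\|u'(T)\|_H$ (and hence the $1/\delta$ factor) follows by averaging the difference--quotient inequality in $s$. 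This weighted bound does \emph{not} come from testing against $u-v$; it comes from testing against $u'$ after shifting the functional, i.e.\ working with $\psi(w):=\varphi(w)-\varphi(v)-\langle A^0v,w-v\rangle\ge 0$, which yields $\|u'\|_H^2+\tfrac{d}{dt}\psi(u)=\langle g-A^0v,u'\rangle$, and then multiplying by $t$ and integrating (this is exactly the device from Lemma~\ref{h1est} you cite, but applied to $\psi$ rather than to $\|u-v\|^2$). The paper carries this out on the Yosida level (their (6.5)--(6.7)); you can do it equally well on your $u^n$ level with $u_0^n\in D(A)$. Once you replace your step~1 by this $\psi$--based energy identity, the rest of your sketch (difference quotients, cutoff/multiply-by-$t$, read off $A^0u(t)=g(t)-u'(t)$, pass to the limit via contraction and lower semicontinuity) goes through exactly as you describe.
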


\begin{proof}
We proceed as follows. We first regularize the solution $u$ by a sequence of
approximate solutions $u_{\alpha }$ with $\alpha >0$, in which $u_{\alpha
}\in C^{1}\left( \left[ 0,T\right] ;H\right) $ such that $u_{\alpha }\left(
0\right) =u_{0}\in \overline{D\left( A\right) }$, and $u_{\alpha }$ solves
the following regularized problem%
\begin{equation}
u_{\alpha }^{\prime }\left( t\right) +A_{\alpha }\left( u_{\alpha }\left(
t\right) \right) =g_{\alpha }\left( t\right) ,\text{ }t\in \left[ 0,T\right]
.  \label{reg_app}
\end{equation}%
Here $A_{\alpha }$ corresponds to the Yosida approximation of $A$, and $%
g_{\alpha }\in C^{1}\left( \left[ 0,T\right] ;H\right) $ is a sequence of
approximate functions such that, as $\alpha \rightarrow 0^{+},$%
\begin{equation}
g_{\alpha }\rightarrow g\text{ in }L^{\infty }\left( 0,T;H\right) \cap
W^{1,2}\left( 0,T;H\right) .  \label{reg_app2}
\end{equation}%
Note that by \cite[Chapter IV, Proposition 1.8]{Scho} we have $A_{\alpha
}=\varphi _{\alpha }^{^{\prime }},$ $\alpha >0,$ i.e., $A_{\alpha }$ is
Lipschitz continuous. Hence by the standard Cauchy-Lipschitz theorem, there
exists at least one solution $u_{\alpha }\in C^{1}\left( \left[ 0,T\right]
;H\right) $ to problem (\ref{reg_app}).

\emph{Step 1.} If $h>0$, then $u_{\alpha }\left( t+h\right) $ is a solution
of (\ref{reg_app}) with $g_{\alpha }\left( t\right) $ replaced by $g_{\alpha
}\left( t+h\right) $, and the accretive estimate on $A_{\alpha }$ (i.e., $%
\left( A_{\alpha }w_{1}-A_{\alpha }w_{2},w_{1}-w_{2}\right) _{H}\geq \omega
\left\Vert w_{1}-w_{2}\right\Vert _{H}^{2},$ $\omega >0$) gives%
\begin{align*}
& \frac{1}{2}\frac{d}{dt}\left\Vert u_{\alpha }\left( t+h\right) -u_{\alpha
}\left( t\right) \right\Vert _{H}^{2}+\omega \left\Vert u_{\alpha }\left(
t+h\right) -u_{\alpha }\left( t\right) \right\Vert _{H}^{2} \\
& \leq \left\Vert g_{\alpha }\left( t+h\right) -g_{\alpha }\left( t\right)
\right\Vert _{H}\left\Vert u_{\alpha }\left( t+h\right) -u_{\alpha }\left(
t\right) \right\Vert _{H} \\
& \leq \frac{1}{2}\left( \omega \left\Vert u_{\alpha }\left( t+h\right)
-u_{\alpha }\left( t\right) \right\Vert _{H}^{2}+\frac{1}{\omega }\left\Vert
g_{\alpha }\left( t+h\right) -g_{\alpha }\left( t\right) \right\Vert
_{H}^{2}\right) .
\end{align*}%
Thus, for $0\leq s\leq t$ we get, on account of Gronwall's inequality (see,
e.g., \cite[Chapter IV, Lemma 4.1]{Scho}),%
\begin{equation*}
\left\Vert u_{\alpha }\left( t+h\right) -u_{\alpha }\left( t\right)
\right\Vert _{H}^{2}e^{\omega t}\leq \left\Vert u_{\alpha }\left( s+h\right)
-u_{\alpha }\left( s\right) \right\Vert _{H}^{2}e^{\omega s}+\omega
^{-1}\int_{s}^{t}e^{\omega \tau }\left\Vert g_{\alpha }\left( \tau +h\right)
-g_{\alpha }\left( \tau \right) \right\Vert _{H}^{2}d\tau .
\end{equation*}%
Dividing both sides of this inequality by $h^{2}$, then letting $%
h\rightarrow 0^{+}$, we obtain after standard transformations,%
\begin{align}
\left\Vert u_{\alpha }^{^{\prime }}\left( t\right) \right\Vert _{H}^{2}&
\leq \left\Vert u_{\alpha }^{^{\prime }}\left( s\right) \right\Vert
_{H}^{2}e^{-\omega \left( t-s\right) }+\omega ^{-1}\int_{s}^{t}e^{-\omega
\left( t-\tau \right) }\left\Vert g_{\alpha }^{^{\prime }}\left( \tau
\right) \right\Vert _{H}^{2}d\tau  \label{esta10} \\
& \leq \left\Vert u_{\alpha }^{^{\prime }}\left( s\right) \right\Vert
_{H}^{2}+\omega ^{-1}\int_{s}^{t}\left\Vert g_{\alpha }^{^{\prime }}\left(
\tau \right) \right\Vert _{H}^{2}d\tau ,  \notag
\end{align}%
for all $t\geq s\geq 0.$

\emph{Step 2}. Fix now $v\in H$ and $\alpha >0$, and define%
\begin{equation*}
\psi \left( w\right) =\varphi _{\alpha }\left( w\right) -\varphi_\alpha
\left( v\right) -\left( A_{\alpha }v,w-v\right) _{H},\text{ }w\in H.
\end{equation*}%
Note that $\psi ^{^{\prime }}\left( w\right) =\varphi _{\alpha }^{^{\prime
}}\left( w\right) -A_{\alpha }v$, $w\in H,$ $\min_{w\in H}\psi \left(
w\right) =\psi \left( v\right) =0$, and
\begin{equation*}
u_{\alpha }^{\prime}\left( t\right) +\psi ^{^{\prime }}\left( u_{\alpha
}\left( t\right) \right) =-A_{\alpha }v+g_{\alpha }\left( t\right) ,\text{ }%
t\in \left[ 0,T\right] .
\end{equation*}%
Since $\left( \psi ^{^{\prime }}\left( u_{\alpha }\right) ,v-u_{\alpha
}\right) _{H}\leq \psi \left( v\right) -\psi \left( u_{\alpha }\right)
=-\psi \left( u_{\alpha }\right) $, it follows that%
\begin{align*}
\psi \left( u_{\alpha }\left( t\right) \right) & \leq \left( u_{\alpha
}^{^{\prime }}\left( t\right) +A_{\alpha }v-g_{\alpha }\left( t\right)
,v-u_{\alpha }\left( t\right) \right) _{H} \\
& =-\frac{1}{2}\frac{d}{dt}\left\Vert v-u_{\alpha }\left( t\right)
\right\Vert _{H}^{2}+\left( A_{\alpha }v,v-u_{\alpha }\left( t\right)
\right) _{H} -\left( g_{\alpha }\left( t\right) ,v-u_{\alpha }\left(
t\right) \right) _{H}.
\end{align*}%
Hence, by integration over $\left( 0,T\right) $ we deduce%
\begin{align}
& \int_{0}^{T}\left( \psi \left( u_{\alpha }\left( t\right) \right) +\left(
A_{\alpha }v,u_{\alpha }\left( t\right) -v\right) _{H}+\left( g_{\alpha
}\left( t\right) ,v-u_{\alpha }\left( t\right) \right) _{H}\right) dt
\label{esta11} \\
& \leq \frac{1}{2}\left( \left\Vert v-u_{0}\right\Vert _{H}^{2}-\left\Vert
v-u_{\alpha }\left( T\right) \right\Vert _{H}^{2}\right) .  \notag
\end{align}%
This is the energy estimate. In order to estimate the derivative of $%
u_{\alpha }$, we take the scalar product in $H$ of equation (\ref{reg_app})
with $u_{\alpha }^{^{\prime }}$, multiply the resulting identity by $t\geq 0$%
, to deduce%
\begin{align}
& t\left\Vert u_{\alpha }^{^{\prime }}\left( t\right) \right\Vert _{H}^{2}+%
\frac{d}{dt}\left[ t\left( \psi \left( u_{\alpha }\left( t\right) \right)
+\left( A_{\alpha }v,u_{\alpha }\left( t\right) -v\right) _{H}+\left(
g_{\alpha }\left( t\right) ,v-u_{\alpha }\left( t\right) \right) _{H}\right) %
\right]  \label{esta12} \\
& =\psi \left( u_{\alpha }\left( t\right) \right) +\left( A_{\alpha
}v,u_{\alpha }\left( t\right) -v\right) _{H}+\left( g_{\alpha }\left(
t\right) ,v-u_{\alpha }\left( t\right) \right) _{H}.  \notag
\end{align}%
Integrating over $\left( 0,T\right) $ and combining with the energy estimate
(\ref{esta11}), on account of Young's inequality, we obtain%
\begin{equation}
\int_{0}^{T}t\left\Vert u_{\alpha }^{^{\prime }}\left( t\right) \right\Vert
_{H}^{2}dt\leq \frac{1}{2}\left( \left\Vert v-u_{0}\right\Vert
_{H}^{2}+T^{2}\left\Vert A_{\alpha }v\right\Vert _{H}^{2}+T^{2}\left\Vert
g_{\alpha }\left( T\right) \right\Vert _{H}^{2}\right) .  \label{esta13}
\end{equation}%
The $H$-norm of $u_{\alpha }^{^{\prime }}\left( t\right) $ satisfies (\ref%
{esta10}) with $t=T$ and $s=t$, so the left-hand side of (\ref{esta13})
dominates%
\begin{equation}
\frac{T^{2}}{2}\left\Vert u_{\alpha }^{^{\prime }}\left( T\right)
\right\Vert _{H}^{2}-\omega ^{-1}\int_{0}^{T}t\int_{t}^{T}\left\Vert
g_{\alpha }^{^{\prime }}\left( \tau \right) \right\Vert _{H}^{2}d\tau dt.
\label{esta14}
\end{equation}%
We obtain from (\ref{esta13}) and (\ref{esta14}), for each $0<\delta \leq
t\leq T$,%
\begin{equation*}
\frac{T^{2}}{2}\left\Vert u_{\alpha }^{^{\prime }}\left( T\right)
\right\Vert _{H}^{2}\leq \frac{1}{2}\left( \left\Vert v-u_{0}\right\Vert
_{H}^{2}+T^{2}\left\Vert A_{\alpha }v\right\Vert _{H}^{2}+T^{2}\left\Vert
g_{\alpha }\left( T\right) \right\Vert _{H}^{2}+\omega ^{-1}\frac{T^{2}}{2}%
\left\Vert g_{\alpha }^{^{\prime }}\right\Vert _{L^{2}\left( \delta
,T;H\right) }^{2}\right) ,
\end{equation*}%
which coupled together with the basic inequality $\left( a-b\right) ^{2}\geq
\frac{a^{2}}{2}-b^{2}$, and equation (\ref{reg_app}), yield%
\begin{equation}
\frac{T^{2}}{4}\left\Vert A_{\alpha }\left( u_{\alpha }\left( t\right)
\right) \right\Vert _{H}^{2}\leq \frac{1}{2}\left( \left\Vert
v-u_{0}\right\Vert _{H}^{2}+T^{2}\left\Vert A_{\alpha }v\right\Vert
_{H}^{2}+2T^{2}\left\Vert g_{\alpha }\left( T\right) \right\Vert
_{H}^{2}+\omega ^{-1}\frac{T^{2}}{2}\left\Vert g_{\alpha }^{^{\prime
}}\right\Vert _{L^{2}\left( \delta ,T;H\right) }^{2}\right) .  \label{esta15}
\end{equation}%
Finally, by virtue of (\ref{reg_app2}), we deduce the following bound for $%
A_{\alpha }\left( u_{\alpha }\right) $ in $L^{\infty }\left( \delta
,T;H\right) $:%
\begin{equation}
\left\Vert A_{\alpha }\left( u_{\alpha }\left( T\right) \right) \right\Vert
_{H}^{2}\leq \frac{2}{T^{2}}\left\Vert v-u_{0}\right\Vert
_{H}^{2}+2\left\Vert A^{0}v\right\Vert _{H}^{2}+4\left\Vert g\left( T\right)
\right\Vert _{H}^{2}+2\omega ^{-1}\left\Vert g^{^{\prime }}\right\Vert
_{L^{2}\left( \delta ,T;H\right) }^{2},  \label{esta16}
\end{equation}%
for every $\alpha >0$ and $T\geq \delta >0.$ Arguing now in a standard way
(see the proof of \cite[Chapter IV, Proposition 3.1]{Scho}), we can pass to
the limit as $\alpha \rightarrow 0$ in (\ref{esta16}), along a subsequence $%
\alpha _{n}\rightarrow 0$, $A_{\alpha _{n}}\left( u_{\alpha _{n}}\left(
T\right) \right) \rightarrow y\in A\left( u\left( T\right) \right) $, so
that (\ref{append1}) follows from (\ref{esta16}) by the weak
lower-semicontinuity of the norm. This completes the proof of the theorem.
\end{proof}

Finally, the following corollary follows as a consequence.

\begin{corollary}
\label{ap_reg_co}Let the assumptions of Theorem \ref{ap_reg_thm} be
satisfied. For each $u_{0}\in \overline{D\left( A\right) }$, there exists a
unique solution $u$ of (\ref{nonhom}) such that%
\begin{equation*}
u\in L^{\infty }\left( [\delta ,\infty );D\left( A\right) \right) \cap
W^{1,\infty }\left( [\delta ,\infty );H\right) ,
\end{equation*}%
for every $\delta >0.$
\end{corollary}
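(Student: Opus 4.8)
The plan is to recognize that Corollary \ref{ap_reg_co} is essentially a repackaging of Theorem \ref{ap_reg_thm}: existence and uniqueness of the solution of \eqref{nonhom} come from the standard theory of subgradient flows, and the claimed global-in-time regularity is exactly what the estimate \eqref{append1} delivers once it is read correctly. So the proof should be short.

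First I would fix $u_{0}\in\overline{D(A)}$ and invoke the classical theory for evolution equations governed by maximal monotone operators (\cite{Scho}, together with Theorem \ref{m1} above): since $A=\partial\varphi$ with $\varphi$ proper, convex, lower-semicontinuous on $H$ and $g\in L^{2}(0,T;H)$ for every $T>0$, there is a unique generalized solution $u\in C([0,\infty);H)$ of \eqref{nonhom} with $u(0)=u_{0}$, and by the regularizing effect of the subgradient (Theorem \ref{m1}) this $u$ is a strong solution on $(0,\infty)$, i.e. $u(t)\in D(A)$ for a.e. $t>0$, $\sqrt{t}\,u'\in L^{2}_{\mathrm{loc}}((0,\infty);H)$, and $u'(t)=g(t)-A^{0}u(t)$ for a.e. $t>0$ (the minimal section being selected along the trajectory of a subgradient flow). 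This already settles existence and uniqueness — uniqueness also follows directly from accretivity of $A$, since two solutions $u_{1},u_{2}$ with the same datum satisfy $\frac{d}{dt}\|u_{1}(t)-u_{2}(t)\|_{H}^{2}\le 0$. What remains is to upgrade the local-in-time regularity to the global bounds $u\in L^{\infty}([\delta,\infty);D(A))\cap W^{1,\infty}([\delta,\infty);H)$ for each $\delta>0$.

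Next I would apply Theorem \ref{ap_reg_thm}. Fixing $\delta>0$ and an arbitrary $v\in D(A)$, estimate \eqref{append1} gives a bound on $\|A^{0}u(t)\|_{H}$ for every $t\ge\delta$ whose right-hand side is finite and independent of $t$; hence $\esssup_{t\ge\delta}\|A^{0}u(t)\|_{H}<\infty$. To complete the $D(A)$-graph-norm bound I would also bound $\|u(t)\|_{H}$: testing \eqref{nonhom} with $u(t)-v$ and using the strong accretivity $A-\omega I\ge0$ (so that $(w-A^{0}v,u(t)-v)_{H}\ge\omega\|u(t)-v\|_{H}^{2}$ for $w\in Au(t)$), one gets $\frac{d}{dt}\|u(t)-v\|_{H}^{2}+\omega\|u(t)-v\|_{H}^{2}\le\omega^{-1}\|g(t)-A^{0}v\|_{H}^{2}$, and Gronwall together with $u\in C([0,\infty);H)$ and $g\in L^{\infty}([\delta,\infty);H)$ yields $\sup_{t\ge\delta}\|u(t)\|_{H}<\infty$. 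Thus $u\in L^{\infty}([\delta,\infty);D(A))$. Finally, from $u'(t)=g(t)-A^{0}u(t)$ a.e. and the two bounds just obtained, $u'\in L^{\infty}([\delta,\infty);H)$, i.e. $u\in W^{1,\infty}([\delta,\infty);H)$; alternatively one reads the $u'$-bound off directly from the Yosida-level estimates \eqref{esta10}, \eqref{esta16} in the proof of Theorem \ref{ap_reg_thm} (since $u'_{\alpha}=g_{\alpha}-A_{\alpha}(u_{\alpha})$), passing to the limit by weak lower semicontinuity of the norm.

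The statement is not a difficult theorem in itself — the genuinely hard work is already contained in Theorem \ref{ap_reg_thm} — so the only point requiring care is the bookkeeping of matching objects: verifying that the local strong solution produced by the subgradient theory coincides with the limit of the approximants $u_{\alpha}$ in the proof of Theorem \ref{ap_reg_thm}, and justifying the identity $u'(t)=g(t)-A^{0}u(t)$ that lets one transfer the $L^{\infty}$ bound on $A^{0}u$ into an $L^{\infty}$ bound on $u'$. This uses precisely the fact that $A=\partial\varphi$ is a subgradient, so that along the flow the strong right derivative exists and selects the minimal section — which is exactly the identification $A_{\alpha_{n}}(u_{\alpha_{n}}(t))\rightharpoonup y\in A(u(t))$, $y=A^{0}u(t)$, already carried out at the end of the proof of Theorem \ref{ap_reg_thm}. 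Everything else is routine Gronwall-type estimation.
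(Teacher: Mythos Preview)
Your proposal is correct and is exactly the natural way to deduce the corollary from Theorem~\ref{ap_reg_thm}; the paper itself gives no proof at all, stating only that the corollary ``follows as a consequence,'' so you have filled in precisely the argument the authors left implicit. The one point worth flagging is that in the multi-valued setting the identity $u'(t)=g(t)-A^{0}u(t)$ is not literally what the subgradient theory gives (the right derivative equals the minimal-norm element of $g(t)-A(u(t))$, not $g(t)$ minus the minimal-norm element of $A(u(t))$), so your alternative route---reading the $L^{\infty}$ bound on $u'$ directly from the Yosida-level estimate on $\|u_{\alpha}'(T)\|_{H}$ obtained between \eqref{esta13} and \eqref{esta15} and passing to the limit---is the cleaner justification; in the paper's application $A=A_{\Theta,\mu}$ is single-valued anyway, so the distinction is moot there.
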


\end{document}